\documentclass{bmcart}

%%% Load packages
%\usepackage{amsthm,amsmath}
%\RequirePackage{natbib}
%\RequirePackage[authoryear]{natbib}% uncomment this for author-year bibliography
%\RequirePackage{hyperref}
\usepackage[utf8]{inputenc} %unicode support

\usepackage{amsmath,amsfonts,amssymb,amsthm,stmaryrd,mathrsfs,mathdots,amsbsy,float,cmll,xcolor,mathpartir,colonequals}
\usepackage{hyperref}
\usepackage{bm}
\usepackage{tikz}
\usetikzlibrary{tikzmark}
\usepackage[small]{diagrams}
\usepackage{float}
\usetikzlibrary{positioning,calc}
\usepackage{tikz-cd}
\usepackage{bussproof}

\theoremstyle{definition}
\newtheorem{definition}{Definition}[section]
\newtheorem{example}[definition]{Example}
\theoremstyle{theorem}
\newtheorem{theorem}[definition]{Theorem}
\newtheorem{proposition}[definition]{Proposition}
\newtheorem{lemma}[definition]{Lemma}
\newtheorem{corollary}[definition]{Corollary}
\theoremstyle{remark}
\newtheorem*{remark}{Remark}
\newtheorem*{notation}{Notation}
\newtheorem*{convention}{Convention}

\newarrow{Corresponds}<--->
\newarrow{TeXto}----{->}

%%%%%%%%%%%%%%%%%%%%%%%%%%%%%%%%%%%%%%%%%%%%%%%%%
%%                                             %%
%%  If you wish to display your graphics for   %%
%%  your own use using includegraphic or       %%
%%  includegraphics, then comment out the      %%
%%  following two lines of code.               %%
%%  NB: These line *must* be included when     %%
%%  submitting to BMC.                         %%
%%  All figure files must be submitted as      %%
%%  separate graphics through the BMC          %%
%%  submission process, not included in the    %%
%%  submitted article.                         %%
%%                                             %%
%%%%%%%%%%%%%%%%%%%%%%%%%%%%%%%%%%%%%%%%%%%%%%%%%

\def\includegraphics{}

%%% Put your definitions there:
\startlocaldefs
\endlocaldefs

%%% Begin ...
\begin{document}

%%% Start of article front matter
\begin{frontmatter}

\begin{fmbox}
\dochead{Research}

%%%%%%%%%%%%%%%%%%%%%%%%%%%%%%%%%%%%%%%%%%%%%%
%%                                          %%
%% Enter the title of your article here     %%
%%                                          %%
%%%%%%%%%%%%%%%%%%%%%%%%%%%%%%%%%%%%%%%%%%%%%%

\title{Game semantics of Martin-L\"of type theory, part~III: its consistency with Church's thesis}

%%%%%%%%%%%%%%%%%%%%%%%%%%%%%%%%%%%%%%%%%%%%%%
%%                                          %%
%% Enter the authors here                   %%
%%                                          %%
%% Specify information, if available,       %%
%% in the form:                             %%
%%   <key>={<id1>,<id2>}                    %%
%%   <key>=                                 %%
%% Comment or delete the keys which are     %%
%% not used. Repeat \author command as much %%
%% as required.                             %%
%%                                          %%
%%%%%%%%%%%%%%%%%%%%%%%%%%%%%%%%%%%%%%%%%%%%%%

\author[
   addressref={aff1},                   % id's of addresses, e.g. {aff1,aff2}
   corref={aff1},                       % id of corresponding address, if any
   %noteref={n1},                        % id's of article notes, if any
   email={yamad041@umn.edu}   % email address
]{\inits{NY}\fnm{Norihiro} \snm{Yamada}} \\
This is a preprint submitted to \emph{Research in the Mathematical Sciences (RMS).}
%\author[
   %addressref={aff1,aff2},
   %email={john.RS.Smith@cambridge.co.uk}
%]{\inits{JRS}\fnm{John RS} \snm{Smith}}

%%%%%%%%%%%%%%%%%%%%%%%%%%%%%%%%%%%%%%%%%%%%%%
%%                                          %%
%% Enter the authors' addresses here        %%
%%                                          %%
%% Repeat \address commands as much as      %%
%% required.                                %%
%%                                          %%
%%%%%%%%%%%%%%%%%%%%%%%%%%%%%%%%%%%%%%%%%%%%%%

\address[id=aff1]{%                           % unique id
  \orgname{University of Minnesota}, % university, etc
  %\street{200 Union St SE},                     %
  \city{Minneapolis},                              % city
  \postcode{MN 55455},                                % post or zip code
  \cny{USA}                                    % country
}
\if0
\address[id=aff2]{%
  \orgname{Marine Ecology Department, Institute of Marine Sciences Kiel},
  \street{D\"{u}sternbrooker Weg 20},
  \postcode{24105}
  \city{Kiel},
  \cny{Germany}
}
\fi

%%%%%%%%%%%%%%%%%%%%%%%%%%%%%%%%%%%%%%%%%%%%%%
%%                                          %%
%This is a preprint submitted to \emph{Research in the Mathematical Sciences (RMS)}.                  %%
%%                                          %%
%% Short notes will be after addresses      %%
%% on first page.                           %%
%%                                          %%
%%%%%%%%%%%%%%%%%%%%%%%%%%%%%%%%%%%%%%%%%%%%%%
\if0
\begin{artnotes}
%\note{Sample of title note}     % note to the article
\note[id=n1]{Correspondence:} % note, connected to author
\end{artnotes}
\fi
\end{fmbox}% comment this for two column layout

%%%%%%%%%%%%%%%%%%%%%%%%%%%%%%%%%%%%%%%%%%%%%%
%%                                          %%
%% The Abstract begins here                 %%
%%                                          %%
%% Please refer to the Instructions for     %%
%% authors on http://www.biomedcentral.com  %%
%% and include the section headings         %%
%% accordingly for your article type.       %%
%%                                          %%
%%%%%%%%%%%%%%%%%%%%%%%%%%%%%%%%%%%%%%%%%%%%%%

\begin{abstractbox}

\begin{abstract} % abstract
%This article is a sequel of the previous work (Part~I) given by the present author.  
%Part~I solved the open problem of establishing \emph{game semantics} of \emph{Martin-L\"{o}f type theory (MLTT)}.
We prove \emph{consistency of intensional Martin-L\"{o}f type theory (MLTT) with formal Church's thesis (CT)}, which was open for at least fifteen years.  
The difficulty in proving the consistency is that a standard method of \emph{realizability \`{a} la Kleene} does not work for the consistency, though it validates CT, as it does not model MLTT; specifically, the realizability does not validate MLTT's congruence rule on pi-types (known as the $\xi$-rule).
We overcome this point and prove the consistency by novel realizability \`{a} la \emph{game semantics}, which is based on the author's previous work.
%In addition, we also present a new proof of \emph{independence} of \emph{Markov's principle from MLTT} by extending the game semantics, which, unlike the existing proof, does not necessiate an extension of the syntax. 
%Consequently, the present work demonstrates effectivity of our game semantics against the proof theory of MLTT.
\end{abstract}

%%%%%%%%%%%%%%%%%%%%%%%%%%%%%%%%%%%%%%%%%%%%%%
%%                                          %%
%% The keywords begin here                  %%
%%                                          %%
%% Put each keyword in separate \kwd{}.     %%
%%                                          %%
%%%%%%%%%%%%%%%%%%%%%%%%%%%%%%%%%%%%%%%%%%%%%%

\begin{keyword}
\kwd{Church's thesis}
\kwd{Martin-L\"{o}f type theory}
\kwd{constructive mathematics}
\kwd{realizability}
\kwd{game semantics}
%\kwd{author}
\end{keyword}

% MSC classifications codes, if any
\begin{keyword}[class=AMS]
\kwd[primary ]{03F50}
%\kwd{}
\kwd[; secondary ]{03B70}
\end{keyword}

\end{abstractbox}
%
%\end{fmbox}% uncomment this for twcolumn layout

\end{frontmatter}

%%%%%%%%%%%%%%%%%%%%%%%%%%%%%%%%%%%%%%%%%%%%%%
%%                                          %%
%% The Main Body begins here                %%
%%                                          %%
%% Please refer to the instructions for     %%
%% authors on:                              %%
%% http://www.biomedcentral.com/info/authors%%
%% and include the section headings         %%
%% accordingly for your article type.       %%
%%                                          %%
%% See the Results and Discussion section   %%
%% for details on how to create sub-sections%%
%%                                          %%
%% use \cite{...} to cite references        %%
%%  \cite{koon} and                         %%
%%  \cite{oreg,khar,zvai,xjon,schn,pond}    %%
%%  \nocite{smith,marg,hunn,advi,koha,mouse}%%
%%                                          %%
%%%%%%%%%%%%%%%%%%%%%%%%%%%%%%%%%%%%%%%%%%%%%%

%%%%%%%%%%%%%%%%%%%%%%%%% start of article main body
% <put your article body there>

%%%%%%%%%%%%%%%%
%% Background %%
%%

\tableofcontents

\begin{convention}
To clear ambiguity, we call a mere assignment of semantic objects to syntactic objects an \emph{interpretation} or \emph{semantics}, and an interpretation that respects target syntactic properties (e.g., existence of a derivation of equality between terms) by semantic properties (e.g., agreement of the assigned semantic objects) a \emph{model}. 
(I.e., a model means a \emph{sound} interpretation.)
We say that an interpretation \emph{models} a type theory if it is a model of the type theory, and it \emph{models} or \emph{validates} (resp. \emph{refutes}) an axiom or rule if it respects (resp. does not respect) the axiom or rule. 
\end{convention}

\section{Introduction}
Ishihara et al.~\cite[Section~8]{ishihara2018consistency} conclude by stating that
\begin{quote}
Unfortunately, the consistency of Church's thesis with full Martin-L\"{o}f's is still open, and it is presumably quite difficult to answer this question.
\end{quote}
The present work answers the question \emph{in the affirmative}, i.e., we prove \emph{consistency of intensional Martin-L\"of type theory with formal Church's thesis}, which was open for at least fifteen years. 
%is one of the challenging open problems in constructive mathematics. 
Our motivation comes from the view that the consistency problem is interesting as a mathematical problem in its own right but also from its consequences on foundations of constructive mathematics. 

Intensional Martin-L\"of type theory, or \emph{MLTT} for short, is a formal system for constructive mathematics, which is shown to be \emph{consistent} (i.e., it does not derive \emph{falsity}). 
On the other hand, formal Church's thesis is a logical formula expressible in MLTT and states that (total) maps on the set $\mathbb{N}$ of all natural numbers are all `computable' or \emph{recursive}. 
%Note that formal Church's thesis therefore contradicts the classical recursion theory, but it is of interest from the standpoints of some schools in constructive mathematics. 
Then, our result, i.e., the targeted consistency, means that MLTT equipped with formal Church's thesis as an additional axiom is consistent. 

Our consistency proof is based on a mathematical model of MLTT, called \emph{game semantics}, and in a novel manner takes advantage of its distinguishing features: the \emph{distinction} and the \emph{asymmetry} between \emph{Player}, who `executes (the interpretation of) a term,' and \emph{Opponent}, who plays the role of the `environment' or `rebutter' for Player in game semantics.
More precisely, our modified game semantics, which we call \emph{realizability \`{a} la game semantics}, forces Opponent to \emph{play as `total,' `effective' computation} when he plays as an input for Player and moreover \emph{exhibit a realizer} for his input computation before its execution even if the calculation of the realizer is not `effective,'\footnote{It is not a problem because game semantics treats Opponent as an `oracle' endowed with an unlimited computational power. See \cite[Section~1]{yamada2019game} for more on this point.} while in contrast it does not impose this heavy task on Player or assign to her a choice of a realizer for the interpretation of a term. 
Consequently, realizability \`{a} la game semantics trivially validates formal Church's thesis because Opponent discloses a realizer for his input computation $\mathbb{N} \rightarrow \mathbb{N}$ (so that it suffices for validating formal Church's thesis to just let Player `copy-cat' the realizer), but also it models MLTT since it is as abstract as the existing game semantics of MLTT. 

This novel `game-semantic art' explains why our method, unlike existing ones, succeeds in solving the consistency problem, and moreover it opens up a new way of applying game semantics to the study of constructive mathematics.
%illustrates effectivity of game semantics in the context of constructive mathematics.

In the rest of this introduction, we explain the backgrounds, our motivations, the problem to solve, our solution for the problem, our contribution and related work in such a way that it would suffice in subsequent sections to just fill in the details.

\subsection{Constructive mathematics} 
\emph{Constructive mathematics} \cite{troelstra1988constructivism,troelstra2014constructivism,beeson2012foundations} is a branch of mathematical logic \cite{shoenfield1967mathematical} and foundations of mathematics \cite{kleene1952introduction} that studies `constructive,' `computable' or `effective' objects and reasonings.
One of the major motivations for constructive mathematics is the suspicion against `nonconstructive' objects and reasonings in classical or ordinary mathematics, e.g., the \emph{law of excluded middle} and the \emph{axiom of choice} \cite{troelstra1988constructivism}. 

However, constructive mathematicians have never established a universal consensus on which objects and reasonings in mathematics are `constructive' or how to formulate the informal notion of `constructivity.'
In fact, various different schools of constructive mathematics have arisen and been present in the literature.

\subsection{Type theories}
\emph{Type theories} \cite{girard1989proofs,jacobs1999categorical} are a particular class of \emph{formal systems} \cite{shoenfield1967mathematical}, whose distinguishing feature is that their variables and terms are \emph{typed}.
By the \emph{Curry-Howard isomorphisms (CHIs)} \cite{girard1989proofs,sorensen2006lectures}, type theories serve as a single formalism for both logic and computation; they are not only formal systems but also \emph{programming languages}.

A type theory is \emph{simple} if it prohibits variables from occurring in types, and \emph{dependent} otherwise \cite{jacobs1999categorical,sorensen2006lectures}, where we consider only \emph{term variables} (i.e., no \emph{type variables}) in this article.
The generalization of simple type theories to dependent ones corresponds under the CHIs to that of \emph{propositional logic} to \emph{predicate logic} \cite{shoenfield1967mathematical}.

Type theories are similar to \emph{sequent calculi} \cite{gentzen1935untersuchungen,troelstra2000basic} except that vertices of a formal proof or derivation (tree) in a type theory are not sequents but \emph{judgements}, for which we usually write $\mathcal{J}$ (possibly with subscripts/superscripts).
Hence, a type theory consists of \emph{axioms} $\frac{}{ \ \mathcal{J} \ }$ and \emph{(inference) rules} $\frac{ \ \mathcal{J}_1 \ \mathcal{J}_2 \dots \mathcal{J}_k \ }{ \mathcal{J}_0 }$, which are to make a \emph{conclusion} from \emph{hypotheses} by constructing a derivation.
Most type theories have the following six kinds of judgements (followed by their intended meanings):
\begin{enumerate}

\item $\mathsf{\vdash \Gamma \ ctx}$ \label{FirstJudgement} ($\mathsf{\Gamma}$ is a \emph{context}, i.e., a finite sequence of pairs of a variable and a type); 

\item $\mathsf{\Gamma \vdash A \ type}$ \label{SecondJudgement} ($\mathsf{A}$ is a \emph{type} in the context $\mathsf{\Gamma}$);

\item $\mathsf{\Gamma \vdash a : A}$ \label{ThirdJudgement} ($\mathsf{a}$ is a \emph{term} of the type $\mathsf{A}$ in the context $\mathsf{\Gamma}$);

\item $\mathsf{\vdash \Gamma = \Delta \ ctx}$ \label{FourthJudgement} ($\mathsf{\Gamma}$ and $\mathsf{\Delta}$ are \emph{equal} contexts);

\item $\mathsf{\Gamma \vdash A = B \ type}$ \label{FifthJudgement} ($\mathsf{A}$ and $\mathsf{B}$ are \emph{equal} types in the context $\mathsf{\Gamma}$);

\item $\mathsf{\Gamma \vdash a = a' : A}$ \label{SixthJudgement} ($\mathsf{a}$ and $\mathsf{a'}$ are \emph{equal} terms of the type $\mathsf{A}$ in the context $\mathsf{\Gamma}$),

\end{enumerate}
where the judgements (\ref{FirstJudgement}), (\ref{FourthJudgement}) and (\ref{FifthJudgement}) are trivial and usually omitted in simple type theories.
We often omit the \emph{turnstile} $\mathsf{\vdash}$ in judgements if the context on the LHS is empty.
The CHIs regard contexts, types and terms as \emph{assumptions}, \emph{formulas} and \emph{proofs} in logic, respectively; type theories serve as formal systems in this way \cite{jacobs1999categorical,sorensen2006lectures}.

\subsection{Intensional Martin-L\"{o}f type theory}
\label{IntroMLTT}
\emph{Martin-L\"{o}f type theory (MLTT)} invented by Martin-L\"{o}f \cite{martin1982constructive,martin1984intuitionistic,martin1998intuitionistic} is a prominent dependent type theory meant to serve as a foundation of constructive mathematics, which is comparable to \emph{Zermelo-Fraenkel set theory with the axiom of choice (ZFC)} \cite{enderton1977elements} for classical mathematics. 
Also, MLTT is a subject in computer science as well since it is not only a formal system but also a programming language \cite{nordstrom1990programming}. %and one may extract programs from its proofs in such a way that the programs are `correct by construction' (see, e.g., \cite{constable1986implementing}). 

Strictly speaking, there are the \emph{intensional} and the \emph{extensional} variants of MLTT; see Appendix~\ref{MLTT} for the details. 
In this article, we focus on the intensional variant as the extensional one is known to be inconsistent with formal Church's thesis \cite[Proposition~6.2]{maietti2005toward}, and \emph{MLTT} refers to the intensional one unless stated otherwise. 

By its computational nature, more specifically \emph{operational semantics}, of MLTT, one may prove \emph{consistency} of MLTT, i.e., there is no formal proof of empty-type (i.e., the type or formula of \emph{falsity}) in MLTT; e.g., see \cite[Corollary~A.4.6]{hottbook}.

For the rest of this article, let us assume that the reader is familiar with MLTT, especially its syntax, and leave the details to Appendix~\ref{MLTT} or the references \cite{martin1998intuitionistic,hofmann1997syntax}.

\subsection{Formal Church's thesis}
\label{IntroFormalChurch'sThesis}
\emph{(Formal) Church's thesis (CT)} \cite{kreisel1970church} is a logical formula that is expressible in MLTT and argued mostly in constructive mathematics.
Informally, CT states that every (total) function on $\mathbb{N}$ is \emph{recursive} (i.e., `computable' in the standard mathematical sense \cite{rogers1967theory,cutland1980computability}). %(i.e., `effective' in the standard mathematical sense).
In other words, CT represents the particular school of constructive mathematics that requires \emph{every} mathematical object to be `constructive.'  

\begin{remark}
The name of CT comes from that of \emph{Church-Turing thesis (CTT)} \cite{rogers1967theory,cutland1980computability}, which states that `computable' (in the informal sense) partial functions on $\mathbb{N}$ are precisely recursive ones.
Note, however, that CT and CTT are different statements.
%Note that CT is a formal, mathematically precise statement, but CTT is not.
%Indeed, CTT is an informal hypothesis that will never be proven because the very notion of `computable' partial functions is informal and out of the scope of mathematics. 
%Nevertheless, there has been no counter-examples to CTT for more than eighty years, and almost all mathematicians today believe in CTT.
\end{remark}

In contrast with CT, some of the other schools in constructive mathematics such as \emph{computable analysis} \cite{weihrauch2012computable} accept constructions or operations on `nonconstructive' objects such as non-recursive functions and real numbers.  
Also, CT contradicts the classical \emph{recursion theory} \cite{rogers1967theory,cutland1980computability} because the latter shows that there are non-recursive functions on $\mathbb{N}$ such as the one for the \emph{halting problem}.

At this point, let us recall a standard formalization of CT as a logical formula,
\begin{equation}
\label{CTinTroelstraAndVanDalen}
\forall f \in \mathbb{N} \Rightarrow \mathbb{N}, \exists e \in \mathbb{N}, \forall n \in \mathbb{N}, \exists c \in \mathbb{N} . \,  T(e, n, c) \wedge U(c) = f(n) \ \text{\cite[p.~192]{troelstra1988constructivism}},
\end{equation}
where $T$ represents \emph{Kleene's T-predicate}, and $U$ does the \emph{result-extracting function} \cite[p.~133]{troelstra1988constructivism}.
%Note that the both are \emph{primitive recursive} \cite[Section~3.4.3]{troelstra1988constructivism}, and thus they are expressible as formulas of MLTT equipped with the natural number type $\mathsf{N}$.
Recall that $T(e, n, c)$ holds if and only if $c$ encodes the computational history of the (necessarily terminating\footnote{The computation must be terminating since its computational history is finite.}) computation encoded by $e$ applied to the input $n$, and $U(c)$ is the output of the computational process encoded by $c$, where an encoding of algorithms by natural numbers is arbitrarily fixed.
Because a function $f : \mathbb{N} \rightarrow \mathbb{N}$ is recursive if and only if there is a natural number $e \in \mathbb{N}$ that encodes an algorithm for $f$ \cite{rogers1967theory,cutland1980computability}, the formula (\ref{CTinTroelstraAndVanDalen}) indeed formalizes the intended meaning of CT.
Following a convention in the field of \emph{realizability} \cite[Section~4.4.1]{troelstra1988constructivism}, we call such a natural number $e$ a \emph{realizer} for $f$, and equivalently say that $e$ \emph{realizes} $f$.

The standard constructive reading or the \emph{BHK-interpretation} of intuitionistic logic \cite[Section~1.3.1]{troelstra1988constructivism} interprets CT as: \emph{There is an algorithm to compute a realizer $e \in \mathbb{N}$ for a given function $f : \mathbb{N} \rightarrow \mathbb{N}$}, where note that it is trivial to constructively validate the remaining part $\forall n \in \mathbb{N}, \exists c \in \mathbb{N} . \,  T(e, n, c) \wedge U(c) = f(n)$ once a realizer $e$ for $f$ is obtained.
However, it is in general not `effective' to compute a realizer for a given function $\mathbb{N} \rightarrow \mathbb{N}$, even if the function is recursive, since otherwise equality on recursive functions $\mathbb{N} \rightarrow \mathbb{N}$ would be decidable, contradicting \emph{Rice's theorem} \cite{rogers1967theory,cutland1980computability,rice1953classes}. 
Therefore, a \emph{constructive} model of CT must require the inputs $\mathbb{N} \rightarrow \mathbb{N}$ not only to be recursive but also to \emph{exhibit their realizers}.
%We shall come back to this point shortly. 

\if0
According to the standard reference \cite{troelstra1988constructivism} on constructive mathematics, there are two, \emph{foundational} and \emph{pragmatic}, motivations for CT from the standpoint of `law-like' constructive mathematics, which regards every mathematical object as `lawlike', i.e., they are completely described in advance in the informal sense.\footnote{This idea is similar to \emph{Russian constructive mathematics (RCM)} developed by Markov (\cite{troelstra1988constructivism}) because RCM takes mathematical objects as \emph{algorithms} in the informal sense.}
Foundationally, `law-like' constructive mathematics justifies CT if we identify `law-like' with `recursive.'
Pragmatically, CT, more generally the identification of `law-like' with `recursive,' gives a way to formalize the informal notion of `law-like.'
\fi

Finally, let us translate under the CHIs the formula (\ref{CTinTroelstraAndVanDalen}) into a formula in MLTT,
\begin{equation}
\label{CTinMLTT}
\mathsf{\Pi_{f : N \Rightarrow N} \Sigma_{x : N} \Pi_{y : N}\Sigma_{z : N} T(x, y, z) \times Id_N(U(z), f(y))},
\end{equation}
where the type $\mathsf{x : N, y : N, z : N \vdash T(x, y, z) \ type}$ represents the T-predicate, and the term $\mathsf{z : N \vdash U(z) : N}$ does the result-extracting function.
Note that they are both \emph{primitive recursive} \cite[Section~3.4.3]{troelstra1988constructivism} and so expressible as formulas of MLTT. 
%Note also that the formula (\ref{CTinMLTT}) represents the intended meaning of CT, i.e., every function $\mathbb{N} \rightarrow \mathbb{N}$ is encodable by a natural number, with respect to \emph{any} implementations of the T-predicate the result-extracting function as terms in MLTT. 

\subsection{Our goal: to prove consistency of intensional Martin-L\"{o}f type theory with formal Church's thesis}
Although CT has been shown to be consistent with most intuitionistic formalisms in the literature \cite[Section~4.10.2]{troelstra1988constructivism}, MLTT has been an exception: Consistency of MLTT with CT (\ref{CTinMLTT}) has been an open problem though both MLTT and CT were introduced in the 1970s.
In other words, it has been unknown whether or not MLTT is compatible with the perspective that every function $\mathbb{N} \rightarrow \mathbb{N}$ is `constructive.'
Let us add another historical fact that this consistency problem was articulated explicitly fifteen years ago by Maietti and Sambin \cite[Section~6.4]{maietti2005toward}. %but it has been open for at least fifteen years.

The primary goal of the present work is to solve this long-standing open problem \emph{in the affirmative}. 
Our motivation is \emph{mathematical}; that is, we find the consistency problem as technically challenging yet interesting in its own right, which we explain below.
We are also motivated by the fact that the consistency, if established, would have consequences on foundations of constructive mathematics; for instance, MLTT plus CT would be available as an intensional level foundation for the programme of \emph{a minimalist two-level foundation for constructive mathematics} \cite{maietti2009minimalist}.

\subsection{Why is the consistency difficult to prove?}
\label{WhyDifficult}
As its longevity indicates, the consistency problem poses a technical challenge. 
Specifically, the challenge is that a standard method of \emph{realizability \`{a} la Kleene} \cite[Section~4.4.1]{troelstra1988constructivism} for showing consistency of an intuitionistic formal system with CT does not work for the case of MLTT \cite[Sections~1 and 8]{ishihara2018consistency}.\footnote{Also, it is unclear how to extend the syntactic method for proving consistency of MLTT mentioned in Section~\ref{IntroMLTT} to MLTT plus CT.}
More concretely, the obstacle is that the realizability does not model MLTT, though it validates CT, since it does not validate MLTT's congruence rule on pi-types or the \emph{$\xi$-rule}:
\begin{equation*}
%\label{XiRule}
\AxiomC{$\mathsf{\Gamma, x : A \vdash b = b' : B}$}
\LeftLabel{\textsc{($\xi$)}}
\UnaryInfC{$\mathsf{\Gamma \vdash \lambda x^A . b = \lambda x^A . b' : \Pi_{x : A} B}$}
\DisplayProof
\end{equation*}
The point is that the realizability interprets a term in MLTT by a choice of a \emph{realizer} for it, i.e., a natural number that encodes an algorithm for the term, and inputs of a term, if any,\footnote{A term has inputs precisely when it is open or of a pi-type.} by a choice of realizers for them too.
This highly \emph{intensional} interpretation constructively (in the sense explained in Section~\ref{IntroFormalChurch'sThesis}) yet trivially validates CT since the inputs $\mathbb{N} \rightarrow \mathbb{N}$ already come in the form of realizers. 
However, this interpretation of \emph{open} terms, i.e., terms whose contexts are nonempty, does not validate equality between open terms such as $\mathsf{x : N \vdash x = x+0 :N}$ because the two terms may represent different algorithms. 
Hence, we must take the \emph{quotient} of the interpretation of each open term modulo input-output pairs or \emph{extensions}. 
(N.b., in contrast, we cannot take the quotients of closed terms of pi-types unless we give up validating CT.)
But the quotient makes the interpretation of an open term \emph{more extensional} than that of its $\lambda$-abstraction; consequently, the interpretation refutes the $\xi$-rule.
For a detailed account of this problem, see Ishihara et al.~\cite{ishihara2018consistency}.

On the other hand, another standard \emph{realizability \`{a} la assemblies} \cite{reus1999realizability,streicher2008realizability} models MLTT, including the $\xi$-rule, since unlike realizability \`{a} la Kleene it does not assign realizers as part of its interpretation of terms, i.e., the one \`{a} la assemblies is more extensional than the one \`{a} la Kleene. 
But still, the former requires the \emph{existence} of a realizer for the interpretation of a term, and thus it reads CT \emph{constructively}. 
It also requires the existence of realizers for inputs of the interpretation of a term, if any, but again it does not assign a choice of these realizers. 
Consequently, realizability \`{a} la assemblies does not validate CT since the inputs $\mathbb{N} \rightarrow \mathbb{N}$ in the interpretation are just recursive maps $\mathbb{N} \rightarrow \mathbb{N}$, and so there is no algorithm to calculate realizers for them as remarked in Section~\ref{IntroFormalChurch'sThesis} (but the realizability reads CT constructively).

\begin{remark}
If we assign a choice of realizers only to inputs of the interpretation of an open term and not to the interpretation itself in realizability \`{a} la assemblies then the interpretation would refute the $\xi$-rule just like realizability \`{a} la Kleene does. 
\end{remark}

To summarize, the main difficulty of the consistency problem is that the standard consistency proof of constructing a realizability model of MLTT plus CT suffers from the \emph{dilemma between intensionality and extensionality}: An interpretation must be intensional enough to validate CT but also extensional enough to model open terms.

We conquer this challenge by introducing novel realizability \`{a} la \emph{game semantics}.

\subsection{Game semantics of simple type theories}
\label{IntroGameSemantics}
\emph{Game semantics} \cite{abramsky1997semantics,hyland1997game} refers to a particular class of \emph{denotational (mathematical) semantics of logic and computation} \cite{amadio1998domains} that interprets formulas (or types) and proofs (or terms) as \emph{games} and \emph{strategies}, respectively. 
Game semantics has been highly successful in interpreting a wide range of simple type theories \cite{abramsky2014intensionality,abramsky1999game} and moreover extended to dependent type theories just recently \cite{abramsky2015games,yamada2016game}. 

A distinguishing feature of game semantics is that it models logic and computation in terms of \emph{interactions} between two participants of games, \emph{Player}, who plays by following a strategy, and \emph{Opponent}, who in contrast plays like an `oracle.' 
Our idea to prove consistency of MLTT with CT is to take advantage of these \emph{distinction} and \emph{asymmetry} between Player and Opponent. 
Concretely, our solution is to modify the game semantics of MLTT \cite{yamada2016game} by requiring that \emph{only Opponent} has to play by a `total,' `effective' strategy when he plays as an input for Player and moreover exhibit a realizer for the input strategy at the beginning of each play. 
Note that strategies are a much more intensional concept than (partial) maps; we then take realizers for strategies also as realizers for their \emph{extensions} (i.e., the partial maps that the strategies compute) and implement the T-predicate and the result-extracting function with respect to the resulting realizers for (total) recursive maps $\mathbb{N} \rightarrow \mathbb{N}$.
Consequently, the resulting, modified game semantics \emph{constructively validates CT} since it suffices for validating CT to let Player `copy-cat' the realizer for an input $\mathbb{N} \rightarrow \mathbb{N}$ given by Opponent, but also it \emph{models MLTT} as the interpretations of terms are as abstract as those of the game semantics \cite{yamada2016game}.
The rest of this introduction is devoted mostly to explaining more in detail this solution to the consistency problem.  

\if0
Main advantages of game semantics are its conceptual naturality, semantic nature and appropriate degree of intensionality/abstraction.
One may think of logic, being \emph{the study of reasoning} \cite[p.~1]{shoenfield1967mathematical}, as consisting of \emph{dialogical arguments} between a `prover' and a `refuter.'
Game semantics interprets logic by mathematically formalizing this idea, which therefore gives a conceptually very natural, semantic explanation of logic.
This game-semantic view on logic is also in harmony with the meaning explanation as the dialogical arguments in game semantics can be seen as a mathematical formalization of `constructions' in the meaning explanation. 
Further, \emph{full completeness/abstraction} results (\cite{curien2007definability}) in the literature of game semantics show that game semantics has not only the appropriate degree of intensionality but also that of abstraction.\footnote{Recall that an interpretation (functor) of a type theory is \emph{fully complete} if it is \emph{full}, and \emph{fully abstract} if the interpretations of given two terms coincide exactly when the terms are \emph{contextually equivalent}, i.e., indistinguishable by their operational semantics, in the type theory. Note also that equality on terms in a type theory is usually \emph{intensional}. Hence, semantics must have the appropriate degree of intensionality/abstraction to achieve full completeness and/or abstraction. See \cite{curien2007definability}.}
%Another advantage of game semantics is its conceptual naturality: It models logic (resp. computation) by \emph{dialogical arguments} (resp. \emph{computational processes}) between the participants of games, giving a syntax-independent explanation of syntax in a conceptually natural (yet mathematically precise) fashion.
Hence, for the conceptual naturality, semantic nature and appropriate intensionality/abstraction, game semantics appears perfect as standard mathematical semantics of MLTT in our sense.
\fi

Let us first recall games and strategies \`{a} la McCusker \cite{abramsky1999game,mccusker1998games} that model simple type theories because our method is based on them.
Our review is largely taken from the author's earlier work \cite[Section~1.4]{yamada2019game}.
A \emph{game}, roughly, is a certain kind of a directed rooted forest whose branches represent possible `developments' or \emph{(valid) positions} in a `game in the usual sense' (such as chess, poker, etc.).
\emph{Moves} in a game are nodes of the game, where some moves are distinguished and called \emph{initial}; only initial moves can be the first element or \emph{occurrence} of a position in the game. 
\emph{Plays} in a game are increasing sequences $\boldsymbol{\epsilon}, m_1, m_1 m_2, \dots$ of positions in the game, where $\boldsymbol{\epsilon}$ is the \emph{empty sequence}. 
For our purpose, it suffices to focus on standard games played by two participants, \emph{Player} (\emph{P}), who represents a `computational agent' or `prover,' and \emph{Opponent} (\emph{O}), who represents a `environment' or `rebutter,' in each of which O always starts a play, and then they separately and alternately make moves allowed by the rules of the game.
Strictly speaking, a position in each game is not just a sequence of moves: Each occurrence $m$ of O's or O- (resp. P's or P-) non-initial move in a position \emph{points} to a previous occurrence $m'$ of P- (resp. O-) move in the position, representing that $m$ is performed specifically as a response to $m'$. 
The pointers are necessary to distinguish similar yet distinct computations \cite{abramsky1999game}, and also they play a crucial role in the \emph{game-semantic model of computation} \cite{yamada2019game}.

On the other hand, a \emph{strategy} $g$ on a game $G$, written $g : G$, is what tells P which move (together with a pointer) she should make at each of her turns in the game.
Technically, $g$ is a set of (selected) even-length positions in $G$ that is
\begin{itemize}

\item Nonempty and \emph{even-prefix-closed} (i.e., $\boldsymbol{s}mn \in g \Rightarrow \boldsymbol{s} \in g$);

\item \emph{Deterministic} (i.e., $\boldsymbol{s}mn, \boldsymbol{s}mn' \in g \Rightarrow \boldsymbol{s}mn = \boldsymbol{s}mn'$),

\end{itemize}
so that it directs P to play by $\boldsymbol{s}m \mapsto n$ (with the pointer from $n$ into $\boldsymbol{s}m$) if and only if there is (necessarily unique) $\boldsymbol{s}mn \in g$ at each odd-length position $\boldsymbol{s}m$ in $G$.

Then, game semantics $\llbracket \_ \rrbracket_{\mathbb{G}}$ of a simple type theory $\mathscr{S}$ interprets a type $\mathsf{A}$ in $\mathscr{S}$ as a game $\llbracket \mathsf{A} \rrbracket_{\mathbb{G}}$ that specifies possible plays between P and O, and a term $\mathsf{M : A}$\footnote{For simplicity, here we focus on \emph{closed} terms, i.e., ones with the \emph{empty context}.} in $\mathscr{S}$ as a strategy $\llbracket \mathsf{M} \rrbracket_{\mathbb{G}} : \llbracket \mathsf{A} \rrbracket_{\mathbb{G}}$ that describes for P how to play in $\llbracket \mathsf{A} \rrbracket_{\mathbb{G}}$. 
An execution of the term $\mathsf{M}$ is then interpreted as a play in $\llbracket \mathsf{A} \rrbracket_{\mathbb{G}}$ in which P follows $\llbracket \mathsf{M} \rrbracket_{\mathbb{G}}$.
%For comparison, recall that the \emph{set-theoretic semantics} $\llbracket \_ \rrbracket_{\mathrm{Set}}$ of $\mathscr{S}$ interprets a type $\mathsf{A}$ as a set $\llbracket \mathsf{A} \rrbracket_{\mathrm{Set}}$, and a term $\mathsf{M : A}$ as an element $\llbracket \mathsf{M} \rrbracket_{\mathrm{Set}}$ of the set  $\llbracket \mathsf{A} \rrbracket_{\mathrm{Set}}$ \cite{lambek1988introduction,jacobs1999categorical}.

Let us consider examples. 
The simplest game is the \emph{unit game} $\boldsymbol{1}$, which has no moves. 
Thus, it has only the trivial position $\boldsymbol{\epsilon}$ and the trivial strategy $\top \colonequals \{ \boldsymbol{\epsilon} \}$.
The unit game $\boldsymbol{1}$ forms a terminal object in the category $\mathbb{G}$ of games and strategies.  
Another simple game is the \emph{empty game} $\boldsymbol{0}$, which has an arbitrary element $q$ as its only move; thus, its positions are $\boldsymbol{\epsilon}$ and $q$, and so it only has the strategy $\bot \colonequals \{ \boldsymbol{\epsilon} \}$.
The game semantics $\llbracket \_ \rrbracket_{\mathbb{G}}$ interprets unit-type $\mathsf{1}$, empty-type $\mathsf{0}$ and the unique \emph{canonical}\footnote{See Appendix~\ref{UnitType} for canonical terms.} term $\mathsf{\top : 1}$ respectively by $\llbracket \mathsf{1} \rrbracket_{\mathbb{G}} \colonequals \boldsymbol{1}$, $\llbracket \mathsf{0} \rrbracket_{\mathbb{G}} \colonequals \boldsymbol{0}$ and $\llbracket \mathsf{\top : 1} \rrbracket_{\mathbb{G}} \colonequals \top : \boldsymbol{1}$.

Yet another example is the game $N$ of natural numbers, which is the rooted tree (infinite in width)
\begin{diagram}
& & q & & \\
& \ldTo(2, 2) \ldTo(1, 2) & \dTo & \rdTo(1, 2) \ \dots & \\
0 & 1 & 2 & 3 & \dots
\end{diagram}
in which a play starts with O's question $q$ (`What is your number?') and ends with P's answer $n \in \mathbb{N}$ (`My number is $n$!'), where $n$ points to $q$ (though this pointer is omitted in the diagram). 
Henceforth, we usually skip drawing arrows that represent edges of a game. 
A strategy $\underline{m} \colonequals \{ \boldsymbol{\epsilon}, qm \}$ on $N$ for each $m \in \mathbb{N}$ can be represented by the map $q \mapsto m$ equipped with a pointer from $m$ to $q$ (though it is the only choice).
In the following, pointers of most strategies are obvious, and thus we often omit them.
The game semantics $\llbracket \_ \rrbracket_{\mathbb{G}}$ interprets natural number type $\mathsf{N}$ and the numeral $\mathsf{\underline{m} : N}$ respectively by $\llbracket \mathsf{N} \rrbracket_{\mathbb{G}} \colonequals N$ and $\llbracket \mathsf{\underline{m} : N} \rrbracket_{\mathbb{G}} \colonequals \underline{m} : N$.

There is a construction $\otimes$ on games, called \emph{tensor}.
Conceptually, a position $\boldsymbol{s}$ in the tensor $A \otimes B$ of games $A$ and $B$ is an interleaving mixture of a position $\boldsymbol{t}$ in $A$ and a position $\boldsymbol{u}$ in $B$ developed `in parallel without communication.' 
More specifically, $\boldsymbol{t}$ (resp. $\boldsymbol{u}$) is the subsequence of $\boldsymbol{s}$ consisting of moves in $A$ (resp. $B$) such that the change of $AB$-parity (i.e., the switch between $\boldsymbol{t}$ and $\boldsymbol{u}$) in $\boldsymbol{s}$ must be made by O. 
The pointer of $\boldsymbol{s}$ is inherited from those of $\boldsymbol{t}$ and $\boldsymbol{u}$ in the obvious manner; this point holds also for other constructions on games and strategies in the rest of the introduction, and therefore we shall not mention it again.
For instance, a maximal position in the tensor $N \otimes N$ is either of the following forms:\footnote{The diagrams are only to make it explicit which component game each move belongs to; the two positions are just finite sequences $q^{[0]} n^{[0]} q^{[1]} m^{[1]}$ and $q^{[1]} m^{[1]} q^{[0]} n^{[0]}$ equipped with the pointers $q^{[i]} \leftarrow n^{[i]}$ and $q^{[i]} \leftarrow m^{[i]}$ ($i = 0, 1$).}
\begin{mathpar}
\begin{tabular}{ccc}
$N^{[0]}$ & $\otimes$ & $N^{[1]}$ \\ \cline{1-3}
\tikzmark{ctensor1} $q^{[0]}$&& \\
\tikzmark{dtensor1} $n^{[0]}$&& \\
&&\tikzmark{ctensor3} $q^{[1]}$ \\
&&\tikzmark{dtensor3} $m^{[1]}$
\end{tabular}
\begin{tikzpicture}[overlay, remember picture, yshift=.25\baselineskip]
\draw [->] ({pic cs:dtensor1}) [bend left] to ({pic cs:ctensor1});
%\draw [->] ({pic cs:dtensor2}) [bend left] to ({pic cs:ctensor2});
\draw [->] ({pic cs:dtensor3}) [bend left] to ({pic cs:ctensor3});
%\draw [->] ({pic cs:dtensor4}) [bend left] to ({pic cs:ctensor4});
\end{tikzpicture}
\and
\begin{tabular}{ccc}
$N^{[0]}$ & $\otimes$ & $N^{[1]}$ \\ \cline{1-3}
&&\tikzmark{ctensor2} $q^{[1]}$ \\
&&\tikzmark{dtensor2} $m^{[1]}$ \\
\tikzmark{ctensor4} $q^{[0]}$&& \\
\tikzmark{dtensor4} $n^{[0]}$&&
\end{tabular}
\begin{tikzpicture}[overlay, remember picture, yshift=.25\baselineskip]
%\draw [->] ({pic cs:dtensor1}) [bend left] to ({pic cs:ctensor1});
\draw [->] ({pic cs:dtensor2}) [bend left] to ({pic cs:ctensor2});
%\draw [->] ({pic cs:dtensor3}) [bend left] to ({pic cs:ctensor3});
\draw [->] ({pic cs:dtensor4}) [bend left] to ({pic cs:ctensor4});
\end{tikzpicture}
\end{mathpar}
where $n, m \in \mathbb{N}$, and $(\_)^{[i]}$ ($i = 0, 1$) are (arbitrary, unspecified) `tags' to distinguish the two copies of $N$ (but we often omit them if it does not bring confusion), and the arrows represent pointers (n.b., they are not edges of the games).
The corresponding \emph{tensor} $a \otimes b$ of strategies $a : A$ and $b : B$ is the strategy on $A \otimes B$ that plays by $a$ if the last O-move is in A, and by $b$ otherwise. 
For instance, the above two plays in $N \otimes N$ can be seen as the ones where P plays by the tensor $\underline{n} \otimes \underline{m} : N \otimes N$.

Next, a fundamental construction $\oc $ on games, called \emph{exponential}, is basically the countably infinite iteration of tensor $\otimes$, i.e., $\oc A \colonequals A \otimes A \otimes A \cdots$ for each game $A$, where the `tag' for each copy of $A$ is given by $(\_, i)$ such that $i \in \mathbb{N}$.

Another central construction $\multimap$, called \emph{linear implication}, captures the notion of \emph{linear functions}, i.e., functions that consume exactly one input to produce an output. 
A position in the linear implication $A \multimap B$ from $A$ to $B$ is almost like a position in the tensor $A \otimes B$ except the following three points:
\begin{enumerate}

\item The first occurrence in the position must be a move in $B$; 

\item A change of $AB$-parity in the position must be made by P; 

\item Each occurrence of an initial move, called an \emph{initial occurrence}, in $A$ points to an initial occurrence in $B$.

\end{enumerate}
Thus, a typical position in the game $N \multimap N$ is the following:
\begin{mathpar}
\begin{tabular}{ccc}
$N^{[0]}$ & $\multimap$ & $N^{[1]}$ \\ \hline 
&&\tikzmark{cmultimap1000} $q^{[1]}$ \tikzmark{cmultimap3} \\
\tikzmark{cmultimap2} $q^{[0]}$ \tikzmark{dmultimap1000}&& \\
\tikzmark{dmultimap2} $n^{[0]}$&& \\
&&$m^{[1]}$ \tikzmark{dmultimap3}
\end{tabular}
\begin{tikzpicture}[overlay, remember picture, yshift=.25\baselineskip]
\draw [->] ({pic cs:dmultimap1000}) to ({pic cs:cmultimap1000});
\draw [->] ({pic cs:dmultimap2}) [bend left] to ({pic cs:cmultimap2});
\draw [->] ({pic cs:dmultimap3}) [bend right] to ({pic cs:cmultimap3});
\end{tikzpicture}
\end{mathpar}
where $n, m \in \mathbb{N}$, which can be read as follows:
\begin{enumerate}
\item O's question $q^{[1]}$ for an output (`What is your output?');
\item P's question $q^{[0]}$ for an input (`Wait, what is your input?');
\item O's answer, say, $n^{[0]}$, to $q^{[0]}$ (`OK, here is an input $n$.');
\item P's answer, say, $m^{[1]}$, to $q^{[1]}$ (`Alright, the output is then $m$.').
\end{enumerate}
This play corresponds to any linear function that maps $n \mapsto m$.
The strategy $\mathrm{succ}$ (resp. $\mathrm{double}$) on $N \multimap N$ for the successor (resp. doubling) function is represented by the map $q^{[1]} \mapsto q^{[0]}, q^{[1]}q^{[0]}n^{[0]} \mapsto n+1^{[1]}$ (resp. $q^{[1]} \mapsto q^{[0]}, q^{[1]}q^{[0]}n^{[0]} \mapsto 2n^{[1]}$). 
\begin{mathpar}
\begin{tabular}{ccc}
$N^{[0]}$ & $\stackrel{\mathrm{succ}}{\multimap}$ & $N^{[1]}$ \\ \cline{1-3} 
&&\tikzmark{csucc31} $q^{[1]}$ \tikzmark{csucc33} \\
\tikzmark{csucc32} $q^{[0]}$ \tikzmark{dsucc31}&& \\
\tikzmark{dsucc32} $m^{[0]}$ && \\
&&$m+1^{[1]}$ \tikzmark{dsucc33}
\end{tabular}
\begin{tikzpicture}[overlay, remember picture, yshift=.25\baselineskip]
\draw [->] ({pic cs:dsucc31}) to ({pic cs:csucc31});
\draw [->] ({pic cs:dsucc32}) [bend left] to ({pic cs:csucc32});
\draw [->] ({pic cs:dsucc33}) [bend right] to ({pic cs:csucc33});
%\draw [->] ({pic cs:ddouble31}) to ({pic cs:cdouble31});
%\draw [->] ({pic cs:ddouble32}) [bend left] to ({pic cs:cdouble32});
%\draw [->] ({pic cs:ddouble33}) [bend right] to ({pic cs:cdouble33});
\end{tikzpicture}
\and
\begin{tabular}{ccc}
$N^{[2]}$ & $\stackrel{\mathrm{double}}{\multimap}$ & $N^{[3]}$ \\ \cline{1-3} 
&&\tikzmark{cdouble31} $q^{[3]}$ \tikzmark{cdouble33} \\
\tikzmark{cdouble32} $q^{[2]}$ \tikzmark{ddouble31} && \\
\tikzmark{ddouble32} $n^{[2]}$&& \\
&&$2 n^{[3]}$ \tikzmark{ddouble33} 
\end{tabular}
\begin{tikzpicture}[overlay, remember picture, yshift=.25\baselineskip]
%\draw [->] ({pic cs:dsucc31}) to ({pic cs:csucc31});
%\draw [->] ({pic cs:dsucc32}) [bend left] to ({pic cs:csucc32});
%\draw [->] ({pic cs:dsucc33}) [bend right] to ({pic cs:csucc33});
\draw [->] ({pic cs:ddouble31}) to ({pic cs:cdouble31});
\draw [->] ({pic cs:ddouble32}) [bend left] to ({pic cs:cdouble32});
\draw [->] ({pic cs:ddouble33}) [bend right] to ({pic cs:cdouble33});
\end{tikzpicture}
\end{mathpar}

Let us remark that the following play, which corresponds to a \emph{constant} linear function that maps $x \mapsto m$ for all $x \in \mathbb{N}$, is also possible in $N^{[0]} \multimap N^{[1]}$: $\boldsymbol{\epsilon}, q^{[1]}, q^{[1]}m^{[1]}$.
Hence, strictly speaking, $A \multimap B$ is the game of \emph{affine functions} from $A$ to $B$, but let us stick to the standard convention to call $\multimap$ linear implication. 

Now, let us recall \emph{composition} on strategies, which is given by \emph{internal communication plus hiding}.
For instance, the composition $\mathrm{succ} ; \mathrm{double} : N \multimap N$ of strategies $\mathrm{succ} : N \multimap N$ and $\mathrm{double} : N \multimap N$, is given as follows. 
First, by \emph{internal communication}, we mean that P plays the role of O as well in the intermediate component games $N^{[1]}$ and $N^{[2]}$ by `copy-catting' her last moves, resulting in the following play:
\begin{mathpar}
\begin{tabular}{ccccccc}
$N^{[0]}$ & $\stackrel{\mathrm{succ}}{\multimap}$ & $N^{[1]}$ && $N^{[2]}$ & $\stackrel{\mathrm{double}}{\multimap}$ & $N^{[3]}$ \\ \hline
&&&&&& \tikzmark{cPC2} $q^{[3]}$ \tikzmark{cPC1} \\
&&&&\tikzmark{cPC3} \fbox{$q^{[2]}$} \tikzmark{dPC2} && \\
&& \tikzmark{cPC4} \fbox{$q^{[1]}$} \tikzmark{dPC3} &&&& \\
\tikzmark{cPC5} $q^{[0]}$ \tikzmark{dPC4} &&&&&& \\
\tikzmark{dPC5} $n^{[0]}$ &&&&&& \\
&&\tikzmark{dPC6} \fbox{$n+1^{[1]}$} &&&& \\
&&&&\tikzmark{dPC7} \fbox{$n+1^{[2]}$} && \\
&&&&&&$2 (n+1)^{[3]}$ \tikzmark{dPC1}
\end{tabular}
\begin{tikzpicture}[overlay, remember picture, yshift=.25\baselineskip]
\draw [->] ({pic cs:dPC1}) [bend right] to ({pic cs:cPC1});
\draw [->] ({pic cs:dPC2}) to ({pic cs:cPC2});
\draw [->] ({pic cs:dPC3}) to ({pic cs:cPC3});
\draw [->] ({pic cs:dPC4}) to ({pic cs:cPC4});
\draw [->] ({pic cs:dPC5}) [bend left] to ({pic cs:cPC5});
\draw [->] ({pic cs:dPC6}) [bend left] to ({pic cs:cPC4});
\draw [->] ({pic cs:dPC7}) [bend left] to ({pic cs:cPC3});
\end{tikzpicture}
\end{mathpar}
where each move for internal communication is marked by a square box just for clarity, and the pointer from $q^{[1]}$ to $q^{[2]}$ is added because the move $q^{[1]}$ is no longer initial \cite{yamada2019game,yamada2016dynamic}. 
Importantly, it is assumed that O plays on the game $N^{[0]} \multimap N^{[3]}$, `seeing' only moves in $N^{[0]}$ or $N^{[3]}$.
Thus, the resulting play is to be read as follows:
\begin{enumerate}

\item O's question $q^{[3]}$ for an output in $N^{[0]} \multimap N^{[3]}$ (`What is your output?');

\item P's question \fbox{$q^{[2]}$} by $\mathrm{double}$ for an input in $N^{[2]} \multimap N^{[3]}$ (`What is an input?');

\item \fbox{$q^{[2]}$} triggers the question \fbox{$q^{[1]}$} for an output in $N^{[0]} \multimap N^{[1]}$ (`What is an output?');

\item P's question $q^{[0]}$ by $\mathrm{succ}$ for an input in $N^{[0]} \multimap N^{[1]}$ (`What is an input?');

\item O's answer, say, $n^{[0]}$, to $q^{[0]}$ in $N^{[0]} \multimap N^{[3]}$ (`Here is an input $n$.');

\item P's answer \fbox{$n+1^{[1]}$} to \fbox{$q^{[1]}$} by $\mathrm{succ}$ in $N^{[0]} \multimap N^{[1]}$ (`The output is $n+1$.');

\item \fbox{$n+1^{[1]}$} triggers the answer \fbox{$n+1^{[2]}$} to \fbox{$q^{[2]}$} in $N^{[2]} \multimap N^{[3]}$ (`Here is the input $n+1$.');

\item P's answer $2 (n+1)^{[3]}$ to $q^{[3]}$ by $\mathrm{double}$ in $N^{[2]} \multimap N^{[3]}$ (`The output is then $2 (n+1)$!').

\end{enumerate}
Next, \emph{hiding} means to hide or delete every move with a square box from the play, resulting in the strategy for the (linear) function $n \mapsto 2 (n + 1)$ as expected:
\begin{mathpar}
\begin{tabular}{ccc}
$N^{[0]}$ & $\stackrel{\mathrm{succ} ; \mathrm{double}}{\multimap}$ & $N^{[3]}$ \\ \hline
&&\tikzmark{cPCH1} $q^{[3]}$ \tikzmark{cPCH3} \\
\tikzmark{cPCH2} $q^{[0]}$ \tikzmark{dPCH1}&& \\
\tikzmark{dPCH2} $n^{[0]}$&& \\
&&$2 (n + 1)^{[3]}$ \tikzmark{dPCH3} 
\end{tabular}
\begin{tikzpicture}[overlay, remember picture, yshift=.25\baselineskip]
\draw [->] ({pic cs:dPCH1}) to ({pic cs:cPCH1});
\draw [->] ({pic cs:dPCH2}) [bend left] to ({pic cs:cPCH2});
\draw [->] ({pic cs:dPCH3}) [bend right] to ({pic cs:cPCH3});
\end{tikzpicture}
\end{mathpar}
Note that hiding makes the resulting play a valid one in the game $N^{[0]} \multimap N^{[3]}$.

\if0
Let us remark here that the following play, which corresponds to a \emph{constant} linear function that maps $x \mapsto m$ for all $x \in \mathbb{N}$, is also possible: $\boldsymbol{\epsilon}, q^{[1]}, q^{[1]}m^{[1]}$.
Thus, strictly speaking, $A \multimap B$ is the game of \emph{affine functions} from $A$ to $B$, but we follow the standard convention to call $\multimap$ linear implication. 
\fi

Another construction $\&$ on games, called \emph{product}, is similar to yet simpler than tensor $\otimes$: A position $\boldsymbol{s}$ in the product $A \mathbin{\&} B$ of $A$ and $B$ is a position $\boldsymbol{t}^{[0]}$ in $A^{[0]}$ or a position $\boldsymbol{u}^{[1]}$ in $B^{[1]}$.
I.e., the set of all positions in $A \mathbin{\&} B$ is the disjoint union of that in $A$ and that in $B$.
It forms product in the category $\mathbb{G}$. 
The corresponding \emph{pairing} $\langle f, g \rangle$ of strategies $f : C \multimap A$ and $g : C \multimap B$, where $C$ is an arbitrary game, is the strategy on $C \multimap A \mathbin{\&} B$ that plays by $f$ if O initiates a play by a move in $A$, and by $g$ otherwise.
The pairing $\langle a, b \rangle : A \mathbin{\&} B$ of strategies $a : A$ and $b : B$ is given by regarding $a$ and $b$ trivially as the ones on $\boldsymbol{1} \multimap A$ and $\boldsymbol{1} \multimap B$, respectively. 
%Clearly, we may generalize product and pairing to $n$-ary ones for any $n \in \mathbb{N}$. 

These four constructions $\otimes$, $\oc $, $\multimap$ and $\&$ come from the corresponding ones in \emph{linear logic} \cite{girard1987linear,abramsky1994games}. 
Thus, in particular, the usual \emph{implication} (or the \emph{function space}) $\Rightarrow$ is recovered by \emph{Girard translation} \cite{girard1987linear}: $A \Rightarrow B \colonequals \oc A \multimap B$.

Girard translation makes it explicit that some functions refer to an input \emph{more than once} to produce an output, i.e., there are non-linear functions.
For instance, consider the game $(N \Rightarrow N) \Rightarrow N$, in which the following position is possible: 
\begin{mathpar}
\begin{tabular}{ccccc}
$\oc (\oc N$ & $\multimap$ & $N)$ & $\multimap$ & $N$ \\ \hline 
&&&& \tikzmark{chigher1} $q$ \tikzmark{chigher9} \\
&&\tikzmark{chigher2} $(q, j)$ \tikzmark{dhigher1} && \\
\tikzmark{chigher3} $((q, i), j)$ \tikzmark{dhigher2} && && \\
\tikzmark{dhigher3} $((n, i), j)$&& && \\
&& \tikzmark{dhigher4} $(m, j)$ && \\
&&\tikzmark{chigher8} $(q, j')$ \tikzmark{dhigher5} && \\
\tikzmark{chigher7} $((q, i'), j')$ \tikzmark{dhigher6} && && \\
\tikzmark{dhigher7} $((n', i'), j')$&& && \\
&& \tikzmark{dhigher8} $(m', j')$ && \\
&&&& $l$ \tikzmark{dhigher9}
\end{tabular}
\begin{tikzpicture}[overlay, remember picture, yshift=.25\baselineskip]
\draw [->] ({pic cs:dhigher1}) to ({pic cs:chigher1});
\draw [->] ({pic cs:dhigher2}) to ({pic cs:chigher2});
\draw [->] ({pic cs:dhigher3}) [bend left] to ({pic cs:chigher3});
\draw [->] ({pic cs:dhigher4}) [bend left] to ({pic cs:chigher2});
\draw [->] ({pic cs:dhigher5}) to ({pic cs:chigher1});
\draw [->] ({pic cs:dhigher6}) to ({pic cs:chigher8});
\draw [->] ({pic cs:dhigher7}) [bend left] to ({pic cs:chigher7});
\draw [->] ({pic cs:dhigher8}) [bend left] to ({pic cs:chigher8});
\draw [->] ({pic cs:dhigher9}) [bend right] to ({pic cs:chigher9});
\end{tikzpicture}
\end{mathpar}
where $n, n', m, m', l, i, i', j, j' \in \mathbb{N}$, $i \neq i'$ and $j \neq j'$, which can be read as follows:
\begin{enumerate}
\item O's question $q$ for an output (`What is your output?');
\item P's question $(q, j)$ for an input function (`Wait, your first output please!');
\item O's question $((q, i), j)$ for an input (`What is your first input then?');
\item P's answer, say, $((n, i), j)$, to $((q, i), j)$ (`Here is my first input $n$.');
\item O's answer, say, $(m, j)$, to $(q, j)$ (`OK, then here is my first output $m$.');
\item P's question $(q, j')$ for an input function (`Your second output please!');
\item O's question $((q, i'), j')$ for an input (`What is your second input then?');
\item P's answer, say, $((n', i'), j')$, to $((q, i'), j')$ (`Here is my second input $n'$.');
\item O's answer, say, $(m', j')$, to $(q, j')$ (`OK, then here is my second output $m'$.');
\item P's answer, say, $l$, to $q$ (`Alright, my output is then $l$.').
\end{enumerate}
In this play, P asks O \emph{twice} about an input strategy $N \Rightarrow N$.
Clearly, such a play is not possible on the linear implication $(N \multimap N) \multimap N$ or $(N \Rightarrow N) \multimap N$.
The strategy $\mathrm{pazo} : (N \Rightarrow N) \Rightarrow N$ that computes the sum $f(0)+f(1)$ for a given function $f : \mathbb{N} \rightarrow \mathbb{N}$, for instance, plays as follows: 
\begin{mathpar}
\begin{tabular}{ccccc}
$\oc (\oc N$ & $\multimap$ & $N)$ & $\stackrel{\mathrm{pazo}}{\multimap}$ & $N$ \\ \hline 
&&&& \tikzmark{chigher11} $q$ \tikzmark{chigher19} \\
&&\tikzmark{chigher12} $(q, 0)$ \tikzmark{dhigher11} && \\
\tikzmark{chigher13} $((q, i), 0)$ \tikzmark{dhigher12} && && \\
\tikzmark{dhigher13} $((0, i), 0)$ && && \\
&& \tikzmark{dhigher14} $(m, 0)$&& \\
&&\tikzmark{chigher18} $(q, 1)$ \tikzmark{dhigher15} && \\
\tikzmark{chigher17} $((q, i'), 1)$ \tikzmark{dhigher16} && && \\
\tikzmark{dhigher17} $((1, i'), 1)$&& && \\
&& \tikzmark{dhigher18} $(m', 1)$ && \\
&&&& $m+m'$ \tikzmark{dhigher19}
\end{tabular}
\begin{tikzpicture}[overlay, remember picture, yshift=.25\baselineskip]
\draw [->] ({pic cs:dhigher11}) to ({pic cs:chigher11});
\draw [->] ({pic cs:dhigher12}) to ({pic cs:chigher12});
\draw [->] ({pic cs:dhigher13}) [bend left] to ({pic cs:chigher13});
\draw [->] ({pic cs:dhigher14}) [bend left] to ({pic cs:chigher12});
\draw [->] ({pic cs:dhigher15}) to ({pic cs:chigher11});
\draw [->] ({pic cs:dhigher16}) to ({pic cs:chigher18});
\draw [->] ({pic cs:dhigher17}) [bend left] to ({pic cs:chigher17});
\draw [->] ({pic cs:dhigher18}) [bend left] to ({pic cs:chigher18});
\draw [->] ({pic cs:dhigher19}) [bend right] to ({pic cs:chigher19});
\end{tikzpicture}
\end{mathpar}
where $j = 0$ and $j' = 1$ are arbitrarily chosen, i.e., any $j, j' \in \mathbb{N}$ with $j \neq j'$ work.
Clearly, this computation is impossible on $(N \multimap N) \multimap N$ or $(N \Rightarrow N) \multimap N$.

Finally, let us recall that any strategy $f$ on the implication $\oc A \multimap B$ induces its \emph{promotion} $f^\dagger : \oc A \multimap \oc B$ such that if $f$ plays, for instance, as 
\begin{mathpar}
\begin{tabular}{ccc}
$\oc A$ & $\stackrel{f}{\multimap}$ & $B$ \\ \hline 
&&\tikzmark{cpromotion1} $b_1$ \tikzmark{cpromotion3} \\
\tikzmark{cpromotion2} $(a_1, i)$ \tikzmark{dpromotion1}&& \\
\tikzmark{dpromotion2} $(a_2, i)$ && \\
&&$b_2$ \tikzmark{dpromotion3}
\end{tabular}
\begin{tikzpicture}[overlay, remember picture, yshift=.25\baselineskip]
\draw [->] ({pic cs:dpromotion1}) to ({pic cs:cpromotion1});
\draw [->] ({pic cs:dpromotion2}) [bend left] to ({pic cs:cpromotion2});
\draw [->] ({pic cs:dpromotion3}) [bend right] to ({pic cs:cpromotion3});
\end{tikzpicture}
\end{mathpar}
then, for any $j \in \mathbb{N}$, the promotion $f^\dagger$ plays as
\begin{mathpar}
\begin{tabular}{ccc}
$\oc A$ & $\stackrel{f^\dagger}{\multimap}$ & $\oc B$ \\ \hline 
&&\tikzmark{cpromotion11} $(b_1, j)$ \tikzmark{cpromotion13} \\
\tikzmark{cpromotion12} $(a_1, \langle i, j \rangle)$ \tikzmark{dpromotion11} && \\
\tikzmark{dpromotion12} $(a_2, \langle i, j \rangle)$ && \\
&& $(b_2, j)$ \tikzmark{dpromotion13} \\
&& \tikzmark{cpromotion14} $(b_1, j')$ \tikzmark{cpromotion16} \\
\tikzmark{cpromotion15} $(a_1, \langle i, j' \rangle)$ \tikzmark{dpromotion14} && \\
\tikzmark{dpromotion15} $(a_2, \langle i, j' \rangle)$ && \\
&& $(b_2, j')$ \tikzmark{dpromotion16}
\end{tabular}
\begin{tikzpicture}[overlay, remember picture, yshift=.25\baselineskip]
\draw [->] ({pic cs:dpromotion11}) to ({pic cs:cpromotion11});
\draw [->] ({pic cs:dpromotion12}) [bend left] to ({pic cs:cpromotion12});
\draw [->] ({pic cs:dpromotion13}) [bend right] to ({pic cs:cpromotion13});
\draw [->] ({pic cs:dpromotion14}) to ({pic cs:cpromotion14});
\draw [->] ({pic cs:dpromotion15}) [bend left] to ({pic cs:cpromotion15});
\draw [->] ({pic cs:dpromotion16}) [bend right] to ({pic cs:cpromotion16});
\end{tikzpicture}
\end{mathpar}
where $\langle \_, \_ \rangle : \mathbb{N} \times \mathbb{N} \stackrel{\sim}{\to} \mathbb{N}$ is an arbitrarily fixed bijection, i.e., $f^\dagger$ plays as $f$ for each \emph{thread} of a position in $\oc A \multimap \oc B$ that corresponds to a position in $\oc A \multimap B$. 
The promotion $b^\dagger : \oc B$ of a strategy $b : B$ is given by regarding $b$ trivially as $b : \oc \boldsymbol{1} \multimap B$.

As already indicated, the category $\mathbb{G}$, whose objects are games and morphisms $A \rightarrow B$ are strategies on the implication $A \Rightarrow B$, is \emph{cartesian closed}, where its terminal object, products and exponential objects are given by the unit game $\boldsymbol{1}$, product $\&$ and implication $\Rightarrow$, respectively. 
Consequently, following the standard \emph{categorical} semantics of simple type theories \cite{jacobs1999categorical,lambek1988introduction}, the game semantics $\llbracket \_ \rrbracket_{\mathbb{G}}$ interprets unit, product and function types by $\boldsymbol{1}$, $\&$ and $\Rightarrow$, respectively, and a term $\mathsf{x_1 : A_1, x_2 : A_2, \dots, x_n : A_n \vdash b : B}$ by a strategy $\llbracket \mathsf{b} \rrbracket_{\mathbb{G}}$ on the implication $(\llbracket \mathsf{A_1} \rrbracket_{\mathbb{G}} \mathbin{\&} \llbracket \mathsf{A_2} \rrbracket_{\mathbb{G}} \mathbin{\&} \cdots \mathbin{\&} \llbracket \mathsf{A_n} \rrbracket_{\mathbb{G}}) \Rightarrow \llbracket \mathsf{B} \rrbracket_{\mathbb{G}}$, where we regard $\boldsymbol{1}$ as the $0$-ary product.

Last but not least, there is a constraint on strategies, called \emph{winning}, such that winning strategies correspond to \emph{proofs} in logic.
We leave the details of winning to Section~\ref{PartI} and here only recall one of its axioms: \emph{totality}.
A strategy $g : G$ is \emph{total} if there is (unique) $\boldsymbol{s}mn \in g$ for each odd-length position $\boldsymbol{s}m$ in $G$.
Hence, it is very similar to totality of partial maps. 
Conceptually, winning strategies must be total since intuitively a proof should not get `stuck.' 
For example, the natural number game $N$ has total strategies $\underline{n}$ for all $n \in \mathbb{N}$ and a non-total one $\bot \colonequals \{ \boldsymbol{\epsilon} \}$, and the unique strategy $\top$ on the unit game $\boldsymbol{1}$ is total.
Crucially, the unique strategy $\bot$ on the empty game $\boldsymbol{0}$ is not total (n.b., note the difference between $\boldsymbol{1}$ and $\boldsymbol{0}$), where recall that empty-type $\mathsf{0}$ is the type of \emph{falsity}, and hence there is no proof of $\mathsf{0}$. 
In this way, game semantics can exclude strategies that do not compute as proofs.

\subsection{Game semantics of intensional Martin-L\"{o}f type theory}
\label{IntroPartI}
Game semantics, including the one \`{a} la McCusker just reviewed, was applicable only to simple type theories (and \emph{polymorphic} ones \cite{abramsky1997semantics,abramsky2005game,hughes2000hypergame}), not to dependent ones, for a technical challenge until recently. 
Nevertheless, the present author has established game semantics of MLTT \cite{yamada2016game} based on the one \`{a} la McCusker; we shall take advantage of this game semantics for the present work.
There is another games-based denotational model of MLTT given by Abramsky et al.~\cite{abramsky2015games}, but our consistency proof seems unavailable for it; we shall come back to this point later.  

To convey our main idea, let us sketch how the game semantics \cite{yamada2016game} models sigma- and pi-types.
For simplicity, we consider a dependent type $\mathsf{x : C \vdash D \ type}$ with one variable $\mathsf{x}$.
%We are now ready to explain how the game semantics \cite{yamada2016game} models sigma- and pi-types. 
For convenience, we also write $G$ for the set of all positions in a game $G$.
The game semantics models $\mathsf{x : C \vdash D \ type}$ as a certain family $D = (D(x))_{x : C}$ of games $D(x)$ indexed by strategies $x$ on the game $C$ that models the simple type $\mathsf{C}$. %where games and strategies refer to the variants given below.

Then, in light of product $\&$ on games, which models a particular kind of sigma-types, viz., product types, it seems a natural idea to model the sigma-type $\mathsf{\Sigma_{x : C} D(x)}$ by a game $\Sigma(C, D)$ such that $\Sigma(C, D) \subseteq C \uplus \bigcup_{x : C}D(x)$, where $\uplus$ denotes disjoint union, and strategies on $\Sigma(C, D)$ are precisely the pairings $\langle c, d \rangle$ of strategies $c : C$ and $d : D(c)$.
However, this idea does not work due to the following two problems: 
\begin{enumerate}

\item Each game $G$, by definition, determines the set of all strategies on $G$;

\item It is impossible for P, when playing on such a game $\Sigma(C, D)$ if any, to \emph{fix} a strategy $c : C$, let alone a game $D(c)$ on the RHS, at the beginning of a play. %since O may (partially) control a choice of a game $D(x)$ on the RHS ($x : C$). 

\end{enumerate}
As an example of the first problem, consider a dependent type $\mathsf{x : N \vdash N_b \ type}$ such that canonical terms of the simple type $\mathsf{N_b(\underline{k})}$ for each $k \in \mathbb{N}$ are the numerals $\mathsf{\underline{n}}$ such that $n \leqslant k$. 
However, there is no game $G$ such that $G \subseteq N \uplus N$ and $\langle \underline{k}, \underline{n} \rangle : G \Leftrightarrow n \leqslant k$ for all $k, n \in \mathbb{N}$ since if such a game $G$ existed then $\langle \underline{0}, \underline{0} \rangle, \langle \underline{1}, \underline{1} \rangle : G$, which implies $\langle \underline{0}, \underline{1} \rangle : G$ by the definition of strategies on a game, a contradiction. 
Hence, no game can properly model the sigma-type $\mathsf{\Sigma_{x:N}N_b}$.
%(In contrast, it is possible to model pi-types by games; e.g., see \cite{abramsky2015games}.)

Let us also give an example of the second problem. 
Let $\mathsf{x : N \vdash List_N \ type}$ be the dependent type such that canonical terms of the simple type $\mathsf{List_N(\underline{k})}$ for each $k \in \mathbb{N}$ are $k$-lists of numerals, and assume that we interpret $\mathsf{List_N}$ as the family $\mathrm{List}_N$ of games such that $\mathrm{List}_N(\bot) \colonequals \mathrm{List}_N(\underline{0}) \colonequals \boldsymbol{1}$ and $\mathrm{List}_N(\underline{n+1}) \colonequals \mathrm{List}_N(\underline{n}) \otimes N$ for each $n \in \mathbb{N}$.
If there were a game that models the sigma-type $\mathsf{\Sigma(N, List_N)}$ then for all $k, n_1, n_2, \dots, n_k \in \mathbb{N}$ the pairings $\langle \underline{k}, \underline{n_1} \otimes \underline{n_2} \otimes \cdots \otimes \underline{n_k} \rangle$ would be total strategies on the game; however, there is no such a game since in this case O may determine $x \in \mathbb{N}$, by his first move in each play, for the $x$-ary tensor $\otimes$ of $N$ on the RHS.
Note that totality matters here since MLTT is a formal system for (intuitionistic) \emph{logic}, and so strategies in game semantics of MLTT must be all total \cite{yamada2016game}. 

To solve these two problems, the previous work \cite{yamada2016game} reformulates strategies as \emph{deterministic games}, i.e., games in which P can play in only one way, called \emph{predicative (p-) strategies}, and then generalizes games to certain sets of p-strategies, called \emph{predicative (p-) games}, in which P first \emph{declares} a p-strategy in her mind before a play with O begins, and then O and P play in the declared p-strategy. 
We say that a p-strategy is \emph{on} a p-game if it is an element of the p-game. 

%As explained below, the reformulation addresses the first problem. and the declaration the second. 
The point of the reformulation of strategies as p-strategies is that p-strategies are defined independently of p-games essentially by \emph{containing odd-length positions} as well, which in turn enables us to \emph{define p-games in terms of p-strategies}. 
Note that in contrast we cannot define a strategy without specifying its underlying game since a strategy does not define its odd-length positions; also, a game determines the set of all strategies on the game just by definition.
Note also that games and strategies are transformed trivially into p-games and p-strategies, respectively: A strategy $\sigma : G$ is mapped to the p-strategy $P(\sigma)$ whose odd-length positions are those in $G$, and the game $G$ to the p-game $P(G)$ whose elements are the p-strategies $P(\sigma)$. 
Hence, we regard games and strategies as p-games and p-strategies, respectively, as well.
It is also easy to lift constructions on games and strategies to those on p-games and p-strategies. 
Then, the dependence of strategies on games explains why there is no game that models the sigma-type $\mathsf{\Sigma_{x:N}N_b(x)}$, but the p-game $\Sigma(N, N_b) \colonequals \{ \langle \bot, \top \rangle \} \cup \bigcup_{k \in \mathbb{N}} N_k$, where $N_k \colonequals \{ \langle \underline{k}, \underline{n} \rangle \mid k, n \in \mathbb{N}, n \leqslant k \, \}$, \emph{does} (n.b., we include the pairing $\langle \bot, \top \rangle$ into $\Sigma(N, N_b)$ for the \emph{downward completeness} axiom on p-games; see Definition~\ref{DefPredicativeGames}). 
More generally, p-games and p-strategies solve the first problem.
For instance, plays in $\Sigma(N, N_b)$ by the p-strategy $\langle \underline{7}, \underline{3} \rangle \in \Sigma(N, N_b)$ look like
\begin{mathpar}
\begin{tabular}{ccc}
$\Sigma(N,$ & & $N_b)$ \\ \cline{1-3}
& $q_{\Sigma(N, N_b)}$ & \\
& $\langle \underline{7}, \underline{3} \rangle$ & \\
\tikzmark{csigma1} $q$&& \\
\tikzmark{dsigma1} $7$&&
\end{tabular}
\begin{tikzpicture}[overlay, remember picture, yshift=.25\baselineskip]
\draw [->] ({pic cs:dsigma1}) [bend left] to ({pic cs:csigma1});
%\draw [->] ({pic cs:dsigma2}) [bend left] to ({pic cs:csigma2});
\end{tikzpicture}
\and
\begin{tabular}{ccc}
$\Sigma(N,$ & & $N_b)$ \\ \cline{1-3}
&$q_{\Sigma(N, N_b)}$& \\
&$\langle \underline{7}, \underline{3} \rangle$& \\
&&\tikzmark{csigma2} $q$ \\
&&\tikzmark{dsigma2} $3$
\end{tabular}
\begin{tikzpicture}[overlay, remember picture, yshift=.25\baselineskip]
%\draw [->] ({pic cs:dsigma1}) [bend left] to ({pic cs:csigma1});
\draw [->] ({pic cs:dsigma2}) [bend left] to ({pic cs:csigma2});
\end{tikzpicture}
\end{mathpar}
where \emph{Judge (J)}\footnote{The game semantics of MLTT \cite{yamada2016game} introduces J for a conceptual reason; technically, however, J is not necessary, and we may replace J's or J-moves by O-moves.} first asks P the question $q_{\Sigma(N, N_b)}$ (`What is your p-strategy?') and P answers it by the p-strategy $\langle \underline{7}, \underline{3} \rangle \in \Sigma(N, N_b)$ (`I declare the p-strategy $\langle \underline{7}, \underline{3} \rangle$!'), and then a play in the declared p-strategy $\langle \underline{7}, \underline{3} \rangle$ between P and O follows.
Although the declaration of a p-strategy is not strictly necessary in this case, it is clear why P cannot play by the p-strategy $\langle \underline{0}, \underline{1} \rangle$ on the p-game $\Sigma(N, N_b)$: It is because $\langle \underline{0}, \underline{1} \rangle \notin \Sigma(N, N_b)$ by the definition of $\Sigma(N, N_b)$.
Let us emphasize that the definition of the p-game $\Sigma(N, N_b)$ is made possible by reversing the traditional relation between games and strategies: P-games are defined in terms of p-strategies. 
%Clearly, it is the reverse of the relation between games and strategies (i.e., strategies are traditionally based on games, but games are based on strategies in Part~I) that allows us to interpret the sigma-type $\Sigma(N, N_b)$ properly. 

Next, the declaration of p-strategies in p-games solves the second problem: The p-game $\Sigma(N, \mathrm{List}_N) \colonequals \{ \langle \bot, \top \rangle \} \cup \{ \langle \underline{k}, \underline{n_1} \otimes \underline{n_2} \otimes \cdots \otimes \underline{n_k} \rangle \mid k, n_1, n_2, \dots, n_k \in \mathbb{N} \, \}$ in fact models the sigma-type $\mathsf{\Sigma_{x:N} List_N(x)}$, where we again include the pairing $\langle \bot, \top \rangle$ into $\Sigma(N, \mathrm{List}_N)$ for the axiom on p-games.
Typical plays in $\Sigma(N, \mathrm{List}_N)$ look like
\begin{mathpar}
\begin{tabular}{ccc}
$\Sigma(N,$ & & $\mathrm{List}_N)$  \\ \cline{1-3}
& $q_{\Sigma(N, \mathrm{List}_N)}$ & \\
& $\langle \underline{2}, \underline{1} \otimes \underline{3} \rangle$ & \\
\tikzmark{csigma71} $q$&& \\
\tikzmark{dsigma71} $2$&& \\
&& \\
&& 
\end{tabular}
\begin{tikzpicture}[overlay, remember picture, yshift=.25\baselineskip]
\draw [->] ({pic cs:dsigma71}) [bend left] to ({pic cs:csigma71});
%\draw [->] ({pic cs:dsigma72}) [bend left] to ({pic cs:csigma72});
%\draw [->] ({pic cs:dsigma73}) [bend left] to ({pic cs:csigma73});
\end{tikzpicture}
\and
\begin{tabular}{ccccc}
$\Sigma(N,$ & & & &$\mathrm{List}_N)$  \\ \cline{1-5} 
$q_{\Sigma(N, \mathrm{List}_N)}$&& \\
$\langle \underline{2}, \underline{1} \otimes \underline{3} \rangle$&&&& \\
&&&&\tikzmark{csigma72} $q$ \\
&&&&\tikzmark{dsigma72} $3$ \\
&&\tikzmark{csigma73} $q$ \\
&&\tikzmark{dsigma73} $1$ 
\end{tabular}
\begin{tikzpicture}[overlay, remember picture, yshift=.25\baselineskip]
%\draw [->] ({pic cs:dsigma71}) [bend left] to ({pic cs:csigma71});
\draw [->] ({pic cs:dsigma72}) [bend left] to ({pic cs:csigma72});
\draw [->] ({pic cs:dsigma73}) [bend left] to ({pic cs:csigma73});
\end{tikzpicture}
\end{mathpar}
where the declaration of the p-strategy $\langle \underline{2}, \underline{1} \otimes \underline{3} \rangle \in \Sigma(N, \mathrm{List}_N)$ \emph{fixes} the underlying game on the RHS (n.b., a p-strategy is a game, and so it specifies its odd-length positions as well) so that O must play on the $2$-ary tensor $N \otimes N$ there.  
In this way, the p-strategy $\langle \underline{2}, \underline{1} \otimes \underline{3} \rangle$ is a \emph{total} one on the p-game $\Sigma(N, \mathrm{List}_N)$.
Intuitively, recalling that the generalization of simple type theories to dependent ones (or that of propositional logic to predicate logic) is made by introducing dependent types (or predicates) that refer to \emph{individuals}, i.e., terms (or proofs), we may understand the declaration of p-strategies in p-games as (part of) the game-semantic counterpart of the generalization because it enables P in p-games to refer to p-strategies. 

Moreover, the game semantics \cite{yamada2016game} defines the p-game $\Pi(C, D)$ that models the pi-type $\mathsf{\Pi_{x:C}D(x)}$ as follows: A p-strategy on $\Pi(C, D)$ is the union $\phi \colonequals \bigcup_{c \in C} \Phi_c$ on a family $\Phi = (\Phi_c)_{c \in C}$ of p-strategies $\Phi_c$ indexed by p-strategies $c \in C$ such that
\begin{enumerate}

\item For each $c \in C$, we have $\Phi_c \in c \Rightarrow d$ for some p-strategy $d \in D(c)$, where we regard the game $c \Rightarrow d$ as a p-game (as already remarked);

\item For any pair $c, \tilde{c} \in C$, the components $\Phi_{c}$ and $\Phi_{\tilde{c}}$ compute in the same manner at the same odd-length position $\boldsymbol{s}m \in \Phi_{c} \cap \Phi_{\tilde{c}}$ (i.e., $\Phi_{c}(\boldsymbol{s}m) \simeq \Phi_{\tilde{c}}(\boldsymbol{s}m)$),

\end{enumerate} 
where we write $x \downarrow$ if an element $x$ is defined, and $x \uparrow$ otherwise, and let $\simeq$ denote the \emph{Kleene equality}, i.e., $x \simeq y \stackrel{\mathrm{df. }}{\Leftrightarrow} (x \downarrow \wedge \ y \downarrow \wedge \ x = y) \vee (x \uparrow \wedge \ y \uparrow)$.
The first axiom on the p-strategy $\phi \in \Pi(C, D)$ is to ensure that $\phi$ respects the type dependency of the pi-type $\mathsf{\Pi_{x:C}D(x)}$, i.e., $\phi \circ c^\dagger \in D(c)$ for each $c \in C$, where the composition $\phi \circ c^\dagger$ is given by trivially identifying p-strategies on $C$ with those on $\boldsymbol{1} \Rightarrow C$.  
On the other hand, the second axiom is to guarantee determinacy of $\phi$ so that it is a well-defined p-strategy.
Moreover, the second axiom ensures that $\phi$ may inspect an input $c \in C$ only gradually by a finite interaction between $\phi$ and $c$ (just as in traditional game semantics), which makes the interpretation of pi-types very natural as semantics of computation (e.g., the extension of $\phi$ forms a \emph{continuous} function \cite{yamada2016game}).

To summarize, the game semantics \cite{yamada2016game} models MLTT by reformulating strategies as p-strategies and games as p-games, and then allowing P to \emph{control possible plays for O} by her `initial protocol' with J. 
In particular, the p-games $\Pi (\Sigma (X, Y), Z)$ and $\Pi (X, \Pi (Y, Z))$ for any (families of) p-games $X$, $Y$ and $Z$ such that the sigma and the pi constructions make sense coincide up to `tags' for disjoint union; hence, the game semantics trivially validates the $\xi$-rule.
Let us call p-games that model sigma- and pi-types \emph{sigma p-games} and \emph{pi p-games}, respectively. 
It is easy to see that sigma and pi p-games generalize product and function games, respectively. 

\begin{remark}
In the previous work \cite{yamada2016game}, strategies and p-strategies in the sense given above are rather called \emph{skeletons} and \emph{tree (t-) skeletons}, respectively, and strategies and p-strategies are slightly more abstract concepts, which follows \cite{abramsky2009game}.
We employ this abuse of the terminologies here for simplicity, but we correct it in Section~\ref{PartI}.
\end{remark}

\subsection{Game semantics is not directly applicable to formal Church's thesis}
Let us next apply the game semantics of MLTT \cite{yamada2016game} to CT (\ref{CTinMLTT}).
Recall that it models \emph{empty-type} $\mathsf{0}$ by the p-game $\boldsymbol{0} \colonequals \{ \bot \}$, called the \emph{empty p-game}, where $\bot \colonequals \{ \boldsymbol{\epsilon}, q \}$. 
It then models the T-predicate $\mathsf{x : N, y : N, z : N \vdash T(x, y, z) \ type}$ by the family $\mathcal{T} = (\mathcal{T}(\langle x, y, z \rangle))_{x, y, z \in N}$ of p-games $\mathcal{T}(\langle x, y, z \rangle)$, where $\langle x, y, z \rangle$ is the evident iteration of pairings, given by $\mathcal{T}(\langle x, y, z \rangle) \colonequals \boldsymbol{1}$ if $x = \underline{e}$, $y = \underline{n}$, $z = \underline{c}$ for some $e, n, c \in \mathbb{N}$, and the triple $(e, n, c)$ satisfies the T-predicate relation, and $\mathcal{T}(\langle x, y, z \rangle) \colonequals \boldsymbol{0}$ otherwise.
Also, it models the result-extracting function $\mathsf{z : N \vdash U(z) : N}$ by a p-strategy $\mu \in N \Rightarrow N$ such that the computation $\underline{n} \in N \mapsto \mu \circ \underline{n}^\dagger \in N$ matches the function in the evident sense. 
%Assume also that MLTT has the \emph{Id-type} $\mathsf{\Gamma, x_1:A, x_2:A \vdash Id_A(x_1, x_2) \ type}$, abbreviated as $\mathsf{Id_A}$, for each type $\mathsf{\Gamma \vdash A \ type}$, which is dependent even if $\mathsf{A}$ is simple. 
%Intuitively, $\mathsf{Id_A}$ is the family $(\mathsf{\Gamma \vdash Id_A(a_1, a_2) \ type})_{\mathsf{\Gamma \vdash a_1 : A}, \mathsf{\Gamma \vdash a_2 : A}}$ of types such that the canonical terms of the type $\mathsf{\Gamma \vdash Id_A(a_1, a_2) \ type}$ are \emph{proofs of the equality between the terms $\mathsf{a_1}$ and $\mathsf{a_2}$}.
Finally, it models an \emph{Id-type} $\mathsf{Id_A}$, where let $\mathsf{A}$ be a simple type for simplicity, by the family $\mathrm{Id}_A = (\mathrm{Id}_A(\langle a_1, a_2 \rangle))_{a_1, a_2 \in A}$ of p-games $\mathrm{Id}_A(\langle a_1, a_2 \rangle)$, called \emph{Id p-games} on $A$, where $A$ is the p-game that interprets the simple type $\mathsf{A}$, given by $\mathrm{Id}_A(\langle a_1, a_2 \rangle) \colonequals \boldsymbol{1}$ if $a_1 = a_2$, and $\mathrm{Id}_A(\langle a_1, a_2 \rangle) \colonequals \boldsymbol{0}$ otherwise. 

We are now able to see how the p-game $\mathcal{CT}$ that interprets CT (\ref{CTinMLTT}) by the game semantics \cite{yamada2016game} looks like. 
Clearly, the only nontrivial point in giving a p-strategy on $\mathcal{CT}$ is the LHS of the first occurrence of a sigma p-game, i.e., a p-strategy on the p-game $(N \Rightarrow N) \Rightarrow N$ that outputs a p-strategy $\underline{e} \in N$ from an input p-strategy $\phi \in N \Rightarrow N$ given by O such that $e \in \mathbb{N}$ realizes $\phi$.
Recall that we have defined realizers for the extension $\mathbb{N} \rightarrow \mathbb{N}$ of $\phi$ to be realizers for $\phi$ and implement the T-predicate and the result-extracting function with respect to such realizers; hence, a realizer $e$ for $\phi$, if any, works as a witness of the LHS of the sigma p-game in $\mathcal{CT}$.

However, there is no such a p-strategy on $(N \Rightarrow N) \Rightarrow N$ because O may play on the domain $N \Rightarrow N$ as a \emph{non-recursive} function $f : \mathbb{N} \rightarrow \mathbb{N}$ so that there is no natural number that realizes $f$.
Strictly speaking, we should say that the extension of the play by O on $N \Rightarrow N$ is $f$, but let us keep using this abuse of terminologies in the rest of this introduction. 
Another problem is that O may play as a \emph{partial} function $\mathbb{N} \rightharpoonup \mathbb{N}$ on the domain.
Unfortunately, the game semantics \cite{yamada2016game} refutes CT even if we somehow manage to restrict plays by O to total recursive ones since positions in games are \emph{finite}, i.e., it is in general impossible for P or a p-strategy to completely identify a given p-strategy on $N \Rightarrow N$ by a finite interaction with it.

\subsection{Our solution: realizability \`{a} la game semantics}
\label{GameSemanticRealizability}
Then, how can we model MLTT plus CT?
Our solution is to \emph{limit O's plays as inputs to total recursive ones} and require him to \emph{exhibit a realizer} for his input computation at the very beginning of each play. %where we select the `canonical' ones in such a way that they match the T-predicate and the result-extracting function.
The resulting, modified game semantics, which we call \emph{realizability \`{a} la game semantics}, models MLTT in a way similar to the game semantics \cite{yamada2016game} since both do not assign a choice of a realizer to the interpretation of each term. 
On the other hand, the main difference between the two models of MLTT is that only realizability \`{a} la game semantics validates CT since it requires O to exhibit a realizer for each input $N \Rightarrow N$ of the p-game $\mathcal{CT}$, and so there is trivially a p-strategy on $\mathcal{CT}$ that essentially `copy-cats' the realizer given by O.
Crucially, O may supply the realizers since game semantics treats O as an `oracle' endowed with an unlimited computational power \cite[Section~1]{yamada2019game}. 
Hence, realizability \`{a} la game semantics models MLTT plus CT. %where the T-predicate and the result-extracting function are implemented according to the choice of realizers for p-strategies. 
Also, it refutes empty-type as it requires p-strategies on codomains to be total too, proving consistency of MLTT with CT.

Technically, we implement our solution by modifying p-strategies on the pi p-game $\Pi(C, D)$ (Section~\ref{IntroPartI}) into the \emph{disjoint union} $\uplus \psi \colonequals \uplus_{e \in \mathscr{R}_{\mathrm{wr}}(C)} \psi_e$ on a nonempty family $\psi = (\psi_e)_{e \in \mathscr{R}_{\mathrm{wr}}(C)}$ of p-strategies $\psi_e \in r(e) \Rightarrow r \circ \pi_{\psi}(e)$, where 
\begin{itemize}

\item $\mathscr{R}_{\mathrm{wr}}(C) \subseteq \mathbb{N}$ is the set of all realizers for winning, recursive p-strategies $c \in C$; %for which we just arbitrarily fix the `canonical' ones; %and we choose realizers for recursive functions $\mathbb{N} \rightarrow \mathbb{N}$ in such a way that matches the ones for winning, realizable p-strategies (i.e., if $e \in \mathbb{N}$ realizes a winning p-strategy $\nu \in N \Rightarrow N$ then $e$ also realizes its extension $\mathbb{N} \rightarrow \mathbb{N}$);

\item $r(e)$ is the (necessarily winning and recursive) p-strategy realized by $e$, and $\pi_{\psi}$ is an `effective' map $e \in \mathscr{R}_{\mathrm{wr}}(C) \mapsto \pi_{\psi}(e) \in \mathscr{R}_{\mathrm{wr}}(D(r(e)))$ assigned to $\psi$,

\end{itemize}
for which we encode all positions by natural numbers,\footnote{Positions encodable by natural numbers suffice for our model of MLTT plus CT.} define \emph{recursive} p-strategies accordingly with respect to the functional computation $\boldsymbol{s}m \in p \mapsto \boldsymbol{s}mn \in p$ of p-strategies $p$ in the standard sense \cite{rogers1967theory,cutland1980computability}, and fix \emph{realizers} for recursive p-strategies.  
Of course, we encode the `tags' for the disjoint union also by natural numbers. 
The `effectivity' of the associated map $\pi_{\psi}$ is to preserve recursiveness of these disjoint unions $\uplus \psi$ under \emph{composition} (Lemma~\ref{LemWellDefinedDoWRWLIs}). 
On the other hand, we simply define the p-strategy $\uplus \psi$ to be the trivial one $\{ \boldsymbol{\epsilon} \}$ if $\psi$ is empty.

\begin{remark}
Strictly speaking, a realizer $e$ in general does not uniquely determine a p-strategy $p$ realized by $e$ since a p-strategy contains odd-length positions as well. 
Therefore, we actually define $r(e)$ to be the \emph{union} of all p-strategies on $C$ realized by $e$; see the remark that comes immediately below Definition~\ref{DefDoWRWLIs}.
\end{remark}

We take the disjoint union of $\psi$ for $\uplus \psi$ since the union of $\psi$ may be indeterministic for the lack of the second axiom on p-strategies on original pi p-games. 
Also, the lack implies that the extension of $\uplus \psi$ may not be continuous. 
However, by the disjoint union, \emph{O must select a component $\psi_e$ and exhibit $e$ at his first move} in the modified $\Pi(C, D)$, where $e$ is a realizer for his play on $C$.
In contrast, this task is not imposed on P.
Let us write $\Pi$ for modified pi p-games in the rest of this introduction. 

On the other hand, $\uplus \psi$ itself is a p-strategy, not a realizer for it, and so modified pi p-games are as abstract as original ones. 
Thus, the isomorphism $\Pi (\Sigma (X, Y), Z) \cong \Pi (X, \Pi (Y, Z))$ holds for modified pi p-games as well. 
This point illustrates why realizability \`{a} la game semantics validates the $\xi$-rule.

As an example, consider the resulting implication $(N \Rightarrow N) \Rightarrow N$, an instance of a modified pi p-game.
First, a p-strategy on $N \Rightarrow N$ is the disjoint union  $\uplus \phi \colonequals \uplus_{e \in \mathscr{R}_{\mathrm{wr}}(N)} \phi_e$ on a family $\phi = (\phi_e)_{e \in \mathscr{R}_{\mathrm{wr}}(N)}$ of p-strategies $\phi_e \in r(e) \Rightarrow r \circ \pi_{\phi}(e)$.
Note that winning, recursive p-strategies on $N \Rightarrow N$ as well as realizers for them are already determined, and hence the implication $(N \Rightarrow N) \Rightarrow N$ is well-defined: A p-strategy on $(N \Rightarrow N) \Rightarrow N$ is the disjoint union $\uplus \psi \colonequals \uplus_{f \in \mathscr{R}_{\mathrm{wr}}(N \Rightarrow N)} \psi_f$ on a family $\psi = (\psi_f)_{f \in \mathscr{R}_{\mathrm{wr}}(N \Rightarrow N)}$ of p-strategies $\psi_f \in r(f) \Rightarrow r \circ \pi_{\psi}(f)$.
We write $\Rightarrow$ for the implication as an instance of the modified pi $\Pi$ in the rest of this introduction. 

\if0
\begin{remark}
The set of all total recursive maps $\mathbb{N} \rightarrow \mathbb{N}$ is not recursively enumerable \cite{rogers1967theory,cutland1980computability}, from which it follows that $\mathscr{R}_{\mathrm{wr}}((N \Rightarrow N) \Rightarrow N) = \emptyset$ holds since the `tags' for p-strategies on $(N \Rightarrow N) \Rightarrow N$ are faithful to realizers for total recursive maps $\mathbb{N} \rightarrow \mathbb{N}$, i.e., we cannot partially decide the `tags.'
Hence, p-strategies on a pi p-game whose domain is $(N \Rightarrow N) \Rightarrow N$ are all trivial. 
In this way, many pi p-games have only the trivial p-strategy, but it does not matter for our model of MLTT plus CT.
\end{remark}
\fi

Schematically, a typical play by our p-strategy $\uplus \mathrm{ct} \in \mathcal{CT}$ looks like
\begin{mathpar}
\begin{tabular}{ccccc}
$\Pi (\uplus \nu : N \Rightarrow N,$ & $\Sigma (x : N,$ & $\Pi (y : N,$ & $\Sigma (z : N,$ & $\mathcal{T}(\langle x, y, z \rangle) \mathbin{\&} \mathrm{Id}_N(\langle \uplus \mu \circ z^\dagger, \uplus \nu \circ x^\dagger \rangle)))))$ \\ \cline{1-5}
&&&&$q_{\mathcal{CT}}$ \\
&&&&$\uplus \mathrm{ct}$ \\
&\tikzmark{cpromotion100001} $q^{[e]}$&&& \\
&\tikzmark{dpromotion100001} $e^{[e]}$&&&
\end{tabular}
\begin{tikzpicture}[overlay, remember picture, yshift=.25\baselineskip]
\draw [->] ({pic cs:dpromotion100001}) [bend left] to ({pic cs:cpromotion100001});
%\draw [->] ({pic cs:dpromotion100002}) [bend left] to ({pic cs:cpromotion100002});
%\draw [->] ({pic cs:dpromotion100003}) [bend right] to ({pic cs:cpromotion100003});
\end{tikzpicture}
\end{mathpar}
where the superscript $(\_)^{[e]}$ ($e \in \mathbb{N}$) represents the `tag' for the leftmost occurrence of a pi p-game, and so $e$ is a realizer for an input p-strategy $\uplus \nu \in N \Rightarrow N$; we write $\uplus \mu \in N \Rightarrow N$ for the evident modification of $\mu$ with respect to the modification of pi p-games. 
In this play, the p-strategy $\uplus \mathrm{ct}$ never interacts with $\uplus \nu$ but `copy-cats' the realizer $e$ given by O.
Let us emphasize that such a play by $\uplus \mathrm{ct}$ would be impossible in the presence of the second axiom on p-strategies on original pi p-games. 

By the way, the play in the p-game $\mathcal{CT}$ by the p-strategy $\uplus \mathrm{ct}$ just given is far from typical plays in game semantics \cite{abramsky1999game,mccusker1998games} as $\uplus \mathrm{ct}$ computes the P-move $e^{[e]}$ \emph{without interacting with the input} $\uplus \nu$.
This point is the main reason why we call our model \emph{realizability \`{a} la game semantics}, rather than `game semantics \`{a} la realizability.'

Another typical play by the p-strategy $\uplus \mathrm{ct}$ looks like
\begin{mathpar}
\begin{tabular}{ccccc}
$\Pi (\uplus \nu : N \Rightarrow N,$ & $\Sigma (x : N,$ & $\Pi (y : N,$ & $\Sigma (z : N,$ & $\mathcal{T}(\langle x, y, z \rangle) \mathbin{\&}\mathrm{Id}_N(\langle \uplus \mu \circ z^\dagger, \uplus \nu \circ x^\dagger \rangle)))))$ \\ \cline{1-5}
&&&&$q_{\mathcal{CT}}$ \\
&&&&$\uplus \mathrm{ct}$ \\
&&&\tikzmark{cpromotion100002} $q^{[e'], [e]}$ \tikzmark{cpromotion200003}& \\
&&\tikzmark{cpromotion200002} $q^{[e'], [e]}$ \tikzmark{dpromotion100002}&& \\
&&\tikzmark{dpromotion200002} $n^{[e'], [e]}$&& \\
&&&$c^{[e'], [e]}$ \tikzmark{dpromotion200003}&
\end{tabular}
\begin{tikzpicture}[overlay, remember picture, yshift=.25\baselineskip]
\draw [->] ({pic cs:dpromotion100002}) to ({pic cs:cpromotion100002});
\draw [->] ({pic cs:dpromotion200002}) [bend left] to ({pic cs:cpromotion200002});
\draw [->] ({pic cs:dpromotion200003}) [bend right] to ({pic cs:cpromotion200003});
\end{tikzpicture}
\end{mathpar}
where the triple $e, n, c \in \mathbb{N}$ satisfies the T-predicate, and $(\_)^{[e']}$ ($e' \in \mathbb{N}$) is the `tag' for the rightmost occurrence of a pi p-game, and so $e'$ is a realizer for the p-strategy $\underline{n} \in N$ given by O. 
Because the p-games $\mathcal{T}(\langle \underline{e}, \underline{n}, \underline{c} \rangle)$ and $\mathrm{Id}_N(\langle \uplus \mu \circ \underline{c}^\dagger, \uplus \nu \circ \underline{n}^\dagger \rangle)$ both become the unit p-game $\boldsymbol{1}$ in this play, $\uplus \mathrm{ct}$ trivially validates them.

Finally, note that $\uplus \mathrm{ct}$ is a p-strategy, not a realizer. 
This point illustrates the fact that realizability \`{a} la game semantics is as abstract as the game semantics of MLTT \cite{yamada2016game}, and it models MLTT.
On the other hand, $\uplus \mathrm{ct}$ itself is \emph{recursive}, which means that we have established a \emph{constructive} model of MLTT plus CT.

\subsection{Our contribution and related work}
Our main contribution is to solve the long-standing open problem, i.e., consistency of MLTT with CT, by novel realizability \`{a} la game semantics that (even \emph{constructively}) resolves the dilemma between intensionality and extensionality.
Methodologically, the present work opens up a new way of applying game semantics to the study of constructive mathematics, while in the literature game semantics has been applied mostly in the context of \emph{full abstraction/completeness} problems \cite{curien2007definability}. 

Our consistency proof is based on the game semantics of MLTT \cite{yamada2016game}, especially its interpretation of pi-types by pi p-games.
Note in particular that its generalization of games and strategies to p-games and p-strategies, respectively, plays a crucial role for our modified pi p-games.
This technique seems unavailable for another games-based model of MLTT \cite{abramsky2015games} because it employs ordinary games and strategies, and interprets pi-types by induction on the lengths of positions.
Therefore, the previous work \cite{yamada2016game} is an indispensable stepping stone to the consistency proof.

As related work, Ishihara et al.~\cite{ishihara2018consistency} prove consistency of a modification of MLTT, called \emph{mTT}, with CT by realizability \`{a} la Kleene formalized within another formal system. 
The main difference between MLTT and mTT is that MLTT's congruence rules such as the $\xi$-rule are replaced with \emph{explicit substitutions} in mTT so that their realizability models mTT.
On the other hand, their realizability cannot model MLTT since it refutes the $\xi$-rule as already explained.
In other words, they circumvent the consistency problem of MLTT plus CT for the obstacle but instead prove consistency of mTT with CT, which nevertheless suffices for one of the authors' research programme: \emph{a minimalist foundation for constructive mathematics} \cite{maietti2005toward}.

Finally, our consistency result also contributes to the programme of a minimalist foundation for constructive mathematics since it implies that MLTT is available as an intensional level foundation of constructive mathematics for the programme.  

\if0
\subsection{New proof of independence of Markov's principle}
Finally, we demonstrate that not only the modified game semantics of MLTT (i.e., the game-semantic realizability mentioned in Section~\ref{GameSemanticRealizability}) but also the original one \cite{yamada2016game} is an effective tool for the proof-theoretic study of MLTT.
Concretely, we apply the game semantics of MLTT to show \emph{independence of Markov's principle (MP) from MLTT}.
Informally, MP states that

Note that the independence has been already shown by Mannaa and Coquand \cite{mannaa2017independence}
\fi

\subsection{Structure of the present article}
The rest of the present article is structured as follows.
We first recall basic definitions given in the previous work \cite{yamada2016game}, in particular p-games and p-strategies, in Section~\ref{PartI} as a technical preparation.
We then establish realizability \`{a} la game semantics that models MLTT plus CT and prove consistency of MLTT with CT as an immediate corollary in Section~\ref{MainConsistencyOfMLTTwithCT}.
Finally, we draw a conclusion and propose future work in Section~\ref{ConclusionAndFutureWork}.
In addition, Appendix~\ref{MLTT} presents the syntax of MLTT.

\begin{notation}
At the end of this introduction, let us introduce the following notations: 
\begin{itemize}

\item We employ bold small letters $\boldsymbol{s}, \boldsymbol{t}, \boldsymbol{u}, \boldsymbol{v}$, $\boldsymbol{w}$, etc. for sequences, and small letters $a, b, c, m, n, x, y$, etc. for elements of sequences;

\item We define $\overline{n} \colonequals \{ 1, 2, \dots, n \, \}$ for each $n \in \mathbb{N}^+ \colonequals \mathbb{N} \setminus \{ 0 \}$, and $\overline{0} \colonequals \emptyset$;

\item We often abbreviate a finite sequence $\boldsymbol{s} = (x_1, x_2, \dots, x_{|\boldsymbol{s}|})$ as $x_1 x_2 \dots x_{|\boldsymbol{s}|}$, where $|\boldsymbol{s}|$ denotes the \emph{length} (i.e., the number of elements) of $\boldsymbol{s}$, and write $\boldsymbol{s}(i)$, where $i \in \overline{|\boldsymbol{s}|}$, as another notation for $x_i$;

\item A \emph{concatenation} of sequences $\boldsymbol{s}$ and $\boldsymbol{t}$ is represented by the juxtaposition $\boldsymbol{s}\boldsymbol{t}$ (or written $\boldsymbol{s} . \boldsymbol{t}$) of them, and we write $a \boldsymbol{s}$, $\boldsymbol{t} b$, $\boldsymbol{u} c \boldsymbol{v}$ for $(a) \boldsymbol{s}$, $\boldsymbol{t} (b)$, $\boldsymbol{u} (c) \boldsymbol{v}$, etc.;

%\item We write $\boldsymbol{s} . \boldsymbol{t}$ for $\boldsymbol{s t}$ if it increases readability, and $\boldsymbol{s}^n \colonequals \underbrace{\boldsymbol{s} \boldsymbol{s} \cdots \boldsymbol{s}}_n$ for each $n \in \mathbb{N}$;

\item We write $\mathrm{Even}(\boldsymbol{s})$ (resp. $\mathrm{Odd}(\boldsymbol{s})$) if $\boldsymbol{s}$ is of even- (resp. odd-) length, and given a set $S$ of sequences and $P \in \{ \mathrm{Even}, \mathrm{Odd} \}$, we define $S^P \colonequals \{ \boldsymbol{s} \in S \mid P(\boldsymbol{s}) \, \}$;

\item We write $\boldsymbol{s} \preceq \boldsymbol{t}$ if $\boldsymbol{s}$ is a \emph{prefix} of $\boldsymbol{t}$, and $\mathrm{Pref}(S)$ for the set of all prefixes of elements in a set $S$ of sequences, i.e., $\mathrm{Pref}(S) \colonequals \{ \boldsymbol{s} \mid \exists \boldsymbol{t} \in S . \, \boldsymbol{s} \preceq \boldsymbol{t} \, \}$;

%\item Given a poset $P = (P, \leqslant)$ and a subset $S \subseteq P$, we write $\mathrm{Sup}_P(S)$, often abbreviated as $\mathrm{Sup}(S)$, for the \emph{supremum} of $S$ with respect to $P$;

\item We define $X^* \colonequals \{ x_1 x_2 \dots x_n \mid n \in \mathbb{N}, \forall i \in \overline{n} . \, x_i \in X \, \}$ for each set $X$;

\item Given a map $f : A \to B$ and a subset $S \subseteq A$, we define $f \upharpoonright S : S \to B$ to be the \emph{restriction} of $f$ to $S$, and $f^\ast : A^\ast \to B^\ast$ by $f^\ast(a_1 a_2 \dots a_n) \colonequals f(a_1) f(a_2) \dots f(a_n) \in B^\ast$ for all $a_1 a_2 \dots a_n \in A^\ast$;

%\item Given sets $A$ and $B$, we write $B^A$ for the set of all functions from $A$ to $B$;

\item Given sets $X_1, X_2, \dots, X_n$, and a natural number $i \in \overline{n}$, we write $\pi^{(n)}_i$ or $\pi_i$ for the \emph{$i^{\text{th}}$-projection (map)} $X_1 \times X_2 \times \cdots \times X_n \rightarrow X_i$. %i.e., it maps $(x_1, x_2, \dots, x_n) \mapsto x_i$ for all $(x_1, x_2, \dots, x_n) \in X_1 \times X_2 \times \cdots \times X_n$.

%\item Let us fix once and for all a recursive bijection $\langle \_, \_ \rangle : \mathbb{N} \times \mathbb{N} \stackrel{\sim}{\rightarrow} \mathbb{N}$.

\if0
\item We use symbols $\Rightarrow$ (\emph{implication}), $\wedge$ (\emph{conjunction}), $\vee$ (\emph{disjunction}), $\forall$ (\emph{universal quantification}), $\exists$ (\emph{existential quantification}) and $\Leftrightarrow$ (\emph{if and only if}, a.k.a., \emph{iff}) informally in our meta-language, whose intended meanings are as written in the following parentheses; $\forall$ and $\exists$ precedes any other symbols; $\wedge$ and $\vee$ precede $\Rightarrow$ and $\Leftrightarrow$; $\wedge$ and $\vee$ are left associative, while $\Rightarrow$ and $\Leftrightarrow$ are right associative.
\fi

%\item If P refers to a \emph{predicate}, then $\mathsf{P}(x)$ means that an object $x$ satisfies P.

\end{itemize}
\end{notation}

\section{Review: predicative games and predicative strategies}
\label{PartI}
First, we review the game semantics of MLTT \cite{yamada2016game} since our consistency proof is based on it, where we mostly focus on the basic definitions necessary for the present work. 
See the original article \cite{yamada2016game} for more details and explanations. 

We first recall key preliminary concepts such as \emph{arenas}, \emph{legal positions}, \emph{games} and \emph{tree skeletons} in Section~\ref{ArenasLegalPositionsGamesAndTreeSkeletons}, and \emph{consistency/completeness} of tree skeletons and the \emph{universal identification} in Section~\ref{ConsistencyAndCompletenessOfTreeSkeletonsAndUniversalIdentification}.
Then finally, we recall the central notions of \emph{p-games} and \emph{p-strategies} in Section~\ref{PredicativeGamesAndPredicativeStrategies}. %and sketch how they model MLTT in Section~\ref{GameSemanticsOfMLTT}.

\subsection{Arenas, legal positions, games and tree skeletons}
\label{ArenasLegalPositionsGamesAndTreeSkeletons}
We first need to confess our simplification of game semantics in the introduction: To be precise, p-strategies are deterministic games \emph{up to inessential details of `tags'} for disjoint union on sets of moves, and a deterministic game (on the nose) is called a \emph{tree (t-) skeleton} \cite{yamada2016game}. 
More accurately, a p-strategy is the union of all equivalent t-skeletons modulo `tags.' 
Also, what is described as strategies in the introduction is, strictly, \emph{skeletons}, and a strategy is the union of all equivalent skeletons modulo `tags.' 
Such a complication is necessary for game semantics to match the abstraction degree of terms in type theories. 
We henceforth switch to the precise terminologies. 

We then start with recalling \emph{t-skeletons}, for which it makes sense to first introduce a more general concept of \emph{games}.
But before that, we need to recall two preliminary concepts: \emph{arenas} and \emph{legal positions}. 
An arena defines the basic components of a game, which in turn induces legal positions of the arena that specify the basic rules of the game in the sense that each position of the game must be legal. 
%Let us first recall these two concepts.

\begin{definition}[Moves \cite{yamada2016game}]
\label{DefMoves}
Let us fix, throughout the present work, arbitrary pairwise distinct symbols $\mathsf{O}$, $\mathsf{P}$, $\mathsf{Q}$ and $\mathsf{A}$, and call them \emph{\bfseries labels}.
A \emph{\bfseries move} is any triple $m^{xy} \colonequals (m, x, y)$ such that $x \in \{ \mathsf{O}, \mathsf{P} \}$ and $y \in \{ \mathsf{Q}, \mathsf{A} \}$, for which we often abbreviate $m^{xy}$ as $m$, and instead define $\lambda(m) \colonequals xy$, $\lambda^\mathsf{OP}(m) \colonequals x$ and $\lambda^\mathsf{QA}(m) \colonequals y$.
A move $m$ is called an \emph{\bfseries Opponent (O-) move} if $\lambda^\mathsf{OP}(m) = \mathsf{O}$, a \emph{\bfseries Player (P-) move} if $\lambda^\mathsf{OP}(m) = \mathsf{P}$, a \emph{\bfseries question} if $\lambda^\mathsf{QA}(m) = \mathsf{Q}$, and an \emph{\bfseries answer} if $\lambda^\mathsf{QA}(m) = \mathsf{A}$.
\end{definition}

\begin{definition}[Arenas \cite{abramsky1999game,yamada2016game,mccusker1998games}]
\label{DefArenas}
An \emph{\bfseries arena} is a pair $G = (M_G, \vdash_G)$ such that
\begin{itemize}
\item $M_G$ is a set of moves;

\item $\vdash_G$ is a subset of $(\{ \star \} \cup M_G) \times M_G$, where $\star$ (or represented more precisely by $\star_G$) is an arbitrarily fixed element such that $\star \not \in M_G$, called the \emph{\bfseries enabling relation}, that satisfies
\begin{itemize}

\item \textsc{(E1)} If $\star \vdash_G m$ then $\lambda (m) = \mathsf{OQ}$;

\item \textsc{(E2)} If $m \vdash_G n$ and $\lambda^\mathsf{QA} (n) = \mathsf{A}$ then $\lambda^\mathsf{QA} (m) = \mathsf{Q}$;

\item \textsc{(E3)} If $m \vdash_G n$ and $m \neq \star$ then $\lambda^\mathsf{OP} (m) \neq \lambda^\mathsf{OP} (n)$.

\end{itemize}

\end{itemize}

A move $m \in M_G$ of $G$ is called \emph{\bfseries initial} if $\star \vdash_G m$, and \emph{\bfseries non-initial} otherwise.
We define the subset $M_G^{\mathrm{Init}} \colonequals \{ m \in M_G \mid \star \vdash_G m \, \} \subseteq M_G$
\end{definition}

That is, an arena $G$ is to specify moves in a game, each of which is O's/P's question/answer, and which move $n$ can be performed for each move $m$ during a play in the game in terms of the relation $m \vdash_G n$ (see Definition~\ref{DefJSequences} for more on this point), where $\star \vdash_G m$ means that O can initiate a play by $m$ in the game.

The axioms E1, E2 and E3 are then to be read as follows:
\begin{itemize}

\item E1 sets the convention that an initial move must be O's question;

\item E2 states that an answer must be performed for a question;

\item E3 says that an O-move must be performed for a P-move, and vice versa.

\end{itemize}

We shall later focus on \emph{well-founded} arenas:
\begin{definition}[Well-founded arenas \cite{clairambault2010totality}]
\label{DefWellFoundedArenas}
An arena $G$ is \emph{\bfseries well-founded} if so is the enabling relation $\vdash_G$ \emph{downwards}, i.e., there is no countably infinite sequence $(m_i)_{i \in \mathbb{N}}$ of moves $m_i \in M_G$ such that $\star \vdash_G m_0$ and $m_i \vdash_G m_{i+1}$ for all $i \in \mathbb{N}$.
\end{definition}

Let us proceed to review \emph{legal positions}, for which recall first that a legal position is a certain finite sequence of moves equipped with a \emph{pointer} from later occurrences to earlier ones of the sequence. 
The idea is that each non-initial occurrence in a legal position must be performed for a specific previous occurrence, and such a pair of occurrences is specified by a pointer. 
Technically, pointers are introduced in order to distinguish similar yet different computations \cite{abramsky1999game,curien2006notes}. 

We call a finite sequence of moves together with a pointer a \emph{justified (j-) sequence}; a legal position is a particular kind of a j-sequence.
\begin{definition}[Justified sequences \cite{abramsky1999game,yamada2016game,mccusker1998games}]
\label{DefJSequences}
An \emph{\bfseries occurrence} in a finite sequence $\boldsymbol{s}$ is a pair $(\boldsymbol{s}(i), i)$ such that $i \in \overline{|\boldsymbol{s}|}$. 
A \emph{\bfseries justified (j-) sequence} is a pair $\boldsymbol{s} = (\boldsymbol{s}, \mathcal{J}_{\boldsymbol{s}})$ of a finite sequence $\boldsymbol{s}$ of moves and a map $\mathcal{J}_{\boldsymbol{s}} : \overline{|\boldsymbol{s}|} \rightarrow \{ 0 \} \cup \overline{|\boldsymbol{s}|-1}$ such that $0 \leqslant \mathcal{J}_{\boldsymbol{s}}(i) < i$ for all $i \in \overline{|\boldsymbol{s}|}$, called the \emph{\bfseries pointer} of the j-sequence. 
Each occurrence $(\boldsymbol{s}(i), i)$ is \emph{\bfseries initial} (resp. \emph{\bfseries non-initial}) in $\boldsymbol{s}$ if $\mathcal{J}_{\boldsymbol{s}}(i) = 0$ (resp. otherwise).

A \emph{\bfseries justified (j-) sequence in an arena} $G$ is a j-sequence $\boldsymbol{s}$ such that its elements are moves in $G$, and its pointer respects the enabling relation of $G$, i.e., it satisfies $\boldsymbol{s} \in M_G^\ast$ and $\forall i \in \overline{|\boldsymbol{s}|} . \, \big(\mathcal{J}_{\boldsymbol{s}}(i) = 0 \Rightarrow \star \vdash_G \boldsymbol{s}(i)\big) \wedge \big(\mathcal{J}_{\boldsymbol{s}}(i) \neq 0 \Rightarrow \boldsymbol{s}({\mathcal{J}_{\boldsymbol{s}}(i)}) \vdash_G \boldsymbol{s}(i)\big)$.
We write $\mathscr{J}_G$ for the set of all j-sequences in an arena $G$.
\end{definition}

\begin{convention}
We say that the occurrence $(\boldsymbol{s}({\mathcal{J}_{\boldsymbol{s}}(i)}), \mathcal{J}_{\boldsymbol{s}}(i))$ is the \emph{\bfseries justifier} of a non-initial one $(\boldsymbol{s}(i), i)$ in a j-sequence $\boldsymbol{s}$, and $(\boldsymbol{s}(i), i)$ is \emph{\bfseries justified} by $(\boldsymbol{s}({\mathcal{J}_{\boldsymbol{s}}(i)}), \mathcal{J}_{\boldsymbol{s}}(i))$. %(or equivalently there is a \emph{\bfseries pointer} from the former to the latter in $\boldsymbol{s}$).
\end{convention}

\begin{definition}[Justified subsequences \cite{yamada2016game}]
\label{DefJSubsequences}
A \emph{\bfseries justified (j-) subsequence} of a j-sequence $\boldsymbol{s}$ is a j-sequence $\boldsymbol{t}$ such that $\boldsymbol{t}$ is a subsequence of $\boldsymbol{s}$, and for all $i, j \in \mathbb{N}$ $\mathcal{J}_{\boldsymbol{t}}(i) = j$ if and only if $\mathcal{J}_{\boldsymbol{s}}^n(i) = j$ for some $n \in \mathbb{N}$.
\end{definition}

\if0
\begin{definition}[Equality on j-sequences \cite{yamada2016game}]
\label{DefEqualityOnJSequences}
J-sequences $\boldsymbol{s}$ and $\boldsymbol{t}$ are \emph{\bfseries equal}, written $\boldsymbol{s} = \boldsymbol{t}$, if their sequences and justifiers coincide (i.e., $\forall i \in \overline{|\boldsymbol{s}|} . \, \mathcal{J}_{\boldsymbol{s}}(i) = \mathcal{J}_{\boldsymbol{t}}(i)$).
\end{definition}
\fi

\begin{convention}
We are henceforth casual about the distinction between moves and occurrences; by abuse of notation, we often keep the pointer $\mathcal{J}_{\boldsymbol{s}}$ of each j-sequence $\boldsymbol{s} = (\boldsymbol{s}, \mathcal{J}_{\boldsymbol{s}})$ implicit and abbreviate occurrences $(\boldsymbol{s}(i), i)$ in $\boldsymbol{s}$ as $\boldsymbol{s}(i)$.
Moreover, we often write $\mathcal{J}_{\boldsymbol{s}}(\boldsymbol{s}(i)) = \boldsymbol{s}(j)$ if $\mathcal{J}_{\boldsymbol{s}}(i) = j$ for all $i, j \in \mathbb{N}$.
%This convention is mathematically imprecise, but it would not bring any serious confusion in practice; in fact, it has been conventional in the literature of game semantics.  
\end{convention}

Next, we recall the `relevant part' or \emph{view} of the previous occurrences of each occurrence in a j-sequence, which is also fundamental for legal positions.
\begin{definition}[Views \cite{abramsky1999game,mccusker1998games,hyland2000full}] 
\label{DefViews}
The \emph{\bfseries Player (P-) view} $\lceil \boldsymbol{s} \rceil$ and the \emph{\bfseries Opponent (O-) view} $\lfloor \boldsymbol{s} \rfloor$ of a j-sequence $\boldsymbol{s}$ are respectively the j-subsequences of $\boldsymbol{s}$ defined by the following induction on the length $|\boldsymbol{s}|$ of $\boldsymbol{s}$: 
\begin{itemize}

\item $\lceil \boldsymbol{\epsilon} \rceil \colonequals \boldsymbol{\epsilon}$; 

\item $\lceil \boldsymbol{s} m \rceil \colonequals \lceil \boldsymbol{s} \rceil . m$ if $m$ is a P-move; 

\item $\lceil \boldsymbol{s} m \rceil \colonequals m$ if $m$ is initial;

\item $\lceil \boldsymbol{s} m \boldsymbol{t} n \rceil \colonequals \lceil \boldsymbol{s} \rceil . m n$ if $n$ is an O-move such that $m$ justifies $n$; %$\mathcal{J}_{\boldsymbol{s} m \boldsymbol{t} n}(n) = m$; 

\item $\lfloor \boldsymbol{\epsilon} \rfloor \colonequals \boldsymbol{\epsilon}$;

\item $\lfloor \boldsymbol{s} m \rfloor \colonequals \lfloor \boldsymbol{s} \rfloor . m$ if $m$ is an O-move; 

\item $\lfloor \boldsymbol{s} m \boldsymbol{t} n \rfloor \colonequals \lfloor \boldsymbol{s} \rfloor . m n$ if $n$ is a P-move such that $m$ justifies $n$. %$\mathcal{J}_{\boldsymbol{s} m \boldsymbol{t} n}(n) = m$

\end{itemize}

A \emph{\bfseries Player (P-) view} (resp. a \emph{\bfseries Opponent (O-) view}) refers to that of some j-sequence, and a \emph{\bfseries view} (of a j-sequence) to a P- or O-view (of the j-sequence).
\end{definition}

The idea behind the notion of views is as follows.
Given a nonempty j-sequence $\boldsymbol{s} m$ such that $m$ is a P- (resp. O-) move, the P-view $\lceil \boldsymbol{s} \rceil$ (resp. O-view $\lfloor \boldsymbol{s} \rfloor$) is intended to be the currently `relevant part' of the previous occurrences in $\boldsymbol{s}$ for P (resp. O). 
I.e., P (resp. O) is concerned only with the last occurrence of an O- (resp. P-) move, its justifier and that justifier's P- (resp. O-) view, which then recursively proceeds.
See \cite{curien2006notes,curien1998abstract} for an explanation of views in terms of their counterparts in syntax.

We are now ready to recall \emph{legal positions}:
\begin{definition}[Legal positions \cite{abramsky1999game,yamada2016game}]
\label{DefLegalPositions}
A \emph{\bfseries legal position} is a j-sequence $\boldsymbol{s}$ that satisfies
\begin{itemize}

\item \textsc{(Alternation)} If $\boldsymbol{s} = \boldsymbol{s_1} m n \boldsymbol{s_2}$, then $\lambda^\mathsf{OP} (m) \neq \lambda^\mathsf{OP} (n)$;

%\item \textsf{\bfseries Bracketing.} If $\boldsymbol{t} q \boldsymbol{u} a \preceq \boldsymbol{s}$, where the question $q$ is ``answered'' by the answer $a$ (i.e., $q$ justifies $a$), then there is no ``unanswered'' question in $\boldsymbol{u}$.

\item \textsc{(Visibility)} If $\boldsymbol{s} = \boldsymbol{t} m \boldsymbol{u}$ with $m$ non-initial, then $\mathcal{J}_{\boldsymbol{s}}(m)$ occurs in $\lceil \boldsymbol{t} \rceil$ if $m$ is a P-move, and in $\lfloor \boldsymbol{t} \rfloor$ otherwise.

\end{itemize}

A \emph{\bfseries legal position in an arena} $G$ is a legal position that is a j-sequence in $G$.
We write $\mathscr{L}_G$ for the set of all legal positions in $G$.
\end{definition}

%The alternation condition is to enforce that P and O plays \emph{alternately} in a legal position. 
%The visibility condition is technically to guarantee that the P- and the O-views of each j-sequence of an arena are again j-sequences of the arena \cite{mccusker1998games}, and conceptually to ensure that the justifier of each non-initial occurrence belongs to the `relevant part' or view of the previous occurrences.

As already stated, legal positions are to specify the basic rules of a game in the sense that each position in the game must be legal so that
\begin{itemize}

\item During a play in the game, O makes the first move by a question, and then P and O alternately perform moves (by alternation), where each non-initial move is performed for a specific previous occurrence, viz., its justifier; 

%\item Moreover, we require that an answer must be made for the most recent unanswered question (bracketing).  

\item The justifier of each non-initial occurrence belongs to the `relevant part' or view of the previous occurrences (by visibility). %so that the P-, O-views of positions are again legal positions (see \cite{mccusker1998games,hyland2000full} for the detail of this point).
\end{itemize}

\if0
\begin{example}
The j-sequences in Exp.~\ref{ExJustifiedSequences} are all legal positions.
\end{example}
\fi

Having reviewed arenas and legal positions, we are now able to recall \emph{games} and deterministic games called \emph{t-skeletons}: 
\begin{definition}[Games \cite{abramsky1999game,yamada2016game,mccusker1998games}]
A \emph{\bfseries game} is a set $G$ of legal positions, called \emph{\bfseries (valid) positions} in $G$, that satisfies 
\begin{itemize}

\item \textsc{(Tree)} The set $G$ is nonempty and \emph{prefix-closed} (i.e., $\boldsymbol{s}m \in G \Rightarrow \boldsymbol{s} \in G$);

\item \textsc{(Wfoud)} The arena $\mathrm{Arn}(G) \colonequals (M_G, \vdash_G)$ is well-founded, 

\end{itemize}
where $M_G \colonequals \{ \boldsymbol{s}(i) \mid \boldsymbol{s} \in G, i \in \overline{|\boldsymbol{s}|} \, \}$ and $\vdash_G \, \colonequals \{ (\star, \boldsymbol{s}(j)) \mid \boldsymbol{s} \in G, \mathcal{J}_{\boldsymbol{s}}(j) = 0 \, \}\cup \\ \{ (\boldsymbol{s}(i), \boldsymbol{s}(j)) \mid \boldsymbol{s} \in S, \mathcal{J}_{\boldsymbol{s}}(j)= i \, \}$.
A \emph{\bfseries subgame} of $G$ is a game $H$ such that $H \subseteq G$.
\end{definition}

Nonemptiness and prefix-closure of a game $G$ formulates the natural phenomenon that each nonempty `moment' or position has a previous `moment.'
The underlying arena $\mathrm{Arn}(G)$ of $G$ is well-founded so that we can impose \emph{winning}, more specifically \emph{noetherianity} (Definition~\ref{DefConstraintsOnTSkeletons}), on identities in the categories of games; see \cite{yamada2016game} for the details.
Also, note that every position in $G$ is a legal position in $\mathrm{Arn}(G)$.

\begin{definition}[Tree skeletons \cite{yamada2016game}]
\label{DefTreeSkeletons}
A \emph{\bfseries tree (t-) skeleton} is a game $\sigma$ that is \emph{deterministic}: $\boldsymbol{s}mn, \boldsymbol{s}mn' \in \sigma^{\mathrm{Even}} \Rightarrow \boldsymbol{s}mn = \boldsymbol{s}mn$.
A t-skeleton $\sigma$ is \emph{\bfseries on} a game $G$, written $\sigma :: G$, if it satisfies $\sigma \subseteq G$ and $(\boldsymbol{s}m \in G^{\mathrm{Odd}} \wedge \boldsymbol{s} \in \sigma) \Rightarrow \boldsymbol{s}m \in \sigma$.
\end{definition}

In other words, a t-skeleton on a game $G$ is a deterministic subgame $\sigma \subseteq G$ such that possible plays by O in $\sigma$ coincide precisely with those in $G$.
Therefore, such a t-skeleton $\sigma :: G$ describes for P \emph{how to play in $G$}.

Clearly, \emph{skeletons}, i.e., what is called strategies in Section~\ref{IntroGameSemantics}, on $G$ correspond bijectively to t-skeletons on $G$.
The main difference between the two is, however, that a skeleton needs its underlying game, but a t-skeleton does not. 

\begin{example}
\label{ExamplesOfGames}
The simplest game is the \emph{\bfseries unit game} $\boldsymbol{1} \colonequals \{ \boldsymbol{\epsilon} \}$.
There is only the trivial t-skeleton $\top \colonequals \{ \boldsymbol{\epsilon} \}$ on $\boldsymbol{1}$.

Another simple game is the \emph{\bfseries empty game} $\boldsymbol{0} \colonequals \{ \boldsymbol{\epsilon}, q^{\mathsf{OQ}} \}$, where $q$ is an arbitrarily fixed element.
There is only the unique t-skeleton $\bot \colonequals \{ \boldsymbol{\epsilon}, q \, \}$ on $\boldsymbol{0}$.

The \emph{\bfseries natural number game} $N$ is given by $N \colonequals \mathrm{Pref}(\{ q^{\mathsf{OQ}}n^{\mathsf{PA}} \mid n \in \mathbb{N} \, \})$, where $q$ justifies $n$.
T-skeletons on $N$ are $\bot \colonequals \{ \boldsymbol{\epsilon}, q \}$ and $\underline{n} \colonequals \{ \boldsymbol{\epsilon}, q, q n \, \}$ for each $n \in \mathbb{N}$.
\end{example}

At this point, recall that not every t-skeleton corresponds to a \emph{proof} \cite{abramsky1999game,yamada2016game}.
For instance, the empty-game $\boldsymbol{0}$ models the \emph{empty-type} or \emph{falsity} $\mathsf{0}$ \cite{yamada2016game}, and therefore the t-skeleton $\bot :: \boldsymbol{0}$ cannot be an interpretation of a proof.
This point matters for the present work since our consistency proof relies on a model of MLTT plus CT that \emph{does not inhabit} the empty-type; that is, we need to carve out t-skeletons that compute as proofs in such a way that the t-skeleton $\bot :: \boldsymbol{0}$ is excluded. 

The previous work \cite{yamada2016game} characterizes such t-skeletons for proofs as \emph{winning} ones:
\begin{definition}[Constraints on tree skeletons \cite{abramsky1999game,yamada2016game,hyland2000full,clairambault2010totality}]
\label{DefConstraintsOnTSkeletons}
A t-skeleton $\sigma$ is
\begin{itemize}

\item \emph{\bfseries Total} if it always responds: $\forall \boldsymbol{s} m \in \sigma^{\mathrm{Odd}} . \, \exists \boldsymbol{s} m n \in \sigma$;

\item \emph{\bfseries Innocent} if its computation depends only on P-views: $\forall \boldsymbol{s} m n \in \sigma^{\mathrm{Even}}, \boldsymbol{\tilde{s}} \tilde{m} \in \sigma^{\mathrm{Odd}} . \, \lceil \boldsymbol{s} m \rceil = \lceil \boldsymbol{\tilde{s}} \tilde{m} \rceil \Rightarrow \exists \boldsymbol{\tilde{s}} \tilde{m} \tilde{n} \in \sigma^{\mathrm{Even}} . \, \lceil \boldsymbol{s} m n \rceil = \lceil \boldsymbol{\tilde{s}} \tilde{m} \tilde{n} \rceil$;

%\item \emph{\bfseries Well-bracketed} if its `question-answering' in P-views is performed always in the `last-question-first-answered' fashion: $\boldsymbol{s} q \boldsymbol{t} a \in \sigma$, where $\lambda^{\mathsf{QA}}(q) = \mathsf{Q}$, $\lambda^{\mathsf{QA}}(a) = \mathsf{A}$ and $\mathcal{J}_{\boldsymbol{s}q\boldsymbol{t}a}(a) = q$ implies that each occurrence of a question in $\boldsymbol{t'}$ with $\lceil \boldsymbol{s} q \boldsymbol{t} \rceil = \lceil \boldsymbol{s} q \rceil . \boldsymbol{t'}$\footnote{Note that $\lceil \boldsymbol{s} q \boldsymbol{t} \rceil_G$ must be of the form $\lceil \boldsymbol{s} q \rceil_G . \boldsymbol{t'}$ by visibility on $\boldsymbol{s} q \boldsymbol{t} a$ (Definition~\ref{DefLegalPositions}).} justifies an occurrence of an answer in $\boldsymbol{t'}$.;

\item \emph{\bfseries Noetherian} if there is no strictly increasing infinite sequence of elements in the set $\lceil \sigma \rceil \colonequals \{ \lceil \boldsymbol{s} \rceil \mid \boldsymbol{s} \in \sigma \, \}$ of all P-views in $\sigma$;

\item \emph{\bfseries Winning} if it is total, innocent and noetherian.

\end{itemize}
\end{definition}

\begin{example}
The t-skeletons $\top :: \boldsymbol{1}$ and $\underline{n} :: N$ for each $n \in \mathbb{N}$ are winning, while the t-skeleton $\bot :: \boldsymbol{0}$ is not even total, let alone winning, as desired. 
\end{example}

Intuitively, we regard winning t-skeletons as proofs (in classical logic) as follows.\footnote{We may further impose \emph{well-bracketing} \cite{abramsky1999game,yamada2016game,hyland2000full} on winning t-skeletons so that they would correspond to proofs in \emph{intuitionistic logic}. Nevertheless, it is not necessary for the present work, and therefore let us skip it for brevity.}
First, a proof should not get `stuck,' and so t-skeletons for proofs must be \emph{total}.
Next, recall that imposing \emph{innocence} on t-skeletons corresponds to excluding \emph{stateful} terms \cite{abramsky1999game}.
Since logic is concerned with truths of formulas, which are invariant with respect to `passage of time,' proofs should not depended on `states of arguments.' 
Thus, we impose innocence on t-skeletons for proofs.
Also, we impose \emph{noetherianity} on t-skeletons for proofs to handle infinite plays: If a play by an innocent, noetherian t-skeleton keeps growing infinitely then it cannot be P's `intention,' and so the play must be a `valid argument' or \emph{win} for P.
Technically, we need noetherianity since total t-skeletons are not closed under \emph{composition} but winning ones are \cite{abramsky1997semantics,yamada2016game,clairambault2010totality}.
Various \emph{full completeness} results in the literature \cite{abramsky1999game,abramsky2015games} indicate that winning is not only necessary but also sufficient as a characterization of t-skeletons for proofs. 
%Finally, recall that \emph{control operators} in computation corresponds via the CHIs  to allowing classical proofs in logic \cite{sorensen2006lectures}, and imposing \emph{well-bracketing} on t-skeletons to prohibiting programs with control \cite{abramsky1999game,laird1997full}.
%Hence, well-bracketing distinguishes t-skeletons for intuitionistic logic and those for classical logic. 

%We shall encounter a technical importance of these conditions later in this article. 

Finally, recall that we have already sketched constructions on games and skeletons \cite{abramsky1999game,mccusker1998games} in Section~\ref{IntroGameSemantics}, and constructions on t-skeletons are essentially the same as those on skeletons \cite{yamada2016game}.
Let us next present their mathematical formalizations:

\begin{convention}
For brevity, we omit `tags' for disjoint union $\uplus$ of sets of moves except the ones for exponential $\oc$. %keeping in mind that formally tags work as described just above Def.~\ref{DefEqualityBetweenGames}. 
For instance, we write $x \in A \uplus B$ if $x \in A$ or $x \in B$; also, given relations $R_A \subseteq A \times A$ and $R_B \subseteq B \times B$, we write $R_A \uplus R_B$ for the relation on the disjoint union $A \uplus B$ such that $(x, y) \in R_A \uplus R_B \stackrel{\mathrm{df. }}{\Leftrightarrow} (x, y) \in R_A \vee (x, y) \in R_B$. 
%For a possible implementation of `tags', see \cite{yamada2019game}.
\end{convention}

\begin{definition}[Constructions on arenas \cite{abramsky1999game,mccusker1998games,hyland2000full}]
Given arenas $A$ and $B$, we define arenas 
\begin{itemize}

\item $A \uplus B \colonequals (M_A \uplus M_B, \vdash_A \uplus \vdash_B)$; 

\item $A \multimap B \colonequals (\{ a^{\overline{x}y} \mid a^{xy} \in M_A \, \} \uplus M_B, \vdash_{A \multimap B})$, where $\overline{\mathsf{O}} \colonequals \mathsf{P}$, $\overline{\mathsf{P}} \colonequals \mathsf{O}$, $\star \vdash_{A \multimap B} m :\Leftrightarrow \star \vdash_B m$ and $m \vdash_{A \multimap B} n :\Leftrightarrow m \vdash_A n \vee m \vdash_B n \vee (\star \vdash_B m \wedge \star \vdash_A n)$; 

\item $\oc A \colonequals (\{ (a, i)^{x y} \mid a^{xy} \in M_A, i \in \mathbb{N} \, \}, \vdash_{\oc A})$, where  $\star \vdash_{\oc A} (a, i) :\Leftrightarrow \star \vdash_A a$ and $(a, i) \vdash_{\oc A} (a', i') :\Leftrightarrow i = i' \wedge a \vdash_A a'$.

\end{itemize}
\end{definition}

\begin{definition}[Tensor on games \cite{abramsky1999game,mccusker1998games}]
\label{DefTensorOfGames}
The \emph{\bfseries tensor} of games $G$ and $H$ is the game 
\begin{equation*}
G \otimes H \colonequals \{ \, \boldsymbol{s} \in \mathscr{L}_{\mathrm{Arn}(G) \uplus \mathrm{Arn}(H)} \mid \forall X \in \{ G, H \} . \, \boldsymbol{s} \upharpoonright X \in X \, \}, 
\end{equation*}
where $\boldsymbol{s} \upharpoonright X$ is the j-subsequence of $\boldsymbol{s}$ that consists of moves in $X$.
\end{definition}

\if0
As explained in \cite{abramsky1997semantics}, during a play of the tensor $A \otimes B$, in fact only Opponent can switch between the component games $A$ and $B$ (by alternation). 

\begin{example}%[An example of tensor]
\label{ExTensor}
Consider the tensor $N \otimes N$ of the flat natural number game $N$ with itself, whose maximal position is as depicted in either of the diagrams
\begin{center}
\begin{tabular}{ccccccccc}
$N$ & $\otimes$ & $N$ &&&& $N$ & $\otimes$ & $N$ \\ \cline{1-3} \cline{7-9}
\tikzmark{ctensor1} $q$&&&&&&&&\tikzmark{ctensor2} $q$ \\
\tikzmark{dtensor1} $n$&&&&&&&&\tikzmark{dtensor2} $m$ \\
&&\tikzmark{ctensor3} $q$&&&&\tikzmark{ctensor4} $q$&& \\
&&\tikzmark{dtensor3} $m$&&&&\tikzmark{dtensor4} $n$&&
\end{tabular}
\begin{tikzpicture}[overlay, remember picture, yshift=.25\baselineskip]
\draw [->] ({pic cs:dtensor1}) [bend left] to ({pic cs:ctensor1});
\draw [->] ({pic cs:dtensor2}) [bend left] to ({pic cs:ctensor2});
\draw [->] ({pic cs:dtensor3}) [bend left] to ({pic cs:ctensor3});
\draw [->] ({pic cs:dtensor4}) [bend left] to ({pic cs:ctensor4});
\end{tikzpicture}
\end{center}
where $n, m \in \mathbb{N}$, and the arrows represent pointers.
\end{example}
\fi

\begin{definition}[Linear implication on games \cite{abramsky1999game,mccusker1998games}]
\label{DefLinearImplication}
The \emph{\bfseries linear implication} between games $G$ and $H$ is the game 
\begin{equation*}
G \multimap H \colonequals \{ \, \boldsymbol{s} \in \mathscr{L}_{\mathrm{Arn}(G) \multimap \mathrm{Arn}(H)} \mid \forall X \in \{ G, H \} . \, \boldsymbol{s} \upharpoonright X \in X \, \}.
\end{equation*}
\end{definition}

By the alternation axiom on legal positions (Definition~\ref{DefLegalPositions}), it is easy to see that only O (resp. P) may switch between games $G$ and $H$ during a play in the tensor $G \otimes H$ (resp. the linear implication $G \multimap H$) \cite{abramsky1997semantics}, which matches the description of tensor (resp. linear implication) on games given in Section~\ref{IntroGameSemantics}.

\if0
Note that in the domain $A$ of $A \multimap B$ the roles of Player and Opponent are interchanged; it is only the difference between $A \multimap B$ and $A \otimes B$. %This simple formulation captures the notion of linear function.
Dually to $A \otimes B$, during a play of $A \multimap B$, only Player may switch between $A$ and $B$ (again by alternation) \cite{abramsky1997semantics}.
%Note that the case distinction on whether or not $B = T$ for $A \multimap B$ is just to preserve \emph{economy} of games (Def.~\ref{DefEconomicalGames}).

\begin{example}%[An example of linear implication]
A maximal position of the linear implication $N \multimap N$ is as depicted in either of the diagrams 
\begin{center}
\begin{tabular}{ccccccccc}
$N$ & $\multimap$ & $N$ &&&& $N$ &$\multimap$ & $N$ \\ \cline{1-3} \cline{7-9} 
&&\tikzmark{cmultimap21} $q$ \tikzmark{cmultimap23} &&&& &&$q$ \tikzmark{cmultimap24} \\
\tikzmark{cmultimap22} $q$ \tikzmark{dmultimap21}&& &&&& &&$m$ \tikzmark{dmultimap24} \\
\tikzmark{dmultimap22} $n$&& \\
&&$m$ \tikzmark{dmultimap23}
\end{tabular}
\begin{tikzpicture}[overlay, remember picture, yshift=.25\baselineskip]
\draw [->] ({pic cs:dmultimap21}) to ({pic cs:cmultimap21});
\draw [->] ({pic cs:dmultimap22}) [bend left] to ({pic cs:cmultimap22});
\draw [->] ({pic cs:dmultimap23}) [bend right] to ({pic cs:cmultimap23});
\draw [->] ({pic cs:dmultimap24}) [bend right] to ({pic cs:cmultimap24});
\end{tikzpicture}
\end{center}
where $n, m \in \mathbb{N}$. 
The left diagram represents a \emph{strict} linear function as it asks an input before producing an output, while the right diagram does a non-strict one. 
%It should be clear now that plays of the game $(N \multimap N) \multimap N$ are as in Sect.~\ref{PreMathematicalIntroduction}. 
%(though we used the \emph{implication} or the \emph{function space} $\Rightarrow$, rather than the linear one $\multimap$, there).
\end{example}
\fi

\begin{definition}[Product on games \cite{abramsky1999game,mccusker1998games}]
\label{DefProduct}
The \emph{\bfseries product} of games $G$ and $H$ is the game 
\begin{align*}
G \mathbin{\&} H &\colonequals \{ \, \boldsymbol{s} \in \mathscr{L}_{\mathrm{Arn}(G) \mathbin{\uplus} \mathrm{Arn}(H)} \mid (\boldsymbol{s} \upharpoonright G \in G \wedge \boldsymbol{s} \upharpoonright H = \boldsymbol{\epsilon}) \vee (\boldsymbol{s} \upharpoonright G = \boldsymbol{\epsilon} \wedge \boldsymbol{s} \upharpoonright H \in H) \, \}.
\end{align*}
\end{definition}

\begin{definition}[Exponential on games \cite{mccusker1998games}]
\label{DefExponential}
The \emph{\bfseries exponential} of a game $G$ is the game 
\begin{equation*}
\oc G \colonequals \{ \boldsymbol{s} \in \mathscr{L}_{\oc \mathrm{Arn}(G)} \mid \forall i \in \mathbb{N} . \, \boldsymbol{s} \upharpoonright i \in G \, \}, 
\end{equation*}
where $\boldsymbol{s} \upharpoonright i$ is the j-subsequence of $\boldsymbol{s}$ that consists of moves $(a, i)$, where $a \in M_G$, changed into $a$.
\end{definition}

\if0
\begin{example}%[An example of implication]
\label{ExExponential}
In the game $\boldsymbol{2} \mathbin{\&} \boldsymbol{2} \multimap \boldsymbol{2}$, Player has to select on the domain $\boldsymbol{2} \mathbin{\&} \boldsymbol{2}$ one of the two $\boldsymbol{2}$:
\begin{center}
\begin{tabular}{ccccccccccccc}
$\boldsymbol{2}$ & $\mathbin{\&}$ & $\boldsymbol{2}$ & $\multimap$ & $\boldsymbol{2}$ &&&& $\boldsymbol{2}$ & $\mathbin{\&}$ & $\boldsymbol{2}$ & $\multimap$ & $\boldsymbol{2}$ \\ \cline{1-5} \cline{9-13}
&&&& \tikzmark{cimplication2} $q$ \tikzmark{cimplication1} &&&&&&&& \tikzmark{cimplication5} $q$ \tikzmark{cimplication4} \\
\tikzmark{cimplication3} $q$ \tikzmark{dimplication2} &&&&&&&&&& \tikzmark{cimplication6} $q$ \tikzmark{dimplication5} && \\
\tikzmark{dimplication3} $b^{(1)}$&&&&&&&&&& \tikzmark{dimplication6} $b^{(1)}$&& \\
&&&&$b^{(2)}$ \tikzmark{dimplication1} &&&&&&&&$b^{(2)}$ \tikzmark{dimplication4}
\end{tabular}
\begin{tikzpicture}[overlay, remember picture, yshift=.25\baselineskip]
\draw [->] ({pic cs:dimplication1}) [bend right] to ({pic cs:cimplication1});
\draw [->] ({pic cs:dimplication2}) to ({pic cs:cimplication2});
\draw [->] ({pic cs:dimplication3}) [bend left] to ({pic cs:cimplication3});
\draw [->] ({pic cs:dimplication4}) [bend right] to ({pic cs:cimplication4});
\draw [->] ({pic cs:dimplication5}) to ({pic cs:cimplication5});
\draw [->] ({pic cs:dimplication6}) [bend left] to ({pic cs:cimplication6});
\end{tikzpicture}
\end{center}
where $b^{(1)}, b^{(2)} \in \mathbb{B}$. 
On the other hand, in the game $\boldsymbol{2} \mathbin{\&} \boldsymbol{2} \Rightarrow \boldsymbol{2} = \oc (\boldsymbol{2} \mathbin{\&} \boldsymbol{2}) \multimap \boldsymbol{2}$ Player does not have that restriction; e.g., she can play as 
\begin{center}
\begin{tabular}{ccccccccccccc}
$\oc(\boldsymbol{2}$ & $\mathbin{\&}$ & $\boldsymbol{2})$ & $\multimap$ & $\boldsymbol{2}$ &&&& $\oc(\boldsymbol{2}$ & $\mathbin{\&}$ & $\boldsymbol{2})$ & $\multimap$ & $\boldsymbol{2}$ \\ \cline{1-5} \cline{9-13}
&&&& \tikzmark{cimplication17} $q$ \tikzmark{cimplication16} &&&&&&&& \tikzmark{cimplication8} $q$ \tikzmark{cimplication7} \\
\tikzmark{cimplication18} $(q, 0)$ \tikzmark{dimplication17} &&&&&&&&&& \tikzmark{cimplication9} $(q, 10)$ \tikzmark{dimplication8} && \\
\tikzmark{dimplication18} $(b^{(1)}, 0)$&&&&&&&&&& \tikzmark{dimplication9} $(b^{(1)}, 10)$&& \\
&& \tikzmark{cimplication20} $(q, 1)$ \tikzmark{dimplication19} &&&&&& \tikzmark{cimplication11} $(q, 7)$ \tikzmark{dimplication10} &&&& \\
&& \tikzmark{dimplication20} $(b^{(2)}, 1)$&&&&&& \tikzmark{dimplication11} $(b^{(2)}, 7)$&&&& \\
&&&&$b^{(3)}$ \tikzmark{dimplication16} &&&& \tikzmark{cimplication13} $(q, 4)$ \tikzmark{dimplication12} &&&& \\
&&&&&&&& \tikzmark{dimplication13} $(b^{(3)}, 4)$&&&& \\
%&&&&&&&&&& \tikzmark{cimplication15} $(q, 100)$ \tikzmark{dimplication14} && \\
%&&&&&&&&&& \tikzmark{dimplication15} $(b^{(4)}, 100)$&& \\
&&&&&&&&&&&& $b^{(5)}$ \tikzmark{dimplication7}
\end{tabular}
\begin{tikzpicture}[overlay, remember picture, yshift=.25\baselineskip]
\draw [->] ({pic cs:dimplication7}) [bend right] to ({pic cs:cimplication7});
\draw [->] ({pic cs:dimplication8}) to ({pic cs:cimplication8});
\draw [->] ({pic cs:dimplication9}) [bend left] to ({pic cs:cimplication9});
\draw [->] ({pic cs:dimplication10}) [bend right]  to ({pic cs:cimplication8});
\draw [->] ({pic cs:dimplication11}) [bend left] to ({pic cs:cimplication11});
\draw [->] ({pic cs:dimplication12}) [bend right] to ({pic cs:cimplication8});
\draw [->] ({pic cs:dimplication13}) [bend left] to ({pic cs:cimplication13});
%\draw [->] ({pic cs:dimplication14}) to ({pic cs:cimplication8});
%\draw [->] ({pic cs:dimplication15}) [bend left] to ({pic cs:cimplication15});
\draw [->] ({pic cs:dimplication16}) [bend right] to ({pic cs:cimplication16});
\draw [->] ({pic cs:dimplication17}) to ({pic cs:cimplication17});
\draw [->] ({pic cs:dimplication18}) [bend left] to ({pic cs:cimplication18});
\draw [->] ({pic cs:dimplication19}) to ({pic cs:cimplication17});
\draw [->] ({pic cs:dimplication20}) [bend left] to ({pic cs:cimplication20});
\end{tikzpicture}
\end{center}
where $b^{(1)}, b^{(2)}, b^{(3)}, b^{(4)}, b^{(5)} \in \mathbb{B}$. 
Hence, e.g., Player may play as conjunction $\wedge : \mathbb{B} \times \mathbb{B} \rightarrow \mathbb{B}$ or disjunction $\vee : \mathbb{B} \times \mathbb{B} \rightarrow \mathbb{B}$ on the implication $\boldsymbol{2} \mathbin{\&} \boldsymbol{2} \Rightarrow \boldsymbol{2}$ in the evident manner, but not on the linear implication $\boldsymbol{2} \mathbin{\&} \boldsymbol{2} \multimap \boldsymbol{2}$.
This example illustrates why the space of (ordinary) functions corresponds in game semantics to implication $\Rightarrow$, not linear implication $\multimap$, between games.

Note that on the game $\boldsymbol{2} \mathbin{\&} \boldsymbol{2} \Rightarrow \boldsymbol{2}$ it is possible for Player to `intentionally' keep playing in the domain $\oc(2 \mathbin{\&} 2)$ \emph{infinitely}, but it should be seen as a \emph{loss} or a \emph{defeat} of Player since intuitively a `valid argument' or proof should not just prolong a play.
We formalize this idea precisely in Sect.~\ref{Strategies} by the notion of \emph{noetherian} (and even stronger \emph{winning}) strategies.
%(One may wonder it may work to simply use tensor product instead of product and exponential for the usual implication, but tensor product does not give rise to a categorical product\footnote{For instance, we cannot have the \emph{diagonal strategy} $\Delta_N : N \to N \times N$ if we interpret $N \to N \times N$ as $N \multimap N \otimes N$ because there is at most just one thread in the domain.}, and so it cannot interpret even simple computation.)
\end{example}
\fi

\if0
\begin{theorem}[Well-defined constructions on games]
\label{ThmConstructionsOnGames}
Games are closed under tensor $\otimes$, linear implication $\multimap$, product $\mathbin{\&}$ and exponential $\oc$, and they preserve the subgame relation $\trianglelefteqslant$.
%Moreover, $\multimap$ and $\mathbin{\&}$ preserve economy, well-openness and well-foundedness, while $\otimes$ and $!$ preserve economy and well-foundedness.
\end{theorem}
\begin{proof}
See \cite{mccusker1998games} for closure of MC-games under the corresponding constructions, where they clearly preserve well-foundedness of arenas and economy of MC-games. 
Thus, games are closed under the constructions.
%economy and view-closure of MC-games (n.b., preservation of P-view-closure under linear implication $\multimap$ is by visibility), and well-foundedness of arenas; thus, they are well-defined on games.
Finally, preservation of $\trianglelefteqslant$ is just straightforward.
\end{proof}

\begin{notation}
Exponential $\oc$ precedes any other constructions on games, while tensor $\otimes$ and product $\mathbin{\&}$ do linear implication $\multimap$. 
%Also, tensor and product are left associative, while linear implication is right associative.
%For instance, $\oc A \multimap B = (\oc A) \multimap B$, $A \otimes \oc B \multimap \oc C \mathbin{\&} D = (A \otimes (\oc B)) \multimap ((\oc C) \mathbin{\&} D)$, $A \mathbin{\&} B \mathbin{\&} C = (A \mathbin{\&} B) \mathbin{\&} C$ and $A \multimap B \multimap C = A \multimap (B \multimap C)$.
\end{notation}
\fi

\begin{definition}[Constructions on tree skeletons between games \cite{yamada2016game}]
\label{DefConstructionsOnT-Skeletons}
Given t-skeletons $\phi :: A \multimap B$, $\sigma :: C \multimap D$, $\tau :: A \multimap C$, $\psi :: B \multimap C$ and $\varphi :: \oc A \multimap B$ between games, we define
\begin{itemize}

\item The \emph{\bfseries tensor} $\phi \otimes \sigma :: A \otimes C \multimap B \otimes D$ of $\phi$ and $\sigma$ by
\begin{equation*}
\phi \otimes \sigma \colonequals \{ \, \boldsymbol{s} \in \mathscr{L}_{\mathrm{Arn}(A \otimes C \multimap B \otimes D)} \mid \boldsymbol{s} \upharpoonright A, B \in \phi, \boldsymbol{s} \upharpoonright C, D \in \sigma \, \},
\end{equation*}
where $\boldsymbol{s} \upharpoonright A, B$ (resp. $\boldsymbol{s} \upharpoonright C, D$) is the j-subsequence of $\boldsymbol{s}$ that consists of moves in $A$ or $B$ (resp. $C$ or $D$);

\item The \emph{\bfseries pairing} $\langle \phi, \tau \rangle :: A \multimap B \mathbin{\&} C$ of $\sigma$ and $\tau$ by
\begin{equation*}
\langle \phi, \tau \rangle \colonequals \{ \, \boldsymbol{s} \in \mathscr{L}_{\mathrm{Arn}(A \multimap B \mathbin{\&} C)} \mid (\boldsymbol{s} \upharpoonright A, B \in \phi \wedge \boldsymbol{s} \upharpoonright C = \boldsymbol{\epsilon}) \vee (\boldsymbol{s} \upharpoonright A, C \in \tau \wedge \boldsymbol{s} \upharpoonright B = \boldsymbol{\epsilon}) \, \};
\end{equation*}

\item The \emph{\bfseries composition} $\phi ; \psi :: A \multimap C$ (also written $\psi \circ \phi$) of $\phi$ and $\psi$ by
\begin{equation*}
\phi ; \psi \colonequals \{ \, \boldsymbol{s} \upharpoonright A, C \mid \boldsymbol{s} \in \phi \parallel \psi \, \},
\end{equation*}
where $\phi \parallel \psi \colonequals \{ \, \boldsymbol{s} \in \mathscr{J}_{\mathrm{Arn}(((A \multimap B^{[0]}) \multimap B^{[1]}) \multimap C)} \mid \boldsymbol{s} \upharpoonright A, B^{[0]} \in \phi, \boldsymbol{s} \upharpoonright B^{[1]}, C \in \psi, \boldsymbol{s} \upharpoonright B^{[0]}, B^{[1]} \in \mathrm{cp}_B \, \}$, the superscripts $(\_)^{[i]}$ are to distinguish the two copies of $B$, and $\mathrm{cp}_B \colonequals \{ \, \boldsymbol{t} \in B^{[0]} \multimap B^{[1]} \mid \forall \boldsymbol{u} \preceq \boldsymbol{t} . \, \mathrm{Even}(\boldsymbol{u}) \Rightarrow \boldsymbol{u} \upharpoonright B^{[0]} = \boldsymbol{u} \upharpoonright B^{[1]} \, \}$;

\item The \emph{\bfseries promotion} $\varphi^\dagger :: \oc A \multimap \oc B$ of $\varphi$ by
\begin{equation*}
\varphi^\dagger \colonequals \{ \, \boldsymbol{s} \in \mathscr{L}_{\mathrm{Arn}(\oc A \multimap \oc B)} \mid \forall i \in \mathbb{N} . \ \! \boldsymbol{s} \upharpoonright i \in \varphi \, \},
\end{equation*}
where $\boldsymbol{s} \upharpoonright i$ is the j-subsequence of $\boldsymbol{s}$ that consists of moves of the form $(b, i)$ such that $b \in M_B$ and $i \in \mathbb{N}$, or $(a, \langle i, j \rangle)$ such that $a \in M_A$ and $i, j \in \mathbb{N}$, changed into $b$ and $(a, j)$, respectively, and $\langle \_, \_ \rangle : \mathbb{N} \times \mathbb{N} \stackrel{\sim}{\rightarrow} \mathbb{N}$ is an arbitrary bijection fixed throughout this article. %whose inverse is the pairing $\langle \pi_1, \pi_2 \rangle$. 
\end{itemize}
\end{definition}

\begin{notation}
\label{NotationOnSkeletons}
We employ the following notations:
\begin{itemize}

\item Given a t-skeleton $\sigma :: G$, we write $\sigma^{\boldsymbol{1}} :: \boldsymbol{1} \multimap G$ and $\sigma^{\oc \boldsymbol{1}} :: \boldsymbol{1} \Rightarrow G$ for the t-skeletons both of which coincide with $\sigma$ up to `tags.'

\item Given t-skeletons $\phi :: \boldsymbol{1} \multimap G$ and $\phi' :: \boldsymbol{1} \Rightarrow G$, we write $\phi_{\boldsymbol{1}}, \phi'_{\oc \boldsymbol{1}} :: G$ for the t-skeletons that coincide with $\phi$ and $\phi'$ up to `tags,' respectively.

\item Given t-skeletons $\psi :: A \multimap B$ and $\alpha :: A$, we define $\psi \circ \alpha  \colonequals (\psi \circ \alpha^{\boldsymbol{1}})_{\boldsymbol{1}} :: B$.

\item Given t-skeletons $\alpha :: A$ and $\beta :: B$, we define $\alpha \otimes \beta \colonequals ((\alpha^{\boldsymbol{1}} \otimes \beta^{\boldsymbol{1}}) \circ \Delta)_{\boldsymbol{1}} :: A \otimes B$, where $\Delta$ is the unique t-skeleton on $\boldsymbol{1} \multimap \boldsymbol{1} \otimes \boldsymbol{1}$, and $\langle \alpha, \beta \rangle \colonequals \langle \alpha^{\boldsymbol{1}}, \beta^{\boldsymbol{1}} \rangle_{\boldsymbol{1}} :: A \mathbin{\&} B$.

\item Given a t-skeleton $\alpha :: A$, we define $\alpha^\dagger \colonequals ((\alpha^{\oc \boldsymbol{1}})^\dagger)_{\oc \boldsymbol{1}} :: \oc A$.

\item Given an innocent t-skeleton $\theta :: \oc G$, we write $\theta^\ddagger :: G$ for the unique (and necessarily innocent) t-skeleton that satisfies $(\theta^\ddagger)^\dagger = \theta$.

\end{itemize}
\end{notation}

\subsection{Consistency/completeness of tree skeletons and universal identification}
\label{ConsistencyAndCompletenessOfTreeSkeletonsAndUniversalIdentification}
Let us next recall a few more preliminary concepts for p-games: \emph{consistency} and \emph{completeness} of t-skeletons, and the \emph{universal identification} on j-sequences. 

Roughly, a nonempty set $\mathcal{S}$ of t-skeletons is \emph{consistent} if there is a game $G$ such that every element of $\mathcal{S}$ is a t-skeleton on $G$, or equivalently:
\begin{definition}[Consistency of tree skeletons \cite{yamada2016game}]
\label{DefConsistencyOfTSkeletons}
A nonempty set $\mathcal{S}$ of t-skeletons is \emph{\bfseries consistent} if
\begin{enumerate}

\item The arena $(\bigcup_{\sigma \in \mathcal{S}}M_\sigma, \bigcup_{\sigma \in \mathcal{S}}\vdash_\sigma)$ is well-founded;

\item $\forall \sigma, \tau \in \mathcal{S}, \boldsymbol{s}m \in (\sigma \cup \tau)^{\mathrm{Odd}} . \, \boldsymbol{s} \in (\sigma \cap \tau) \Rightarrow \boldsymbol{s}m \in (\sigma \cap \tau)$.

\end{enumerate}

We write $\sigma \asymp \tilde{\sigma}$ and say that t-skeletons $\sigma$ and $\tilde{\sigma}$ are \emph{\bfseries consistent} if the two-element set $\{ \sigma, \tilde{\sigma} \, \}$ is consistent. 
\end{definition}

The union $\bigcup \mathcal{S}$ of a consistent set $\mathcal{S}$ of t-skeletons forms a game such that each element of $\mathcal{S}$ is a t-skeleton on $\bigcup \mathcal{S}$ (but not necessarily vice versa), and conversely the set of all t-skeletons on a game $G$ is consistent. 
Hence, Definition~\ref{DefConsistencyOfTSkeletons} in fact formulates the intended meaning of consistency on t-skeletons. 
%Note that some t-skeleton on the game $\bigcup \mathcal{S}$ may not be in $\mathcal{S}$.
E.g., given a set $\mathcal{P}$ of t-skeletons, each consistent subset $\mathcal{S} \subseteq \mathcal{P}$ (not necessarily faithfully) identifies a game $\bigcup \mathcal{S}$ contained in $\mathcal{P}$; we shall utilize consistency on t-skeletons in this way. 

We may further impose \emph{completeness} on consistent sets of t-skeletons such that complete sets of t-skeletons correspond \emph{bijectively} to games:
\begin{definition}[Completeness of tree skeletons \cite{yamada2016game}]
A consistent set $\mathcal{S}$ of t-skeletons is \emph{\bfseries complete} if any subset $\mathcal{A} \subseteq \bigcup \mathcal{S}$ is an element of $\mathcal{S}$ whenever it is a t-skeleton on the game $\bigcup \mathcal{S}$.
\end{definition}

By a bijection $G \stackrel{\sim}{\mapsto} \{ \, \sigma \mid \sigma :: G \, \}$ between games $G$ and complete sets $\{ \, \sigma \mid \sigma :: G \, \}$ of t-skeletons $\sigma$, we may identify games with complete sets of t-skeletons as the previous work \cite{yamada2016game} does. 
We encourage the reader to see that the first problem in modeling sigma-types by games sketched in Section~\ref{IntroPartI}, i.e., there is no game that models the sigma-type $\mathsf{\Sigma (N, N_b)}$, is precisely due to completeness of games. 

As explained in Section~\ref{IntroPartI}, the main idea of p-games is to replace games with complete sets of t-skeletons and then discard completeness, and even consistency, for which we suggest the reader to observe that the second problem in modeling sigma-types by games sketched in Section~\ref{IntroPartI}, i.e., there is no game that models the sigma-type $\mathsf{\Sigma (N, List_N)}$, is due to consistency of games.

\if0
\begin{definition}[Constraints on t-skeletons \cite{yamada2016game}]
A t-skeleton $S$ is
\begin{itemize}

\item \emph{\bfseries Total} if $\forall \boldsymbol{s} m \in S^{\mathrm{Odd}} . \, \exists \boldsymbol{s} m n \in S$;

\item \emph{\bfseries Innocent} if $\forall \boldsymbol{s} m n \in S^{\mathrm{Even}}, \boldsymbol{\tilde{s}} \tilde{m} \in S^{\mathrm{Odd}} . \, \lceil \boldsymbol{s} m \rceil = \lceil \boldsymbol{\tilde{s}} \tilde{m} \rceil \Rightarrow \exists \boldsymbol{\tilde{s}} \tilde{m} \tilde{n} \in S^{\mathrm{Even}} . \, \lceil \boldsymbol{s} m n \rceil = \lceil \boldsymbol{\tilde{s}} \tilde{m} \tilde{n} \rceil$;

\item \emph{\bfseries Well-bracketed} if it contains only well-bracketed j-sequences;

\item \emph{\bfseries Noetherian} if there is no strictly increasing infinite sequence of elements in the set $\lceil S \rceil \colonequals \{ \lceil \boldsymbol{s} \rceil \mid \boldsymbol{s} \in S \, \}$ of all P-views in $S$;

\item \emph{\bfseries Winning} if it is total, innocent and noetherian.

\end{itemize}
\end{definition}
\fi

Next, let us review the \emph{universal identification} on j-sequences.
Recall first that traditionally each game $G$ comes together with a certain equivalence relation $\simeq_G$ on its positions, called the \emph{identification (of positions)}  \cite{abramsky2000full,mccusker1998games}. 
The identification $\simeq_G$ is to identify positions in $G$ \emph{up to inessential details of `tags' on moves} so that the resulting game semantics matches the abstraction degree of terms in type theories. 

Nevertheless, the previous work \cite{yamada2016game} observes that the identifications proposed in the literature identify positions always in the same way and shows that we may therefore replace the identifications equipped on games with a single equivalence relation $\simeq_{\mathscr{U}}$ on j-sequences, called the \emph{universal identification}. 
By this unification of identifications, t-skeletons do not need an identification either, which facilitates key concepts in \cite{yamada2016game} such as \emph{p-strategies} (Definition~\ref{DefPredicativeStrategies}). 

The technical detail of the universal identification $\simeq_{\mathscr{U}}$ is slightly involved (see \cite{yamada2016game} for its precise definition), but the idea is straightforward: 
\begin{definition}[Universal identification, informally \cite{yamada2016game}]
\label{DefUniversalIdentification}
The \emph{\bfseries universal identification} $\simeq_{\mathscr{U}}$ holds between positions $\boldsymbol{s}, \boldsymbol{t} \in G$ in a game $G$, for which we write $\boldsymbol{s} \simeq_{\mathscr{U}} \boldsymbol{t}$, if $\boldsymbol{s}$ and $\boldsymbol{t}$ are the same j-sequence up to permutation of natural numbers $i \in \mathbb{N}$ serving as `tags' $(\_, i)$ on the same occurrence of exponential $\oc$ in $G$.
\end{definition}

For example, by any bijection $f : \mathbb{N} \stackrel{\sim}{\rightarrow} \mathbb{N}$ that satisfies $f(0) = 1$, we have $q . (q, 0) . (n, 0) . m \simeq_{\mathscr{U}} q . (q, 1) . (n, 1) . m$ for $q . (q, 0) . (n, 0) . m, q . (q, 1) . (n, 1) . m \in N \Rightarrow N$.

Finally, we henceforth focus on games and t-skeletons whose moves are \emph{standard} so that we can implement the universal identification $\simeq_{\mathscr{U}}$ on them.
Also, for technical convenience, we require that they are \emph{saturated} with respect to $\simeq_{\mathscr{U}}$, which leads to:
\begin{definition}[Standard games and t-skeletons, informally \cite{yamada2016game}]
\label{DefStandardMovesAndTSkeletons}
Fix a set $\mathscr{M}$ of moves, called \emph{\bfseries standard moves}, that allows us to accommodate all the games and the constructions on them given in \cite{abramsky1999game,yamada2016game} but also to implement the universal identification $\simeq_{\mathscr{U}}$ (see \cite{yamada2016game} for concrete implementations of $\mathscr{M}$ and $\simeq_{\mathscr{U}}$).
We write $\mathscr{J}(\mathscr{M})$ for the set of all j-sequences whose elements are standard moves.

A game $G$ is \emph{\bfseries standard} if $\{ \, \boldsymbol{s} \in \mathscr{J}(\mathscr{M}) \mid \exists \boldsymbol{t} \in G . \, \boldsymbol{s} \simeq_{\mathscr{U}} \boldsymbol{t} \, \} \subseteq G \subseteq \mathscr{J}(\mathscr{M})$, and a t-skeleton $\sigma$ is \emph{\bfseries standard} if $\sigma \subseteq \mathscr{J}(\mathscr{M})$ and $(\boldsymbol{s}m \simeq_{\mathscr{U}} \boldsymbol{s}m' \wedge \boldsymbol{s}m \in \sigma^{\mathrm{Odd}}) \Rightarrow \boldsymbol{s}m' \in \sigma$.
\end{definition}

It is straightforward to see that standard games and standard t-skeletons are closed under the constructions reviewed in this section; see \cite{yamada2016game} for the details.

\subsection{Predicative games and predicative strategies}
\label{PredicativeGamesAndPredicativeStrategies}
We are now ready to recall a central concept in the game semantics of MLTT \cite{yamada2016game}:
\begin{definition}[Predicative games \cite{yamada2016game}]
\label{DefPredicativeGames}
A \emph{\bfseries predicative (p-) game} is a nonempty set $G$ of standard t-skeletons, called \emph{\bfseries tree (t-) skeletons on $\boldsymbol{G}$}, such that the union $P_G \colonequals \bigcup G$ forms a standard game that satisfies
\begin{enumerate}

\item \textsc{(Det-j completeness)} The set $G$ is \emph{\bfseries deterministic-join (det-j) complete}: If a  consistent subset $\mathcal{S} \subseteq G$ is \emph{deterministic}, i.e., $\boldsymbol{s}mn, \boldsymbol{s}mn' \in \bigcup \mathcal{S}^{\mathrm{Even}}$ implies $\boldsymbol{s}mn = \boldsymbol{s}mn'$, then $\bigcup \mathcal{S} \in G$;

\item \textsc{(Downward completeness)} The set $G$ is \emph{\bfseries downward complete}: If standard t-skeletons $\sigma \in G$ and $\tilde{\sigma} \subseteq P_G$ satisfy $\tilde{\sigma} \leqslant_G \sigma$ then $\tilde{\sigma} \in G$, where $\leqslant_G$ is the partial order on t-skeletons $\sigma, \tilde{\sigma} \subseteq P_G$ defined by
\begin{equation*}
\sigma \leqslant_G \tilde{\sigma} \stackrel{\mathrm{df. }}{\Leftrightarrow} \sigma \asymp \tilde{\sigma} \wedge \sigma \subseteq \tilde{\sigma};
\end{equation*}

\item \textsc{(Horizontal completeness)} The set $G$ is \emph{\bfseries horizontally complete}: If standard t-skeletons $\sigma \in G$ and $\tilde{\sigma} \subseteq P_G$ satisfy $\sigma \simeq_G \tilde{\sigma}$ then $\tilde{\sigma} \in G$, where $\simeq_G$, called the \emph{\bfseries identification of t-skeletons} on $G$, is the symmetric closure of the preorder $\lesssim_G$ on standard t-skeletons $\sigma, \tilde{\sigma} \subseteq P_G$ defined by
\begin{equation*}
\sigma \lesssim_G \tilde{\sigma} \stackrel{\mathrm{df. }}{\Leftrightarrow} \sigma \asymp \tilde{\sigma} \wedge \forall \boldsymbol{s}mn \in \sigma^{\mathrm{Even}}, \boldsymbol{\tilde{s}}\tilde{m} \in \tilde{\sigma} . \, \boldsymbol{s}m \simeq_G \boldsymbol{\tilde{s}}\tilde{m} \Rightarrow \exists \boldsymbol{\tilde{s}}\tilde{m}\tilde{n} \in \tilde{\sigma} . \, \boldsymbol{s}mn \simeq_{\mathscr{U}} \boldsymbol{\tilde{s}}\tilde{m}\tilde{n}.
\end{equation*}

\if0
\item \textsc{(Local independence)} For each maximal consistent subset $\mathcal{S} \subseteq \mathcal{TS}(G)$, the pair $(\bigcup \mathcal{S}, \simeq_G \cap \ (\bigcup \mathcal{S} \times \bigcup \mathcal{S}))$ forms a standard game. 
\fi

\end{enumerate}

We write $\sigma :: G$ for $\sigma \in G$. %and call elements of $P_G$ \emph{\bfseries actual positions} of $G$.
A \emph{\bfseries (valid) position} in $G$ is a prefix of a sequence $q_G \sigma \boldsymbol{s}$ such that $\sigma :: G$ and $\boldsymbol{s} \in \sigma$, where $q_G$ is any distinguished element. %and the prefix $q_G \sigma$ is called the \emph{\bfseries initial protocol} in $q_G \sigma \boldsymbol{s}$.
%A \emph{\bfseries play} in $G$ is a (finite or infinite) sequence $(\boldsymbol{\epsilon}, q_G, q_G \sigma, q_G \sigma m_1, q_G \sigma m_1 m_2, \dots)$ of positions in $G$. %where the sequence $(\boldsymbol{\epsilon}, m_1, m_1m_2, \dots)$ is called an \emph{\bfseries actual play} of $G$.
\end{definition}

Conceptually, a play of a p-game $G$ proceeds as follows.
At the beginning, \emph{\bfseries Judge (J)} asks P a question $q_G$ `What is your t-skeleton?,' and P answers it by some $\sigma :: G$; then, an `actual play' between O and P in the `declared' t-skeleton $\sigma$ follows.
%That is, after an \emph{initial protocol} $q_G \sigma$ and given an even-length position $q_G \sigma \boldsymbol{s}$ in $G$, O may only perform an O-move $m$ such that $\boldsymbol{s} m \in \sigma$, if any, and then P must perform the unique P-move $n$ such that $\boldsymbol{s} mn \in \sigma$, if any, and none otherwise, and so on. 
%resulting in a play $(\boldsymbol{\epsilon}, q_G, q_G \sigma, \dots, q_G \sigma \boldsymbol{s}, q_G \sigma \boldsymbol{s}m, q_G \sigma \boldsymbol{s}mn, \dots)$ in $G$.

\if0
\begin{convention}
We often say that a p-game $G$ is \emph{det-j} (resp. \emph{downward}, \emph{horizontally}) \emph{complete} instead of saying that so is the set $\mathcal{TS}(G)$ of its all standard t-skeletons.
%Similarly, we say that $G$ is \emph{locally independent} to mean that it satisfies the local independence axiom.
\end{convention}
\fi

The main point of a p-game $G$ is that $G$ may be incomplete so that it is more general than a game.
Moreover, $G$ may be even inconsistent so that $\sigma :: G$ ranges over t-skeletons \emph{on different games}; P \emph{fixes} a game when she answers J. 
In this way, p-games solve the problems in game semantics of sigma-types (raised in Section~\ref{IntroPartI}) and indeed model sigma-types; see the original article \cite{yamada2016game} for the details. 
%This is the central idea of work \cite{yamada2016game} as mentioned in the introduction. 

\if0
In this sense, $G$ is something like a \emph{family of games}, where each component game of $G$ corresponds to a maximal consistent subset $\mathcal{S} \subseteq \mathcal{TS}(G)$, i.e., the component game is the union game $(\bigcup \mathcal{S}, \simeq_G \cap \ (\bigcup \mathcal{S} \times \bigcup \mathcal{S}))$ (Definition~\ref{DefUnionGames}). 
For this point, we show:
\begin{lemma}[Local independence \cite{yamada2016game}]
\label{LemLocalIndependence}
Let $G$ be a p-game.
For each maximal consistent subset $\mathcal{S} \subseteq \mathcal{TS}(G)$, the pair $(\bigcup \mathcal{S}, \simeq_G \cap \ (\bigcup \mathcal{S} \times \bigcup \mathcal{S}))$ is a standard game.
\end{lemma}

\begin{remark}
The \emph{universal} identification $\simeq_{\mathscr{U}}$ of j-sequences and \emph{standardness} of the underlying game $(P_G, \simeq_G)$ on each p-game $G$ are crucial in the proof of Lemma~\ref{LemLocalIndependence}.
\end{remark}
\fi

For the three axioms on p-games, let us briefly mention that det-j completeness is vital for the domain-theoretic nature of p-games, downward completeness for linear implication between p-games, and horizontal completeness for p-strategies; see \cite{yamada2016game}. 

\begin{example}
\label{ExamplesOfPredicativeGames}
Given a set $S$, define the p-game $\mathrm{flat}(S) \colonequals \{ \, \underline{x} \mid x \in S \, \} \cup \{ \bot \}$, where $\underline{x} \colonequals \mathrm{Pref}(\{ q^{\mathsf{OQ}} x^{\mathsf{PA}} \})$ and $\bot \colonequals \mathrm{Pref}(\{ q^{\mathsf{OQ}} \})$.
Let us then call the p-games $\boldsymbol{0} \colonequals \mathrm{flat}(\emptyset)$ and $N \colonequals \mathrm{flat}(\mathbb{N})$, respectively, the \emph{\bfseries empty predicative (p-) game} and the \emph{\bfseries natural number predicative (p-) game}.
In addition, define another simple p-game $\boldsymbol{1} \colonequals \{ \top \}$, where $\top \colonequals \{ \boldsymbol{\epsilon} \}$, called the \emph{\bfseries unit predicative (p-) game}. 

Some maximal positions of these p-games are depicted in the following diagram:
\begin{mathpar}
\begin{tabular}{ccc}
&$N$ \\ \cline{2-2}
&$q_{N}$ \\
&$\underline{0}$ \\
& \tikzmark{chundred1} $q$ \\
& \tikzmark{dhundred1} $0$
\end{tabular}
\begin{tikzpicture}[overlay, remember picture, yshift=.25\baselineskip]
\draw [->] ({pic cs:dhundred1}) [bend left] to ({pic cs:chundred1});
\end{tikzpicture}
\and
\begin{tabular}{ccc}
&$N$ \\ \cline{2-2}
&$q_{N}$ \\
&$\underline{10}$ \\
& \tikzmark{chundred2} $q$ \\
& \tikzmark{dhundred2} $10$
\end{tabular}
\begin{tikzpicture}[overlay, remember picture, yshift=.25\baselineskip]
\draw [->] ({pic cs:dhundred2}) [bend left] to ({pic cs:chundred2});
\end{tikzpicture}
\and
\begin{tabular}{ccc}
&$N$ \\ \cline{2-2}
&$q_{N}$ \\
&$\underline{100}$ \\
& \tikzmark{chundred3} $q$ \\
& \tikzmark{dhundred3} $100$
\end{tabular}
\begin{tikzpicture}[overlay, remember picture, yshift=.25\baselineskip]
\draw [->] ({pic cs:dhundred3}) [bend left] to ({pic cs:chundred3});
\end{tikzpicture}
\and
\begin{tabular}{ccc}
& $\boldsymbol{1}$ \\ \cline{2-2}
& $q_{\boldsymbol{1}}$ \\
& $\top$ \\ 
& \\
&
\end{tabular}
\and
\begin{tabular}{ccc}
& $\boldsymbol{0}$ \\ \cline{2-2}
& $q_{\boldsymbol{0}}$  \\
& $\bot$ \\
& $q$ \\
&
\end{tabular}
\end{mathpar}
They clearly correspond to the natural number game, the unit game and the empty game given in Example~\ref{ExamplesOfGames}, respectively, as the abuse of the notations indicates.

\if0
Both of the t-skeletons $\top$ and $\bot$ come from the trivial \emph{skeleton} \cite{abramsky2009game,yamada2016game} $\{ \boldsymbol{\epsilon} \}$ on games $\boldsymbol{1}$ and $\boldsymbol{0}$, respectively, i.e., the skeletons are identical without the underlying games. 
In contrast, their difference becomes rather explicit by t-skeletons. 
\fi
\end{example}

Finally, since we identify positions up to the universal identification $\simeq_{\mathscr{U}}$, it makes sense to identify t-skeletons on a p-game $G$ up to the identification $\simeq_{G}$ as well. 
In fact, such an identification of equivalent t-skeletons matches the abstraction degree of terms in type theories \cite[Section~3.6]{mccusker1998games}.
For this point, it is technically more convenient to take the union of all equivalent t-skeletons on $G$ up to $\simeq_{G}$ than the quotient \cite{abramsky2009game}, which leads us to another central concept in the previous work \cite{yamada2016game}:
\begin{definition}[Predicative strategies \cite{yamada2016game}]
\label{DefPredicativeStrategies}
A \emph{\bfseries predicative (p-) strategy} on a p-game $G$ is the \emph{\bfseries saturation} $\mathrm{Sat}(\sigma) \colonequals \bigcup \{ \, \sigma' :: G \mid \sigma \simeq_G \sigma' \, \} \subseteq P_G$, written $\mathrm{Sat}(\sigma) : G$, of an arbitrary t-skeleton $\sigma :: G$ that satisfies \emph{\bfseries validity}: $\sigma \simeq_G \sigma$. 
Given $\sigma' :: G$, we write $\sigma' \propto \mathrm{Sat}(\sigma)$ and say that $\sigma'$ \emph{\bfseries implements} $\mathrm{Sat}(\sigma)$ if $\sigma \simeq_G \sigma'$.%\footnote{As we shall see in Section~\ref{GameSemanticRealizability}, the axiom PST corresponds in \emph{realizability models} \cite{streicher2008realizability} to the requirement that each morphism must be \emph{realized}.}
\end{definition}

\begin{remark}
The original definitions of saturations and p-strategies \cite{yamada2016game} are different from yet equivalent to the ones given above. 
The modifications are just for brevity. 
\end{remark}

\begin{example}
The t-skeleton $\mathrm{der}_N^{(i)} \colonequals \mathrm{Pref}(\{ \, q . (q, i) . (n, i) . n \mid n \in \mathbb{N} \, \}) :: N \Rightarrow N$ for any $i \in \mathbb{N}$ is \emph{ad-hoc} or \emph{too low-level} since it chooses $i$ for the `tag' $(\_, i)$ on the domain $\oc N$. 
The p-strategy $\mathrm{Sat}(\mathrm{der}_N^{(i)}) = \bigcup_{i \in \mathbb{N}} \mathrm{der}_N^{(i)} : N \Rightarrow N$ fixes this problem.
\end{example}

For each p-game $G$, validity of a t-skeleton $\sigma :: G$ ensures nonemptiness of the p-strategy $\mathrm{Sat}(\sigma) : G$.
Since $\mathrm{Sat}(\sigma) = \mathrm{Sat}(\sigma')$ if and only if $\sigma \simeq_G \sigma'$ for all $\sigma, \sigma' :: G$ \cite{yamada2016game}, a p-strategy $\mathrm{Sat}(\sigma) : G$ is invariant with respect to the representative valid t-skeleton $\sigma :: G$ up to $\simeq_G$, which justify the arbitrary choice of $\sigma$ in Definition~\ref{DefPredicativeStrategies}.

\if0
\begin{remark}
%Let $G$ be a p-game.
Without the universal identification $\simeq_{\mathscr{U}}$ (Definition~\ref{DefUniversalIdentification}), we would have to equip each p-strategy with an identification. 
%Then, however, we would lose an `intensional nature' of game semantics: The underlying game of $g$ would be clarified completely even before initiating an actual play of $G$; see Sect.~\ref{DependentFunctionSpace} and \ref{DependentPairSpace} on this point. 
However, it would be mathematically cumbersome, and not properly generalize the relation between games and strategies.
This is the main reason why the part~I \cite{yamada2016game} introduces the universal one $\simeq_{\mathscr{U}}$.
\end{remark}
\fi

\if0
Next, we recall constructions on p-games \cite{yamada2016game}, which generalize those on games.
Note that constructions on t-skeletons must precede those on p-games as t-skeletons do p-games. 
%Again, constructions on t-skeletons are evident analogues of those on strategies sketched in Section~\ref{IntroGameSemantics}, and so we leave their details to \cite{yamada2016game}.
Note also that we do not specify the corresponding constructions on identifications since they are always the universal one $\simeq_{\mathscr{U}}$ (Definition~\ref{DefUniversalIdentification}).

We first review a key preliminary for linear implication between p-games: 
\begin{definition}[FoPLIs \cite{yamada2016game}]
\label{DefFoPLIs}
A \emph{\bfseries family of pointwise linear implications (FoPLI)} between p-games $G$ and $H$ is a family $\Phi = (\Phi_\sigma)_{\sigma :: G}$ of standard t-skeletons $\Phi_\sigma$ that satisfies
\begin{enumerate}

%\item The set $\{ \Phi_\sigma \mid \sigma :: A \, \}$ of PLIs is consistent;

\item $\forall \sigma :: G . \, \exists \tau :: H . \, \Phi_\sigma :: \sigma \multimap \tau$;%\footnote{The identification of positions of the underlying linear implication can be arbitrary because it is not relevant at all to the notion of UoPLIs; our specific choice does not matter.}

%\item $\forall \sigma :: A . \, \Phi_\sigma \upharpoonright \sigma \colonequals \{ \boldsymbol{s} \upharpoonright \sigma \mid \boldsymbol{s} \in \Phi_\sigma \, \} \subseteq \sigma$ and $\Phi_\sigma \circ \sigma \in \mathcal{TS}(B)$;

\item $\forall \sigma, \tilde{\sigma} :: G, \boldsymbol{s} m n \in \Phi_\sigma^{\mathrm{Even}}, \boldsymbol{s} m \tilde{n} \in \Phi_{\tilde{\sigma}}^{\mathrm{Even}} . \, \boldsymbol{s} m n = \boldsymbol{s} m \tilde{n}$.%\footnote{An alternative axiom $\forall \sigma, \tilde{\sigma} :: A, \boldsymbol{s} m n \in (\phi_\sigma^{\mathrm{p}} \cup \phi_{\tilde{\sigma}}^{\mathrm{p}})^{\mathrm{Even}} . \, \boldsymbol{s} m \in \Phi_\sigma \cap \Phi_{\tilde{\sigma}} \Rightarrow \boldsymbol{s} m n \in \Phi_\sigma \cap \Phi_{\tilde{\sigma}}$ suffices for the present work except Theorem~\ref{ThmReflectiveSubcategories}; see the proof of the theorem.}

%\item $\forall \sigma, \tilde{\sigma} :: \mathsf{Vld}(A), \boldsymbol{s}m \in (\Phi_\sigma \cup \Phi_{\tilde{\sigma}})^{\mathrm{Odd}} . \, \boldsymbol{s} \in \Phi_\sigma \cap \Phi_{\tilde{\sigma}} \wedge m \in M_B \Rightarrow \boldsymbol{s} m \in \Phi_\sigma \cap \Phi_{\tilde{\sigma}}$.

\end{enumerate}
We write $\mathscr{F}(G, H)$ for the set of all FoPLIs between $G$ and $H$, and call the union $\bigcup \Phi \colonequals \bigcup_{\sigma :: G}\Phi_\sigma$ the \emph{\bfseries union of PLIs (UoPLI)} between $G$ and $H$ (on $\Phi$).

Given another p-game $K$ and an FoPLI $\Psi \in \mathscr{F}(H, K)$, the \emph{\bfseries composition of FoPLIs} $\Phi$ and $\Psi$ is the FoPLI $\Psi \circ \Phi \in \mathscr{F}(G, K)$ defined by $(\Psi \circ \Phi)_\sigma \colonequals \Psi_{\Phi_\sigma \upharpoonright H} \circ \Phi_\sigma$ for each $\sigma :: G$, where $\Phi_\sigma \upharpoonright H \colonequals \{ \boldsymbol{s} \upharpoonright H \mid \boldsymbol{s} \in \Phi_\sigma \, \} = \Phi_\sigma \circ \sigma$. %and $\boldsymbol{s} \upharpoonright H$ is the j-subsequence of $\boldsymbol{s}$ that consists of moves of $H$.
\end{definition}

The first axiom on an FoPLI specifies its UoPLI in the \emph{pointwise} fashion.
On the other hand, the second axiom brings UoPLIs \emph{determinacy}, and moreover the \emph{computational nature} of game semantics; for instance, thanks to the second axiom, their input-output pairs induce \emph{continuous} functions \cite{yamada2016game}.

Then, \emph{linear implication} between p-games is given by:
\begin{definition}[Linear implication between predicative games \cite{yamada2016game}]
The \emph{\bfseries linear implication} between p-games $G$ and $H$ is the p-game $G \multimap H$ defined by
\begin{equation*}
G \multimap H \colonequals \{ \, \bigcup \Phi \mid \Phi \in \mathscr{F}(G, H) \, \}.
\end{equation*}
\end{definition}

Next, \emph{product} on p-games is very simple:
\begin{definition}[Product on predicative games \cite{yamada2016game}]
The \emph{\bfseries product} of p-games $G$ and $H$ is the p-game $G \mathbin{\&} H$ defined by 
\begin{mathpar}
G \mathbin{\&} H \colonequals \{ \langle \sigma, \tau \rangle \mid \sigma :: G, \tau :: H \, \}.
%\and
%\boldsymbol{s} \simeq_{G \mathbin{\&} H} \boldsymbol{t} \stackrel{\mathrm{df. }}{\Leftrightarrow} \boldsymbol{s} \upharpoonright G \simeq_G \boldsymbol{t} \upharpoonright G \wedge \boldsymbol{s} \upharpoonright H \simeq_H \boldsymbol{t} \upharpoonright H,
\end{mathpar}
%where $\boldsymbol{u} \upharpoonright G$ (resp. $\boldsymbol{u} \upharpoonright H$) is the j-subsequence of a position $\boldsymbol{u} \in P_{G \mathbin{\&} H}$ that consists of moves of $G$ (resp. $H$).
\end{definition}

On the other hand, tensor $\otimes$ is slightly trickier: If we define the tensor $G \otimes H$ of p-games $G$ and $H$ by $G \otimes H \colonequals \{ \sigma \otimes \tau \mid \sigma :: G, \tau :: H \, \}$, then, e.g., the tensor $N^{[0]} \otimes N^{[1]}$ could not have the t-skeleton $\mathrm{Pref}(\{ q^{[0]} 1^{[0]} q^{[1]} 0^{[1]}, q^{[1]}1^{[1]} q^{[0]}0^{[0]} \})$, where for clarity we employ the `tags' $(\_)^{[i]}$ ($i = 0, 1$), and so it does not properly generalize tensor $\otimes$ on games.
%Moreover, it follows that the tensor $\boldsymbol{2}^{[0]} \otimes N^{[1]}$ would not be vertically complete.
To overcome this problem, the part~I \cite{yamada2016game} introduces:
\begin{definition}[Closure on sets of tree skeletons \cite{yamada2016game}]
\label{DefClosure}
The \emph{\bfseries closure} $\mathrm{clsr}(\mathcal{C})$ of a consistent set $\mathcal{C}$ of t-skeletons is defined by
\begin{equation*}
\mathrm{clsr}(\mathcal{C}) \colonequals \{ \sigma \subseteq \bigcup \mathcal{C} \mid \text{$\sigma$ is a t-skeleton that satisfies $\mathrm{Oinc}(\sigma, \bigcup \mathcal{C})$} \, \},
\end{equation*}
where the predicate $\mathrm{Oinc}(S, T)$ on sets $S$ and $T$ of j-sequences is given by
\begin{equation*}
\mathrm{Oinc}(S, T) \stackrel{\mathrm{df. }}{\Leftrightarrow} S \subseteq T \wedge (\boldsymbol{s} m \in T^{\mathsf{Odd}} \wedge \boldsymbol{s} \in S \Rightarrow \boldsymbol{s} m \in S),
\end{equation*}
and generalized to an arbitrary nonempty set $\mathcal{S}$ of t-skeletons by
\begin{equation*}
\mathrm{clsr}(S) \colonequals \bigcup \{ \, \mathrm{clsr}(\mathcal{C}) \mid \text{$\mathcal{C}$ is a maximal consistent subset of $\mathcal{S}$} \, \}.
\end{equation*}
%where we usually abbreviate $\mathrm{clsr}(S, \simeq)$ as $\mathrm{clsr}(S)$ or $\mathrm{clsr}(S)_\simeq$.
\end{definition}

\begin{definition}[Tensor on predicative games \cite{yamada2016game}]
The \emph{\bfseries tensor} of p-games $G$ and $H$ is the p-game $G \otimes H$ defined by 
\begin{mathpar}
G \otimes H \colonequals \mathrm{clsr}(\{ \sigma \otimes \tau \mid \sigma :: G, \tau :: H \, \}).
%\and
%\boldsymbol{s} \simeq_{G \otimes H} \boldsymbol{t} \stackrel{\mathrm{df. }}{\Leftrightarrow} \boldsymbol{s} \upharpoonright G \simeq_G \boldsymbol{t} \upharpoonright G \wedge \boldsymbol{s} \upharpoonright H \simeq_H \boldsymbol{t} \upharpoonright H \wedge \mathrm{att}_{G \otimes H}^\ast(\boldsymbol{s}) = \mathrm{att}_{G \otimes H}^\ast(\boldsymbol{t}),
\end{mathpar}
%where $\boldsymbol{u} \upharpoonright G$ (resp. $\boldsymbol{u} \upharpoonright H$) is the j-subsequence of a position $\boldsymbol{u} \in P_{G \otimes H}$ that consists of moves of $G$ (resp. $H$), and $\mathrm{att}_{G \otimes H}$ is a function $M_{G \otimes H} \rightarrow \{ 0, 1 \}$ that maps $x \in M_G \mapsto 0$ and $y \in M_H \mapsto 1$.
\end{definition}

Finally, let us recall \emph{exponential} $\oc$ on p-games, for which there is an obstacle to overcome: A t-skeleton on the exponential $\oc A$ of a game $A$ may not be obtained as the promotion of a single t-skeleton on $A$ (plus the closure operator).
However, it is not a difficult problem; it suffices to introduce:
\begin{definition}[C-tensor \cite{yamada2016game}]
\label{DefCTensor}
Given a family $(\sigma_i)_{i \in \mathbb{N}}$ of t-skeletons $\sigma_i$ on a p-game $G$, the \emph{\bfseries countable (c-) tensor} of $(\sigma_i)_{i \in \mathbb{N}}$ is the t-skeleton $\otimes_{i \in \mathbb{N}}\sigma_i$ given by 
\begin{equation*}
\otimes_{i \in \mathbb{N}}\sigma_i \colonequals \{ \boldsymbol{s} \in \mathscr{L}_{\mathrm{Arn}(\otimes_{i \in \mathbb{N}}\sigma_i)} \mid \forall i \in \mathbb{N} . \, \boldsymbol{s} \upharpoonright i \in \sigma_i \, \}, 
\end{equation*}
where the arena $\mathrm{Arn}(\otimes_{i \in \mathbb{N}}\sigma_i) = (M_{\otimes_{i \in \mathbb{N}}\sigma_i}, \vdash_{\otimes_{i \in \mathbb{N}}\sigma_i})$ is defined by
\begin{itemize}

\item $M_{\otimes_{i \in \mathbb{N}}\sigma_i} \colonequals \{ (a, i)^{xy} \mid i \in \mathbb{N}, a^{xy} \in M_{\sigma_i} \}$;

\item $\vdash_{\otimes_{i \in \mathbb{N}}\sigma_i} \, \colonequals \{ (\star, (a, i)) \mid i \in \mathbb{N}, \star \vdash_{\sigma_i} a \, \} \cup \{ ((a, i), (a', i)) \mid i \in \mathbb{N}, a \vdash_{\sigma_i} a' \, \}$,

%\item $P_{\otimes \sigma} \colonequals \{ \boldsymbol{s} \in \mathscr{L}_{\otimes \sigma} \mid \forall n \in \mathbb{N} . \, \boldsymbol{s} \upharpoonright n \in P_{\sigma_n} \}$;

%\item $\boldsymbol{s} \simeq_{\otimes \sigma} \boldsymbol{t} \stackrel{\mathrm{df. }}{\Leftrightarrow} \exists \varphi \in P(\mathbb{N}) . \, \pi_2^\ast(\boldsymbol{s}) = (\varphi \circ \pi_2)^\ast(\boldsymbol{t}) \wedge \forall n \in \mathbb{N} . \, \boldsymbol{s} \upharpoonright \varphi(n) \simeq_{A} \boldsymbol{t} \upharpoonright n$.

\end{itemize}
and $\boldsymbol{s} \upharpoonright i$ is the j-subsequence of $\boldsymbol{s}$ that consists of moves of the form $(a, i)$, where $a$ ranges over $M_{\sigma_i}$, changed into $a$.
\end{definition} 

\begin{definition}[Exponential of predicative games \cite{yamada2016game}]
The \emph{\bfseries exponential} of a p-game $G$ is the p-game $\oc G$ defined by 
\begin{mathpar}
\oc G \colonequals \mathrm{clsr}(\{ \otimes_{i \in \mathbb{N}}\sigma_i \mid \forall i \in \mathbb{N} . \, \sigma_i :: G \, \}).
%\and
%\boldsymbol{s} \simeq_{\oc G} \boldsymbol{t} \stackrel{\mathrm{df. }}{\Leftrightarrow} \exists f \in \mathscr{P}(\mathbb{N}) . \, \pi_2^\ast(\boldsymbol{s}) = (f \circ \pi_2)^\ast(\boldsymbol{t}) \wedge \forall i \in \mathbb{N} . \, \boldsymbol{s} \upharpoonright f(i) \simeq_{G} \boldsymbol{t} \upharpoonright i,
\end{mathpar}
%where $\boldsymbol{u} \upharpoonright i$ for each $i \in \mathbb{N}$ is the j-subsequence of a position $\boldsymbol{u} \in P_{\oc G}$ that consists of moves $(a, i)$ such that $a \in M_G$ yet changed into $a$.
\end{definition}

\begin{theorem}[Well-defined constructions on predicative games \cite{yamada2016game}]
\label{ThmWellDefinedConstructionsOnPredicativeGames}
P-games are closed under linear implication $\multimap$, tensor $\otimes$, product $\&$ and exponential $\oc$.
\end{theorem}

At this point, let us recall the following two lemmata on UoPLIs:
\begin{lemma}[Well-defined UoPLIs \cite{yamada2016game}]
\label{LemWellDefinedUoPLIs}
Let $G$, $H$ and $K$ be p-games.
\begin{enumerate}

\item Any UoPLI $\phi = \bigcup \Phi$ between $G$ and $H$ is a standard t-skeleton; %such that the family $(\phi_\sigma)_{\sigma :: A}$ of t-skeletons $\phi_\sigma \colonequals \{ \boldsymbol{s} \in \phi \mid \boldsymbol{s} \upharpoonright A \in \sigma \ \! \}$ is the FoPLI between $A$ and $B$ such that the PLIs are the largest ones whose union coincides with $\phi$;

\item FoPLIs are closed under composition, and given another UoPLI $\psi = \bigcup \Psi$ between $H$ and $K$, we have $\psi \circ \phi = \bigcup (\Psi \circ \Phi)$, where the operation $\circ$ on the LHS (resp. the RHS) denotes composition on t-skeletons (resp. FoPLIs).

\end{enumerate}
\end{lemma}

%Hence, given a standard t-skeleton $\phi$, we regard, thanks to Lem.~\ref{LemWellDefinedUoPLIs}, \emph{existence} of a (not necessarily unique) FoPLI $\Phi$ between p-games $G$ and $H$ such that $\phi = \bigcup \Phi$ as the \emph{axiom} for $\phi$ to be a UoPLI between $G$ and $H$ (rather than $\Phi$ as an additional structure equipped on $\phi$).

%Moreover, we may simply apply the constructions on t-skeletons (Definition) for constructions on UoPLIs (without referring to underlying FoPLIs) by
\begin{lemma}[Well-defined constructions on UoPLIs \cite{yamada2016game}]
\label{LemConstructionsOnT-SkeletonsOnUoPLIs}
Given UoPLIs $\lambda :: A \multimap B$, $\gamma :: C \multimap D$, $\eta :: A \multimap C$ and $\epsilon :: \oc A \multimap B$ between p-games, we obtain UoPLIs $\lambda \otimes \gamma :: A \otimes C \multimap B \otimes D$, $\langle \lambda, \eta \rangle :: A \multimap B \mathbin{\&} C$ and $\epsilon^\dagger :: \oc A \multimap \oc B$.
\end{lemma}

Thanks to these lemmata, the underlying FoPLI $\Phi$ on each UoPLI $\phi$, i.e., $\phi = \bigcup \Phi$, does not matter, and we may simply apply the constructions on t-skeletons to those on UoPLIs \emph{without referring to underlying FoPLIs}.

Furthermore, the constructions on t-skeletons such as tensor $\otimes$, composition $\circ$, pairing $\langle \_, \_ \rangle$ and promotion $(\_)^\dagger$ all respect identifications on t-skeletons \cite{yamada2016game}, and so we simply define the corresponding constructions on p-strategies in terms those on t-skeletons, i.e., $\mathrm{Sat}(\psi) \circ \mathrm{Sat}(\phi) \colonequals \mathrm{Sat}(\psi \circ \phi)$, $\mathrm{Sat}(\varphi)^\dagger\colonequals \mathrm{Sat}(\varphi^\dagger)$, etc.

Let us proceed to recall one of the most basic t-skeletons: \emph{copy-cat t-skeletons}. 
As the name suggests, they simply `copy-cat' the last O-moves:
\begin{definition}[Copy-cat tree skeletons \cite{mccusker1998games,abramsky1999game,yamada2016game}]
\label{DefCopyCats}
The \emph{\bfseries copy-cat tree (t-) skeleton} on a p-game $G$ is the t-skeleton $\mathrm{tcp}_G :: G \multimap G$ defined by
\begin{equation*}
\mathrm{tcp}_G \colonequals \{ \boldsymbol{s} \in P_{G^{[0]} \multimap G^{[1]}} \mid \forall \boldsymbol{t} \preceq{\boldsymbol{s}}. \ \mathrm{Even}(\boldsymbol{t}) \Rightarrow \boldsymbol{t} \upharpoonright G^{[0]} = \boldsymbol{t} \upharpoonright G^{[1]} \ \! \},
\end{equation*}
where the superscripts $(\_)^{[i]}$ on $G$ are to distinguish the two copies of $G$, and $\boldsymbol{t} \upharpoonright G^{[i]}$ is the j-subsequence of $\boldsymbol{t}$ that consists of moves in $G^{[i]}$ ($i = 0, 1$).
\end{definition}

Let us also recall a relative of copy-cat t-skeletons:
\begin{definition}[Dereliction tree skeletons \cite{mccusker1998games,abramsky1999game,yamada2016game}]
\label{DefDerelictionSkeletons}
The \emph{\bfseries $\boldsymbol{i}^{\text{th}}$ dereliction tree (t-) skeleton} ($i \in \mathbb{N}$) on a game $G$ is the t-skeleton $\mathrm{tder}_G^{(i)} :: G \Rightarrow G$ defined by
\begin{equation*}
\mathrm{tder}_G^{(i)} \colonequals \{ \boldsymbol{s} \in P_{\oc G \multimap G} \mid \forall \boldsymbol{t} \preceq \boldsymbol{s} . \, \mathrm{Even}(\boldsymbol{t}) \Rightarrow (\boldsymbol{t} \upharpoonright \oc G) \upharpoonright i = \boldsymbol{t} \upharpoonright G \, \}.
\end{equation*}
\end{definition}

That is, the $i^{\text{th}}$ dereliction t-skeleton $\mathrm{tder}_G^{(i)}$ for each $i \in \mathbb{N}$ plays essentially in the same way as the copy-cat t-skeleton $\mathrm{tcp}_G$, where $\mathrm{tder}_G^{(i)}$ employs specifically the `tag' $(\_, i)$ on the domain $\oc G$.
Of course, any choice of $i \in \mathbb{N}$ should not matter, which illustrates the point that t-skeletons are not abstract enough for modeling terms in type theories. 
This point motivates us to introduce \emph{predicative (p-) strategies}:

\begin{remark}
The original definition of p-strategies given in \cite{yamada2016game} are different from yet equivalent to the one given below. 
The modification is just for brevity. 
\end{remark}

\begin{definition}[Predicative strategies \cite{yamada2016game}]
\label{DefPredicativeStrategies}
A \emph{\bfseries predicative (p-) strategy} on a p-game $G$ is the \emph{\bfseries saturation} $\mathrm{Sat}(\sigma) \colonequals \{ \sigma' :: G \mid \sigma \simeq_G \sigma' \, \} \subseteq P_G$, written $\mathrm{Sat}(\sigma) : G$, of an arbitrary t-skeleton $\sigma :: G$ that satisfies \emph{\bfseries validity}: $\sigma \simeq_G \sigma$. 
Given $\sigma' :: G$, we write $\sigma' \propto \mathrm{Sat}(\sigma)$ and say that $\sigma'$ \emph{\bfseries implements} $\mathrm{Sat}(\sigma)$ if $\sigma \simeq_G \sigma'$.%\footnote{As we shall see in Section~\ref{GameSemanticRealizability}, the axiom PST corresponds in \emph{realizability models} \cite{streicher2008realizability} to the requirement that each morphism must be \emph{realized}.}
\end{definition}

\begin{remark}
%Let $G$ be a p-game.
Without the universal identification $\simeq_{\mathscr{U}}$ (Definition~\ref{DefUniversalIdentification}), we would have to equip each p-strategy with an identification. 
%Then, however, we would lose an `intensional nature' of game semantics: The underlying game of $g$ would be clarified completely even before initiating an actual play of $G$; see Sect.~\ref{DependentFunctionSpace} and \ref{DependentPairSpace} on this point. 
However, it would be mathematically cumbersome, and not properly generalize the relation between games and strategies.
This is the main reason why the part~I \cite{yamada2016game} introduces the universal one $\simeq_{\mathscr{U}}$.
\end{remark}

For each p-game $G$, validity of a t-skeleton $\sigma :: G$ ensures nonemptiness of the p-strategy $\mathrm{Sat}(\sigma) : G$.
Because $\mathrm{Sat}(\sigma) = \mathrm{Sat}(\sigma')$ if and only if $\sigma \simeq_G \sigma'$ for all $\sigma, \sigma' :: G$ \cite{yamada2016game}, a p-strategy $\mathrm{Sat}(\sigma) : G$ is invariant with respect to a representative valid t-skeleton $\sigma :: G$ up to $\simeq_G$, which justify the arbitrary choice of $\sigma$ in Definition~\ref{DefPredicativeStrategies}.

In particular, we clearly have $\mathrm{tder}_G^{(i)} \simeq_{G \Rightarrow G} \mathrm{tder}_G^{(j)}$ for any $i, j \in \mathbb{N}$, for which recall Definition~\ref{DefUniversalIdentification}. 
Hence, let us select $0 \in \mathbb{N}$ without any reason and define:
\begin{definition}[Copy-cats and derelictions \cite{yamada2016game}]

The \emph{\bfseries copy-cat} $\mathrm{pcp}_G : G \multimap G$ and the \emph{\bfseries dereliction} $\mathrm{pder}_G : G \Rightarrow G$ on a p-game $G$ are the p-strategies defined by
\begin{mathpar}
\mathrm{pcp}_G \colonequals \mathrm{Sat}(\mathrm{tcp}_G)
\and
\mathrm{pder}_G \colonequals \mathrm{Sat}(\mathrm{tder}^{(0)}_G).
\end{mathpar}
\end{definition}

Finally, let us recall constraints on p-strategies that correspond to those on t-skeletons (Definition~\ref{DefConstraintsOnTSkeletons}):
\begin{definition}[Constraints on predicative strategies \cite{yamada2016game}]
\label{DefConstraintsOnP-Strategies}
A p-strategy $g : G$ is
\begin{itemize}

\item \emph{\bfseries Total} if $\forall \boldsymbol{s} m \in g^{\mathsf{Odd}} . \ \! \exists \boldsymbol{s} m n \in g$;

\item \emph{\bfseries Innocent} if $g = \bigcup \{ \sigma :: G \mid \sigma \propto g, \text{$\sigma$ is innocent} \ \! \}$;

%\item \emph{\bfseries Well-bracketed} if it contains only well-bracketed j-sequences;

\item \emph{\bfseries Noetherian} if there is no strictly increasing infinite sequence of elements in the set $\lceil g \rceil  \colonequals \{ \lceil \boldsymbol{s} \rceil \mid \boldsymbol{s} \in g \ \! \}$ of all P-views in $g$;

\item \emph{\bfseries Winning} if it is total, innocent and noetherian.

\end{itemize}
\end{definition}

\begin{lemma}[Closure of winning predicative strategies \cite{yamada2016game}]
\label{LemPreservationOfConstraintsOnPStrategies}
Winning p-strategies are closed under tensor $\otimes$, pairing $\langle \_, \_ \rangle$, composition $\circ$ and promotion $(\_)^\dagger$.
\end{lemma}

\begin{definition}[Categories of predicative games \cite{yamada2016game}]
\label{DefGameSemanticCategories}
A category $\mathbb{PG}$ consists of

\begin{itemize}

\item P-games as objects;

\item P-strategies $A \Rightarrow B$ as morphisms $A \rightarrow B$;

\item The p-strategy $\mathrm{Sat}(\psi) \bullet \mathrm{Sat}(\psi) \colonequals \mathrm{Sat}(\psi \circ \phi^\dagger) : A \Rightarrow C$ as the composition of morphisms $\mathrm{Sat}(\phi) : A \rightarrow B$ and $\mathrm{Sat}(\psi) : B \rightarrow C$;

\item The dereliction $\mathrm{pder}_A : A \Rightarrow A$ as the identity $\mathrm{id}_A$ on each object $A$.

\end{itemize}

The lluf subcategory $\mathbb{LPG}$ of $\mathbb{PG}$ has winning p-strategies in $\mathbb{PG}$ as morphisms.
\end{definition}

We could further impose \emph{well-bracketing} on p-strategies as in \cite{yamada2016game}, but we do not in this article for brevity. 
The categories $\mathbb{PG}$ and $\mathbb{LPG}$ are \emph{cartesian closed}, where a terminal object, products and exponential objects are given by the unit p-game $\boldsymbol{1}$, product $\&$ and implication $(\_) \Rightarrow (\_) \colonequals \oc (\_) \multimap (\_)$, respectively \cite{yamada2016game}.
Moreover, we may enrich $\mathbb{PG}$ and $\mathbb{LPG}$ by \emph{algebraic CPOs} \cite{abramsky1994domain}; see \cite{yamada2016game} for the details. 

\begin{notation}
Given a p-game $G$, we write $\mathbb{PG}(G)$ (resp. $\mathbb{LPG}(G)$) for the set of all morphisms $\boldsymbol{1} \rightarrow G$ in $\mathbb{PG}$ (resp. $\mathbb{LPG}$), and we do not notationally distinguish strategies on $G$ and those on $\boldsymbol{1} \multimap G$ or $\boldsymbol{1} \Rightarrow G$.
Given a p-strategy $g : \oc \oc G$, we write $g^\ddagger : \oc G$ for the p-strategy that plays essentially in the same way as $g$, which is obtained from $g$ by the bijection $\langle \_, \_ \rangle : \mathbb{N} \times \mathbb{N} \stackrel{\sim}{\rightarrow} \mathbb{N}$ in the evident manner.
\end{notation}

\subsection{Game semantics of intensional Martin-L\"{o}f type theory}
\label{GameSemanticsOfMLTT}
As the last preparation, let us sketch in this section the proof of the following main theorem of the previous work \cite{yamada2016game}:
\begin{theorem}[Game semantics of MLTT \cite{yamada2016game,yamada2020game}]
\label{ThmGameSemanticsOfMLTT}
The category $\mathbb{LPG}$ gives rise to a model of MLTT equipped with unit-, empty-, N-, pi-, sigma- and Id-types.
\end{theorem}
\begin{remark}
For lack of space, the part~I \cite{yamada2016game} models only pi- and sigma-types and leaves the other types to the part~II \cite{yamada2020game}, whose outline is given in \cite[Version~4]{yamada2016game}. 
\end{remark}

Technically, the model is given via an algebraic model of MLTT, called a \emph{category with families (CwF)}, equipped with \emph{semantic type formers} for the types \cite{hofmann1997syntax,dybjer1996internal}.
For brevity, however, we do not review the game-semantic CwF with the semantic type formers in this section, leaving the details to the previous work \cite{yamada2016game}.
Instead, we \emph{directly} sketch the game-semantic counterparts of some major type constructions. 

First, the model interprets (dependent) types by:
\begin{definition}[Dependent predicative games \cite{yamada2016game}]
\label{DefDPGames}
A \emph{\bfseries dependent predicative (p-) game} on a p-game $\Gamma$ is a continuous map $A$ from the algebraic CPO $\mathbb{PG}(\oc \Gamma)$ to the flat CPO on $\mathsf{ob}(\mathbb{PG})$ such that the union $\bigcup_{g \in \mathbb{PG}(\oc \Gamma)} \vdash_{A(g)}$ is well-founded.

It is \emph{\bfseries constant} if $A(g) = A(g')$ for all $g, g' \in \mathbb{PG}(\oc \Gamma)$.
\end{definition}

We often represent a dependent p-game $A$ on a p-game $\Gamma$ as a family $(A(g))_{g \in \mathbb{PG}(\oc \Gamma)}$ of p-games $A(g)$.
The dependent p-game $A$ is indexed by p-strategies on the \emph{exponential $\oc \Gamma$}, not $\Gamma$, for allowing \emph{non-promotions} as inputs of the \emph{pi} $\Pi (\Gamma, A)$ (Definition~\ref{DefPiSpaces}), which models pi-types. 
This point does not matter for Theorem~\ref{ThmGameSemanticsOfMLTT} since morphisms in $\mathbb{LPG}$ are all innocent; however, it is crucial for inheriting the flexibility of game semantics to systematically model \emph{effects} and linear \emph{typing} \cite{abramsky1999game}. 

Each dependent p-game is \emph{continuous} as in the domain-theoretic model of MLTT \cite{palmgren1990domain}. 
The continuity is conceptually natural particularly if we think of the game semantics of \emph{universes} \cite{hofmann1997syntax,hottbook}. 
Technically, at least monotonicity of dependent p-games is necessary for the \emph{sigma} $\Sigma(\Gamma, A)$ (Definition~\ref{DefSigma}); see \cite{yamada2016game} for the details.  

\begin{notation}
Given p-games $\Gamma$ and $\Gamma'$, we write $\{ \Gamma' \}_\Gamma$ for the constant dependent p-game on $\Gamma$ valued at $\Gamma'$, and $\mathscr{D}\mathbb{PG}(\Gamma)$ for the set of all dependent p-games on $\Gamma$.
\end{notation}

Let us next recall a key auxiliary construction that supports pi and sigma: 
\begin{definition}[Integration on dependent predicative games \cite{yamada2016game}]
\label{DefIntegrationOfDependentPredicativeGames}
The \emph{\bfseries integration} of a dependent p-game $A$ on a p-game $\Gamma$ is the p-game $\int_\Gamma A$ given by
\begin{mathpar}
\int_\Gamma A \colonequals \mathrm{DetJ}(\bigcup_{g \in \mathbb{PG}(\oc \Gamma)}A(g))
%\and
%\mathbin{\simeq_{\int_A B}} \colonequals \mathbin{\bigcup_{a \in \mathbb{PG}(\oc A)} \simeq_{B(a)}},
\end{mathpar}
where $\mathrm{DetJ}(\mathcal{A}) \colonequals \{ \, \bigcup \mathcal{S} \mid \mathcal{S} \subseteq \mathcal{A}, \text{$\mathcal{S}$ is consistent and deterministic} \, \} \supseteq \mathcal{A}$ for a given set $\mathcal{A}$ of t-skeletons.
\end{definition}

\begin{remark}
If each p-game were equipped with its own identification, then it is not obvious how to define integration $\int$ that preserves the identifications.
It is another reason why we employ the universal identification $\simeq_{\mathscr{U}}$ (Definition~\ref{DefUniversalIdentification}).
\end{remark}

\begin{lemma}[Well-defined integration \cite{yamada2016game}]
\label{LemWellDefinedIntegration}
Given a dependent p-game $A$ on a p-game $\Gamma$, the integration $\int_\Gamma A$ is a well-defined p-game.
\end{lemma}
\begin{proof}
Let us just remark that $\int_\Gamma A$ is det-j complete by the operation $\mathrm{DetJ}$.
\end{proof}

We are now ready to recall \emph{pi} and \emph{sigma}:
\begin{definition}[FoDPLIs \cite{yamada2016game}]
\label{DefUoDPLIs}
A \emph{\bfseries family of dependently pointwise linear implications (FoDPLI)} from a p-game $\Gamma$ to a dependent p-game $A$ on $\Gamma$ is an FoPLI $\Phi$ from $\Gamma$ to $\int_\Gamma A$ that satisfies
\begin{equation*}
\forall \gamma :: \Gamma . \, A(\mathrm{Sat}(\gamma)^\dagger) \in \mathbb{PG} \Rightarrow (\bigcup \Phi) \circ \gamma :: A(\mathrm{Sat}(\gamma)^\dagger),
\end{equation*}
where the UoPLI $\bigcup \Phi \colonequals \bigcup_{\gamma :: \Gamma}\Phi_\gamma$ on $\Phi$ is called a \emph{\bfseries union of DPLIs (UoDPLI)} from $\Gamma$ to $A$.
We write $\mathscr{F}(\Gamma, A)$ for the set of all FoDPLIs from $\Gamma$ to $A$
\end{definition}

\begin{definition}[Linear-pi between predicative games \cite{yamada2016game}]
\label{DefLinearPiSpaces}
The \emph{\bfseries linear-pi} from a p-game $\Gamma$ to a dependent p-game $A$ on $\Gamma$ is the p-game $\ell \Pi (\Gamma, A)$ defined by
\begin{equation*}
\ell \Pi (\Gamma, A) \colonequals \{ \, \bigcup \Phi \mid \Phi \in \mathscr{F}(\Gamma, A) \, \}.
\end{equation*}
\end{definition}

\begin{definition}[Pi between predicative games \cite{yamada2016game}]
\label{DefPiSpaces}
The \emph{\bfseries pi} from a p-game $\Gamma$ to a dependent p-game $A$ on $\Gamma$ is the p-game $\Pi (\Gamma, A)$ given by $\Pi(\Gamma, A) \colonequals \ell \Pi(\oc \Gamma, A^\ddagger)$, where $A^\ddagger \in \mathscr{D}\mathbb{PG}(\oc \Gamma)$ is given by $A^\ddagger(\hat{a}) \colonequals A(\hat{g}^{\ddagger})$ for all $\hat{g} \in \mathbb{PG}(\oc \oc \Gamma)$.
\end{definition}

\begin{definition}[Sigma on predicative games \cite{yamada2016game}]
\label{DefSigma}
The \emph{\bfseries sigma} of a p-game $\Gamma$ and a dependent p-game $A$ on $\Gamma$ is the p-game $\Sigma (\Gamma, A)$ defined by
\begin{equation*}
\Sigma(\Gamma, A) \colonequals \{ \langle \gamma, \alpha \rangle :: \Gamma \mathbin{\&} \int_\Gamma A \mid A(\mathrm{Sat}(\gamma)^\dagger) \in \mathbb{PG} \Rightarrow \alpha :: B(\mathrm{Sat}(\gamma)^\dagger) \, \}.
\end{equation*}
\end{definition}

In terms of the syntax of MLTT, pi and sigma interpret the formation rules
\begin{mathpar}
\AxiomC{$\mathsf{\vdash A \ type}$}
\AxiomC{$\mathsf{x : A \vdash B \ type}$}
%\LeftLabel{(\textsc{$\mathsf{\Pi}$-Form})}
\BinaryInfC{$\mathsf{\vdash \Pi_{x : A}B \ type}$}
\DisplayProof 
\and
\AxiomC{$\mathsf{\vdash A \ type}$}
\AxiomC{$\mathsf{x : A \vdash B \ type}$}
%\LeftLabel{(\textsc{$\mathsf{\Sigma}$-Form})}
\BinaryInfC{$\mathsf{\vdash \Sigma_{x : A}B \ type}$}
\DisplayProof
\end{mathpar}
by
\begin{mathpar}
\AxiomC{$\{ A \, \}_{\boldsymbol{1}} \in \mathscr{D}\mathbb{PG}(\boldsymbol{1})$}
\AxiomC{$B \in \mathscr{D}\mathbb{PG}(A)$}
%\LeftLabel{(\textsc{$\mathsf{\Pi}$-Form})}
\BinaryInfC{$\{ \Pi(A, B) \}_{\boldsymbol{1}} \in \mathscr{D}\mathbb{PG}(\boldsymbol{1})$}
\DisplayProof 
\and
\AxiomC{$\{ A \, \}_{\boldsymbol{1}} \in \mathscr{D}\mathbb{PG}(\boldsymbol{1})$}
\AxiomC{$B \in \mathscr{D}\mathbb{PG}(A)$}
%\LeftLabel{(\textsc{$\mathsf{\Pi}$-Form})}
\BinaryInfC{$\{ \Sigma(A, B) \}_{\boldsymbol{1}} \in \mathscr{D}\mathbb{PG}(\boldsymbol{1})$}
\DisplayProof
\end{mathpar}
respectively, where note that the type $\mathsf{A}$ is simple. 
The general case, where $\mathsf{A}$ may be dependent, is handled simply in the \emph{pointwise} fashion; see \cite{yamada2016game} for the details.
Also, the game semantics models terms $\mathsf{\Gamma \vdash a : A}$ by elements $a \in \mathbb{LPG}(\Pi(\Gamma, A))$.

Next, the unity-type $\mathsf{1}$, the empty-type $\mathsf{0}$ and the natural number type $\mathsf{N}$ are modeled by the unit p-game $\boldsymbol{1}$, the empty p-game $\boldsymbol{0}$ and the natural number p-game $N$, respectively (Example~\ref{ExamplesOfPredicativeGames}); again, see \cite{yamada2016game} for the details.

Let us proceed to recall the game semantics of Id-types:
\begin{definition}[Identity predicative games \cite{yamada2016game}]
\label{DefIdSpaces}
Given a p-game $\Gamma$ and p-strategies $g, g' \in \mathbb{LPG}(\Gamma)$, the \emph{\bfseries Identity (Id) predicative (p-) game} between $g$ and $g'$ is the p-game $\mathrm{Id}_\Gamma(g, g') \colonequals \begin{cases} \boldsymbol{1} &\text{if $g = g'$;} \\ \boldsymbol{0} &\text{otherwise.} \end{cases}$
%The \emph{\bfseries dependent Id p-game} $\mathrm{Id}_G \in \mathscr{D}\mathbb{NPG}_{\mathrm{wrw}}^{\mathrm{wo}}(G \& G)$ is defined by $\mathrm{Id}_G(\langle g, g' \rangle) \colonequals \mathrm{Id}_G(g, g')$ for all $\langle g, g' \rangle \in \mathbb{LPG}(G \& G)$.
\end{definition}

Similarly to the case of pi and sigma, Id p-games interpret the formation rule
\begin{mathpar}
\AxiomC{$\mathsf{\vdash A \ type}$}
\AxiomC{$\mathsf{\vdash a : A}$}
\AxiomC{$\mathsf{\vdash a' : A}$}
%\LeftLabel{(\textsc{$\mathrm{Id}$-Form})}
\TrinaryInfC{$\mathsf{\vdash Id_A(a, a') \ type}$}
\DisplayProof 
\end{mathpar}
by
\begin{mathpar}
\AxiomC{$\{ A \, \}_{\boldsymbol{1}} \in \mathscr{D}\mathbb{PG}(\boldsymbol{1})$}
\AxiomC{$a, a' \in \mathbb{LPG}(\Pi(\boldsymbol{1}, \{ A \}_{\boldsymbol{1}}) \cong A)$}
%\LeftLabel{(\textsc{$\mathsf{\Pi}$-Form})}
\BinaryInfC{$\{ \mathrm{Id}_A(a, a') \}_{\boldsymbol{1}} \in \mathscr{D}\mathbb{PG}(\boldsymbol{1})$}
\DisplayProof 
\end{mathpar}
The general case is again dealt with in the pointwise fashion \cite{yamada2016game}.

Finally, the game semantics of MLTT \cite{yamada2016game} models the $\xi$-rule as its interpretation of currying simply \emph{adjusts `tags'} on moves just like game semantics of simple type theories \cite{abramsky1997semantics}, which does not change p-strategies in an essential way. 
On the other hand, the game semantics refutes CT due to the second axiom of Definition~\ref{DefFoPLIs}.
\fi

\section{Consistency of intensional Martin-L\"{o}f type theory with formal Church's thesis}
\label{MainConsistencyOfMLTTwithCT}
We have reviewed all the necessary preliminaries, and therefore let us now turn to the main content of this article: consistency of MLTT with CT.

In this section, we prove the consistency by a constructive model of MLTT plus CT as follows.
We first define \emph{recursive} t-skeletons and their \emph{realizers}, and fix a choice of \emph{canonical} realizers for technical convenience in Section~\ref{RealizableTSkeletonsAndCanonicalRealizers}.
Next, we introduce another, `nonstandard' variant of p-games, called \emph{np-games}, to accommodate \emph{disjoint unions of winning-realizer-wise linear implications (DoWRWLIs)} in Section~\ref{RealizerwisePStrategies}. 
DoWRWLIs are t-skeletons on modified linear implication, in which O must play on the domain by a winning, recursive t-skeleton and exhibit the canonical realizer for it at his first move. 
%We then lift w.r.w. t-skeletons to p-strategies, called \emph{w.r.w. p-strategies}, in the evident way. 
As explained in Section~\ref{GameSemanticRealizability}, DoWRWLIs implement our idea on how to validate CT.
%We also define a CCC $\mathbb{LPG}_{\mathrm{trw}}$ of p-games and winning w.r.w. p-strategies in Section~\ref{RealizerwisePStrategies}. 
Next, we show that a CCC $\mathbb{NPG}_{\mathrm{wrw}}^{\mathrm{wo}}$ of \emph{well-opened} np-games and winning, recursive DoWRWLIs gives rise to a CwF equipped with semantic type formers for unit-, empty-, N-, pi-, sigma- and Id-types in Section~\ref{RealizabilityModelOfMLTTAlaGameSemantics}, which establishes a model of MLTT equipped with these types.
Finally, we prove that the model of MLTT in $\mathbb{NPG}_{\mathrm{wrw}}^{\mathrm{wo}}$ (even \emph{constructively}) validates CT and refutes empty-type, establishing consistency of MLTT with CT, in Section~\ref{MainResult}.

\subsection{Recursive tree skeletons and canonical realizers}
\label{RealizableTSkeletonsAndCanonicalRealizers}
In this section, we define \emph{recursive} t-skeletons and their \emph{realizers}, which are based on \emph{recursion theory} \cite{rogers1967theory,cutland1980computability} similarly to Section~5.6 of the classic \emph{HO-games} \cite{hyland2000full}. 
Also, for technical convenience, we fix a \emph{canonical} realizer for each recursive t-skeleton.

First, let us arbitrarily encode standard moves (Definition~\ref{DefStandardMovesAndTSkeletons}) by finite sequences of natural numbers, which is clearly possible \cite{yamada2016game}. 
Next, let us recall that the pointer of each j-sequence (Definition~\ref{DefJSequences}) is a finite function on natural numbers.
It then follows from these two points that there is a recursive bijection $\mathscr{J}(\mathscr{M}) \stackrel{\sim}{\rightarrow} \mathbb{N}$ \cite{rogers1967theory,cutland1980computability}.

\begin{definition}[Coding of tree skeletons]
Let us fix once and for all a recursive bijection $\mathscr{C} : \mathscr{J}(\mathscr{M}) \stackrel{\sim}{\rightarrow} \mathbb{N}$ and call $\mathscr{C}(\sigma) \subseteq \mathbb{N}$ the \emph{\bfseries coding} of t-skeletons $\sigma \subseteq \mathscr{J}(\mathscr{M})$. 
\end{definition}

The coding $\mathscr{C}$ of t-skeletons enables us to employ recursion theory for defining `constructive' or \emph{recursive} t-skeletons similarly to HO-games \cite[Section~5.6]{hyland2000full}:

\begin{convention}
Let us fix once and for all an enumeration or \emph{G\"{o}del numbering} $\mathscr{G}_{\mathrm{PRF}} : \mathbb{N} \twoheadrightarrow \mathrm{PRF}$ on the set $\mathrm{PRF}$ of all partial recursive functions $\mathbb{N} \rightharpoonup \mathbb{N}$. %and a recursive bijection $\langle \_, \_ \rangle : \mathbb{N} \times \mathbb{N} \stackrel{\sim}{\rightarrow} \mathbb{N}$, called the \emph{pairing}, together with injections $p_i : \mathbb{N} \rightarrowtail \mathbb{N}$ that satisfies $\langle p_1(n), p_2(n) \rangle = n$ and $p_i (\langle n_1, n_2 \rangle) = n_i$ ($i = 1, 2$) for all $n, n_1, n_2 \in \mathbb{N}$. 
\end{convention}

\begin{definition}[Functional representation]
The \emph{\bfseries functional representation} of a t-skeleton $\sigma \subseteq \mathscr{J}(\mathscr{M})$ is the partial map $\mathrm{fun}(\sigma) : \mathscr{C}(\sigma^{\mathrm{Odd}}) \rightharpoonup \mathscr{C}(\sigma^{\mathrm{Even}})$ given by
\begin{equation*}
\mathrm{fun}(\sigma)(\mathscr{C}(\boldsymbol{s}m)) \colonequals \begin{cases} \mathscr{C}(\boldsymbol{s}mn) &\text{if there is $\boldsymbol{s}mn \in \sigma^{\mathrm{Even}}$;} \\ \uparrow &\text{otherwise} \end{cases} \quad (\boldsymbol{s}m \in \sigma^{\mathrm{Odd}}).
\end{equation*}
\end{definition}

\begin{definition}[Recursive tree skeletons]
\label{DefRecursiveTreeSkeletons}
%A t-skeleton $\sigma$ is \emph{\bfseries realizable} if the subset $\mathscr{C}(\sigma^{\mathrm{Even}}) \subseteq \mathbb{N}$ is recursively enumerable. 
A \emph{\bfseries realizer} for a t-skeleton $\sigma \subseteq \mathscr{J}(\mathscr{M})$ is a natural number $e \in \mathbb{N}$ that \emph{\bfseries realizes} (the functional representation of) $\sigma$: 
\begin{equation*}
\forall \boldsymbol{s}m \in \sigma^{\mathrm{Odd}} . \, \mathscr{G}_{\mathrm{PRF}}(e)(\mathscr{C}(\boldsymbol{s}m)) \simeq \mathrm{fun}(\sigma)(\mathscr{C}(\boldsymbol{s}m)),
\end{equation*}
and a t-skeleton is \emph{\bfseries recursive} if there is a realizer for it.
\end{definition}

Our definition of recursive t-skeletons is the same as that of \emph{recursive strategies} \cite[Section~5.6]{hyland2000full} except that the latter focuses on innocent strategies and their P-views.
Our choice is just for simplicity; we could follow precisely the latter formulation. 

Note that a realizer $e$ for a recursive t-skeleton $\sigma$ is only concerned with positions in $\sigma$, and hence $\mathscr{G}_{\mathrm{PRF}}(e)$ may not coincide with $\mathrm{fun}(\sigma)$ as a partial map $\mathbb{N} \rightharpoonup \mathbb{N}$ (i.e., they may compute differently on natural numbers that do not code positions in $\sigma$).
Note also that $\mathscr{G}_{\mathrm{PRF}}(e)$ does not (even partially) decide odd-length positions in $\sigma$, and it \emph{only computes even-length ones from odd-length ones}.
These `compromises' play a crucial role for our validation of CT (essentially because the set of all total recursive functions is not recursively enumerable \cite{rogers1967theory,cutland1980computability}), as we shall see shortly.

\if0
From the viewpoint of recursion theory, it makes sense to also reformulate p-games as recursively enumerable sets of realizers for t-skeletons so that the initial protocols between P and J would be `effective,' and further impose that the set of the codings of all odd-length positions of a realizable t-skeleton is recursive so that P would be able to `effectively' decide if a given action by O is allowed in the t-skeleton. 
However, we do not employ such more `effective' p-games and t-skeletons in this article since they are unnecessary for our consistency proof. 
\fi

\begin{notation}
Let $G$ be \emph{any} set of t-skeletons (so that $G$ can be an \emph{np-game} (Definition~\ref{DefNonstandardPredicativeGames}) as well as a p-game).
We write $\mathcal{TS}_{\mathrm{r}}(G)$ (resp. $\mathcal{TS}_{\mathrm{wr}}(G)$) for the set of all recursive (resp. winning and recursive) t-skeletons in $G$, and $\mathscr{R}_{\mathrm{r}}(G)$ (resp. $\mathscr{R}_{\mathrm{wr}}(G)$) for the set of all realizers for recursive (resp. winning and recursive) ones in $G$.
\if0
Moreover, we write $\mathrm{tsk}_G$ or $\mathrm{tsk}$ for the surjection $\mathscr{R}_{\mathrm{r}}(G) \twoheadrightarrow \mathcal{TS}_{\mathrm{r}}(G)$ such that each natural number $e \in \mathscr{R}_{\mathrm{r}}(G)$ \emph{realizes} the t-skeleton $\mathrm{tsk}_G(e) \in \mathcal{TS}_{\mathrm{r}}(G)$, i.e., 
\begin{equation*}
\mathrm{tsk}_G(e) \colonequals \{ \, \boldsymbol{s} \in \mathscr{J}(\mathscr{M}) \mid \mathscr{G}_{\mathrm{PRF}}(e)(\mathscr{C}(\boldsymbol{s})) = 1 \, \}.
\end{equation*}
Clearly, the partial recursive function $\mathscr{G}_{\mathrm{PRF}}(e)$ partially decides the set $\mathscr{C}(\mathrm{tsk}_G(e))$.
\fi
\end{notation}

Let us then fix an arbitrary choice of a realizer for each recursive t-skeleton and call it the \emph{canonical} one:
\begin{definition}[Canonical realizers]
\label{DefCanonicalRealizers}
Let us fix once and for all a function $\mathscr{G}_G : \mathcal{TS}_{\mathrm{r}}(G) \rightarrow \mathscr{R}_{\mathrm{r}}(G)$ for each set $G$ of t-skeletons such that $\mathscr{G}_G(\sigma) \in \mathscr{R}_{\mathrm{r}}(G)$ realizes each $\sigma \in \mathcal{TS}_{\mathrm{r}}(G)$, and call $\mathscr{G}_G$ the \emph{\bfseries canonical G\"{o}del numbering} on $G$.
We assume that canonical G\"{o}del numberings satisfy the condition at the end of Definition~\ref{DefConstructionsOnDoWRWLIs}.

We call the realizer $\mathscr{G}_G(\sigma)$ and the pair $(\sigma, \mathscr{G}_G(\sigma))$ the \emph{\bfseries canonical realizer} and the \emph{\bfseries canonical pair} for $\sigma$, respectively, and define 
\begin{equation*}
\mathscr{R}^{\mathrm{cp}}_{\mathrm{wr}}(G) \colonequals \{ \, (\sigma, \mathscr{G}_G(\sigma)) \mid \sigma \in \mathcal{TS}_{\mathrm{wr}}(G) \, \}. 
\end{equation*}
\end{definition}

For technical convenience (e.g., for Definitions~\ref{DefWRWDerelictionTreeSkeletons} and \ref{DefConstructionsOnDoWRWLIs}), we shall henceforth employ canonical pairs frequently. 
Note that, although canonical realizers in general do not have complete information about t-skeletons, canonical pairs (trivially) do.

\subsection{Cartesian closed category of well-opened nonstandard predicative games and winning, recursive, winning-realizer-wise tree skeletons}
\label{RealizerwisePStrategies}
In this section, we modify a certain class of t-skeletons, called \emph{unions of pointwise linear implications (UoPLIs)}, on the \emph{linear implication $G \multimap H$ between p-games} $G$ and $H$ \cite{yamada2016game}, and based on it define a modified linear implication $G \rightarrowtriangle H$. 
In $G \rightarrowtriangle H$, O must play by a \emph{winning, recursive} t-skeleton $\sigma$ on the domain $G$ and \emph{exhibit the canonical realizer for $\sigma$} at his first move.
This modification of linear implication implements our idea on how to model MLTT plus CT sketched in Section~\ref{GameSemanticRealizability}.
%We also introduce other constructions on p-games and define CCCs of them.

However, there arises a problem: P-games $G$ are \emph{not} closed under the modified linear implication $\rightarrowtriangle$ because it does not preserve \emph{saturation} of the set $P_G$ of all positions.
Note that we cannot simply discard the saturation axiom since if we do so then a t-skeleton $\sigma :: G$ may generate a p-strategy $\mathrm{Sat}(\sigma)$ that is not even a subset of $P_G$, i.e., p-strategies would be no longer `strategies on p-games' \cite{yamada2016game}.

On the other hand, recall that McCusker \cite{abramsky1999game,mccusker1998games} dispenses with identification of positions and strategies by employing much simpler exponential $\hat{\oc}$ \emph{without the `tags'} $(\_, i)$ such that $i \in \mathbb{N}$, which we call \emph{simplified exponential}, and \emph{well-opened} games for which simplified exponential $\hat{\oc}$ works.
As confessed in \cite[p.~48]{mccusker1998games}, however, this simpler approach to cartesian closure of games is mathematically \emph{ad-hoc}, and it is why the previous work \cite{yamada2016game} employs identifications of positions and p-strategies. 

Nevertheless, the goal of the present work is to prove consistency of MLTT with CT, and mathematical elegance of the employed method is secondly. 
Therefore, we adopt, as a solution to the above problem, McCusker's simplified exponential $\hat{\oc}$ and well-opened games.
Let us first recall these concepts plus \emph{thread-closed} games:

\begin{definition}[Thread-closed games \cite{abramsky1999game,mccusker1998games}]
A game $A$ is \emph{\bfseries thread-closed} if $\boldsymbol{s} \upharpoonright \mathscr{I} \in A$ for any position $\boldsymbol{s} \in A$ and set $\mathscr{I}$ of initial occurrences in $\boldsymbol{s}$, where $\boldsymbol{s} \upharpoonright \mathscr{I}$ is the j-subsequence of $\boldsymbol{s}$ that consists of occurrences in $\boldsymbol{s}$ \emph{hereditarily justified}\footnote{An initial occurrence $m$ in a j-sequence $\boldsymbol{s}$ \emph{hereditarily justifies} an occurrence $n$ in $\boldsymbol{s}$ if a finite iteration of the pointer $\mathcal{J}_{\boldsymbol{s}}$ applied to $n$ goes back to $m$ \cite{abramsky1999game,mccusker1998games}.} by initial occurrences in $\mathscr{I}$, called the \emph{\bfseries thread} of $\boldsymbol{s}$ with respect to $\mathscr{I}$.
\end{definition}

That is, a game is thread-closed if its positions are closed under taking threads. 

\begin{remark}
The terminology \emph{thread-closed} is not used in the original articles \cite{abramsky1999game,mccusker1998games}.
\end{remark}

\begin{definition}[Simplified exponential on games \cite{abramsky1999game,mccusker1998games}]
The \emph{\bfseries simplified exponential} on a game $A$ is the game $\hat{\oc} A \colonequals \{ \, \boldsymbol{s} \in \mathscr{L}_{\mathrm{Arn}(A)} \mid \forall m \in M_A^{\mathrm{Init}} . \, \boldsymbol{s} \upharpoonright \{ m \} \in A \, \}$, where $\{ m \}$ ranges over the singleton set of each initial occurrence in $\boldsymbol{s}$ whose move is $m$ if there is such an initial occurrence in $\boldsymbol{s}$, and the empty set $\emptyset$ otherwise. 
\end{definition}

We want the relation $A \subseteq \hat{\oc} A$ for every game $A$ since we employ $\hat{\oc}$ as an alternative to exponential $\oc$ (Section~\ref{IntroGameSemantics}).
Although it does not hold for games in general, it does for thread-closed ones; it is the point of the thread-closing constraint \cite[p.~41]{mccusker1998games}.

As mentioned in \cite[pp.~42-43]{mccusker1998games}, however, identities (or identity morphisms) in the CCC of games, called \emph{derelictions}, are not well-defined with respect to simplified exponential $\hat{\oc}$. 
To remedy this problem, we have to further focus on:
\begin{definition}[Well-opened games \cite{abramsky1999game,mccusker1998games}]
\label{DefWellOpenedGames}
A game $A$ is \emph{\bfseries well-opened} if the conjunction of $\boldsymbol{s} m \boldsymbol{t} \in A$ and $m \in M_A^{\mathrm{Init}}$ implies $\boldsymbol{s} = \boldsymbol{\epsilon}$.
\end{definition}

In other words, a game is well-opened if its position contains at most one initial occurrence. 
Note that well-opened games are trivially thread-closed. 
Note also that well-opened games are not closed under (simplified) exponential, but it does not matter for the present work as what we need is the implication $\Rightarrow$, not (simplified) exponential itself, and well-opened games are closed under implication \cite[p.~43]{mccusker1998games}. 

Because simplified exponential $\hat{\oc}$ dispenses with the `tags' $(\_, i)$ such that $i \in \mathbb{N}$ on exponential $\oc$, we no longer need the universal identification $\simeq_{\mathscr{U}}$ (Definition~\ref{DefUniversalIdentification}) or relevant concepts. 
Let us therefore introduce modified, `nonstandard' p-games without the nonempty, saturation or three completenesses axioms:
\begin{definition}[Nonstandard predicative games]
\label{DefNonstandardPredicativeGames}
A \emph{\bfseries nonstandard predicative (np-) game} is a set of t-skeletons $\sigma$ such that $\sigma \subseteq \mathscr{J}(\mathscr{M})$. 
An np-game is \emph{\bfseries well-opened} if so are all its elements (Definition~\ref{DefWellOpenedGames}).
\end{definition}

\begin{remark}
We could instead call np-games \emph{generalized p-games} since np-games are obtained from p-games by discarding all the axioms. 
Nevertheless, we would like to emphasize that np-games are undoubtedly `nonstandard' as they do not have the domain-theoretic structures of game semantics, e.g., an np-game may be the empty set, which motivates the term \emph{np-games}.
%Also, np-games no longer have identifications. 
Technically, we discard in Definition~\ref{DefNonstandardPredicativeGames} the nonempty axiom on p-games for the modified linear implication $\rightarrowtriangle$ between np-games (Lemma~\ref{LemWellDefinedWRWLI}), and the saturation and the three completenesses axioms since we no longer care for identifications of positions or orders between t-skeletons (n.b., recall that mathematical elegance of our mathematical structures is secondary).  
\end{remark}

\begin{example}
\label{ExamplesOfWellOpenedNonstandardPredicativeGames}
The p-games in Example~\ref{ExamplesOfPredicativeGames} are all well-opened np-games. 
\end{example}

\begin{convention}
We apply the notations/conventions for p-games to np-games as well.
\end{convention}

Let us next introduce the modified linear implication $\rightarrowtriangle$ between np-games as announced above.
Because np-games are defined in terms of t-skeletons (just like p-games), we define the modified linear implication in terms of t-skeletons as follows:
\begin{definition}[FoWRWLIs]
\label{DefFoWRWLIs}
A \emph{\bfseries family of winning-realizer-wise linear implications (FoWRWLI)} between np-games $G$ and $H$ is a family 
\begin{equation*}
\phi = (\phi_{(\gamma, e)})_{(\gamma, e) \in \mathscr{R}^{\mathrm{cp}}_{\mathrm{wr}}(G)}
\end{equation*}
of t-skeletons $\phi_{(\gamma, e)} :: \gamma \multimap \mathrm{cod}_\phi(\gamma, e)$ that satisfies
\begin{enumerate}

\item $\mathrm{cod}_\phi(\gamma, e) \in \mathcal{TS}_{\mathrm{wr}}(H)$;

\item $\forall (\gamma, e), (\gamma', e) \in  \mathscr{R}^{\mathrm{cp}}_{\mathrm{wr}}(G), \boldsymbol{s}mn \in \phi_{(\gamma, e)}^{\mathrm{Even}}, \boldsymbol{s}mn' \in \phi_{(\gamma', e)}^{\mathrm{Even}} . \, \boldsymbol{s}mn = \boldsymbol{s}mn'$;

\item There is a natural number $f \in \mathbb{N}$ such that $\mathscr{G}_{\mathrm{PRF}}(f)(e) = \pi^\gamma_\phi(e)$ for all $(\gamma, e) \in \mathscr{R}^{\mathrm{cp}}_{\mathrm{wr}}(G)$, where $\pi^\gamma_\phi : e \mapsto \mathscr{G}_H (\mathrm{cod}_\phi(\gamma, e))$ (called the \emph{\bfseries realizer-map} of $\phi$ at $\gamma$).

\end{enumerate}
\if0
\begin{equation*}
%\label{AxiomOnFoRLIs}
\Phi_e \circ \mathrm{tsk}_G(e) \in \mathcal{TS}_{\mathrm{wr}}(H)
\end{equation*}
for all $e \in \mathscr{R}^{\mathrm{cp}}_{\mathrm{wr}}(G)$.
We define a function $\pi_\Phi : \mathscr{R}^{\mathrm{cp}}_{\mathrm{wr}}(G) \rightarrow \mathscr{R}^{\mathrm{cp}}_{\mathrm{wr}}(H)$ by 
\begin{equation*}
\pi_\Phi(e) : e \in \mathscr{R}^{\mathrm{cp}}_{\mathrm{wr}}(G) \mapsto \mathscr{G}_H(\Phi_e \circ \mathrm{tsk}_G(e)) \in \mathscr{R}^{\mathrm{cp}}_{\mathrm{wr}}(H).
\end{equation*} 
\fi

Given an np-game $K$ and an FoWRWLI $\psi$ between $H$ and $K$, the \emph{\bfseries composition} of $\phi$ and $\psi$ is the FoWRWLI $\psi \circ \phi$ between $G$ and $K$ defined by 
\begin{mathpar}
%\pi_{\Psi \circ \Phi} \colonequals \pi_{\Psi} \circ \pi_\Phi
%\and
(\psi \circ \phi)_{(\gamma, e)} \colonequals \psi_{(\mathrm{cod}_\phi(\gamma, e), \pi_\phi^\gamma (e))} \circ \phi_{(\gamma, e)} \quad ((\gamma, e) \in \mathscr{R}^{\mathrm{cp}}_{\mathrm{wr}}(G)).
%\and
%\pi_{\psi \circ \phi} \colonequals \pi_{\psi} \circ \pi_{\phi}.
\end{mathpar}

We write $\mathscr{F_{\mathrm{wrw}}}(G, H)$ for the set of all FoWRWLIs between $G$ and $H$.
\end{definition}

Each realizer-map $\pi^\gamma_\phi$ is not required to be recursive (or there is $f \in \mathbb{N}$ such that $\mathscr{G}_{\mathrm{PRF}}(f)(n) \simeq \pi_\phi^\gamma(n)$ \emph{for all $n \in \mathbb{N}$}) since otherwise we could not validate CT (again due to the computational hardness of the set of all total recursive functions), as we shall see shortly.
Nevertheless, even in such a weakened sense, the `effectivity' of realizer-maps is necessary for \emph{recursive DoWRWLIs} (Definition~\ref{DefDoWRWLIs}) to be \emph{closed under composition} (the second clause of Lemma~\ref{LemWellDefinedDoWRWLIs}). 
Note also that the realizer-map $\pi_\phi^\gamma$ does not actually depend on the t-skeleton $\gamma \in \pi_1(\mathscr{R}^{\mathrm{cp}}_{\mathrm{wr}}(G))$.

\begin{convention}
%Because the underlying FoWRWLI of each DoWRWLI is unique, we henceforth use small Greek letters such as $\phi \in \mathscr{F}(G, H)$ for FoWRWLIs and write, e.g., $\uplus \phi \colonequals \uplus_{e \in \mathscr{R}^{\mathrm{cp}}_{\mathrm{wr}}(G)} \phi_e$ for DoWRWLIs.
We henceforth omit the superscript $(\_)^\gamma$ on realizer-maps $\pi_\phi^\gamma$. 
We write $\emptyset$ for the empty FoWRWLI, and $\{ \varphi \}$ for a singleton FoWRWLI of a t-skeleton $\varphi$.
\end{convention}

\if0
FoWRWLIs are similar to FoPLIs (Definition~\ref{DefFoPLIs}) because they are both certain families of t-skeletons. 
However, there are crucial differences between them. 
First, FoWRWLIs are \emph{indexed by canonical realizers for realizable, winning t-skeletons}, while FoPLIs by (unrestricted) t-skeletons. 
%This constraint on indices of FoWRWLIs enables us to restrict O's plays on the domain of an implication to realizable, winning ones.
Second, each component of an FoWRWLI may \emph{depend on the indexing realizer}, but it is not the case for an FoPLI. 
These two features of FoWRWLIs implement our idea for validating CT as we shall see. 
\fi

Next, the second axiom on FoWRWLIs prohibits their components from depending on the indexing t-skeletons, which is crucial for the first clause of Lemma~\ref{LemWellDefinedDoWRWLIs}.

On the other hand, the components may depend on the indexing realizers, which will play a crucial role for our validation of CT, as we shall see. 
However, it prohibits us from taking the union on an FoWRWLI since such a union may not be a well-defined t-skeleton. 
Specifically, the union may be \emph{nondeterministic} (Definition~\ref{DefTreeSkeletons}). 

Therefore, we instead take the \emph{disjoint union} of each FoWRWLI: 
\begin{definition}[DoWRWLIs]
\label{DefDoWRWLIs}
A \emph{\bfseries disjoint union of winning-realizer-wise linear implications (DoWRWLI)} between np-games $G$ and $H$ is the t-skeleton 
\begin{equation*}
\uplus \phi \colonequals \begin{cases} \top \, (= \{ \boldsymbol{\epsilon} \}) & \text{if $\phi = \emptyset$;} \\ \bigcup_{(\gamma, e) \in \mathscr{R}^{\mathrm{cp}}_{\mathrm{wr}}(G)} \phi_{(\gamma, e)}^{[e]} &\text{otherwise,} \end{cases}
\end{equation*}
for an FoWRWLI $\phi$ between $G$ and $H$, where 
\begin{equation*}
\phi_{(\gamma, e)}^{[e]} \colonequals \{ \, (\boldsymbol{s}(1), e) . (\boldsymbol{s}(2), e) \dots (\boldsymbol{s}(|\boldsymbol{s}|), e) \mid \boldsymbol{s} \in \phi_{(\gamma, e)} \, \}, 
\end{equation*}
and the `tags' $(\_, e)$ are implemented within the formalization of standard moves.

Given an np-game $K$ and a DoWRWLI $\uplus \psi$ between $H$ and $K$, the \emph{\bfseries composition} of $\uplus \phi$ and $\uplus \psi$ is the DoWRWLI $\uplus \psi \circ \uplus \phi$ between $G$ and $K$ defined by 
\begin{equation*}
\uplus \psi \circ \uplus \phi \colonequals \uplus (\psi \circ \phi).
\end{equation*}
\end{definition}

\begin{remark}
Alternatively, a DoWRWLI $\uplus \phi$ between np-games $G$ and $H$ can be defined by $\uplus \phi \colonequals \top$ if $\phi = \emptyset$, and $\uplus \phi \colonequals \bigcup_{e \in \pi_2(\mathscr{R}^{\mathrm{cp}}_{\mathrm{wr}}(G))} \phi_{e}^{[e]}$, where $\phi_e \colonequals \bigcup_{(\gamma, e) \in \mathscr{R}^{\mathrm{cp}}_{\mathrm{wr}}(G)} \phi_{(\gamma, e)}$, otherwise.
This point on DoWRWLIs, which are morphisms in our category, justifies the simplification of the semantics of pi-types made in Section~\ref{GameSemanticRealizability}, as we shall see. 
\end{remark}

\begin{convention}
Henceforth, we shall frequently reason about DoWRWLIs, for which we need case analyses accordingly to how DoWRWLIs are defined. 
It is, however, mostly rather trivial to handle the first case (i.e., when the underlying FoWRWLI $\phi$ of a given DoWRWLI $\uplus \phi$ is empty), and so we shall often skip it.
\end{convention}

\if0
\begin{definition}[Ranks of nonstandard predicative games]
The \emph{\bfseries rank} $\mathrm{rank}(G)$ of each np-game $G$ is undefined or a natural number defined by
\begin{equation*}
\mathrm{rank}(G) \colonequals \begin{cases} 0 &\text{if $\sigma \subseteq \mathscr{J}(\mathscr{M})$ for all $\sigma \in G$;} \\ \mathrm{max}(\mathrm{rank}(D), \mathrm{rank}(C)) + 1 &\text{if $G = \{ \, \uplus \phi \mid \phi \in \mathscr{F_{\mathrm{wrw}}}(D, C) \, \}$;} \\ \uparrow &\text{otherwise.} \end{cases}
\end{equation*}
\end{definition}

\begin{definition}[Winning-realizer-wise recursive tree skeletons]
A \emph{\bfseries winning-realizer-wise (w.r.w.) realizer} for a t-skeleton $\phi$ on an np-game $G$ is 
\begin{itemize}

\item A realizer $e \in \mathbb{N}$ for $\phi$ (Definition~\ref{DefRecursiveTreeSkeletons}) if $\mathrm{rank}(G) = 0$;

\item A family $f = (f_\gamma)_{(\gamma, e) \in \mathscr{R}_{\mathrm{wr}}^{\mathrm{cp}}(D)}$ of natural numbers $f_\gamma \in \mathbb{N}$ such that the partial map $\mathscr{G}_{\mathrm{PRF}}(f_\gamma)(\langle e, \_ \rangle) : \mathbb{N} \rightharpoonup \mathbb{N}$ is a w.r.w. realizer for $\phi_{(\gamma, e)}$ if  $\mathrm{rank}(G) > 0$.

\end{itemize}
\end{definition}
\fi

\begin{lemma}[Well-defined DoWRWLIs]
\label{LemWellDefinedDoWRWLIs}
Let $G$, $H$ and $K$ be np-games.
\begin{enumerate}

\item Any DoWRWLI between $G$ and $H$ is a t-skeleton contained in $\mathscr{J}(\mathscr{M})$;

\item DoWRWLIs (resp. winning, recursive ones) are closed under composition. %If $\Phi \in \mathscr{F}_{\mathrm{wrw}}(G, H)$ and $\Psi \in \mathscr{F}_{\mathrm{wrw}}(H, K)$ then $\Psi \circ \Phi \in \mathscr{F}_{\mathrm{wrw}}(G, K)$.

\end{enumerate}
\end{lemma}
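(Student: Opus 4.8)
The plan is to handle the two clauses separately, reducing each to the three FoWRWLI axioms of Definition~\ref{DefFoWRWLIs} and to standard facts about t-skeletons between games. Throughout I will use that tagging a j-sequence by $[e]$ is a bijective relabelling of moves realised inside the standard moves (Definition~\ref{DefDoWRWLIs}), hence preserves membership in $\mathscr{J}(\mathscr{M})$, legality, pointers and the $\mathsf{OP}/\mathsf{QA}$-labels. For the first clause the case $\phi=\emptyset$ is immediate, since then $\uplus\phi=\top=\{\boldsymbol{\epsilon}\}$ is the trivial t-skeleton and $\boldsymbol{\epsilon}\in\mathscr{J}(\mathscr{M})$. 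Assuming $\phi\neq\emptyset$ (so $\mathscr{R}^{\mathrm{cp}}_{\mathrm{wr}}(G)\neq\emptyset$), I would inherit $\uplus\phi\subseteq\mathscr{J}(\mathscr{M})$, legality, nonemptiness and prefix-closure directly from the components $\phi_{(\gamma,e)}::\gamma\multimap\mathrm{cod}_\phi(\gamma,e)$ via the relabelling: $\boldsymbol{\epsilon}$ lies in every component, and any prefix of a tagged position, after stripping its uniform tag, is a prefix of a position of some $\phi_{(\gamma,e)}$. Well-foundedness of $\mathrm{Arn}(\uplus\phi)$ follows as for unions of pointwise linear implications in \cite{yamada2016game}, since any enabling chain is witnessed inside positions whose moves all carry one tag $e$ and hence lives in the well-founded union of the arenas of the $\phi_{(\gamma,e)}$ sharing that $e$. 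The crux is determinism: given $\boldsymbol{s}mn,\boldsymbol{s}mn'\in(\uplus\phi)^{\mathrm{Even}}$, their common odd prefix $\boldsymbol{s}m$ is nonempty, so its first move carries a single tag $e$, and because moves of distinct tags are distinct, both positions lie in the tag-$e$ part $\bigcup_{\gamma}\phi_{(\gamma,e)}^{[e]}$; stripping the tag turns them into even positions of $\phi_{(\gamma,e)}$ and $\phi_{(\gamma',e)}$ for the same $e$, and the second FoWRWLI axiom forces them equal, whence $n=n'$ after retagging. Thus $\uplus\phi$ is a deterministic game, i.e.\ a t-skeleton (Definition~\ref{DefTreeSkeletons}).

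For the second clause I would show that $\psi\circ\phi$ is again an FoWRWLI, so that $\uplus\psi\circ\uplus\phi=\uplus(\psi\circ\phi)$ is a DoWRWLI by the first clause. First I would record the key consequence of the third FoWRWLI axiom: since a single recursive $f$ satisfies $\mathscr{G}_{\mathrm{PRF}}(f)(e)=\pi^\gamma_\phi(e)$ for all $(\gamma,e)\in\mathscr{R}^{\mathrm{cp}}_{\mathrm{wr}}(G)$, the value $\pi^\gamma_\phi(e)=\mathscr{G}_H(\mathrm{cod}_\phi(\gamma,e))$ cannot depend on $\gamma$, so for a fixed $e$ the index $(\mathrm{cod}_\phi(\gamma,e),\pi_\phi(e))$ at which $\psi$ is evaluated always has the same second coordinate $\pi_\phi(e)$. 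The first axiom for $\psi\circ\phi$ then holds because composition of t-skeletons on linear implications \cite{yamada2016game} sends $\phi_{(\gamma,e)}::\gamma\multimap\mathrm{cod}_\phi(\gamma,e)$ and $\psi_{(\mathrm{cod}_\phi(\gamma,e),\pi_\phi(e))}::\mathrm{cod}_\phi(\gamma,e)\multimap\mathrm{cod}_\psi(\mathrm{cod}_\phi(\gamma,e),\pi_\phi(e))$ to a t-skeleton on $\gamma\multimap\mathrm{cod}_\psi(\mathrm{cod}_\phi(\gamma,e),\pi_\phi(e))$, whose codomain lies in $\mathcal{TS}_{\mathrm{wr}}(K)$ by the first axiom for $\psi$. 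For the third axiom, a direct computation gives $\pi_{\psi\circ\phi}(e)=\mathscr{G}_K(\mathrm{cod}_\psi(\mathrm{cod}_\phi(\gamma,e),\pi_\phi(e)))=\pi_\psi(\pi_\phi(e))$, so the composite realizer-map is $\pi_\psi\circ\pi_\phi$; composing the recursive functions supplied by the third axioms for $\phi$ and $\psi$, using closure of partial recursive functions under composition together with the coherence condition on canonical G\"odel numberings imposed in Definition~\ref{DefCanonicalRealizers}, then yields the required single recursive witness.

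The hard part will be the second axiom for $\psi\circ\phi$, namely that $(\psi\circ\phi)_{(\gamma,e)}$ and $(\psi\circ\phi)_{(\gamma',e)}$ agree on even-length responses. Here the second axiom for $\phi$ makes $\phi_{(\gamma,e)}$ and $\phi_{(\gamma',e)}$ agree on even responses, and by the $\gamma$-independence noted above the two $\psi$-factors are evaluated at indices with the common second coordinate $\pi_\phi(e)$, so the second axiom for $\psi$ makes them agree on even responses too. It then remains to verify that composition of t-skeletons depends only on the even-length (functional) behaviour of its two factors: unwinding $\phi;\psi$ as internal communication followed by hiding (Definition~\ref{DefConstructionsOnT-Skeletons}), the next P-move produced at any odd position of the composite is computed, along the uniquely determined interaction sequence, by whichever factor is to move, using only that factor's even responses, so agreement of the factors propagates to agreement of the composites. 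This propagation argument, carried out by induction on the interaction sequence, is the main obstacle, since it must be checked that the hidden internal $H$-dialogue is itself determined factor-wise. Finally the parenthetical closure statements follow: winning t-skeletons are closed under composition \cite{abramsky1997semantics,yamada2016game,clairambault2010totality}, so each component $(\psi\circ\phi)_{(\gamma,e)}$ is winning when those of $\phi$ and $\psi$ are (and totality, innocence and noetherianity of $\uplus(\psi\circ\phi)$ reduce tag-wise to the components); recursiveness is preserved because the composition operation on t-skeletons is effective and, by the third axiom, the reindexing $e\mapsto\pi_\phi(e)$ needed to locate the correct $\psi$-component is recursive, which is exactly the point at which the weak effectivity built into the realizer-maps is indispensable.
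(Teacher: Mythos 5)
Your proposal is correct and takes essentially the same route as the paper's (much more compressed) proof: clause~1 rests on the second FoWRWLI axiom together with the fact that the tags partition positions by the realizer $e$, and clause~2 reduces preservation of recursiveness to the `effectivity' of realizer-maps supplied by the third axiom, exactly as the paper does by invoking the Hyland--Ong argument for closure of recursive strategies under composition. The only point of the paper's proof you do not address is that a DoWRWLI uniquely determines its underlying FoWRWLI, which is what makes composition well-defined as an operation on DoWRWLIs rather than on chosen families; conversely, you spell out the verification that $\psi \circ \phi$ satisfies the three FoWRWLI axioms (including the propagation of the second axiom through the interaction sequence), which the paper leaves implicit in its definition of composition of FoWRWLIs.
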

\begin{proof}
For the first clause, it suffices to remark that each DoWRWLI is deterministic thanks to the second axiom on FoWRWLIs.

For the second clause, note that each DoWRWLI uniquely determines the underlying FoWRWLI so that the composition is well-defined on DoWRWLIs (resp. winning ones). 
Finally, the third axiom on FoWRWLIs or `effectivity' of realizer-maps enables us to `effectively' obtain the `intermediate' realizer $\pi_\phi(e)$ on the component $(\psi \circ \phi)_e$ of the composition $\uplus \psi \circ \uplus \phi$ of given composable DoWRWLIs $\uplus \phi$ and $\uplus \psi$ so that the composition evidently preserves recursiveness of DoWRWLIs (by the same argument for closure of recursive strategies under composition in \cite[p.~355]{hyland2000full}).
\end{proof}

For comparison with DoWRWLIs, let us recall \emph{UoPLIs} \cite{yamada2016game} between p-games:
\begin{definition}[UoPLIs \cite{yamada2016game}]
\label{DefUoPLIs}
A \emph{\bfseries family of pointwise linear implications (FoPLI)} between p-games $G$ and $H$ is a family $\varphi = (\varphi_\sigma)_{\sigma :: G}$ of standard t-skeletons $\varphi_\sigma$ that satisfies
\begin{enumerate}

%\item The set $\{ \Phi_\sigma \mid \sigma :: A \, \}$ of PLIs is consistent;

\item $\forall \sigma :: G . \, \exists \tau :: H . \, \varphi_\sigma :: \sigma \multimap \tau$;%\footnote{The identification of positions of the underlying linear implication can be arbitrary because it is not relevant at all to the notion of UoPLIs; our specific choice does not matter.}

%\item $\forall \sigma :: A . \, \Phi_\sigma \upharpoonright \sigma \colonequals \{ \boldsymbol{s} \upharpoonright \sigma \mid \boldsymbol{s} \in \Phi_\sigma \, \} \subseteq \sigma$ and $\Phi_\sigma \circ \sigma \in \mathcal{TS}(B)$;

\item $\forall \sigma, \tilde{\sigma} :: G, \boldsymbol{s} m n \in \varphi_\sigma^{\mathrm{Even}}, \boldsymbol{s} m \tilde{n} \in \varphi_{\tilde{\sigma}}^{\mathrm{Even}} . \, \boldsymbol{s} m n = \boldsymbol{s} m \tilde{n}$.%\footnote{An alternative axiom $\forall \sigma, \tilde{\sigma} :: A, \boldsymbol{s} m n \in (\phi_\sigma^{\mathrm{p}} \cup \phi_{\tilde{\sigma}}^{\mathrm{p}})^{\mathrm{Even}} . \, \boldsymbol{s} m \in \Phi_\sigma \cap \Phi_{\tilde{\sigma}} \Rightarrow \boldsymbol{s} m n \in \Phi_\sigma \cap \Phi_{\tilde{\sigma}}$ suffices for the present work except Theorem~\ref{ThmReflectiveSubcategories}; see the proof of the theorem.}

%\item $\forall \sigma, \tilde{\sigma} :: \mathsf{Vld}(A), \boldsymbol{s}m \in (\Phi_\sigma \cup \Phi_{\tilde{\sigma}})^{\mathrm{Odd}} . \, \boldsymbol{s} \in \Phi_\sigma \cap \Phi_{\tilde{\sigma}} \wedge m \in M_B \Rightarrow \boldsymbol{s} m \in \Phi_\sigma \cap \Phi_{\tilde{\sigma}}$.

\end{enumerate}

We write $\mathscr{F}(G, H)$ for the set of all FoPLIs between $G$ and $H$, and call the union $\bigcup \varphi \colonequals \bigcup_{\sigma :: G}\varphi_\sigma$ the \emph{\bfseries union of PLIs (UoPLI)} between $G$ and $H$ (on $\varphi$).
\end{definition}

The first axiom on an FoPLI specifies the induced UoPLI in the \emph{pointwise} fashion.
On the other hand, the second axiom brings UoPLIs \emph{determinacy} so that they are well-defined t-skeletons. 
Moreover, the second axiom gives rise to the \emph{computational nature} or \emph{domain-theoretic structures} of game semantics, e.g., thanks to the axiom, the input-output pairs (or extension) of a UoPLI induce \emph{continuous} functions \cite{yamada2016game}.

Crucially, DoWRWLIs differ from UoPLIs in the following two points:
\begin{enumerate}

\item O in each DoWRWLI must play on the domain by a \emph{winning, recursive} t-skeleton and \emph{exhibit the canonical realizer} for it by the `tag' at his first move, while in each UoPLI O may play by any t-skeleton on the domain and does not have to exhibit anything other than ordinary O-moves during a play;

\item A play by a DoWRWLI may \emph{depend on the realizer} that O supplies, while it is not the case for a play by a UoPLI due to its second axiom.

\end{enumerate}
Consequently, DoWRWLIs implement our idea on how to validate CT, which is described in Section~\ref{GameSemanticRealizability}, while UoPLIs cannot.

Next, recall that the previous work \cite{yamada2016game} defines the \emph{linear implication} $G \multimap H$ between p-games $G$ and $H$ by $G \multimap H \colonequals \{ \, \bigcup \Phi \mid \Phi \in \mathscr{F}(G, H) \, \}$.
Similarly, we define our modified linear implication $\rightarrowtriangle$ between np-games by
\begin{definition}[Winning-realizer-wise linear implication]
\label{DefWRWLinearImplication}
The \emph{\bfseries winning-realizer-wise (w.r.w.) linear implication} between np-games $G$ and $H$ is the np-game 
\begin{mathpar}
G \rightarrowtriangle H \colonequals \{ \, \uplus \phi \mid \phi \in \mathscr{F}_{\mathrm{wrw}}(G, H) \, \}.
\end{mathpar}
%and the \emph{\bfseries w.r.w. implication} between $G$ and $H$ is the p-game $\oc G \rightarrowtriangle H$.
\end{definition}

Note that $G \rightarrowtriangle H = \emptyset$ if $\mathscr{R}^{\mathrm{cp}}_{\mathrm{wr}}(G) \neq \emptyset$ and $\mathscr{R}^{\mathrm{cp}}_{\mathrm{wr}}(H) = \emptyset$. 
It is why we allow an np-game to be the empty set (Definition~\ref{DefNonstandardPredicativeGames}).
%Note also that a realizer for a t-skeleton $\uplus \phi :: G \rightarrowtriangle H$ is equivalent to a natural number $f \in \mathbb{N}$ such that $\mathscr{G}_{\mathrm{PRF}}(f)(\langle \langle e, \pi_\phi(e) \rangle, \_ \rangle) : \mathbb{N} \rightarrow \mathbb{N}$ for each $e \in \mathscr{R}_{\mathrm{wr}}^{\mathrm{cp}}(G)$ partially decides the component $\phi_e$.

\if0
\begin{convention}
Let us assume that the canonical realizers $f \in \mathscr{R}_{\mathrm{wr}}^{\mathrm{cp}}(G \rightarrowtriangle H)$ for t-skeletons $\uplus \phi :: G \rightarrowtriangle H$ between all np-games $G$ and $H$ are chosen in such a way that $\mathscr{G}_{\mathrm{PRF}}(f)(\langle \langle e, \pi_\phi(e) \rangle, \_ \rangle) : \mathbb{N} \rightarrow \mathbb{N}$ for each $e \in \mathscr{R}_{\mathrm{wr}}^{\mathrm{cp}}(G)$ partially decides $\phi_e$.
\end{convention}
\fi

\if0
\begin{remark}
In Definition~\ref{DefDoWRWLIs}, `tags' for DoWRWLIs are given by standard moves, and so we may simply apply realizability of t-skeletons to DoWRWLIs.
However, because the set of all total recursive functions $\mathbb{N} \rightarrow \mathbb{N}$ is not recursively enumerable \cite[Theorem~7.2.9]{cutland1980computability}, we could not model MLTT plus CT  if we focus on recursive DoWRWLIs.  
Hence, we shall impose weakened realizability, called \emph{realizer-wise-realizability}, on DoWRWLIs and based on it validate CT \emph{constructively} in Section~\ref{MainResult}. 
\end{remark}
\fi

\begin{lemma}[Well-defined winning-realizer-wise linear implication]
\label{LemWellDefinedWRWLI}
Np-games are closed under w.r.w. linear implication $\rightarrowtriangle$.
\end{lemma}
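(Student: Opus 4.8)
The plan is to unwind the definitions and observe that the statement is essentially a repackaging of the first clause of Lemma~\ref{LemWellDefinedDoWRWLIs}. By Definition~\ref{DefNonstandardPredicativeGames}, an np-game is nothing but a set of t-skeletons each of which is contained in $\mathscr{J}(\mathscr{M})$; there are no nonemptiness, saturation or completeness axioms left to verify. Hence, given np-games $G$ and $H$, to conclude that $G \rightarrowtriangle H$ is again an np-game it suffices to show that every element of $G \rightarrowtriangle H$ is a t-skeleton contained in $\mathscr{J}(\mathscr{M})$.

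By Definition~\ref{DefWRWLinearImplication}, every element of $G \rightarrowtriangle H$ is of the form $\uplus \phi$ for some FoWRWLI $\phi \in \mathscr{F}_{\mathrm{wrw}}(G, H)$, i.e., it is a DoWRWLI between $G$ and $H$. The first clause of Lemma~\ref{LemWellDefinedDoWRWLIs} states precisely that any such DoWRWLI is a t-skeleton contained in $\mathscr{J}(\mathscr{M})$, which is exactly the property required. I would therefore simply invoke that clause to finish.

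For completeness I would recall why each $\uplus \phi$ is a genuine (deterministic) t-skeleton, since this is the only non-formal content. When $\phi = \emptyset$ we have $\uplus \phi = \top = \{ \boldsymbol{\epsilon} \}$, which is trivially a t-skeleton in $\mathscr{J}(\mathscr{M})$. Otherwise $\uplus \phi = \bigcup_{(\gamma, e)} \phi_{(\gamma, e)}^{[e]}$, and determinism is secured on two fronts: components indexed by distinct realizers $e \neq e'$ are separated by the `tags' $(\_, e)$ carried by $\phi_{(\gamma,e)}^{[e]}$, so their positions can never share an odd-length prefix, while components sharing a common realizer $e$ but different winning, recursive t-skeletons $\gamma, \gamma'$ agree on common continuations by the second axiom on FoWRWLIs in Definition~\ref{DefFoWRWLIs}. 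Containment in $\mathscr{J}(\mathscr{M})$ is inherited from the components $\phi_{(\gamma,e)}$ together with the fact that the realizer-tagging is implemented inside the formalization of standard moves.

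Since the definition of np-games is deliberately minimal, there is no real obstacle here; the lemma is a formal consequence of the earlier well-definedness result. The only point that deserves care — and which is already discharged in Lemma~\ref{LemWellDefinedDoWRWLIs} — is determinism of the disjoint union, which is exactly what the realizer-indexed tagging and the second FoWRWLI axiom were designed to guarantee.
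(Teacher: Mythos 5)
Your proposal is correct and follows exactly the paper's route: the paper's proof is simply "Immediate from the first clause of Lemma~\ref{LemWellDefinedDoWRWLIs}," which is precisely the reduction you make. Your additional recap of why each $\uplus \phi$ is a deterministic t-skeleton (tags separating distinct realizers, the second FoWRWLI axiom handling components with a shared realizer) is just an unfolding of that cited clause's own proof, so nothing diverges.
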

\begin{proof}
Immediate from the first clause of Lemma~\ref{LemWellDefinedDoWRWLIs}.
%Similar to (and simpler than) the proof of closure of p-games under linear implication $\multimap$ \cite{yamada2016game}.
\end{proof}

On the other hand, p-games are not closed under w.r.w. linear implication $\rightarrowtriangle$.
To see this point, note that the `tags' for DoWRWLIs are embedded into standard moves, and thus \emph{certain forms of standard moves determine O's play on the domain} of a DoWRWLI.
Consequently, p-games are not closed under w.r.w. linear implication since it does not preserve \emph{saturation} of positions. 
For instance, consider the w.r.w. linear implication $\oc (\oc N \rightarrowtriangle N) \rightarrowtriangle N$, where $N$ and $\oc N$ are the natural number game and its exponential (Section~\ref{IntroGameSemantics}) regarded as p-games. 
A play in it looks like
\begin{mathpar}
\begin{tabular}{ccccc}
$\oc (\oc N$ & $\rightarrowtriangle$ & $N)$ & $\rightarrowtriangle$ & $N$ \\ \hline 
&&$q_{\oc (\oc N \rightarrowtriangle N) \rightarrowtriangle N}$&& \\
&&$\uplus \phi$&& \\
&&&& \tikzmark{chigher01} $q^{[e]}$ \tikzmark{chigher09} \\
&&\tikzmark{chigher02} $(q, j)^{[e'], [e]}$ \tikzmark{dhigher01} && \\
\tikzmark{chigher03} $((q, i), j)^{[e'], [e]}$ \tikzmark{dhigher02} && && \\
\tikzmark{dhigher03} $((n, i), j)^{[e'], [e]}$&& && \\
&& \tikzmark{dhigher04} $(m, j)^{[e'], [e]}$ && \\
&&\tikzmark{chigher08} $(q, j')^{[e'], [e]}$ \tikzmark{dhigher05} && \\
\tikzmark{chigher07} $((q, i'), j')^{[e'], [e]}$ \tikzmark{dhigher06} && && \\
\tikzmark{dhigher07} $((n', i'), j')^{[e'], [e]}$&& && \\
&& \tikzmark{dhigher08} $(m', j')^{[e'], [e]}$ && \\
&&&& $l^{[e]}$ \tikzmark{dhigher09}
\end{tabular}
\begin{tikzpicture}[overlay, remember picture, yshift=.25\baselineskip]
\draw [->] ({pic cs:dhigher01}) to ({pic cs:chigher01});
\draw [->] ({pic cs:dhigher02}) to ({pic cs:chigher02});
\draw [->] ({pic cs:dhigher03}) [bend left] to ({pic cs:chigher03});
\draw [->] ({pic cs:dhigher04}) [bend left] to ({pic cs:chigher02});
\draw [->] ({pic cs:dhigher05}) to ({pic cs:chigher01});
\draw [->] ({pic cs:dhigher06}) to ({pic cs:chigher08});
\draw [->] ({pic cs:dhigher07}) [bend left] to ({pic cs:chigher07});
\draw [->] ({pic cs:dhigher08}) [bend left] to ({pic cs:chigher08});
\draw [->] ({pic cs:dhigher09}) [bend right] to ({pic cs:chigher09});
\end{tikzpicture}
\end{mathpar}
%where the superscripts $(\_)^{[e]}$ and $(\_)^{[e']}$ denote the `tags' for the outer and the inner occurrences of w.r.w. linear implication, respectively.
Because $e$ realizes O's play on the domain $\oc N \rightarrowtriangle N$, his choice $i \in \mathbb{N}$ is determined by $e$, where the `tag' $(\_)^{[e]}$ constitutes the moves in the diagram. 
It then follows that the set $P_{\oc (\oc N \rightarrowtriangle N) \rightarrowtriangle N}$ is not saturated (as $i$ does not range over all natural numbers unless we change $e$), showing that $\oc (\oc N \rightarrowtriangle N) \rightarrowtriangle N$ is not a p-game. 

We circumvent this problem by discarding identifications and the saturation axiom on p-games, and then adopting np-games and \emph{simplified exponential} $\hat{\oc}$ on them:
\begin{definition}[Simplified exponential on nonstandard predicative games]
The \emph{\bfseries simplified exponential} of an np-game $G$ is the np-game $\hat{\oc} G \colonequals \{ \, \hat{\oc} \sigma \mid \sigma :: G \, \}$.
\end{definition}

\begin{notation}
We adopt the notations for exponential $\oc$ on games given at the end of Section~\ref{ArenasLegalPositionsGamesAndTreeSkeletons} also for simplified exponential $\hat{\oc}$ on np-games in the evident way.
It in particular implies that $\hat{\oc} \sigma = \sigma^\dagger :: \hat{\oc} G$ holds for any np-game $G$ and t-skeleton $\sigma :: G$.
\end{notation}

\begin{remark}
The previous work \cite{yamada2016game} necessitates the \emph{countably infinite tensor} on t-skeletons $(\sigma_i)_{i \in \mathbb{N}}$ for exponential $\oc$ on p-games in order to systematically accommodate \emph{non-innocent} t-skeletons on exponentials. 
%Note that each innocent t-skeleton on an exponential $\oc G$ is the promotion $\sigma^\dagger$ of some $\sigma :: G$, and so we can dispense with the countably infinite tensor and define $\oc G \colonequals \{ \, \sigma^\dagger (= \oc \sigma) \mid \sigma :: G \, \}$ if we focus on innocent t-skeletons. 
In contrast, we shall use simplified exponential $\hat{\oc}$ on np-games exclusively on the domains of w.r.w. linear implications, where O must play by \emph{innocent} t-skeletons which are all \emph{promotions}.
Hence, it suffices to define simplified exponential on np-games as above. 
\end{remark}

Let us proceed to define other constructions on np-games:
\begin{definition}[Winning-realizer-wise implication]
\label{DefWRWImplication}
The \emph{\bfseries winning-realizer-wise (w.r.w.) implication} between np-games $G$ and $H$ is the np-game $\hat{\oc} G \rightarrowtriangle H$.
\end{definition}

\begin{definition}[Product on nonstandard predicative games]
The \emph{\bfseries product} of np-games $G$ and $H$ is the np-game $G \mathbin{\&} H \colonequals \{ \langle \sigma, \tau \rangle \mid \sigma ::, \tau :: H \, \}$.
\end{definition}

Recall once again that McCusker \cite{abramsky1999game,mccusker1998games} has to focus on well-opened games for the use of simplified exponential $\hat{\oc}$ on games.
For the same reason, we shall henceforth focus on well-opened np-games, which the following theorem supports:
\begin{theorem}[Well-defined constructions on nonstandard predicative games]
\label{ThmWellDefinedConstructionsOnNonstandardPredicativeGames}
Np-games (resp. well-opened ones) are closed under product and w.r.w. implication.
\end{theorem}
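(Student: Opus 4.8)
The plan is to handle the two constructions separately and, for each, to establish the plain np-game closure first and then upgrade it to well-openedness. The guiding simplification is that np-games carry no structural axioms beyond being sets of t-skeletons inside $\mathscr{J}(\mathscr{M})$ (Definition~\ref{DefNonstandardPredicativeGames}), so verifying ``is an np-game'' reduces entirely to checking that every element produced by a construction is a t-skeleton contained in $\mathscr{J}(\mathscr{M})$.

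For the \emph{product}, I would first note that each element $\langle \sigma, \tau \rangle$ of $G \mathbin{\&} H$, with $\sigma :: G$ and $\tau :: H$, is the pairing of two t-skeletons, hence a deterministic game and thus itself a t-skeleton (Definition~\ref{DefConstructionsOnT-Skeletons}, Notation~\ref{NotationOnSkeletons}); since $\sigma, \tau \subseteq \mathscr{J}(\mathscr{M})$ are standard, so is $\langle \sigma, \tau \rangle$, whence $G \mathbin{\&} H$ is an np-game. For well-openedness I would then use that, by the defining clause of the product, every position of $\langle \sigma, \tau \rangle$ lies wholly in the $G$-component or wholly in the $H$-component, and therefore inherits its initial occurrences from a single position of $\sigma$ or of $\tau$. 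When $G$ and $H$ are well-opened, each of $\sigma, \tau$ has at most one initial occurrence per position (Definition~\ref{DefWellOpenedGames}), so the same holds for $\langle \sigma, \tau \rangle$; hence $G \mathbin{\&} H$ is well-opened.

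For the \emph{w.r.w. implication} $\hat{\oc} G \rightarrowtriangle H$ (Definition~\ref{DefWRWImplication}), the np-game part factors through two earlier facts: first, $\hat{\oc} G = \{ \hat{\oc}\sigma \mid \sigma :: G \}$ is an np-game because $\hat{\oc}\sigma = \sigma^\dagger$ is again a t-skeleton in $\mathscr{J}(\mathscr{M})$; second, Lemma~\ref{LemWellDefinedWRWLI} yields closure of np-games under $\rightarrowtriangle$, so $\hat{\oc} G \rightarrowtriangle H$ is an np-game. For well-openedness the decisive point is the enabling relation of linear implication, $\star \vdash_{A \multimap B} m \Leftrightarrow \star \vdash_B m$: the initial occurrences of any DoWRWLI $\uplus \phi :: \hat{\oc} G \rightarrowtriangle H$ (Definition~\ref{DefDoWRWLIs}) are exactly those lying in the codomain component, i.e. in some $\mathrm{cod}_\phi(\gamma, e) :: H$. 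Since each such codomain t-skeleton is well-opened when $H$ is, every position of $\uplus \phi$ contains at most one initial occurrence, giving well-openedness of $\hat{\oc} G \rightarrowtriangle H$. This is precisely McCusker's observation that well-opened games are closed under implication, transported to np-games.

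The step I expect to demand the most care is the well-openedness of $\hat{\oc} G \rightarrowtriangle H$: I must confirm that the domain-side decorations, namely the simplified exponential $\hat{\oc}$ and the realizer `tags' $\phi_{(\gamma,e)}^{[e]}$ attached in forming the disjoint union, genuinely leave the initiality of occurrences untouched, so that initial occurrences remain confined to the codomain. Once that is pinned down the conclusion is forced, since it is the well-openedness of $H$ alone that is required; in particular the fact that well-opened np-games are not closed under $\hat{\oc}$ itself is harmless, because $\hat{\oc}$ is only ever applied on the \emph{domain} of $\rightarrowtriangle$. The remaining checks—that pairings and promotions are t-skeletons inside $\mathscr{J}(\mathscr{M})$—are routine and invoke the constructions and closure results already established.
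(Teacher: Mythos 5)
Your proposal is correct and takes essentially the same approach as the paper, whose entire proof reads that closure under product is obvious and closure under w.r.w. implication follows immediately from Lemma~\ref{LemWellDefinedWRWLI}. You merely spell out the details the paper leaves implicit: that pairings of t-skeletons are t-skeletons in $\mathscr{J}(\mathscr{M})$, that positions of a product lie wholly in one component, and that initial occurrences of a DoWRWLI are confined to the codomain (McCusker's observation, which the paper cites just before the theorem), so well-openedness of the implication needs only well-openedness of $H$.
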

\begin{proof}
Closure of np-games (resp. well-opened ones) under product is obvious, and that under implication follows immediately from Lemma~\ref{LemWellDefinedWRWLI}.
\end{proof}

At this point, we can sketch how we shall validate CT as follows.
Consider the np-game $\hat{\oc} (\hat{\oc} N \rightarrowtriangle N) \rightarrowtriangle N$, which interprets the type $\mathsf{(N \Rightarrow N) \Rightarrow N}$.
We then define a t-skeleton $\uplus \mathrm{ctl} : \hat{\oc} (\hat{\oc} N \rightarrowtriangle N) \rightarrowtriangle N$ that plays as in the following diagram: 
\begin{mathpar}
\begin{tabular}{ccccc}
$\hat{\oc} (\hat{\oc} N$ & $\rightarrowtriangle$ & $N)$ & $\stackrel{\uplus \mathrm{ctl}}{\rightarrowtriangle}$ & $N$ \\ \hline 
&&&$q_{\hat{\oc} (\hat{\oc} N \rightarrowtriangle N) \rightarrowtriangle N}$& \\
&&&$\uplus \mathrm{ctl}$& \\
&&&&\tikzmark{cmultimap1} $q^{[e]}$ \tikzmark{cmultimap3000} \\
&&&&$(e^\ddagger)^{[e]}$ \tikzmark{dmultimap3000}
\end{tabular}
\begin{tikzpicture}[overlay, remember picture, yshift=.25\baselineskip]
\draw [->] ({pic cs:dmultimap3000}) [bend right] to ({pic cs:cmultimap3000});
\end{tikzpicture}
\end{mathpar}
%where the superscript $(\_)^{[e]}$ ($e \in \mathbb{N}$) is the `tag' for the outer occurrence of w.r.w. linear implication.
Note that O must perform the first move $q^{[e]}$ in $\hat{\oc} (\hat{\oc} N \rightarrowtriangle N) \rightarrowtriangle N$, which specifies the canonical realizer $e \in \pi_2(\mathscr{R}^{\mathrm{cp}}_{\mathrm{wr}}(\hat{\oc} (\hat{\oc} N \rightarrowtriangle N)))$ for his intended computation on the domain $\hat{\oc} (\hat{\oc} N \rightarrowtriangle N)$.
The computation must be by a winning, recursive t-skeleton on $\hat{\oc} (\hat{\oc} N \rightarrowtriangle N)$. 
Then, the t-skeleton $\uplus \mathrm{ctl}$ directs P to `copy-cat' the given realizer $e$ as the P-move $(e^\ddagger)^{[e]}$, where $e^\ddagger$ realizes the winning, recursive t-skeleton on $\hat{\oc} N \rightarrowtriangle N$ that essentially coincides with the one realized by $e$.
%Note that we have $\mathrm{Sat}(\uplus \mathrm{ctl}) = \uplus \mathrm{ctl}$, and so $\uplus \mathrm{ctl}$ is a p-strategy on $\oc (\oc N \rightarrowtriangle N) \rightarrowtriangle N$ too.
In this way, the t-skeleton $\uplus \mathrm{ctl}$ models the nontrivial part of CT (\ref{CTinMLTT}), which is the main idea of the present work. 

Moreover, it is straightforward to see that the t-skeleton $\uplus \mathrm{ctl}$ is \emph{recursive} so that we may \emph{constructively} model MLTT plus CT. 
Note that the position $q^{[e]} \in \uplus \mathrm{ctl}$ is not even partially decidable since otherwise it would contradict the well-known fact that the set of all total recursive functions is not recursively enumerable; for this point, we have carefully `compromised' our notions of (the `effectivity' of) recursive t-skeletons and realizer-maps, as already remarked, so that $\uplus \mathrm{ctl}$ is recursive. 

Also, let us emphasize the point that the t-skeleton $\uplus \mathrm{ctl}$ resolves the \emph{dilemma between intensionality and extensionality} explained in Section~\ref{WhyDifficult}. 
In fact, O must exhibit the canonical realizer for his winning, recursive t-skeleton on the domain \emph{on the nose} at his first move in a play in $\uplus\mathrm{ctl}$, which is very intensional like realizability \`{a} la Kleene, while $\uplus\mathrm{ctl}$ itself is just as abstract as morphisms in the game semantics \cite{yamada2016game} or realizability \`{a} la assemblies, i.e., extensional enough to model MLTT.

The rest of this article is dedicated to verifying this solution in detail.
Towards this end, we first modify constructions on t-skeletons on linear implication $\multimap$ \cite{yamada2016game} into appropriate ones on w.r.w. linear implication $\rightarrowtriangle$:

\begin{definition}[Winning-realizer-wise copy-cats]
The \emph{\bfseries winning-realizer-wise (w.r.w.) copy-cat} on an np-game $G$ is the DoWRWLI $\uplus \mathrm{cp}_G :: G \rightarrowtriangle G$ whose component $(\mathrm{cp}_G)_{(\gamma, e)}$ for each $(\gamma, e) \in \mathscr{R}^{\mathrm{cp}}_{\mathrm{wr}}(G)$ is given by
\begin{equation*}
(\mathrm{cp}_G)_{(\gamma, e)} \colonequals \{ \, \boldsymbol{s} \in \gamma^{[0]} \multimap \gamma^{[1]} \mid \forall \boldsymbol{t} \preceq{\boldsymbol{s}}. \ \mathrm{Even}(\boldsymbol{t}) \Rightarrow \boldsymbol{t} \upharpoonright \gamma^{[0]} = \boldsymbol{t} \upharpoonright \gamma^{[1]} \, \},
\end{equation*}
where the superscripts $(\_)^{[i]}$ ($i = 0, 1$) are to distinguish the two copies of $\gamma$.
\end{definition}

\begin{definition}[Winning-realizer-wise derelictions]
\label{DefWRWDerelictionTreeSkeletons}
The \emph{\bfseries winning-realizer-wise (w.r.w.) dereliction} on an np-game $G$ is the DoWRWLI $\uplus \mathrm{der}_G :: \hat{\oc} G \rightarrowtriangle G$ whose component $(\mathrm{der}_G)_{(\gamma^\dagger, e')}$ for each $(\gamma^\dagger, e') \in \mathscr{R}^{\mathrm{cp}}_{\mathrm{wr}}(\hat{\oc} G)$ is given by
\begin{equation*}
(\mathrm{der}_G)_{(\gamma^\dagger, e')} \colonequals \{ \, \boldsymbol{s} \in \gamma^\dagger \multimap \gamma \mid \forall \boldsymbol{t} \preceq{\boldsymbol{s}}. \ \mathrm{Even}(\boldsymbol{t}) \Rightarrow \boldsymbol{t} \upharpoonright \gamma^\dagger = \boldsymbol{t} \upharpoonright \gamma \, \}.
\end{equation*}
%where the realizer-map $\pi : \mathscr{R}^{\mathrm{cp}}_{\mathrm{wr}}(\hat{\oc} G) \rightarrow \mathscr{R}^{\mathrm{cp}}_{\mathrm{wr}}(G)$ maps $e$ to the canonical realizer $\pi(e)$ for the unique t-skeleton $\sigma(e) :: G$ such that $\sigma(e)^\dagger = \mathrm{tsk}_{\hat{\oc} G}(e)$.
\end{definition}

\begin{lemma}[Well-defined winning-realizer-wise copy-cats and derelictions]
\label{LemWellDefinedCopyCatAndDerelictionTreeSkeletons}
Let $G$ be an np-game.
\begin{enumerate}

\item The w.r.w. copy-cat $\uplus\mathrm{cp}_G$ is a winning, recursive t-skeleton on the np-game $G \rightarrowtriangle G$, and it is the unit for composition on DoWRWLIs; 

\item The w.r.w. dereliction $\uplus\mathrm{der}_G$ is a winning, recursive t-skeleton on the np-game $\hat{\oc} G \rightarrowtriangle G$ if $G$ is well-opened. %and it is the unit with respect to the composition $\circ$ on DoRLIs; 

\end{enumerate}
\end{lemma}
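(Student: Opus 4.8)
The plan is to treat both clauses by the same template: in each case I first check that the indicated family of component t-skeletons is a \emph{family of winning-realizer-wise linear implications (FoWRWLI)} in the sense of Definition~\ref{DefFoWRWLIs}, so that its disjoint union is automatically a well-defined t-skeleton contained in $\mathscr{J}(\mathscr{M})$ by the first clause of Lemma~\ref{LemWellDefinedDoWRWLIs}; then I verify that this t-skeleton lies on the relevant w.r.w. linear implication, is winning and recursive, and (for the copy-cat) is a two-sided unit. The guiding observation is that every nonempty position of a DoWRWLI carries a unique realizer `tag' $[e]$ and hence belongs to a single component $\phi_{(\gamma,e)}^{[e]}$, so that the winning constraints (totality, innocence, noetherianity, Definition~\ref{DefConstraintsOnTSkeletons}) may be checked component by component, each component being an honest copy-cat or dereliction t-skeleton between ordinary games for which these properties are classical \cite{abramsky1999game,mccusker1998games,hyland2000full}.

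For clause~(1) I verify the three FoWRWLI axioms for $\mathrm{cp}_G = ((\mathrm{cp}_G)_{(\gamma,e)})_{(\gamma,e) \in \mathscr{R}^{\mathrm{cp}}_{\mathrm{wr}}(G)}$. Axiom~1 holds because $\mathrm{cod}_{\mathrm{cp}_G}(\gamma,e) = \gamma \in \mathcal{TS}_{\mathrm{wr}}(G)$ by the very definition of a canonical pair. Axiom~2 (determinacy across components sharing a tag) holds because the copy-cat response to an odd-length position $\boldsymbol{s}m$ is the mirror of the last O-move $m$ in the opposite copy of $\gamma$, independently of which $\gamma$ witnesses $e$. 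Axiom~3 is in fact trivial: since $e = \mathscr{G}_G(\gamma)$ for a canonical pair, the realizer-map is $\pi_{\mathrm{cp}_G} : e \mapsto \mathscr{G}_G(\mathrm{cod}_{\mathrm{cp}_G}(\gamma,e)) = \mathscr{G}_G(\gamma) = e$, the identity, which is realized by an index of the identity function. Recursiveness of $\uplus\mathrm{cp}_G$ then follows because the copy-cat computation --- read the last O-move, emit its tagged mirror --- is uniformly recursive in $e$, exactly as for recursive copy-cat strategies in \cite[Section~5.6]{hyland2000full}. For the unit law I reduce componentwise through the definition of composition of FoWRWLIs: for $\uplus\phi :: G \rightarrowtriangle H$ one has $(\phi \circ \mathrm{cp}_G)_{(\gamma,e)} = \phi_{(\gamma,e)} \circ (\mathrm{cp}_G)_{(\gamma,e)}$ using $\mathrm{cod}_{\mathrm{cp}_G}(\gamma,e)=\gamma$ and $\pi_{\mathrm{cp}_G}(e)=e$, and dually $(\mathrm{cp}_H \circ \phi)_{(\gamma,e)} = (\mathrm{cp}_H)_{(\mathrm{cod}_\phi(\gamma,e),\pi_\phi(e))} \circ \phi_{(\gamma,e)}$; both collapse to $\phi_{(\gamma,e)}$ by the classical fact that copy-cat t-skeletons are two-sided units for composition on $\multimap$ \cite{yamada2016game,abramsky1999game,mccusker1998games}, so that $\uplus\phi \circ \uplus\mathrm{cp}_G = \uplus\phi = \uplus\mathrm{cp}_H \circ \uplus\phi$.

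For clause~(2), I first record that, since $\hat{\oc}\sigma = \sigma^\dagger$, the winning recursive t-skeletons on $\hat{\oc}G$ are exactly the promotions $\gamma^\dagger$ of winning recursive $\gamma :: G$ (promotion preserving both winning and recursiveness), so $\mathscr{R}^{\mathrm{cp}}_{\mathrm{wr}}(\hat{\oc}G)$ is indexed by pairs $(\gamma^\dagger, e')$ as written. I then check the FoWRWLI axioms for $\mathrm{der}_G$: axiom~1 since $\mathrm{cod}_{\mathrm{der}_G}(\gamma^\dagger,e') = \gamma \in \mathcal{TS}_{\mathrm{wr}}(G)$; axiom~2 by the same mirror argument as for copy-cat; and axiom~3, the effectivity of $\pi_{\mathrm{der}_G} : e' = \mathscr{G}_{\hat{\oc}G}(\gamma^\dagger) \mapsto \mathscr{G}_G(\gamma)$, is precisely where I invoke the standing assumption on canonical G\"{o}del numberings (the condition at the end of Definition~\ref{DefConstructionsOnDoWRWLIs}), which is designed to make the passage from the canonical realizer of a promotion to that of its base recursive. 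Winning and recursiveness of $\uplus\mathrm{der}_G$ are then checked componentwise, each $(\mathrm{der}_G)_{(\gamma^\dagger,e')}$ being the dereliction between $\gamma^\dagger$ and $\gamma$.

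The main obstacle --- and the sole place where the well-openedness hypothesis on $G$ is used --- is the \emph{totality}, hence well-definedness, of the dereliction component with respect to \emph{simplified} exponential $\hat{\oc}$. As McCusker notes \cite[pp.~42--43]{mccusker1998games}, derelictions are not well-defined for $\hat{\oc}$ on arbitrary games; the remedy is exactly well-openedness (Definition~\ref{DefWellOpenedGames}), which ensures each position of $\gamma$ has at most one initial occurrence, so that the single thread of $\gamma^\dagger$ to be copy-catted is unambiguous and the dereliction responds to every O-move. I would verify this totality carefully, and confirm that the effectivity assumption on canonical G\"{o}del numberings is compatible with its analogous use in the forthcoming Definition~\ref{DefConstructionsOnDoWRWLIs}; these two points, rather than the copy-cat case, are where the genuine work lies.
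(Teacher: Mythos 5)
Your proof is correct and takes essentially the same route as the paper's, whose entire proof is a deferral to the copy-cat/dereliction arguments of the prior work \cite{yamada2016game} together with the remarks that $\uplus\mathrm{cp}_G$ and $\uplus\mathrm{der}_G$ are clearly recursive and that both collapse to the trivial t-skeleton $\top$ when $\mathscr{R}^{\mathrm{cp}}_{\mathrm{wr}}(G) = \emptyset$ (a degenerate case your template handles trivially, consistent with the paper's stated convention of skipping it). Your componentwise verification of the FoWRWLI axioms --- in particular the observation that the copy-cat's realizer-map is the identity while the dereliction's realizer-map needs the standing assumption on canonical G\"{o}del numberings, and the identification of well-openedness as what makes the dereliction well-defined for the simplified exponential $\hat{\oc}$ --- merely fills in the details the paper leaves implicit.
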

\begin{proof}
Similar to the case of copy-cats and derelictions given in \cite{yamada2016game}, where $\uplus\mathrm{cp}_G$ and $\uplus\mathrm{der}_G$ are clearly recursive, and they are the trivial t-skeleton $\top$ if $\mathscr{R}_{\mathrm{wr}}^{\mathrm{cp}}(G) = \emptyset$. 
\end{proof}

\begin{definition}[Constructions on DoWRWLIs]
\label{DefConstructionsOnDoWRWLIs}
Given np-games $A$, $B$, $C$ and $D$, and DoWRWLIs $\uplus \phi :: A \rightarrowtriangle B$, $\uplus \rho :: A \rightarrowtriangle C$ and $\uplus \varphi :: \hat{\oc} A \rightarrowtriangle B$, we define
\begin{itemize}
\if0
\item The \emph{\bfseries tensor} $\uplus \phi \otimes \uplus \theta \colonequals \uplus (\phi \otimes \theta) :: A \otimes C \rightarrowtriangle B \otimes D$ of $\uplus \phi$ and $\uplus \theta$ by 
\begin{mathpar}
(\phi \otimes \theta)_{(\gamma, e)} \colonequals \phi_{e \upharpoonright A} \otimes \theta_{e \upharpoonright C}
\and
\pi_{\phi \otimes \theta}(e) \colonequals \mathscr{G}_{B \otimes D}(\mathrm{tsk}_B \circ \pi_\phi(e \upharpoonright A) \otimes \mathrm{tsk}_D \circ \pi_\theta(e \upharpoonright C)) \in \mathscr{R}^{\mathrm{cp}}_{\mathrm{wr}}(B \otimes D)
\end{mathpar}
for each $e \in \mathscr{R}^{\mathrm{cp}}_{\mathrm{wr}}(A \otimes B)$, where 
\begin{align*}
e \upharpoonright A &\colonequals \mathscr{G}_{A}(\mathrm{tsk}_{A \otimes C}(e) \upharpoonright A) \in \mathscr{R}^{\mathrm{cp}}_{\mathrm{wr}}(A); \\
e \upharpoonright C &\colonequals \mathscr{G}_{C}(\mathrm{tsk}_{A \otimes C}(e) \upharpoonright C) \in \mathscr{R}^{\mathrm{cp}}_{\mathrm{wr}}(C); 
\end{align*}
\fi

\item The \emph{\bfseries pairing} $\langle \uplus \phi, \uplus \rho \rangle \colonequals \uplus \langle \phi, \rho \rangle :: A \rightarrowtriangle B \mathbin{\&} C$ of $\uplus \phi$ and $\uplus \rho$ by
\begin{mathpar}
\langle \phi, \rho \rangle_{(\gamma, e)} \colonequals \langle \phi_{(\gamma, e)}, \rho_{(\gamma, e)} \rangle
%\and
%\pi_{\langle \phi,  \rho \rangle}(e) \colonequals \mathscr{G}_{B \mathbin{\&} C}(\langle \mathrm{tsk}_B \circ \pi_\phi(e'), \mathrm{tsk}_C \circ \pi_\rho(e') \rangle) \in \mathscr{R}^{\mathrm{cp}}_{\mathrm{wr}}(B \mathbin{\&} C)
\end{mathpar}
for each $(\gamma, e) \in \mathscr{R}^{\mathrm{cp}}_{\mathrm{wr}}(A)$;

\item The \emph{\bfseries promotion} $(\uplus \varphi)^\dagger \colonequals \uplus (\varphi^\dagger) :: \hat{\oc} A \rightarrowtriangle \hat{\oc} B$ of $\uplus \varphi$ by
\begin{mathpar}
(\varphi^\dagger)_{(\gamma^\dagger, e')} \colonequals (\varphi_{(\gamma^\dagger, e')})^\dagger
%\and
%\pi_{\varphi^\dagger}(e'') \colonequals \mathscr{G}_{\hat{\oc} B}((\mathrm{tsk}_B \circ \pi_\varphi(e''))^\dagger) \in \mathscr{R}^{\mathrm{cp}}_{\mathrm{wr}}(\hat{\oc} B)
\end{mathpar}
for each $(\gamma^\dagger, e') \in \mathscr{R}^{\mathrm{cp}}_{\mathrm{wr}}(\hat{\oc} A)$ (which allows us to write $\uplus \varphi^\dagger$ for $(\uplus \varphi)^\dagger$),
\end{itemize}
where we (can and do) assume that the corresponding constructions on realizer-maps are `effective' by an appropriate choice of canonical realizers (Definition~\ref{DefCanonicalRealizers}). 
%the canonical realizers respect the pairing and the promotion, i.e., $\pi_{\langle \phi,  \rho \rangle}(e) = \langle \pi_\phi(e), \pi_\rho(e) \rangle$ and $\pi_{\varphi^\dagger}(e') = \mathscr{G}_{\hat{\oc} B}(\hat{\oc} \mathrm{cod}_\varphi(\hat{\oc} \gamma, e'))$.
\end{definition}

\begin{remark}
We fix \emph{canonical} realizers since they enable us to concisely define these constructions on DoWRWLIs in such a way that Lemma~\ref{LemWRWPromotionLemma} below holds.
\end{remark}

\begin{lemma}[Well-defined constructions on DoWRWLIs]
\label{LemWellDefinedConstructionsOnDoWRWLIs}
DoWRWLIs (resp. winning, recursive ones) are closed under pairing and promotion. 
\end{lemma}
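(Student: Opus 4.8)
The plan is to prove Lemma~\ref{LemWellDefinedConstructionsOnDoWRWLIs} by verifying, for each of the two constructions (pairing and promotion), that the stated operation yields a well-defined DoWRWLI, and that it moreover preserves the winning and recursive properties. Since a DoWRWLI is by definition $\uplus\phi$ for some FoWRWLI $\phi$, the first task in each case is to check that the family produced by the construction genuinely satisfies the three axioms of an FoWRWLI (Definition~\ref{DefFoWRWLIs}): that each component lands in a winning, recursive t-skeleton on the appropriate codomain (axiom~1), that components sharing a realizer tag agree on their even-length continuations (axiom~2), and that the induced realizer-map is `effective' (axiom~3). Once $\langle\phi,\rho\rangle$ and $\varphi^\dagger$ are confirmed to be FoWRWLIs, the first clause of Lemma~\ref{LemWellDefinedDoWRWLIs} guarantees that their disjoint unions are genuine t-skeletons contained in $\mathscr{J}(\mathscr{M})$.

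For \emph{pairing}, I would proceed componentwise. Fix $(\gamma,e)\in\mathscr{R}^{\mathrm{cp}}_{\mathrm{wr}}(A)$ and set $\langle\phi,\rho\rangle_{(\gamma,e)}\colonequals\langle\phi_{(\gamma,e)},\rho_{(\gamma,e)}\rangle$. The key observation is that both $\uplus\phi$ and $\uplus\rho$ are indexed by the \emph{same} canonical pairs $\mathscr{R}^{\mathrm{cp}}_{\mathrm{wr}}(A)$, so the pairing is indexed consistently. Axiom~1 follows because pairing $\langle\_,\_\rangle$ on t-skeletons (Definition~\ref{DefConstructionsOnT-Skeletons}) sends winning t-skeletons on $B$ and $C$ to a winning t-skeleton on $B\mathbin{\&}C$ (this is exactly the closure property recorded in Lemma~\ref{LemPreservationOfConstraintsOnPStrategies}, which applies to the underlying t-skeletons), and recursiveness is preserved since pairing merely redirects O's first move into one of two disjoint components, an operation that is plainly effective given realizers for $\phi_{(\gamma,e)}$ and $\rho_{(\gamma,e)}$. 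Axiom~2 is inherited from axiom~2 for $\phi$ and for $\rho$ separately, since in a position of $A\rightarrowtriangle B\mathbin{\&}C$ every even-length continuation lives entirely in the $B$-part or entirely in the $C$-part. Axiom~3 holds by taking the effective combination of the two realizer-maps $\pi_\phi$ and $\pi_\rho$; this is precisely the point covered by the closing hypothesis of Definition~\ref{DefConstructionsOnDoWRWLIs}, namely that canonical realizers are chosen so that the construction on realizer-maps is effective.

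For \emph{promotion}, fix $(\gamma^\dagger,e')\in\mathscr{R}^{\mathrm{cp}}_{\mathrm{wr}}(\hat{\oc}A)$ and set $(\varphi^\dagger)_{(\gamma^\dagger,e')}\colonequals(\varphi_{(\gamma^\dagger,e')})^\dagger$. Here the winning property of the promotion $(\_)^\dagger$ on t-skeletons is again supplied by Lemma~\ref{LemPreservationOfConstraintsOnPStrategies}, and recursiveness is preserved because promotion plays as $\varphi_{(\gamma^\dagger,e')}$ on each thread, reindexing copies via the fixed recursive bijection $\langle\_,\_\rangle:\mathbb{N}\times\mathbb{N}\stackrel{\sim}{\to}\mathbb{N}$, which is an effective operation on realizers. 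Axiom~2 follows from the thread-wise definition of promotion together with axiom~2 for $\varphi$: distinct threads carry disjoint copy-tags, so determinacy within each thread lifts to determinacy of the whole. Axiom~3 is handled by the same effectivity-of-realizer-maps convention. I would also note the compatibility $(\uplus\varphi)^\dagger=\uplus(\varphi^\dagger)$ claimed in the statement, which is immediate once one checks the disjoint-union tag $[\,e'\,]$ commutes with the thread-splitting of promotion.

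The main obstacle I anticipate is axiom~3, the `effectivity' of the composite realizer-maps, rather than the winning or determinacy checks, which reduce cleanly to the corresponding closure results for ordinary t-skeletons. The subtlety is that realizer-maps are \emph{not} required to be recursive as total functions $\mathbb{N}\rightharpoonup\mathbb{N}$ but only to be uniformly computable along the actual canonical realizers of the domain np-game (the weakened sense stressed after Definition~\ref{DefFoWRWLIs}); I must therefore verify that pairing and promotion of realizer-maps preserve exactly this weakened effectivity. This is what forces the dependence on the choice of canonical realizers, and it is the reason the definition explicitly builds in the assumption that these constructions on realizer-maps are effective ``by an appropriate choice of canonical realizers''. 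So the real content of the proof is to confirm that such a coherent choice of canonical realizers can indeed be made simultaneously for pairing and promotion; given that assumption, the verification of Lemma~\ref{LemWellDefinedConstructionsOnDoWRWLIs} is a routine assembly of the facts above.
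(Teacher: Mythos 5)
Your proof is correct and takes essentially the same approach as the paper, whose entire proof is a one-line deferral to the corresponding constructions on UoPLIs in \cite{yamada2016game} together with the remark that recursive DoWRWLIs are clearly closed under pairing and promotion; your explicit verification of the three FoWRWLI axioms is precisely the unpacking of that citation (note only that the closure-of-winning-strategies lemma you cite by label is not present in this paper's compiled text---it is a result of \cite{yamada2016game}). Your diagnosis that the effectivity of the realizer-maps is the one genuinely new point, and that it is discharged by the canonical-realizer convention built into Definition~\ref{DefConstructionsOnDoWRWLIs} rather than proven within the lemma, likewise matches the paper's treatment.
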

\begin{proof}
Similar to the corresponding constructions on UoPLIs \cite{yamada2016game}, where recursive DoWRWLIs are clearly closed under pairing and promotion.
\end{proof}

\begin{lemma}[Winning-realizer-wise promotion lemma]
\label{LemWRWPromotionLemma}
Let $A$, $B$, $C$ and $D$ be np-games, and $\uplus \phi :: \hat{\oc} A \rightarrowtriangle B$, $\uplus \psi :: \hat{\oc} B \rightarrowtriangle C$ and $\uplus \varphi :: \hat{\oc} C \rightarrowtriangle D$ DoWRWLIs. 
\begin{enumerate}

\item $\uplus \mathrm{der}_A^\dagger = \uplus \mathrm{cp}_{\hat{\oc} A} :: \hat{\oc} A \rightarrowtriangle \hat{\oc} A$ and $\uplus \mathrm{der}_B \circ \uplus \phi^\dagger = \uplus \phi :: \hat{\oc} A \rightarrowtriangle B$;

\item $\uplus \varphi \circ (\uplus \psi \circ \uplus \phi^\dagger)^\dagger = (\uplus \varphi \circ \uplus \psi^\dagger) \circ \uplus \phi^\dagger :: \hat{\oc} A \rightarrowtriangle D$.

\end{enumerate}
\end{lemma}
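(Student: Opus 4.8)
The plan is to recognise that these three equations are exactly the comonad / co-Kleisli identities for the exponential $\hat{\oc}$ — the \emph{promotion lemma} familiar from McCusker's games — transported to DoWRWLIs, and that each already holds at the level of the underlying t-skeleton components. First I would record that, by the componentwise definitions of composition (Definition~\ref{DefDoWRWLIs}) and of promotion and pairing (Definition~\ref{DefConstructionsOnDoWRWLIs}), together with the fact that a DoWRWLI $\uplus\phi$ determines its underlying FoWRWLI $\phi$ uniquely (Lemma~\ref{LemWellDefinedDoWRWLIs} and the alternative description of $\uplus\phi$ in the remark after Definition~\ref{DefDoWRWLIs}), it suffices to prove each identity \emph{componentwise}: two DoWRWLIs between the same np-games coincide as soon as their families of component t-skeletons agree, since the tags $(\_)^{[e]}$ on both sides run over the same index set, viz.\ $\mathscr{R}^{\mathrm{cp}}_{\mathrm{wr}}(\hat{\oc}A)$.

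For the first clause, each component of $\uplus\mathrm{der}_A^\dagger$ is $((\mathrm{der}_A)_{(\gamma^\dagger,e')})^\dagger$ while each component of $\uplus\mathrm{cp}_{\hat{\oc}A}$ is the copy-cat on $\gamma^\dagger\multimap\gamma^\dagger$ (Definition~\ref{DefWRWDerelictionTreeSkeletons}); the identity $\mathrm{der}^\dagger=\mathrm{cp}$ for these t-skeletons is precisely the standard one established in \cite{yamada2016game}, so the components agree. The same reasoning applies to $\uplus\mathrm{der}_B\circ\uplus\phi^\dagger=\uplus\phi$, which componentwise reads $(\mathrm{der}_B)_{(\cdot)}\circ(\phi_{(\gamma^\dagger,e')})^\dagger=\phi_{(\gamma^\dagger,e')}$, the classical $\mathrm{der}\circ f^\dagger=f$. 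For the second clause I would unfold both sides componentwise and invoke the classical promotion lemma $\varphi\circ(\psi\circ\phi^\dagger)^\dagger=(\varphi\circ\psi^\dagger)\circ\phi^\dagger$ for t-skeletons on linear implication, again available from \cite{yamada2016game}.

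The remaining, and genuinely delicate, point is the realizer bookkeeping: I must check that the \emph{indices} threaded through the two sides of each equation agree, so that the componentwise identities really compare the same components. The key observation is that the intermediate canonical pair picked out by composition, namely $(\mathrm{cod}_\phi(\gamma,e),\pi_\phi(e))=(\mathrm{cod}_\phi(\gamma,e),\mathscr{G}_H(\mathrm{cod}_\phi(\gamma,e)))$, is exactly the canonical pair \emph{determined by the output t-skeleton} of the component $\phi_{(\gamma,e)}$, i.e.\ by the unique winning, recursive t-skeleton in $H$ into which $\phi_{(\gamma,e)}$ restricts as a strategy. Consequently, once the component t-skeletons are shown equal by the classical identities, their codomains and hence the induced canonical realizers $\pi$ automatically coincide, so the chains of intermediate indices produced by the two orders of composition and promotion in the second clause match. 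The compatibility condition imposed on canonical realizers (Definition~\ref{DefCanonicalRealizers}, referring to the end of Definition~\ref{DefConstructionsOnDoWRWLIs}) is precisely what guarantees that the realizer-maps of promotions and composites are the ones dictated by $\mathscr{G}$, closing this gap.

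I expect the main obstacle to be this last bookkeeping step in the second clause: verifying that the realizer-map of the two-fold promotion $(\uplus\psi\circ\uplus\phi^\dagger)^\dagger$ on the left matches the realizer-map obtained from $\uplus\psi^\dagger$ followed by $\uplus\phi^\dagger$ on the right, keeping track of the $\langle i,j\rangle$ re-tagging introduced by $(\_)^\dagger$. Everything else reduces cleanly to the t-skeleton promotion lemma of \cite{yamada2016game}, so the only genuine novelty lies in confirming that the extra realizer layer carried by DoWRWLIs is transported coherently through the two composites.
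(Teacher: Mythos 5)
Your proposal is correct and follows essentially the same route as the paper's own (very terse) proof, which simply says the argument is "similar to the case of UoPLIs \cite{yamada2016game}, where it is crucial for the first and the second clauses to fix a choice of canonical realizers": your componentwise reduction to the classical t-skeleton identities of \cite{yamada2016game}, plus the observation that the canonical-realizer convention of Definitions~\ref{DefCanonicalRealizers} and \ref{DefConstructionsOnDoWRWLIs} makes the intermediate index chains coincide, is exactly the content the paper is compressing into that sentence.
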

\begin{proof}
Similar to the case of UoPLIs \cite{yamada2016game}, where it is crucial for the first and the second clauses to fix a choice of canonical realizers (Definition~\ref{DefCanonicalRealizers}). 
\end{proof}

We are now ready to define:
\begin{definition}[Category of realizability \`{a} la game semantics]
\label{DefCategoryWRWPG}
The category $\mathbb{NPG}_{\mathrm{wrw}}^{\mathrm{wo}}$ consists of 
\begin{itemize}

\item Well-opened np-games as objects;

\item Winning, recursive t-skeletons on $\hat{\oc} A \rightarrowtriangle B$ as morphisms $A \rightarrow B$;

\item The composition $\uplus \psi \bullet \uplus \phi : A \rightarrow C$ of morphisms $\uplus \phi : A \rightarrow B$ and $\uplus \psi : B \rightarrow C$ given by $\uplus \psi \bullet \uplus \phi \colonequals \uplus \psi \circ \uplus \phi^\dagger$;

\item W.r.w. derelictions $\uplus \mathrm{der}_A : \hat{\oc} A \rightarrowtriangle A$ as identities $A \rightarrow A$. 

\end{itemize}
%The lluf subcategory $\mathbb{NPG}_{\mathrm{wrw}}^{\mathrm{wo}}$ of $\mathbb{PG}_{\mathrm{wrw}}$ consists winning t-skeletons in $\mathbb{PG}_{\mathrm{wrw}}$.
\end{definition}

The category $\mathbb{NPG}_{\mathrm{wrw}}^{\mathrm{wo}}$ is well-defined by Lemmata~\ref{LemWellDefinedDoWRWLIs}, \ref{LemWellDefinedCopyCatAndDerelictionTreeSkeletons}, \ref{LemWellDefinedConstructionsOnDoWRWLIs} and \ref{LemWRWPromotionLemma}, and moreover \emph{cartesian closed} (which is an immediate corollary of the theorem given later that $\mathbb{NPG}_{\mathrm{wrw}}^{\mathrm{wo}}$ models MLTT equipped with unit-, pi- and sigma-types via a CwF and semantic type formers, and so the following proof is just a sketch):
\begin{proposition}[Cartesian closure]
The category $\mathbb{NPG}_{\mathrm{wrw}}^{\mathrm{wo}}$ is cartesian closed, where a terminal object, products and exponential objects are given by the unit np-game $\boldsymbol{1}$, product $\&$ and w.r.w. implication $\hat{\oc} (\_) \rightarrowtriangle (\_)$, respectively.
\end{proposition}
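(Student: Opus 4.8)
The plan is to recognize $\mathbb{NPG}_{\mathrm{wrw}}^{\mathrm{wo}}$ as the \emph{co-Kleisli category} of the comonad induced by simplified exponential $\hat{\oc}$ on the ``linear'' world of np-games and DoWRWLIs, and then run the standard argument that such a co-Kleisli category is cartesian closed. The categorical laws (identity and associativity, for the composition $\uplus\psi\bullet\uplus\phi\colonequals\uplus\psi\circ\uplus\phi^\dagger$) are already supplied by Lemmata~\ref{LemWellDefinedDoWRWLIs}, \ref{LemWellDefinedCopyCatAndDerelictionTreeSkeletons} and \ref{LemWellDefinedConstructionsOnDoWRWLIs}, and crucially by the w.r.w. promotion lemma (Lemma~\ref{LemWRWPromotionLemma}), whose three equations are exactly the comonad/co-Kleisli coherences. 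Hence it remains only to exhibit (i) a terminal object, (ii) binary products and (iii) exponential objects, each with its universal property, and at every step to check that the mediating morphisms are again \emph{winning} and \emph{recursive} DoWRWLIs.

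For the terminal object I would observe that $\boldsymbol{1}=\{\top\}$ satisfies $\mathcal{TS}_{\mathrm{wr}}(\boldsymbol{1})=\{\top\}$, so for any well-opened $A$ every FoWRWLI $\phi$ between $\hat{\oc}A$ and $\boldsymbol{1}$ has $\mathrm{cod}_\phi(\gamma,e)=\top$ for all canonical pairs, forcing $\uplus\phi$ to be the unique trivial DoWRWLI; uniqueness of the map $A\to\boldsymbol{1}$ is then immediate. For products I would take $A\mathbin{\&}B$ with projections given by the evident dereliction-style DoWRWLIs $A\mathbin{\&}B\to A$ and $A\mathbin{\&}B\to B$ that copy-cat into the respective component, and use the pairing $\langle\uplus\phi,\uplus\rho\rangle$ (shown in Lemma~\ref{LemWellDefinedConstructionsOnDoWRWLIs} to preserve winning and recursiveness) as the mediating map. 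The equations $\mathrm{fst}\bullet\langle f,g\rangle=f$, $\mathrm{snd}\bullet\langle f,g\rangle=g$ and uniqueness follow by unwinding the component-wise definitions of pairing, composition and promotion and discharging the $(\_)^\dagger$ bookkeeping with Lemma~\ref{LemWRWPromotionLemma}; this is essentially identical to the product case for UoPLIs in \cite{yamada2016game}.

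The substantive step is the exponential. I would set $A\Rightarrow B\colonequals\hat{\oc}A\rightarrowtriangle B$ and establish a bijection $\mathbb{NPG}_{\mathrm{wrw}}^{\mathrm{wo}}(C\mathbin{\&}A,B)\cong\mathbb{NPG}_{\mathrm{wrw}}^{\mathrm{wo}}(C,\hat{\oc}A\rightarrowtriangle B)$, natural in $C$ and $B$. Concretely this requires a Seely-type re-indexing identifying winning recursive t-skeletons on $\hat{\oc}(C\mathbin{\&}A)\rightarrowtriangle B$ with those on $\hat{\oc}C\rightarrowtriangle(\hat{\oc}A\rightarrowtriangle B)$: a play consulting the domain $\hat{\oc}(C\mathbin{\&}A)$ is reorganized so that its $\hat{\oc}A$-threads are relocated to the inner w.r.w. implication, leaving only $\hat{\oc}C$-threads on the outer one. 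I would realize this by producing an explicit evaluation DoWRWLI $\mathrm{ev}_{A,B}\colon(\hat{\oc}A\rightarrowtriangle B)\mathbin{\&}A\to B$ (a copy-cat-like t-skeleton feeding the $A$-component into the function component and relaying its output), showing that currying $\Lambda(f)$ of $f\colon C\mathbin{\&}A\to B$, defined component-wise on the underlying FoWRWLI, is a winning recursive DoWRWLI, and verifying $\mathrm{ev}_{A,B}\bullet(\Lambda(f)\mathbin{\&}\mathrm{id}_A)=f$ together with uniqueness of $\Lambda(f)$; naturality again reduces to Lemma~\ref{LemWRWPromotionLemma}.

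The main obstacle will be the \emph{realizer bookkeeping} in this currying bijection rather than the combinatorics of positions. Since O must exhibit the canonical realizer for his winning recursive t-skeleton on each domain, redistributing $\hat{\oc}(C\mathbin{\&}A)$ into an outer $\hat{\oc}C$ and an inner $\hat{\oc}A$ forces a matching redistribution of the exhibited realizers, and I must check that the induced realizer-maps remain ``effective'' in the weakened sense of the third FoWRWLI axiom, so that winning and especially \emph{recursiveness} of the curried and uncurried DoWRWLIs are preserved. This is precisely where the paper's choice of \emph{canonical} realizers and the compatibility condition assumed at the end of Definition~\ref{DefConstructionsOnDoWRWLIs} are needed: I would lean on that choice to turn the re-indexing of realizers into an honest recursive operation, and on the alternative description of $\uplus\phi$ in the Remark following Definition~\ref{DefDoWRWLIs} (indexing only by $e\in\pi_2(\mathscr{R}^{\mathrm{cp}}_{\mathrm{wr}}(G))$) to see that currying is independent of the discarded t-skeleton component $\gamma$.
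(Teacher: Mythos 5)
Your proposal is correct and takes essentially the same route as the paper: the paper's own proof is a short sketch citing exactly the ingredients you use, namely Theorem~\ref{ThmWellDefinedConstructionsOnNonstandardPredicativeGames} (closure under $\&$ and $\hat{\oc}(\_) \rightarrowtriangle (\_)$), Lemma~\ref{LemWellDefinedConstructionsOnDoWRWLIs} (pairing/promotion preserve winning and recursiveness) and Lemma~\ref{LemWRWPromotionLemma} (the co-Kleisli coherences), with the exponential handled in the detailed CwF development by the same component-wise, tag-adjusting currying of FoWRWLIs that you describe, where the canonical-realizer convention discharges precisely the realizer-bookkeeping concern you raise. The only cosmetic difference is that the paper presents the proposition as a corollary of its later CwF theorems (unit-, sigma- and pi-type formers specialized to constant families) rather than verifying the terminal/product/exponential universal properties directly, but the mathematical content is the same.
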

\begin{proof}[Proof (sketch)]
By Theorem~\ref{ThmWellDefinedConstructionsOnNonstandardPredicativeGames} and Lemmata~\ref{LemWellDefinedConstructionsOnDoWRWLIs} and \ref{LemWRWPromotionLemma}, where note that t-skeletons in $\mathbb{NPG}_{\mathrm{wrw}}^{\mathrm{wo}}$ are all winning and recursive so that $\mathbb{NPG}_{\mathrm{wrw}}^{\mathrm{wo}}$ is closed. 
\end{proof}

\begin{notation}
Given $G \in \mathbb{NPG}_{\mathrm{wrw}}^{\mathrm{wo}}$, we write $\mathbb{NPG}_{\mathrm{wrw}}^{\mathrm{wo}}(G)$ for the hom-set $\mathbb{NPG}_{\mathrm{wrw}}^{\mathrm{wo}}(\boldsymbol{1}, G)$. 
Hence, each element of $\mathbb{NPG}_{\mathrm{wrw}}^{\mathrm{wo}}(G)$ is of the form $\uplus \{ \gamma_0^{\hat{\oc} \boldsymbol{1}} \}$ for some $\gamma_0 \in \mathcal{TS}_{\mathrm{wr}}(G)$ (n.b., $\gamma_0 :: \Gamma$ implies $\gamma_0^{\hat{\oc} \boldsymbol{1}} :: \top^\dagger \rightarrowtriangle \Gamma$); we usually write $\uplus \gamma$ for it (i.e., $\gamma = \{ \gamma_0^{\hat{\oc} \boldsymbol{1}} \}$).
%Given a t-skeleton $\hat{\sigma} : \hat{\oc} \boldsymbol{1} \rightarrowtriangle G$, we write $\hat{\sigma}^\star$ for the t-skeleton on the np-game $G$ that coincides with $\hat{\sigma}$ up to `tags,' and $(\_)^{\hat{\oc} \boldsymbol{1}}$ for the inverse operation of $(\_)^\star$. 

Just for convenience, we also write $\mathbb{NPG}_{\mathrm{wrw}}(H)$ for the set of all winning, recursive t-skeletons on $\hat{\oc} \boldsymbol{1} \rightarrowtriangle H$ for any (not necessarily well-opened) np-game $H$.
\end{notation}

Similarly to the game semantics of simple type theories and the game semantics of MLTT \cite{yamada2016game}, the w.r.w. implications $\hat{\oc} (A \mathbin{\&} B) \rightarrowtriangle C$ and $\hat{\oc} A \rightarrowtriangle (\hat{\oc} B \rightarrowtriangle C)$ coincide up to `tags' on moves for any np-games $A$, $B$ and $C$ (n.b., t-skeletons on the w.r.w. implications are only the trivial one $\top$ if $\mathscr{R}_{\mathrm{wr}}^{\mathrm{cp}}(A) = \emptyset$ or $\mathscr{R}_{\mathrm{wr}}^{\mathrm{cp}}(B) = \emptyset$), and therefore currying is modeled in the category $\mathbb{NPG}_{\mathrm{wrw}}^{\mathrm{wo}}$.
In particular, it explains essentially why our model of MLTT in $\mathbb{NPG}_{\mathrm{wrw}}^{\mathrm{wo}}$ validates the $\xi$-rule, as we shall see shortly.

\subsection{Realizability model of intensional Martin-L\"{o}f type theory equipped with formal Church's thesis \`{a} la game semantics}
\label{RealizabilityModelOfMLTTAlaGameSemantics}
\if0
\begin{definition}[W.r. dependent p-games]
A \emph{\bfseries winningly realizerwise (w.r.) dependent predicative (p-) game} on a predicative game $\Gamma$ is an continuous map $A$ from the algebraic CPO $(\mathbb{NPG}_{\mathrm{wrw}}^{\mathrm{wo}}(\Gamma), \leqslant_\Gamma)$ to the flat CPO on $\mathsf{ob}(\mathbb{NPG}_{\mathrm{wrw}}^{\mathrm{wo}})$ with the least element $\bot$ such that the union $\bigcup_{g \in \mathbb{NPG}_{\mathrm{wrw}}^{\mathrm{wo}}(\Gamma)}\vdash_{A(g)}$ is well-founded. 
\end{definition}
\fi
In this section, we lift the CCC $\mathbb{NPG}_{\mathrm{wrw}}^{\mathrm{wo}}$ to a \emph{category with families (CwF)} \cite{hofmann1997syntax,dybjer1996internal}, an abstract model of MLTT, equipped with \emph{semantic type formers} \cite{hofmann1997syntax} for unit-, empty-, N-, pi-, sigma- and Id-types so that $\mathbb{NPG}_{\mathrm{wrw}}^{\mathrm{wo}}$ is shown to model MLTT together with these types by \emph{soundness} of the semantics of MLTT in CwFs \cite{hofmann1997syntax}. 
%Note that many of the arguments in this section are similar to the corresponding ones in \cite{yamada2016game}

Towards this end, let us first introduce our interpretation of (dependent) types:
\if0
\begin{proposition}[CPO-enrichment]
The CCC $\mathbb{NPG}_{\mathrm{wrw}}^{\mathrm{wo}}$ is CPO-enriched with respect to the partial order $\leqslant_G$ between t-skeletons on each np-game $G$ defined by 
\begin{equation*}
\sigma \leqslant_G \sigma' :\Leftrightarrow \sigma \asymp \sigma' \wedge \sigma \subseteq \sigma'
\end{equation*}
for all $\sigma, \sigma' :: G$, and the CPOs of np-games are algebraic whose compact elements are t-skeletons $\kappa :: G$ such that the set $\lceil \kappa \rceil$ of all its P-views is finite. 
\end{proposition}
\begin{proof}
As outlined in \cite[pp.~46-47]{mccusker1998games}, where the constraint $\sigma \asymp \sigma'$ plays a key role.
\end{proof}
\fi

\begin{definition}[Dependent nonstandard predicative games]
\label{DefDependentNonstandardPredicativeGames}
A \emph{\bfseries dependent nonstandard predicative (np-) game} on an np-game $\Gamma$ is a family 
\begin{equation*}
A = (A(\uplus \gamma))_{\uplus \gamma \in \mathbb{NPG}_{\mathrm{wrw}}(\Gamma)}
\end{equation*}
of np-games $A(\uplus \gamma)$ such that the union $\bigcup_{\uplus \gamma \in \mathbb{NPG}_{\mathrm{wrw}}(\Gamma)} \mathbin{\vdash_{A(\uplus \gamma)}}$ is well-founded.

A dependent np-game is \emph{\bfseries well-opened} if so are all its components.
We write $\mathscr{D}\mathbb{NPG}_{\mathrm{wrw}}^{\mathrm{wo}}(\Gamma)$ for the set of all well-opened dependent np-games on $\Gamma$.
\end{definition}

\if0
Every dependent np-game $A$ is required to be \emph{continuous} similarly to the domain-theoretic interpretation of (partial) MLTT \cite{palmgren1990domain}. 
The continuity is conceptually natural particularly if we think of \emph{universes} though we do not interpret them in this article. 
On the other hand, at least its monotonicity is necessary for Theorem~\ref{LemWellDefinedWRWSigma}. 
\fi

Unlike the game semantics of MLTT \cite{yamada2016game}, the indexing t-skeletons of a dependent np-game $A$ on an np-game $\Gamma$ are on $\Gamma$, not $\hat{\oc} \Gamma$, but it does not lose any generality for our interpretation since the domain and the codomain t-skeletons of DoWRWLIs are all \emph{innocent} so that we may assume that those on $\hat{\oc} \Gamma$ are all \emph{promotions}.  

Let us then generalize w.r.w. implication (Definition~\ref{DefWRWImplication}) in such a way that it captures type dependency of pi-types (Definition~\ref{DefWRWPi}): 
\begin{definition}[Integration on dependent nonstandard predicative games]
The \emph{\bfseries integration} of a dependent np-game $A$ on an np-game $\Gamma$ is the np-game
\begin{mathpar}
\int_\Gamma A \colonequals \bigcup_{\uplus \gamma \in \mathbb{NPG}_{\mathrm{wrw}}(\Gamma)}A(\uplus \gamma) \quad \text{(n.b., we often abbreviate $\int_\Gamma$ as $\int$)}.
\end{mathpar}
%where $\mathrm{DetJ}(\mathcal{A}) \colonequals \{ \, \bigcup \mathcal{S} \mid \mathcal{S} \subseteq \mathcal{A}, \text{$\mathcal{S}$ is consistent and deterministic} \, \} \supseteq \mathcal{A}$ for a given set $\mathcal{A}$ of t-skeletons.
\end{definition}

\begin{remark}
The integration on dependent np-games is simpler than the corresponding one on p-games in the previous work \cite{yamada2016game} as ours does not have to be det-j complete. 
\end{remark}

\if0
\begin{lemma}[Well-defined integration]
\label{LemWellDefinedIntegration}
Given a dependent np-game $A$ on a well-opened np-game $\Gamma$, the integration $\int_\Gamma A$ is a well-opened np-game.
\end{lemma}
\begin{proof}
It suffices to remark that $\int_\Gamma A$ is det-j complete by the operation $\mathrm{DetJ}$.
\end{proof}
\fi

\begin{definition}[FoDWRWLIs]
\label{DefFoDWRWLIs}
A \emph{\bfseries family of dependently winning-realizer-wise linear implications (FoDWRWLI)} from an np-game $\Gamma$ to a dependent np-game $A$ on $\Gamma$ is an FoWRWLI $\phi$ from $\Gamma$ to $\int_\Gamma A$ that satisfies
\begin{equation*}
\forall (\gamma, e) \in \mathscr{R}^{\mathrm{cp}}_{\mathrm{wr}}(\Gamma) . \, \mathrm{cod}_\phi(\gamma, e) :: A(\uplus \{ \gamma^{\hat{\oc} \boldsymbol{1}} \}) \quad \text{(n.b., $\gamma :: \Gamma$ implies $\gamma^{\hat{\oc} \boldsymbol{1}} :: \top^\dagger \rightarrowtriangle \Gamma$)}.
\end{equation*}

We write $\mathscr{F_{\mathrm{wrw}}}(\Gamma, A)$ for the set of all FoDWRWLIs from $\Gamma$ to $A$.
\end{definition}

\if0
Note that the axiom on an FoDWRWLI $\phi$ from $\Gamma$ to $A$ is equivalent to
\begin{equation*}
\forall \, \mathbin{\uplus \gamma} \in \mathbb{NPG}_{\mathrm{wrw}}^{\mathrm{wo}}(\Gamma) . \, \phi_{\pi_\gamma \circ \mathscr{G}_{\boldsymbol{1}}(\top)} \circ \gamma_\top :: \top \multimap A(\uplus \gamma).
\end{equation*}
\fi

\begin{definition}[Winning-realizer-wise linear-pi and pi]
\label{DefWRWPi}
The \emph{\bfseries winning-realizer-wise (w.r.w.) linear-pi} from an np-game $\Gamma$ to a dependent np-game $A$ on $\Gamma$ is the np-game 
\begin{equation*}
\ell\Pi_{\mathrm{wrw}}(\Gamma, A) \colonequals \{ \, \uplus \phi \mid \phi \in \mathscr{F}_{\mathrm{wrw}}(\Gamma, A) \,  \}, 
\end{equation*}
and the \emph{\bfseries winning-realizer-wise (w.r.w.) pi} from $\Gamma$ to $A$ is the np-game 
\begin{equation*}
\Pi_{\mathrm{wrw}}(\Gamma, A) \colonequals \ell\Pi_{\mathrm{wrw}}(\hat{\oc} \Gamma, A^\ddagger),
\end{equation*}
where $A^\ddagger$ is the dependent np-game on $\hat{\oc} \Gamma$ defined by $A^\ddagger(\uplus \gamma^\dagger) \colonequals A(\uplus \gamma)$ for each $\uplus \gamma^\dagger \in \mathbb{NPG}_{\mathrm{wrw}}(\hat{\oc} \Gamma)$. %and $\uplus \gamma^\ddagger \in \mathbb{NPG}_{\mathrm{wrw}}(\Gamma)$ is the unique one that satisfies $\uplus \gamma = (\uplus \gamma^\ddagger)^\dagger$.
\end{definition}

\begin{lemma}[Well-defined winning-realizer-wise linear-pi and pi]
The w.r.w. linear-pi $\ell\Pi_{\mathrm{wrw}}(\Gamma, A)$ and the w.r.w. pi $\Pi_{\mathrm{wrw}}(\Gamma, A)$ are (well-opened) np-games for any np-game $\Gamma$ and (well-opened) dependent np-game $A$ on $\Gamma$. 
\end{lemma}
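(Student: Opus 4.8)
The plan is to reduce both assertions to the first clause of Lemma~\ref{LemWellDefinedDoWRWLIs} together with one elementary observation about well-openedness, so that almost nothing beyond bookkeeping is required. First consider the np-game claim for $\ell\Pi_{\mathrm{wrw}}(\Gamma, A)$. By Definition~\ref{DefFoDWRWLIs}, every element of $\ell\Pi_{\mathrm{wrw}}(\Gamma, A)$ is of the form $\uplus \phi$ for some FoDWRWLI $\phi$ from $\Gamma$ to $A$, and such a $\phi$ is in particular an FoWRWLI between the np-games $\Gamma$ and $\int_\Gamma A$. Hence $\uplus \phi$ is a DoWRWLI between $\Gamma$ and $\int_\Gamma A$, so by Lemma~\ref{LemWellDefinedDoWRWLIs} it is a t-skeleton contained in $\mathscr{J}(\mathscr{M})$. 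Therefore $\ell\Pi_{\mathrm{wrw}}(\Gamma, A)$ is a set of t-skeletons all contained in $\mathscr{J}(\mathscr{M})$, i.e.\ an np-game by Definition~\ref{DefNonstandardPredicativeGames}.

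For the well-opened claim, assume $\Gamma$ and $A$ are well-opened, and fix $\uplus \phi \in \ell\Pi_{\mathrm{wrw}}(\Gamma, A)$; I would show that $\uplus \phi$ is a well-opened game in the sense of Definition~\ref{DefWellOpenedGames}. The key observation is that in a linear implication $\gamma \multimap \tau$ the initial moves are exactly those of the codomain $\tau$ (by the arena construction, $\star \vdash_{\gamma \multimap \tau} m \Leftrightarrow \star \vdash_\tau m$), and moreover the initial occurrences of any position coincide with the initial occurrences of its codomain projection, since the formerly initial occurrences of the domain are re-justified to point into the codomain. Consequently, if $\tau$ is well-opened then so is $\gamma \multimap \tau$. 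Now each component t-skeleton $\phi_{(\gamma, e)} :: \gamma \multimap \mathrm{cod}_\phi(\gamma, e)$ has codomain $\mathrm{cod}_\phi(\gamma, e) :: A(\uplus \{ \gamma^{\hat{\oc} \boldsymbol{1}} \})$, which is well-opened because $A$ is; the uniform tagging $(\_, e)$ passing from $\phi_{(\gamma, e)}$ to $\phi_{(\gamma, e)}^{[e]}$ in Definition~\ref{DefDoWRWLIs} does not alter which occurrences are initial, so each $\phi_{(\gamma, e)}^{[e]}$ is a well-opened t-skeleton. Finally, because the tags $(\_, e)$ are pairwise distinct across components, any two positions drawn from distinct $\phi_{(\gamma, e)}^{[e]}$ share only $\boldsymbol{\epsilon}$, so every nonempty position of $\uplus \phi = \bigcup_{(\gamma, e)} \phi_{(\gamma, e)}^{[e]}$ already lies in a single well-opened component; hence $\uplus \phi$ inherits well-openedness. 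The degenerate case $\phi = \emptyset$ gives $\uplus \phi = \top = \{ \boldsymbol{\epsilon} \}$, which is trivially well-opened.

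For the pi case I would simply invoke the linear-pi case applied to $\hat{\oc} \Gamma$ and $A^\ddagger$: by Definition~\ref{DefWRWPi}, $\Pi_{\mathrm{wrw}}(\Gamma, A) = \ell\Pi_{\mathrm{wrw}}(\hat{\oc} \Gamma, A^\ddagger)$, where $\hat{\oc} \Gamma$ is an np-game and $A^\ddagger$ is the dependent np-game on $\hat{\oc} \Gamma$ with $A^\ddagger(\uplus \gamma^\dagger) \colonequals A(\uplus \gamma)$. Since the components of $A^\ddagger$ are literally components of $A$, the well-foundedness of $\bigcup \vdash_{A^\ddagger(\uplus \gamma^\dagger)}$ and the well-openedness of each component are inherited directly from $A$ (using, as noted after Definition~\ref{DefDependentNonstandardPredicativeGames}, that the relevant t-skeletons on $\hat{\oc} \Gamma$ may be taken to be promotions), so $A^\ddagger$ is a (well-opened) dependent np-game on $\hat{\oc} \Gamma$; the already-established linear-pi case then yields the claim. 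The only step carrying genuine content is the well-openedness argument, and within it the subtle point is the localization to a single component via the disjoint-union tags together with the claim that initial occurrences of $\gamma \multimap \tau$ sit entirely in the codomain; I expect this tag-localization to be the main thing to state carefully, precisely because it is the structural feature that distinguishes DoWRWLIs from ordinary UoPLIs.
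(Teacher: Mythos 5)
Your proposal is correct and takes essentially the same route as the paper: the paper's own proof is just the citation ``Immediate from Lemma~\ref{LemWellDefinedWRWLI} and Theorem~\ref{ThmWellDefinedConstructionsOnNonstandardPredicativeGames},'' and your argument simply unfolds those citations down to the first clause of Lemma~\ref{LemWellDefinedDoWRWLIs} together with the observation that initial moves of $\gamma \multimap \tau$ lie in the codomain, i.e., you supply the details the paper leaves implicit. One minor correction: the tags $(\_, e)$ are \emph{not} pairwise distinct across components, since the family is indexed by canonical pairs $(\gamma, e)$ and distinct t-skeletons $\gamma \neq \gamma'$ may share the same canonical realizer $e$ (the remark following Definition~\ref{DefDoWRWLIs} regroups components by $e$ for precisely this reason); this is harmless, however, because any position of $\bigcup_{(\gamma, e)} \phi_{(\gamma, e)}^{[e]}$ lies in some single component by the mere definition of union, which is all your localization step actually needs.
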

\begin{proof}
Immediate from Lemma~\ref{LemWellDefinedWRWLI} and Theorem~\ref{ThmWellDefinedConstructionsOnNonstandardPredicativeGames}.
%Similar to (and simpler than) the case of linear-pi and pi given in \cite{yamada2016game}.
\end{proof}

Let us similarly define the w.r.w. variant of sigma:
\begin{definition}[Winning-realizer-wise sigma]
The \emph{\bfseries winning-realizer-wise (w.r.w.) sigma} of an np-game $\Gamma$ and a dependent np-game $A$ on $\Gamma$ is the np-game
\begin{equation*}
\Sigma_{\mathrm{wrw}}(\Gamma, A) \colonequals \{ \langle \gamma, \alpha \rangle :: \Gamma \mathbin{\&} \int_\Gamma A \mid \gamma \in \mathcal{TS}_{\mathrm{wr}}(\Gamma) \Rightarrow \alpha :: A (\uplus \{ \gamma^{\hat{\oc} \boldsymbol{1}} \}) \, \}.
\end{equation*}
\end{definition}

\begin{lemma}[Well-defined winning-realizer-wise sigma]
The w.r.w. sigma $\Sigma_{\mathrm{wrw}}(\Gamma, A)$ is a (well-opened) np-game for any (well-opened) np-game $\Gamma$ and (well-opened) dependent np-game $A$ on $\Gamma$. 
\label{LemWellDefinedWRWSigma}
\end{lemma}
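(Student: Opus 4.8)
The plan is to avoid any direct manipulation of positions and instead exhibit $\Sigma_{\mathrm{wrw}}(\Gamma, A)$ as a \emph{subset} of a product np-game, exploiting the fact that np-games (Definition~\ref{DefNonstandardPredicativeGames}) carry none of the nonempty, saturation or completeness axioms, so that every subset of a (well-opened) np-game is again a (well-opened) np-game. Concretely, by the definition of product on np-games one has $\Gamma \mathbin{\&} \int_\Gamma A = \{ \langle \gamma, \alpha \rangle \mid \gamma :: \Gamma, \ \alpha :: \int_\Gamma A \}$, and $\Sigma_{\mathrm{wrw}}(\Gamma, A)$ is by definition exactly the set of those pairings $\langle \gamma, \alpha \rangle$ that additionally satisfy the dependency condition $\gamma \in \mathcal{TS}_{\mathrm{wr}}(\Gamma) \Rightarrow \alpha :: A(\uplus \{ \gamma^{\hat{\oc} \boldsymbol{1}} \})$; hence $\Sigma_{\mathrm{wrw}}(\Gamma, A) \subseteq \Gamma \mathbin{\&} \int_\Gamma A$, and the whole argument reduces to closure of (well-opened) np-games under product together with closure under passing to subsets.

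First I would check that $\int_\Gamma A$ is itself a (well-opened) np-game: it is the union $\bigcup_{\uplus \gamma \in \mathbb{NPG}_{\mathrm{wrw}}(\Gamma)} A(\uplus \gamma)$ of np-games, so it is again a set of t-skeletons each contained in $\mathscr{J}(\mathscr{M})$, and when $A$ is well-opened every component $A(\uplus \gamma)$ consists of well-opened games, whence so does their union. Then Theorem~\ref{ThmWellDefinedConstructionsOnNonstandardPredicativeGames} yields that the product $\Gamma \mathbin{\&} \int_\Gamma A$ is a (well-opened) np-game. Since $\Sigma_{\mathrm{wrw}}(\Gamma, A)$ is a subset of it, and a subset of a (well-opened) np-game is trivially a (well-opened) np-game — each of its elements is a t-skeleton in $\mathscr{J}(\mathscr{M})$, and well-opened whenever the ambient elements are — the claim follows. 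No separate verification that the individual pairings $\langle \gamma, \alpha \rangle$ are deterministic t-skeletons in $\mathscr{J}(\mathscr{M})$ or are well-opened games is then needed, as this is already subsumed by Theorem~\ref{ThmWellDefinedConstructionsOnNonstandardPredicativeGames} for the ambient product.

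The only point requiring care, which I regard as the main (albeit minor) obstacle, is the well-formedness of the dependency condition that carves $\Sigma_{\mathrm{wrw}}(\Gamma, A)$ out of the product: one must confirm that $A(\uplus \{ \gamma^{\hat{\oc} \boldsymbol{1}} \})$ is meaningful. For this I would note that whenever $\gamma \in \mathcal{TS}_{\mathrm{wr}}(\Gamma)$, the t-skeleton $\gamma^{\hat{\oc} \boldsymbol{1}} :: \hat{\oc} \boldsymbol{1} \rightarrowtriangle \Gamma$ is again winning and recursive, so $\uplus \{ \gamma^{\hat{\oc} \boldsymbol{1}} \} \in \mathbb{NPG}_{\mathrm{wrw}}(\Gamma)$ lies in the indexing set of the dependent np-game $A$, and therefore $A(\uplus \{ \gamma^{\hat{\oc} \boldsymbol{1}} \})$ is a genuine np-game, making the membership $\alpha :: A(\uplus \{ \gamma^{\hat{\oc} \boldsymbol{1}} \})$ a legitimate side condition. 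Finally, no well-foundedness obligation arises for $\Sigma_{\mathrm{wrw}}(\Gamma, A)$ itself, since np-games require none; the only well-foundedness datum in play, that underlying $\int_\Gamma A$, is furnished directly by the hypothesis that $A$ is a dependent np-game (Definition~\ref{DefDependentNonstandardPredicativeGames}).
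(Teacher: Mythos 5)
Your proof is correct and takes essentially the same route as the paper: the paper's proof is simply ``Immediate from Theorem~\ref{ThmWellDefinedConstructionsOnNonstandardPredicativeGames},'' i.e., $\Sigma_{\mathrm{wrw}}(\Gamma, A)$ is a subset of the product $\Gamma \mathbin{\&} \int_\Gamma A$, which is a (well-opened) np-game by that theorem, and np-games satisfy no further axioms so subsets are again (well-opened) np-games. Your write-up merely fills in the details the paper leaves implicit (that $\int_\Gamma A$ is a well-opened np-game and that the dependency condition is well-formed), which is a faithful elaboration rather than a different argument.
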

\begin{proof}
Immediate from Theorem~\ref{ThmWellDefinedConstructionsOnNonstandardPredicativeGames}.
\end{proof}

Finally, let us introduce our interpretation of Id-types:
\begin{definition}[Identity nonstandard predicative games]
\label{DefIdentityNonstandardPredicativeGames}
Given an np-game $\Gamma$ and t-skeletons $\gamma, \gamma' :: \Gamma$, the \emph{\bfseries identity (Id)} between $\gamma$ and $\gamma'$ is the np-game 
\begin{equation*}
\mathrm{Id}_\Gamma(\gamma, \gamma') \colonequals \begin{cases} \boldsymbol{1} &\text{if $\gamma = \gamma'$;} \\ \boldsymbol{0} &\text{otherwise.} \end{cases}
\end{equation*}
\end{definition}

\begin{lemma}[Well-defined Id on nonstandard predicative games]
Given an np-game $\Gamma$, the Id $\mathrm{Id}_\Gamma(\gamma, \gamma')$ between any $\gamma, \gamma' :: \Gamma$ is a well-opened np-game.
\end{lemma}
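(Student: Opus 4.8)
The plan is to argue by a simple case analysis on whether the two t-skeletons $\gamma, \gamma' :: \Gamma$ coincide, since the definition of $\mathrm{Id}_\Gamma(\gamma, \gamma')$ (Definition~\ref{DefIdentityNonstandardPredicativeGames}) assigns it only one of two possible values. First I would observe that in either case the resulting np-game is among those already treated: if $\gamma = \gamma'$ then $\mathrm{Id}_\Gamma(\gamma, \gamma') = \boldsymbol{1}$, and otherwise $\mathrm{Id}_\Gamma(\gamma, \gamma') = \boldsymbol{0}$, where $\boldsymbol{1}$ and $\boldsymbol{0}$ are the unit and empty p-games of Example~\ref{ExamplesOfPredicativeGames}.

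Next I would invoke Example~\ref{ExamplesOfWellOpenedNonstandardPredicativeGames}, which records precisely that the p-games of Example~\ref{ExamplesOfPredicativeGames} --- in particular $\boldsymbol{1}$ and $\boldsymbol{0}$ --- are well-opened np-games. Hence in both branches of the case analysis $\mathrm{Id}_\Gamma(\gamma, \gamma')$ is a well-opened np-game, and the lemma follows immediately.

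If one prefers a self-contained verification rather than citing the example, I would check well-openedness directly from Definitions~\ref{DefNonstandardPredicativeGames} and \ref{DefWellOpenedGames}: the unique element $\top = \{ \boldsymbol{\epsilon} \}$ of $\boldsymbol{1}$ contains no nonempty position at all, so the defining implication of well-openedness holds vacuously; and the unique element $\bot = \{ \boldsymbol{\epsilon}, q \}$ of $\boldsymbol{0}$ has its only nonempty position $q$ consisting of a single initial occurrence, for which the prefix preceding that occurrence is $\boldsymbol{\epsilon}$, so the condition $\boldsymbol{s} = \boldsymbol{\epsilon}$ is again met. Since well-openedness of an np-game is by definition well-openedness of each of its t-skeletons, both $\boldsymbol{1}$ and $\boldsymbol{0}$ qualify.

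There is essentially no obstacle here: the statement is an immediate consequence of the two-valued definition of the Id construction together with the already-established well-openedness of the unit and empty np-games. The only point that deserves a word of care is the bookkeeping that the objects named $\boldsymbol{1}$ and $\boldsymbol{0}$ in Definition~\ref{DefIdentityNonstandardPredicativeGames} are literally the same np-games as those in Example~\ref{ExamplesOfPredicativeGames}, which is clear from the shared notation; once this is noted, the proof is a one-line appeal to the earlier example.
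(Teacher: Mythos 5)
Your proof is correct and matches the paper's approach: the paper simply records this lemma as ``Obvious,'' and your case analysis on $\gamma = \gamma'$ together with the (vacuous or one-step) verification of well-openedness for $\boldsymbol{1}$ and $\boldsymbol{0}$ is exactly the argument that remark leaves implicit.
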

\begin{proof}
Obvious. 
\end{proof}

We are now ready to lift the CCC $\mathbb{NPG}_{\mathrm{wrw}}^{\mathrm{wo}}$ to a CwF.
Let us first recall the general definition of CwFs \cite{dybjer1996internal}, where our presentation is based on \cite{hofmann1997syntax}:

\begin{definition}[Categories with families \cite{hofmann1997syntax,dybjer1996internal}]
A \emph{\bfseries category with families (CwF)} is a tuple $\mathcal{C} = (\mathcal{C}, \mathrm{Ty}, \mathrm{Tm}, \_\{\_\}, T, \_.\_, \mathrm{p}, \mathrm{v}, \langle\_,\_\rangle_\_)$,
where
\begin{itemize}

\item $\mathcal{C}$ is a category with a terminal object $T \in \mathcal{C}$;

\item $\mathrm{Ty}$ assigns, to each object $\Gamma \in \mathcal{C}$, a set $\mathrm{Ty}(\Gamma)$, called the set of all \emph{\bfseries types} in the \emph{\bfseries context} $\Gamma$;

\item $\mathrm{Tm}$ assigns, to each pair $(\Gamma, A)$ of an object $\Gamma \in \mathcal{C}$ and a type $A \in \mathrm{Ty}(\Gamma)$, a set $\mathrm{Tm}(\Gamma, A)$, called the set of all \emph{\bfseries terms} of type $A$ in the context $\Gamma$;

\item To each $f : \Delta \to \Gamma$ in $\mathcal{C}$, $\_\{\_\}$ assigns a map $\_\{f\} : \mathrm{Ty}(\Gamma) \to \mathrm{Ty}(\Delta)$, called the \emph{\bfseries substitution on types}, and a family $(\_\{f\}_A)_{A \in \mathrm{Ty}(\Gamma)}$ of maps $\_\{f\}_A : \mathrm{Tm}(\Gamma, A) \to \mathrm{Tm}(\Delta, A\{f\})$, called the \emph{\bfseries substitutions on terms};

%\item $T \in \mathcal{C}$ is a terminal object;

\item $\_ . \_$ assigns, to each pair $(\Gamma, A)$ of a context $\Gamma \in \mathcal{C}$ and a type $A \in \mathrm{Ty}(\Gamma)$, a context $\Gamma . A \in \mathcal{C}$, called the \emph{\bfseries comprehension} of $A$;

\item $\mathrm{p}$ (resp. $\mathrm{v}$) associates each pair $(\Gamma, A)$ of a context $\Gamma \in \mathcal{C}$ and a type $A \in \mathrm{Ty}(\Gamma)$ with a morphism $\mathrm{p}(A) : \Gamma . A \to \Gamma$ in $\mathcal{C}$ (resp. a term $\mathrm{v}_A \in \mathrm{Tm}(\Gamma . A, A\{\mathrm{p}(A)\})$), called the \emph{\bfseries first projection} on $A$ (resp. the \emph{\bfseries second projection} on $A$);

\item $\langle \_, \_ \rangle_\_$ assigns, to each triple $(f, A, g)$ of a morphism $f : \Delta \to \Gamma$ in $\mathcal{C}$, a type $A \in \mathrm{Ty}(\Gamma)$ and a term $g \in \mathrm{Tm}(\Delta, A\{f\})$, a morphism $\langle f, g \rangle_A : \Delta \to \Gamma . A$
in $\mathcal{C}$, called the \emph{\bfseries extension} of $f$ by $g$

\end{itemize}
that satisfies
\begin{itemize}

\item \textsc{(Ty-Id)} $A \{ \mathrm{id}_\Gamma \} = A$;

\item \textsc{(Ty-Comp)} $A \{ f \circ e \} = A \{ f \} \{ e \}$;

\item \textsc{(Tm-Id)} $h \{ \mathrm{id}_\Gamma \}_A = h$;

\item \textsc{(Tm-Comp)} $h \{ f \circ e \}_A = h \{ f \}_A \{ e \}_{A\{f\}}$;

\item \textsc{(Cons-L)} $\mathrm{p}(A) \circ \langle f, g \rangle_A = f$;

\item \textsc{(Cons-R)} $\mathrm{v}_A \{ \langle f, g \rangle_A \} = g$;

\item \textsc{(Cons-Nat)} $\langle f, g \rangle_A \circ e = \langle f \circ e, g \{ e \}_{A\{f\}} \rangle_A$;

\item \textsc{(Cons-Id)} $\langle \mathrm{p}(A), \mathrm{v}_A \rangle_A = \mathrm{id}_{\Gamma . A}$

\end{itemize}
for any $\Gamma, \Delta, \Theta \in \mathcal{C}$, $A \in \mathrm{Ty}(\Gamma)$, $f : \Delta \to \Gamma$, $e : \Theta \to \Delta$, $h \in \mathrm{Tm}(\Gamma, A)$ and $g \in \mathrm{Tm}(\Delta, A\{ f \})$.
\end{definition}
%For the details, see \cite{hofmann1997syntax}.

%Next, let us recall the \emph{interpretation} \cite{hofmann1997syntax,jacobs1999categorical} of MLTT in a CwF $\mathcal{C}$.
Roughly, judgements of MLTT are modeled in a CwF $\mathcal{C}$ by
\begin{align*}
%\label{ctx}
\mathsf{\vdash \Gamma \ ctx} &\mapsto \llbracket \mathsf{\Gamma} \rrbracket \in \mathcal{C}; \\
%\label{type}
\mathsf{\Gamma \vdash A \ type} &\mapsto \llbracket \mathsf{A} \rrbracket \in \mathrm{Ty}(\llbracket \mathsf{\Gamma} \rrbracket); \\
%\label{term}
\mathsf{\Gamma \vdash a : A} &\mapsto \llbracket \mathsf{a} \rrbracket \in \mathrm{Tm}(\llbracket \mathsf{\Gamma} \rrbracket, \llbracket \mathsf{A} \rrbracket); \\
\mathsf{\vdash \Gamma = \Delta \ ctx} &\Rightarrow \llbracket \mathsf{\Gamma} \rrbracket = \llbracket \mathsf{\Delta} \rrbracket \in \mathcal{C}; \\
\mathsf{\Gamma \vdash A = B \ type} &\Rightarrow \llbracket \mathsf{A} \rrbracket = \llbracket \mathsf{B} \rrbracket \in \mathrm{Ty}(\llbracket \mathsf{\Gamma} \rrbracket); \\ 
%\label{eqterm}
\mathsf{\Gamma \vdash a = a' : A} &\Rightarrow \llbracket \mathsf{a} \rrbracket = \llbracket \mathsf{a'} \rrbracket \in \mathrm{Tm}(\llbracket \mathsf{\Gamma} \rrbracket, \llbracket \mathsf{A} \rrbracket),
\end{align*}
where $\llbracket \_ \rrbracket$ denotes the \emph{semantic map} or \emph{interpretation} \cite{hofmann1997syntax}.
Strictly speaking, the first three maps define an interpretation $\llbracket \_ \rrbracket$ of MLTT in $\mathcal{C}$, while the last three logical implications are \emph{soundness} of the interpretation $\llbracket \_ \rrbracket$. 
See \cite{hofmann1997syntax} for the details.
%As we shall see, the additional equality of the rank of a type and the rank of its interpretation in (\ref{type}) is easily established by induction on $\mathsf{\Gamma \vdash A \ type_i}$.

\if0
\begin{remark}
An interpretation $\llbracket \_ \rrbracket$ is applied to judgements, and thus $\llbracket \mathsf{\Gamma} \rrbracket$, $\llbracket \mathsf{A} \rrbracket$ and $\llbracket \mathsf{a} \rrbracket$ should be written, strictly speaking, as $\llbracket \mathsf{\vdash \Gamma \ ctx} \rrbracket$, $\llbracket \mathsf{\Gamma \vdash A \ type} \rrbracket$ and $\llbracket \mathsf{\Gamma \vdash a : A} \rrbracket$, respectively.
For brevity, however, we usually adopt the abbreviation whenever it does not bring any serious confusion. 
\end{remark}
\fi

Let us now turn to introducing our CwF of \emph{realizability \`{a} la game semantics}:
\begin{definition}[CwF of realizability \`{a} la game semantics]
\label{DefCwFMuWPG}
The CwF $\mathbb{NPG}_{\mathrm{wrw}}^{\mathrm{wo}}$ is the tuple $(\mathbb{NPG}_{\mathrm{wrw}}^{\mathrm{wo}}, \mathrm{Ty}, \mathrm{Tm}, \_\{\_\}, \boldsymbol{1}, \_.\_, \mathrm{p}, \mathrm{v}, \langle\_,\_\rangle_\_)$,
where
\begin{itemize}

\item The category $\mathbb{NPG}_{\mathrm{wrw}}^{\mathrm{wo}}$ is the one defined in Definition~\ref{DefCategoryWRWPG}, and $\boldsymbol{1} \in \mathbb{NPG}_{\mathrm{wrw}}^{\mathrm{wo}}$ is the unit np-game (Example~\ref{ExamplesOfWellOpenedNonstandardPredicativeGames});

%\item For each $\Gamma \in \mathbb{LPG}$, $\mathrm{Ty}(\Gamma) \colonequals \mathscr{D}\mathbb{PG}(\Gamma)$;

\item Given $\Gamma \in \mathbb{NPG}_{\mathrm{wrw}}^{\mathrm{wo}}$ and $A \in \mathscr{D}\mathbb{NPG}_{\mathrm{wrw}}^{\mathrm{wo}}(\Gamma)$, we define $\mathrm{Ty}(\Gamma) \colonequals \mathscr{D}\mathbb{NPG}_{\mathrm{wrw}}^{\mathrm{wo}}(\Gamma)$ and $\mathrm{Tm}(\Gamma, A) \colonequals \mathcal{TS}_{\mathrm{wr}}(\Pi_{\mathrm{wrw}}(\Gamma, A))$; %is the set of all well-bracketed, winning p-strategies on the pi $\Pi (\Gamma, A)$;

\item Given $\uplus \phi : \Delta \rightarrow \Gamma$ in $\mathbb{NPG}_{\mathrm{wrw}}^{\mathrm{wo}}$, we define the map $\_\{\uplus \phi\} : \mathrm{Ty}(\Gamma) \to \mathrm{Ty}(\Delta)$ by
\begin{equation*}
A \{ \uplus \phi \} \colonequals (A(\uplus \phi \bullet \uplus \delta))_{\uplus \delta \in \mathbb{NPG}_{\mathrm{wrw}}^{\mathrm{wo}}(\Delta)} \quad (A \in \mathrm{Ty}(\Gamma)),
\end{equation*}
and the map $\_\{\uplus \phi\}_A : \mathrm{Tm}(\Gamma, A) \to \mathrm{Tm}(\Delta, A\{\uplus \phi\})$ by 
\begin{equation*}
\uplus \alpha \{ \uplus \phi \}_A \colonequals \uplus \alpha \bullet \uplus \phi \quad (\uplus \alpha \in \mathrm{Tm}(\Gamma, A));
\end{equation*}

\item The comprehension $\Gamma.A$ is given by $\Gamma.A \colonequals \Sigma_{\mathrm{wrw}} (\Gamma, A)$, the first projection $\mathrm{p}(A) \colonequals \mathrm{fst}_{\Sigma_{\mathrm{wrw}}(\Gamma, A)} : \Sigma_{\mathrm{wrw}} (\Gamma, A) \to \Gamma$ is the w.r.w. dereliction $\uplus \mathrm{der}_\Gamma$ up to `tags,' the second projection $\mathrm{v}_A \colonequals \mathrm{snd}_{\Sigma_{\mathrm{wrw}}(\Gamma, A)} \in \mathrm{Tm}(\Sigma_{\mathrm{wrw}}(\Gamma, A), A\{\mathrm{p}(A)\})$ is the w.r.w. dereliction $\uplus \mathrm{der}_{\int_\Gamma A}$ up to `tags,' and the extension $\langle \uplus \phi, \uplus \alpha \rangle_A : \Delta \rightarrow \Sigma_{\mathrm{wrw}} (\Gamma, A)$ is the pairing $\langle \uplus \phi, \uplus \alpha \rangle$ of the DoWRWLIs $\uplus \phi$ and $\uplus \alpha$.

%\item $\mathrm{p}(A) \colonequals \mathrm{fst}_{\Sigma(\Gamma, A)} : \Sigma (\Gamma, A) \to \Gamma$, which is $\mathrm{pder}_\Gamma$ up to `tags';

%\item $\mathrm{v}_A \colonequals \mathrm{snd}_{\Sigma (\Gamma, A)} : \Pi (\Sigma(\Gamma, A), A\{\mathrm{p}(A)\})$, which is $\mathrm{pder}_{\int_\Gamma A}$ up to `tags';

%\item For each $\Gamma \in \mathbb{LPG}$, $A \in \mathscr{D}\mathbb{PG}(\Gamma)$, $f \in \mathbb{LPG}(\Delta, \Gamma)$ and $a \in \mathbb{LPG}(\Pi(\Delta, A\{ f \}))$, $\langle f, a \rangle_A \colonequals  \langle f, a \rangle : \Delta \rightarrow \Sigma (\Gamma, A)$.

\end{itemize}

\end{definition}

\begin{convention}
We frequently omit the subscript $(\_)_A$ on $\_\{ \_ \}_A$ and $\langle \_, \_ \rangle_A$, and the one $(\_)_{\Sigma_{\mathrm{wrw}}(\Gamma, A)}$ on $\mathrm{fst}_{\Sigma_{\mathrm{wrw}}(\Gamma, A)}$ and $\mathrm{snd}_{\Sigma_{\mathrm{wrw}}(\Gamma, A)}$. 
\end{convention}

%Of course, we need to establish the following:
\begin{theorem}[Well-defined CwF of realizability \`{a} la game semantics]
\label{ThmWellDefinedWPG}
The structure $\mathbb{NPG}_{\mathrm{wrw}}^{\mathrm{wo}}$ given in Definition~\ref{DefCwFMuWPG} forms a well-defined CwF.
\end{theorem}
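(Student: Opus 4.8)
The plan is to follow the blueprint of the game-semantic CwF established in \cite{yamada2016game}, verifying first that all the data listed in Definition~\ref{DefCwFMuWPG} are well-defined and then that the eight CwF equations hold. The key observation making this feasible is that DoWRWLIs have been engineered (Lemmas~\ref{LemWellDefinedDoWRWLIs}, \ref{LemWellDefinedConstructionsOnDoWRWLIs} and \ref{LemWRWPromotionLemma}) to behave at the categorical level exactly like the UoPLIs of \cite{yamada2016game}, so that most steps transfer \emph{mutatis mutandis} and the proof is essentially a transcription of the corresponding argument there, with one genuinely new ingredient concerning realizers.

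First I would check well-definedness of the structure maps. The category $\mathbb{NPG}_{\mathrm{wrw}}^{\mathrm{wo}}$ and its terminal object $\boldsymbol{1}$ are already in place, and $\mathrm{Ty}(\Gamma)$ and $\mathrm{Tm}(\Gamma, A) = \mathcal{TS}_{\mathrm{wr}}(\Pi_{\mathrm{wrw}}(\Gamma, A))$ are well-defined because $\Pi_{\mathrm{wrw}}(\Gamma, A)$ is a well-opened np-game. The substitution on types $A\{\uplus\phi\} = (A(\uplus\phi\bullet\uplus\delta))_{\uplus\delta}$ must be shown to be again a well-opened dependent np-game on $\Delta$: each component $A(\uplus\phi\bullet\uplus\delta)$ is a well-opened np-game since $\uplus\phi\bullet\uplus\delta \in \mathbb{NPG}_{\mathrm{wrw}}(\Gamma)$ by Lemma~\ref{LemWellDefinedDoWRWLIs}, and the union of enabling relations over $\uplus\delta$ stays well-founded because it is a subrelation of the corresponding union for $A$. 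The substitution on terms $\uplus\alpha\{\uplus\phi\}_A = \uplus\alpha\bullet\uplus\phi$ lands in $\mathcal{TS}_{\mathrm{wr}}(\Pi_{\mathrm{wrw}}(\Delta, A\{\uplus\phi\}))$ because winning, recursive DoWRWLIs are closed under composition (Lemma~\ref{LemWellDefinedDoWRWLIs}). Finally, the comprehension $\Sigma_{\mathrm{wrw}}(\Gamma, A)$ is well-opened by Lemma~\ref{LemWellDefinedWRWSigma}, the projections are the w.r.w. derelictions up to `tags' (well-defined morphisms by Lemma~\ref{LemWellDefinedCopyCatAndDerelictionTreeSkeletons}), and the extension is the pairing, well-defined by Lemma~\ref{LemWellDefinedConstructionsOnDoWRWLIs}.

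Next I would verify the eight equations. The type-substitution laws (Ty-Id) and (Ty-Comp) unwind pointwise to $A(\uplus\mathrm{der}_\Gamma \bullet \uplus\delta) = A(\uplus\delta)$ and $A((\uplus\phi\bullet\uplus\psi)\bullet\uplus\delta) = A(\uplus\phi\bullet(\uplus\psi\bullet\uplus\delta))$, i.e. to unitality and associativity of $\bullet$, which are the content of Lemma~\ref{LemWRWPromotionLemma} together with Lemma~\ref{LemWellDefinedCopyCatAndDerelictionTreeSkeletons}; the term-substitution laws (Tm-Id) and (Tm-Comp) then follow by applying the same two laws to $\uplus\alpha\bullet(\_)$. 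The comprehension laws (Cons-L), (Cons-R), (Cons-Nat) and (Cons-Id) reduce to the interaction of pairing with the two derelictions $\mathrm{fst}$, $\mathrm{snd}$ and with composition, and are verified exactly as the product-like equations for sigma in \cite{yamada2016game}, using that the promotion of a pairing is computed componentwise by Definition~\ref{DefConstructionsOnDoWRWLIs}.

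The main obstacle I anticipate is not any individual equation but the bookkeeping of realizers threaded through these computations: unlike in \cite{yamada2016game}, each DoWRWLI carries realizer-maps $\pi_\phi$, and every composition, pairing and promotion must reproduce the canonical realizer structure so that the result stays recursive \emph{and} the componentwise equalities above hold on the nose rather than merely up to reindexing. This is precisely why Definition~\ref{DefConstructionsOnDoWRWLIs} stipulates an appropriate choice of canonical realizers (Definition~\ref{DefCanonicalRealizers}) and why Lemma~\ref{LemWRWPromotionLemma} is phrased as strict equalities. The crux of the proof is therefore to confirm that, under that choice, the realizer-maps on the two sides of (Ty-Comp) and (Cons-Nat) agree, so that the underlying FoWRWLIs, and hence the DoWRWLIs themselves, coincide.
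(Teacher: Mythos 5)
Your proposal follows the same route as the paper's proof: well-definedness of the structure maps via Lemmas~\ref{LemWellDefinedDoWRWLIs}, \ref{LemWellDefinedCopyCatAndDerelictionTreeSkeletons} and~\ref{LemWellDefinedConstructionsOnDoWRWLIs}, followed by the eight CwF equations, which reduce to unitality and associativity of $\bullet$ (Lemma~\ref{LemWRWPromotionLemma}) and the componentwise behaviour of pairing and promotion. Your account of the equation-checking part is accurate and matches the paper's calculations.

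There is, however, one genuine gap, and it sits exactly where the paper spends its main effort. You justify $\uplus\alpha\{\uplus\phi\} = \uplus\alpha\bullet\uplus\phi \in \mathcal{TS}_{\mathrm{wr}}(\Pi_{\mathrm{wrw}}(\Delta, A\{\uplus\phi\}))$ solely by closure of winning, recursive DoWRWLIs under composition (Lemma~\ref{LemWellDefinedDoWRWLIs}). That lemma only yields a winning, recursive t-skeleton on the \emph{non-dependent} implication $\hat{\oc}\Delta \rightarrowtriangle \int_\Gamma A$; membership in the pi np-game additionally requires the FoDWRWLI condition of Definition~\ref{DefFoDWRWLIs}, namely that the codomain components track the type dependency, i.e. $(\uplus\alpha\bullet\uplus\phi)\bullet\uplus\delta :: \hat{\oc}\boldsymbol{1}\rightarrowtriangle A\{\uplus\phi\}(\uplus\delta)$ for every $\uplus\delta\in\mathbb{NPG}_{\mathrm{wrw}}^{\mathrm{wo}}(\Delta)$. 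The paper proves this by the computation $(\uplus\alpha\circ\uplus\phi^\dagger)\circ\uplus\delta^\dagger = \uplus\alpha\circ(\uplus\phi\bullet\uplus\delta)^\dagger :: \hat{\oc}\boldsymbol{1}\rightarrowtriangle A^\ddagger((\uplus\phi\bullet\uplus\delta)^\dagger) = A\{\uplus\phi\}(\uplus\delta)$, which rests on Lemma~\ref{LemWRWPromotionLemma} --- a tool you invoke only for the equations, not here. The same issue affects the extension: pairing closure gives a t-skeleton on $\hat{\oc}\Delta\rightarrowtriangle\Gamma\mathbin{\&}\int_\Gamma A$, but landing in $\Sigma_{\mathrm{wrw}}(\Gamma, A)$ needs the further check $\uplus\tilde{\alpha}\bullet\uplus\delta :: \hat{\oc}\boldsymbol{1}\rightarrowtriangle A(\uplus\phi\bullet\uplus\delta)$. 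By contrast, the obstacle you single out as the crux --- agreement of realizer-maps on the two sides of (Ty-Comp) and (Cons-Nat) --- is already discharged before this theorem, by the stipulated choice of canonical realizers in Definitions~\ref{DefCanonicalRealizers} and~\ref{DefConstructionsOnDoWRWLIs} and by the fact that a DoWRWLI uniquely determines its underlying FoWRWLI (noted in the proof of Lemma~\ref{LemWellDefinedDoWRWLIs}); the paper never needs to reopen that issue inside this proof.
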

\begin{proof}
We focus on substitutions on terms and extensions as other verifications are straightforward. 
Let $\Gamma, \Delta \in \mathbb{NPG}_{\mathrm{wrw}}^{\mathrm{wo}}$, $A \in \mathscr{D}\mathbb{NPG}_{\mathrm{wrw}}^{\mathrm{wo}}(\Gamma)$, $\uplus \phi \in \mathbb{NPG}_{\mathrm{wrw}}^{\mathrm{wo}}(\Delta, \Gamma)$, $\uplus \alpha \in \mathcal{TS}_{\mathrm{wr}}(\Pi_{\mathrm{wrw}}(\Gamma, A))$ and $\uplus \tilde{\alpha} \in \mathcal{TS}_{\mathrm{wr}}(\Pi_{\mathrm{wrw}}(\Delta, A\{ \uplus \phi \}))$, and assume that $\mathscr{R}_{\mathrm{wr}}^{\mathrm{cp}}(\Gamma) \neq \emptyset$ and $\mathscr{R}_{\mathrm{wr}}^{\mathrm{cp}}(\Delta) \neq \emptyset$, following our convention, since the other cases are trivial. 

By Lemma~\ref{LemWellDefinedDoWRWLIs}, $\uplus \alpha \{ \uplus \phi \} = \uplus \alpha \bullet \uplus \phi$ is a winning, recursive t-skeleton on $\hat{\oc} \Delta \rightarrowtriangle \int_\Gamma A$. 
Take any $\uplus \delta \in \mathbb{NPG}_{\mathrm{wrw}}^{\mathrm{wo}}(\Delta)$; for proving $\uplus \alpha \{ \uplus \phi \} \in \mathcal{TS}_{\mathrm{wr}}(\Pi_{\mathrm{wrw}}(\Delta, A\{ \uplus \phi \}))$, it remains to show $(\uplus \alpha \bullet \uplus \phi) \bullet \uplus \delta :: \oc \boldsymbol{1} \rightarrowtriangle A\{ \uplus \phi \}(\uplus \delta)$.
Then, we calculate
\begin{align*}
(\uplus \alpha \bullet \uplus \phi) \bullet \uplus \delta &= (\uplus \alpha \circ \uplus \phi^\dagger) \circ \uplus \delta^\dagger \\
&= \uplus \alpha \circ (\uplus \phi \circ \uplus \delta^\dagger)^\dagger \quad \text{(by Lemma~\ref{LemWRWPromotionLemma})} \\
&= \uplus \alpha \circ (\uplus \phi \bullet \uplus \delta)^\dagger :: \oc \boldsymbol{1} \rightarrowtriangle A^\ddagger((\uplus \phi \bullet \uplus \delta)^\dagger),
\end{align*}
where
\begin{align*}
A^\ddagger((\uplus \phi \bullet \uplus \delta)^\dagger) &= A(\uplus \phi \bullet \uplus \delta) \\
&= A\{ \uplus \phi \}(\uplus \delta).
\end{align*}
Hence, we have shown that $\uplus \alpha \{ \uplus \phi \} \in \mathcal{TS}_{\mathrm{wr}}(\Pi_{\mathrm{wrw}}(\Delta, A\{ \uplus \phi \}))$ holds. 

Similarly, the extension $\langle \uplus \phi, \uplus \tilde{\alpha} \rangle$ is a winning, recursive t-skeleton on $\hat{\oc} \Delta \rightarrowtriangle \Gamma \mathbin{\&} (\int_\Gamma A)$ by Lemma~\ref{LemWellDefinedConstructionsOnDoWRWLIs}.
Hence, for proving $\langle \uplus \phi, \uplus \tilde{\alpha} \rangle \in \mathbb{NPG}_{\mathrm{wrw}}^{\mathrm{wo}}(\Delta, \Sigma_{\mathrm{wrw}}(\Gamma, A))$, it suffices to show $\uplus \tilde{\alpha} \bullet \uplus \delta :: \oc \boldsymbol{1} \rightarrowtriangle A(\uplus \phi \bullet \uplus \delta)$ since we have $\langle \uplus \phi, \uplus \tilde{\alpha} \rangle \bullet \uplus \delta = \langle \uplus \phi \bullet \uplus \delta, \uplus \tilde{\alpha} \bullet \uplus \delta \rangle$ with $\uplus \phi \bullet \uplus \delta$ winning and recursive.
Then, we calculate
\begin{equation*}
\uplus \tilde{\alpha} \bullet \uplus \delta :: \oc \boldsymbol{1} \rightarrowtriangle A\{ \uplus \phi \}(\uplus \delta) = \oc \boldsymbol{1} \rightarrowtriangle A(\uplus \phi \bullet \uplus \delta).
\end{equation*}

Finally, let us verify the required equations:
\begin{itemize}

\item \textsc{(Ty-Id)} $A\{\mathrm{id}_\Gamma\} = (A(\uplus \mathrm{der}_\Gamma \bullet \uplus \gamma))_{\uplus \gamma \in \mathbb{NPG}_{\mathrm{wrw}}^{\mathrm{wo}}(\Gamma)} = (A(\uplus \gamma))_{\uplus \gamma \in \mathbb{NPG}_{\mathrm{wrw}}^{\mathrm{wo}}(\Gamma)} = A$ by Lemma~\ref{LemWRWPromotionLemma};

\item \textsc{(Ty-Comp)} Given $\Theta \in \mathbb{NPG}_{\mathrm{wrw}}^{\mathrm{wo}}$ and $\uplus \psi : \Gamma \rightarrow \Theta$ in $\mathbb{NPG}_{\mathrm{wrw}}^{\mathrm{wo}}$, we calculate
\begin{align*}
A\{ \uplus \psi \bullet \uplus \phi \} &= (A((\uplus \psi \bullet \uplus \phi) \bullet \uplus \delta))_{\uplus \delta \in \mathbb{NPG}_{\mathrm{wrw}}^{\mathrm{wo}}(\Delta)} \\
%&= (A((g \circ f^\dagger) \circ d^\dagger))_{d \in \mathbb{NPG}_{\mathrm{wrw}}^{\mathrm{wo}}(\oc \Delta)} \\
%&= (A(g \circ (f \circ d^\dagger)^\dagger))_{d \in \mathbb{NPG}_{\mathrm{wrw}}^{\mathrm{wo}}(\oc \Delta)} \quad \text{(by Lemma~\ref{LemWRWPromotionLemma})} \\
&= (A(\uplus \psi \bullet (\uplus \phi \bullet \uplus \delta)))_{\uplus \delta \in \mathbb{NPG}_{\mathrm{wrw}}^{\mathrm{wo}}(\Delta)} \quad \text{(by Lemma~\ref{LemWRWPromotionLemma})} \\
&= (A \{ \uplus \psi \} (\uplus \phi \bullet \uplus \delta))_{\uplus \delta \in \mathbb{NPG}_{\mathrm{wrw}}^{\mathrm{wo}}(\Delta)} \\
&= (A \{ \uplus \psi \} \{ \uplus \phi \} (\uplus \delta))_{\uplus \delta \in \mathbb{NPG}_{\mathrm{wrw}}^{\mathrm{wo}}(\Delta)} \\
&= A \{ \uplus \psi \} \{ \uplus \phi \};
\end{align*}

\item \textsc{(Tm-Id)} $\uplus \alpha \{ \mathrm{id}_\Gamma \} = \uplus \alpha \bullet \uplus \mathrm{der}_\Gamma = \uplus \alpha \circ \uplus \mathrm{cp}_{\hat{\oc} \Gamma} = \uplus \alpha$ by Lemmata~\ref{LemWellDefinedCopyCatAndDerelictionTreeSkeletons} and~\ref{LemWRWPromotionLemma};

\item \textsc{(Tm-Comp)} $\uplus \alpha \{ \uplus \psi \bullet \uplus \phi \} = \uplus \alpha \bullet (\uplus \psi \bullet \uplus \phi) = (\uplus \alpha \bullet \uplus \psi) \bullet \uplus \phi = \uplus \alpha \{ \uplus \psi \} \bullet \uplus \phi = \uplus \alpha \{ \uplus \psi \} \{ \uplus \phi \}$ by Lemma~\ref{LemWRWPromotionLemma};

\item \textsc{(Cons-L)} $\mathrm{p}(A) \bullet \langle \uplus \psi, \uplus \tilde{\alpha} \rangle = \mathrm{fst} \circ \langle \uplus \psi, \uplus \tilde{\alpha} \rangle^\dagger = \mathrm{fst} \circ \langle \uplus \psi^\dagger, \uplus \tilde{\alpha}^\dagger \rangle = \uplus \psi$;

\item \textsc{(Cons-R)} $\mathrm{v}_A \{ \langle \uplus \psi, \uplus \tilde{\alpha} \rangle \} = \mathrm{snd} \circ \langle \uplus \psi, \uplus \tilde{\alpha} \rangle^\dagger = \mathrm{snd} \circ \langle \uplus \psi^\dagger, \uplus \tilde{\alpha}^\dagger \rangle = \uplus \tilde{\alpha}$;

\item \textsc{(Cons-Nat)} $\langle \uplus \psi, \uplus \tilde{\alpha} \rangle \bullet \uplus \phi = \langle \uplus \psi \bullet \uplus \phi, \uplus \tilde{\alpha} \bullet \uplus \phi \rangle  = \langle \uplus \psi \bullet \uplus \phi, \uplus \tilde{\alpha} \{ \uplus \phi \} \rangle$;

\item \textsc{(Cons-Id)} $\langle \mathrm{p}(A), \mathrm{v}_A \rangle = \langle \mathrm{fst}_{\Sigma_{\mathrm{wrw}}(\Gamma, A)}, \mathrm{snd}_{\Sigma_{\mathrm{wrw}}(\Gamma, A)} \rangle = \uplus \mathrm{der}_{\Sigma_{\mathrm{wrw}}(\Gamma, A)} = \mathrm{id}_{\Gamma . A}$,

\end{itemize}
which completes the proof. 
\end{proof}

Let us then proceed to equip the CwF $\mathbb{NPG}_{\mathrm{wrw}}^{\mathrm{wo}}$ with semantic type formers for unit-, empty-, N-, pi-, sigma- and Id-types.
We begin with \emph{pi-types} (Appendix~\ref{DependentFunctionTypes}).
Recall first the semantic type former for pi-types in an arbitrary CwF:
\begin{definition}[CwFs with pi-types \cite{hofmann1997syntax}]
A CwF $\mathcal{C}$ \emph{\bfseries supports pi} if
\begin{itemize}

\item \textsc{($\Pi$-Form)} Given $\Gamma \in \mathcal{C}$, $A \in \mathrm{Ty}(\Gamma)$ and $B \in \mathrm{Ty}(\Gamma . A)$, there is a type $\Pi (A, B) \in \mathrm{Ty}(\Gamma)$;

\item \textsc{($\Pi$-Intro)} Given $b \in \mathrm{Tm}(\Gamma . A, B)$, there is a term $\lambda_{A, B} (b) \in \mathrm{Tm}(\Gamma, \Pi (A, B))$;

\item \textsc{($\Pi$-Elim)} Given $k \in \mathrm{Tm}(\Gamma, \Pi (A, B))$ and $a \in \mathrm{Tm}(\Gamma, A)$, there is a term $\mathrm{App}_{A, B} (k, a) \in \mathrm{Tm}(\Gamma, B\{ \overline{a} \})$, where $\overline{a} \colonequals \langle \mathrm{id}_\Gamma, a \rangle_A : \Gamma \to \Gamma . A$;

\item \textsc{($\Pi$-Comp)} $\mathrm{App}_{A, B} (\lambda_{A, B} (b) , a) = b \{ \overline{a} \}$;

\item \textsc{($\Pi$-Subst)} Given $\Delta \in \mathcal{C}$ and $f : \Delta \to \Gamma$ in $\mathcal{C}$, $\Pi (A, B) \{ f \} = \Pi (A\{f\}, B\{f^+\})$, where $f^+ \colonequals \langle f \circ \mathrm{p}(A\{ f \}), \mathrm{v}_{A\{ f \}} \rangle_A : \Delta . A\{ f \} \to \Gamma . A$;

\item \textsc{($\lambda$-Subst)} $\lambda_{A, B} (b) \{ f \} = \lambda_{A\{f\}, B\{f^+\}} (b \{ f^+ \}) \in \mathrm{Tm} (\Delta, \Pi (A\{f\}, B\{ f^+\}))$;

\item \textsc{(App-Subst)} $\mathrm{App}_{A, B} (k, a) \{ f \} = \mathrm{App}_{A\{f\}, B\{f^+\}} (k \{ f \}, a \{ f \}) \in \mathrm{Tm} (\Delta, B\{ \overline{a} \} \{ f \})$. %where note that $k\{f\} \in \mathrm{Tm} (\Delta, \Pi (A\{f\}, B\{f^+\}))$, $a\{f\} \in \mathrm{Tm}(\Delta, A\{f\})$ and $f^+ \circ \overline{a\{f\}} =  \langle f \circ \mathrm{p}(A\{f\}), \mathrm{v}_{A\{f\}} \rangle_A \circ \langle \mathrm{id}_\Delta, a\{f\} \rangle_{A\{f\}} = \langle f, a \{f\} \rangle_A = \langle \mathrm{id}_\Gamma, a \rangle_A \circ f = \overline{a} \circ f$.
\end{itemize}

Furthermore, $\mathcal{C}$ \emph{\bfseries supports pi in the strict sense} if it additionally satisfies
\begin{itemize}
\item \textsc{($\lambda$-Uniq)} $\lambda_{A, B} \circ \mathrm{App}_{A\{\mathrm{p}(A)\}, B\{\mathrm{p}(A)^+\}}(k\{ \mathrm{p}(A) \}, \mathrm{v}_A) = k$.

\end{itemize}
%Note that it corresponds to $\Pi$-Uniq or \emph{$\eta$-rule} in MLTT.
\end{definition}

Pi-types (with the $\eta$-rule) are modeled in a CwF that supports pi (in the strict sense); see Appendix~\ref{DependentFunctionTypes} for the details.

Let us now show that the CwF $\mathbb{NPG}_{\mathrm{wrw}}^{\mathrm{wo}}$ supports pi in the strict sense:

\begin{lemma}[Winning-realizer-wise currying lemma]
\label{LemWRWCurryingLemma}
Given $\Gamma \in \mathbb{NPG}_{\mathrm{wrw}}^{\mathrm{wo}}$, $A \in \mathscr{D}\mathbb{NPG}_{\mathrm{wrw}}^{\mathrm{wo}}(\Gamma)$ and $B \in \mathscr{D}\mathbb{NPG}_{\mathrm{wrw}}^{\mathrm{wo}}(\Sigma_{\mathrm{wrw}}(\Gamma, A))$, there is a bijection 
\begin{equation*}
\lambda_{A, B} : \mathcal{TS}_{\mathrm{wr}}(\Pi_{\mathrm{wrw}}(\Sigma_{\mathrm{wrw}}(\Gamma, A), B)) \stackrel{\sim}{\rightarrow} \mathcal{TS}_{\mathrm{wr}}(\Pi_{\mathrm{wrw}}(\Gamma, \Pi_{\mathrm{wrw}}(A, B))),
\end{equation*}
where $\Pi_{\mathrm{wrw}}(A, B) \colonequals (\Pi_{\mathrm{wrw}}(A(\uplus \gamma), B_{\uplus \gamma}))_{\uplus \gamma \in \mathbb{NPG}_{\mathrm{wrw}}^{\mathrm{wo}}(\Gamma)}$ and $B_{\uplus \gamma} \colonequals (B(\langle \uplus \gamma, \uplus \alpha \rangle)_{\uplus \alpha \in \mathbb{NPG}_{\mathrm{wrw}}^{\mathrm{wo}}(A(\uplus \gamma))}$.
\end{lemma}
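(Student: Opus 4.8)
The plan is to reduce the dependent currying to the \emph{non-dependent} currying already present in the cartesian closed structure of $\mathbb{NPG}_{\mathrm{wrw}}^{\mathrm{wo}}$, and then to check that the type dependency (the fibers of $A$ and $B$) and the realizer bookkeeping are respected. Recall that w.r.w.\ implications satisfy $\hat{\oc}(X \mathbin{\&} Y) \rightarrowtriangle Z \cong \hat{\oc}X \rightarrowtriangle (\hat{\oc}Y \rightarrowtriangle Z)$ up to `tags' on moves, which is exactly the hom-set bijection witnessing closure of the CCC. The map $\lambda_{A, B}$ is to be this isomorphism restricted along the decomposition of $\Sigma_{\mathrm{wrw}}(\Gamma, A)$ into $\Gamma$ and $A$; I would define it together with its inverse (uncurrying), and then verify that the two are mutually inverse, that both preserve winning and recursiveness, and that $\lambda_{A, B}(\uplus\phi)$ indeed lands in $\Pi_{\mathrm{wrw}}(\Gamma, \Pi_{\mathrm{wrw}}(A, B))$.

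First I would unfold the definitions. By Definition~\ref{DefWRWPi}, an element $\uplus\phi \in \mathcal{TS}_{\mathrm{wr}}(\Pi_{\mathrm{wrw}}(\Sigma_{\mathrm{wrw}}(\Gamma, A), B))$ is a winning, recursive DoWRWLI from $\hat{\oc}\Sigma_{\mathrm{wrw}}(\Gamma, A)$ to $B^\ddagger$, hence an FoWRWLI indexed by canonical pairs $(\gamma_\Sigma^\dagger, e) \in \mathscr{R}^{\mathrm{cp}}_{\mathrm{wr}}(\hat{\oc}\Sigma_{\mathrm{wrw}}(\Gamma, A))$, where each domain t-skeleton on $\hat{\oc}$ is innocent and so a promotion. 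By the definition of $\Sigma_{\mathrm{wrw}}$ each $\gamma_\Sigma$ is, up to `tags', a pairing $\langle \gamma, \alpha \rangle$ with $\gamma \in \mathcal{TS}_{\mathrm{wr}}(\Gamma)$ and $\alpha \in \mathcal{TS}_{\mathrm{wr}}(A(\uplus\{\gamma^{\hat{\oc}\boldsymbol{1}}\}))$, and the codomain constraint forces $\mathrm{cod}_\phi(\gamma_\Sigma^\dagger, e)$ into the fiber $B(\langle \uplus\gamma, \uplus\alpha \rangle)$. I would then reorganize this single family into a nested one: for each canonical pair $(\gamma^\dagger, e_0)$ of $\hat{\oc}\Gamma$, the value is the curried DoWRWLI whose own component at each canonical pair $(\alpha^\dagger, e_1)$ of $\hat{\oc}A(\uplus\gamma)$ is the restriction of $\phi$ to the thread in which O plays $\langle \gamma, \alpha \rangle$ on the domain. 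This is precisely the position-level content of the base isomorphism, so the combinatorics (that the reorganized family is again a deterministic t-skeleton and that its codomain components satisfy $\mathrm{cod} :: \Pi_{\mathrm{wrw}}(A(\uplus\gamma), B_{\uplus\gamma})$, hence respect dependency) follows from the currying isomorphism of w.r.w.\ implications together with the original currying lemma of~\cite{yamada2016game}. The inverse $\lambda_{A, B}^{-1}$ merges the nested family back into one indexed by pairings, and mutual inverseness is immediate once both directions are seen as the same re-tagging of positions.

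The genuinely delicate point, and the step I expect to be the main obstacle, is the \emph{realizer bookkeeping}: I must check that the reorganization sends realizers to realizers and preserves the weakened `effectivity' of realizer-maps (the third axiom of Definition~\ref{DefFoWRWLIs}), so that $\lambda_{A, B}$ restricts to a bijection between the \emph{recursive} (and winning) t-skeletons, not merely between all t-skeletons. Concretely, a canonical realizer $e$ for $\langle \gamma, \alpha \rangle^\dagger$ must be recursively splittable into canonical realizers $e_0$ for $\gamma^\dagger$ and $e_1$ for $\alpha^\dagger$, and conversely recursively combinable; this is exactly what the compatibility condition on canonical G\"{o}del numberings imposed in Definitions~\ref{DefCanonicalRealizers} and~\ref{DefConstructionsOnDoWRWLIs} guarantees, so I would invoke that condition here. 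Granting it, the realizer-map of the curried family is the composite of the recursive splitting with $\pi_\phi$, and is recursive by the same argument used for closure of recursive DoWRWLIs under composition (Lemma~\ref{LemWellDefinedDoWRWLIs}); preservation of totality, innocence and noetherianity is routine because currying only re-tags moves and never alters P's responses, as in Lemmata~\ref{LemWellDefinedConstructionsOnDoWRWLIs} and~\ref{LemWRWPromotionLemma}. Assembling these verifications yields that $\lambda_{A, B}$ and its inverse are well-defined and mutually inverse, which establishes the claimed bijection.
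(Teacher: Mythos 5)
Your proposal is correct and follows essentially the same route as the paper's proof: unfold a winning, recursive t-skeleton on $\Pi_{\mathrm{wrw}}(\Sigma_{\mathrm{wrw}}(\Gamma, A), B)$ as a DoWRWLI indexed by canonical pairs $(\langle\gamma,\alpha\rangle^\dagger, e)$, reorganize it into the nested family via the standard tag-adjusting currying of game semantics, and note the evident inverse. The only difference is one of emphasis: the paper compresses your ``realizer bookkeeping'' step into the phrase ``in the evident way'' (after reducing to a bijection on arbitrary t-skeletons up to finitary tags, which preserves winning and recursiveness), relying on the same assumption you invoke explicitly, namely that canonical G\"odel numberings are chosen compatibly with pairing and promotion (Definitions~\ref{DefCanonicalRealizers} and~\ref{DefConstructionsOnDoWRWLIs}), so your more detailed verification is consistent with, and slightly more careful than, the paper's.
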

\begin{proof}
It suffices to show that there is a bijection between (not necessarily winning or recursive) t-skeletons on $\Pi_{\mathrm{wrw}}(\Sigma_{\mathrm{wrw}}(\Gamma, A), B)$ and those on $\Pi_{\mathrm{wrw}}(\Gamma, \Pi_{\mathrm{wrw}}(A, B))$ up to finitary `tags' since such a bijection preserves winning and recursiveness. 
%Assume that $\mathscr{R}_{\mathrm{wr}}^{\mathrm{cp}}(\Gamma) \neq \emptyset$ and $\mathscr{R}_{\mathrm{wr}}^{\mathrm{cp}}(A(\uplus \gamma)) \neq \emptyset$ for some $\uplus \gamma$ since the other cases are trivial. 

Recall that a t-skeleton $\uplus \phi :: \Pi_{\mathrm{wrw}}(\Sigma_{\mathrm{wrw}}(\Gamma, A), B)$ is the disjoint union on a family $\phi = (\phi_{(\sigma^\dagger, e)})_{(\sigma^\dagger, e) \in \mathscr{R}^{\mathrm{cp}}_{\mathrm{wr}}(\hat{\oc} \Sigma_{\mathrm{wrw}}(\Gamma, A))}$ of t-skeletons $\phi_{(\sigma^\dagger, e)} :: \sigma^\dagger \rightarrowtriangle \mathrm{cod}_\phi(\sigma^\dagger, e)$ such that $\mathrm{cod}_\phi(\sigma^\dagger, e) \in \mathcal{TS}_{\mathrm{wr}}(B(\uplus \{\sigma^{\hat{\oc}\boldsymbol{1}} \}))$, each $\phi_{(\sigma^\dagger, e)}$ does not depend on $\sigma^\dagger$, and the realizer-map $\pi_\phi$ is `effective' (Definitions~\ref{DefFoWRWLIs}, \ref{DefDoWRWLIs}, \ref{DefFoDWRWLIs} and \ref{DefWRWPi}).

Similarly, a t-skeleton $\uplus \psi :: \Pi_{\mathrm{wrw}}(\Gamma, \Pi_{\mathrm{wrw}} (A, B))$ is the disjoint union on a family $\psi = (\psi_{(\gamma^\dagger, f)})_{(\gamma^\dagger, f) \in \mathscr{R}^{\mathrm{cp}}_{\mathrm{wr}}(\hat{\oc} \Gamma)}$ of t-skeletons $\psi_{(\gamma^\dagger, f)} :: \gamma^\dagger \rightarrowtriangle \mathrm{cod}_\psi(\gamma^\dagger, f)$ such that $\mathrm{cod}_\psi(\gamma^\dagger, f) \in \mathcal{TS}_{\mathrm{wr}}(\Pi_{\mathrm{wrw}}(A, B)(\uplus \{\gamma^{\hat{\oc}\boldsymbol{1}} \}))$, each $\psi_{(\gamma^\dagger, f)}$ does not depend on $\gamma^\dagger$, and the realizer-map $\pi_\psi$ is `effective.' 
Note that for each $\sigma_0^\dagger \in \mathcal{TS}_{\mathrm{wr}}(\hat{\oc}\Sigma(\Gamma, A))$ we may write $\sigma_0^\dagger = \langle \gamma_0^\dagger, \alpha_0^\dagger \rangle$ for some $\gamma_0 \in \mathcal{TS}_{\mathrm{wr}}(\Gamma)$ and $\alpha_0 \in \mathcal{TS}_{\mathrm{wr}}(A (\uplus \{ \gamma_0^{\hat{\oc}\boldsymbol{1}} \}))$.

Then, by applying the \emph{currying} on skeletons in game semantics \cite{abramsky1999game,mccusker1998games}, which simply adjusts the finitary `tags' for product and implication appropriately, to each component $\phi_{(\sigma^\dagger, e)}$ of a t-skeleton $\uplus \phi :: \Pi_{\mathrm{wrw}}(\Sigma_{\mathrm{wrw}}(\Gamma, A), B)$ in the evident way, it is easy to see that we get a t-skeleton $\lambda_{A, B}(\uplus \phi) :: \Pi_{\mathrm{wrw}}(\Gamma, \Pi_{\mathrm{wrw}}(A, B))$.
Finally, this operation on t-skeletons has the evident inverse, which completes the proof. 
\if0
Then, by adjusting `tags' appropriately, which is the standard \emph{currying} operation in game semantics (\cite{abramsky1999game,mccusker1998games}), we obtain, from the DPLI $\Phi_{\langle \gamma, \alpha \rangle} :: \langle \gamma, \alpha \rangle \multimap \beta_{\langle \gamma, \alpha \rangle}$ of $\Phi$ at each $\langle \gamma, \alpha \rangle :: \oc \Sigma (\Gamma, A)$, where $\beta_{\langle \gamma, \alpha \rangle}$ is a t-skeleton required by the first axiom on FoPLIs, a t-skeleton $\Phi'_\gamma :: \gamma \multimap (\alpha \multimap \beta_{\langle \gamma, \alpha \rangle})$.
It is easy to check that the family $\Phi' \colonequals (\Phi'_\gamma)_{\gamma :: \Gamma}$ forms an FoDPLI from $\Gamma$ to $\Pi (A, B)$, so that we obtain $\phi' \colonequals \bigcup \Phi' :: \Pi (\Gamma, \Pi (A, B))$.
We have defined the required map $\phi \mapsto \phi'$, which clearly has the inverse, completing the proof.
\fi
\end{proof}

\begin{remark}
The proof of Lemma~\ref{LemWRWCurryingLemma} is essentially the same as the corresponding one given in the previous work \cite{yamada2016game}, and thus it explains why our realizability \`{a} la game semantics validates the $\xi$-rule just like the existing game semantics does. 
\end{remark}

%Let us now propose our game semantics of pi-types:
\begin{theorem}[Realizability model of pi-types \`{a} la game semantics]
The CwF $\mathbb{NPG}_{\mathrm{wrw}}^{\mathrm{wo}}$ supports pi in the strict sense.
\end{theorem}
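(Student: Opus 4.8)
The plan is to verify the eight clauses of \cite{hofmann1997syntax}'s definition of a CwF supporting pi in the strict sense, taking Lemma~\ref{LemWRWCurryingLemma} as the workhorse. For the formation clause, given $\Gamma \in \mathbb{NPG}_{\mathrm{wrw}}^{\mathrm{wo}}$, $A \in \mathrm{Ty}(\Gamma)$ and $B \in \mathrm{Ty}(\Gamma.A) = \mathscr{D}\mathbb{NPG}_{\mathrm{wrw}}^{\mathrm{wo}}(\Sigma_{\mathrm{wrw}}(\Gamma, A))$, I would set $\Pi(A,B) \colonequals \Pi_{\mathrm{wrw}}(A,B)$, the dependent np-game on $\Gamma$ introduced in the statement of Lemma~\ref{LemWRWCurryingLemma}; that it lies in $\mathscr{D}\mathbb{NPG}_{\mathrm{wrw}}^{\mathrm{wo}}(\Gamma)$ is immediate from well-definedness of w.r.w.\ pi. Since $\mathrm{Tm}(\Gamma.A, B) = \mathcal{TS}_{\mathrm{wr}}(\Pi_{\mathrm{wrw}}(\Sigma_{\mathrm{wrw}}(\Gamma,A), B))$ and $\mathrm{Tm}(\Gamma, \Pi(A,B)) = \mathcal{TS}_{\mathrm{wr}}(\Pi_{\mathrm{wrw}}(\Gamma, \Pi_{\mathrm{wrw}}(A,B)))$, the bijection $\lambda_{A,B}$ supplied by Lemma~\ref{LemWRWCurryingLemma} is \emph{literally} the abstraction operator demanded by the introduction clause, so no further work is needed there.

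For elimination I would define $\mathrm{App}_{A,B}(k,a) \colonequals (\lambda_{A,B}^{-1}(k))\{\overline{a}\}_B$, where $\overline{a} = \langle \mathrm{id}_\Gamma, a\rangle_A$ and $\lambda_{A,B}^{-1}$ is the inverse guaranteed by the currying lemma; this lands in $\mathrm{Tm}(\Gamma, B\{\overline{a}\})$ by the substitution operations of the CwF $\mathbb{NPG}_{\mathrm{wrw}}^{\mathrm{wo}}$ established in Theorem~\ref{ThmWellDefinedWPG}. The computation rule then reads $\mathrm{App}_{A,B}(\lambda_{A,B}(b), a) = \lambda_{A,B}^{-1}(\lambda_{A,B}(b))\{\overline{a}\} = b\{\overline{a}\}$, which is immediate from $\lambda_{A,B}^{-1}\circ \lambda_{A,B} = \mathrm{id}$. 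The uniqueness (i.e.\ $\eta$) clause is the converse direction: I would expand $\mathrm{App}_{A\{\mathrm{p}(A)\}, B\{\mathrm{p}(A)^+\}}(k\{\mathrm{p}(A)\}, \mathrm{v}_A)$ using the definition of $\mathrm{App}$ together with the projection equations (Cons-L), (Cons-R) and (Cons-Id) of Theorem~\ref{ThmWellDefinedWPG}, then apply $\lambda_{A,B}$; using $\langle \mathrm{p}(A), \mathrm{v}_A\rangle_A = \mathrm{id}_{\Gamma.A}$ this collapses to $\lambda_{A,B}(\lambda_{A,B}^{-1}(k)) = k$.

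The remaining three clauses ($\Pi$-Subst, $\lambda$-Subst, App-Subst) are \emph{naturality} statements: they assert that forming $\Pi$, currying, and applying all commute with reindexing along an arbitrary $\uplus\phi : \Delta \rightarrow \Gamma$. For $\Pi$-Subst I would unfold both $\Pi(A,B)\{\uplus\phi\}$ and $\Pi(A\{\uplus\phi\}, B\{(\uplus\phi)^+\})$ pointwise at each $\uplus\delta \in \mathbb{NPG}_{\mathrm{wrw}}(\Delta)$ and observe that, after using the promotion lemma (Lemma~\ref{LemWRWPromotionLemma}) to rewrite the nested composites $\uplus\phi \bullet \uplus\delta$ of promotions, both reduce to the same pointwise w.r.w.\ pi, exactly as in the corresponding calculation of Theorem~\ref{ThmWellDefinedWPG}. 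For $\lambda$-Subst and App-Subst I would use that the currying operation of Lemma~\ref{LemWRWCurryingLemma} merely re-tags moves for product and implication and is therefore stable under the tag-adjusting pairing, composition and promotion of DoWRWLIs (Lemmata~\ref{LemWellDefinedConstructionsOnDoWRWLIs} and~\ref{LemWRWPromotionLemma}); this makes $\lambda_{A,B}$ a natural family of bijections, whence both equations follow.

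I expect the main obstacle to be precisely this naturality of currying, namely checking that $\lambda_{A,B}$ commutes with substitution while the \emph{disjoint-union-over-realizers} structure of DoWRWLIs and the \emph{effectivity} of the realizer-maps are carried along coherently. Whereas in the existing game semantics \cite{yamada2016game,mccusker1998games} currying is a transparent tag manipulation, here each curried or uncurried component is indexed by a canonical pair $(\gamma,e)$ with its realizer-map $\pi_\phi$, so I must confirm that re-tagging for product versus implication does not disturb which canonical realizer O is obliged to exhibit at his first move, and that the composite realizer-maps stay `effective' under the promotion-lemma rewrites. Once this bookkeeping is discharged — following the proof schema of \cite{yamada2016game} and relying on the fact that canonical realizers were fixed (Definition~\ref{DefCanonicalRealizers}) so that Lemma~\ref{LemWRWPromotionLemma} holds on the nose — all eight clauses reduce to the short computations sketched above.
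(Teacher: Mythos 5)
Your proposal matches the paper's own proof essentially step for step: the same formation via the pointwise w.r.w.\ pi $\Pi_{\mathrm{wrw}}(A,B)$, the same use of the currying bijection of Lemma~\ref{LemWRWCurryingLemma} for $\lambda_{A,B}$ and for $\mathrm{App}_{A,B}(k,a) = \lambda_{A,B}^{-1}(k)\bullet\overline{a}$, the same pointwise computation with Lemma~\ref{LemWRWPromotionLemma} for $\Pi$-Subst, and the same reliance on currying being a mere tag adjustment (hence natural in the context) to discharge $\lambda$-Subst, App-Subst and $\lambda$-Uniq. The only cosmetic difference is that the paper's $\lambda$-Uniq calculation explicitly routes through $\lambda$-Subst to rewrite $\lambda^{-1}_{A\{\mathrm{p}(A)\},B\{\mathrm{p}(A)^+\}}(k\{\mathrm{p}(A)\})$ as $\lambda^{-1}_{A,B}(k)\bullet\mathrm{p}(A)^+$, a step your uniqueness sketch leaves implicit but which is covered by the naturality you plan to verify.
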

\begin{proof}
Let $\Gamma \in \mathbb{NPG}_{\mathrm{wrw}}^{\mathrm{wo}}$, $A \in \mathscr{D}\mathbb{NPG}_{\mathrm{wrw}}^{\mathrm{wo}}(\Gamma)$, $B \in \mathscr{D}\mathbb{NPG}_{\mathrm{wrw}}^{\mathrm{wo}}(\Sigma_{\mathrm{wrw}}(\Gamma, A))$ and $\uplus \beta \in \mathcal{TS}_{\mathrm{wr}}(\Pi_{\mathrm{wrw}}(\Sigma_{\mathrm{wrw}}(\Gamma, A), B))$.
\begin{itemize}

\item \textsc{($\Pi$-Form)} Let us define $\Pi(A, B) \colonequals (\Pi_{\mathrm{wrw}}(A(\uplus \gamma), B_{\uplus \gamma}))_{\uplus \gamma \in \mathbb{NPG}_{\mathrm{wrw}}^{\mathrm{wo}}(\Gamma)}$, where $B_{\uplus \gamma} \colonequals (B(\langle \uplus \gamma, \uplus \tilde{\alpha} \rangle)_{\uplus \tilde{\alpha} \in \mathbb{NPG}_{\mathrm{wrw}}^{\mathrm{wo}}(A(\uplus \gamma))} \in \mathscr{D}\mathbb{NPG}_{\mathrm{wrw}}^{\mathrm{wo}}(A(\uplus \gamma))$.
To distinguish it from the game semantics of pi-types in \cite{yamada2016game}, we write $\Pi_{\mathrm{wrw}}(A, B)$ for $\Pi(A, B)$.
 
\item \textsc{($\Pi$-Intro)} We get $\lambda_{A, B} (\uplus \beta) \in \mathcal{TS}_{\mathrm{wr}}(\Pi_{\mathrm{wrw}}(\Gamma, \Pi_{\mathrm{wrw}}(A, B)))$ by Lemma~\ref{LemWRWCurryingLemma}, where recall that $\lambda_{A, B}$ and $\lambda_{A, B}^{-1}$ simply `adjust finitary tags' on standard moves. 
We often omit the subscripts $(\_)_{A, B}$ on $\lambda_{A, B}$ and $\lambda_{A, B}^{-1}$.
%Clearly, this operation $\lambda_{A, B}$ preserves the identification of positions. 
%Thus, given $[\beta] \in \mathrm{Tm}(\widehat{\Sigma} (\Gamma, A), B)$, we define $\lambda_{A, B}([\beta]) \colonequals [\lambda_{A, B}(\beta)] \in \mathrm{Tm}(\Gamma, \Pi(A, B))$.
%To distinguish it from the syntactic currying, we usually write $\lambda_{A, B}$ rather than $\lambda_{A, B}$.

\item \textsc{($\Pi$-Elim)} Given $\uplus \kappa \in \mathcal{TS}_{\mathrm{wr}}(\Pi_{\mathrm{wrw}} (\Gamma, \Pi_{\mathrm{wrw}}(A, B)))$ and $\uplus \alpha \in \mathcal{TS}_{\mathrm{wr}}(\Pi_{\mathrm{wrw}} (\Gamma, A))$, we define $\mathrm{App}_{A, B} (\uplus \kappa, \uplus \alpha) \colonequals  \lambda_{A, B}^{-1}(\uplus \kappa) \bullet \overline{\uplus \alpha}$. 
As in the proof of Theorem~\ref{ThmWellDefinedWPG}, $\lambda_{A, B}^{-1}(\uplus \kappa) \bullet \overline{\uplus \alpha} :: \Pi_{\mathrm{wrw}} (\Gamma, B\{ \overline{\uplus \alpha} \})$, and so $\mathrm{App}_{A, B} (\uplus \kappa, \uplus \alpha) = \lambda_{A, B}^{-1}(\uplus \kappa) \bullet \overline{\uplus \alpha} \in \mathcal{TS}_{\mathrm{wr}}(\Pi_{\mathrm{wrw}}(\Gamma, B\{ \overline{\uplus \alpha} \}))$. 
We often omit the subscripts $A, B$ on $\mathrm{App}_{A, B}$.

\item \textsc{($\Pi$-Comp)} $\mathrm{App} (\lambda (\uplus \beta) , \uplus \alpha) = \lambda_{A, B}^{-1}(\lambda_{A, B}(\uplus \beta)) \bullet \overline{\uplus \alpha} = \uplus \beta \bullet \overline{\uplus \alpha} = \uplus \beta \{ \overline{\uplus \alpha} \}$.

\item \textsc{($\Pi$-Subst)} Given $\Delta \in \mathbb{NPG}_{\mathrm{wrw}}^{\mathrm{wo}}$ and $\uplus \phi \in \mathbb{NPG}_{\mathrm{wrw}}^{\mathrm{wo}}(\Delta, \Gamma)$, we calculate 
\begin{align*}
\textstyle \Pi_{\mathrm{wrw}} (A, B) \{ \uplus \phi \} &= (\Pi_{\mathrm{wrw}} (A(\uplus \gamma), B_{\uplus \gamma}))_{\uplus \gamma \in \mathbb{NPG}_{\mathrm{wrw}}^{\mathrm{wo}}(\Gamma)} \{ \uplus \phi \} \\
&= (\Pi_{\mathrm{wrw}} (A(\uplus \phi \bullet \uplus \delta), B_{\uplus \phi \bullet \uplus \delta}))_{\uplus \delta \in \mathbb{NPG}_{\mathrm{wrw}}^{\mathrm{wo}}(\Delta)} \\
%&= \textstyle \{ \ell \Pi (A\{\phi\}(\delta), B(q(\phi, A) \circ \delta)) \ \! | \ \! \delta : \Delta \} \\
&= (\Pi_{\mathrm{wrw}} (A\{ \uplus \phi \}(\uplus \delta), B\{ \uplus \phi^+ \}_{\uplus \delta}))_{\uplus \delta \in \mathbb{NPG}_{\mathrm{wrw}}^{\mathrm{wo}}(\Delta)} \\
&= \Pi_{\mathrm{wrw}}(A\{ \uplus \phi \}, B\{ \uplus \phi^+ \}),
\end{align*}
where $B\{\uplus \phi^+\}_{\uplus \delta} = B_{\uplus \phi \bullet \uplus \delta}$ since for all $\uplus \alpha' \in \mathbb{NPG}_{\mathrm{wrw}}^{\mathrm{wo}}(A(\uplus \phi \bullet \uplus \delta))$ we have
\begin{align*}
B\{\uplus \phi^+\}_{\uplus \delta} (\uplus \alpha') &= B\{\uplus \phi^+\} (\langle \uplus \delta, \uplus \alpha' \rangle) \\ 
&= B (\langle \uplus \phi \bullet \mathrm{p}(A \{ \uplus \phi \}), \mathrm{v}_{A\{ \uplus \phi \}} \rangle \bullet \langle \uplus \delta, \uplus \alpha' \rangle) \\
%&= B(\langle f \bullet \mathrm{fst}, \mathrm{snd} \rangle \circ \langle d, \hat{a} \rangle) \\
%&= B(\langle f^\dagger \bullet \mathrm{fst} \circ \langle d, \hat{a} \rangle, \mathrm{snd}^\dagger \circ \langle d, \hat{a} \rangle \rangle) \\
&= B(\langle \uplus \phi \bullet \uplus \delta, \uplus \alpha' \rangle) \\
&= B_{\uplus \phi \bullet \uplus \delta} (\uplus \alpha').
\end{align*}

\item \textsc{($\lambda$-Subst)} We calculate 
\begin{align*}
\lambda (\uplus \beta) \{ \uplus \phi \} &= \lambda_{A, B} (\uplus \beta) \bullet \uplus \phi \\
&= \lambda_{A\{\uplus \phi\}, B\{\uplus \phi^+\}} (\uplus \beta \bullet \langle \uplus \phi \bullet \mathrm{fst}, \mathrm{snd} \rangle) \quad \text{(by the definition of $\lambda$)} \\
&= \lambda_{A\{\uplus \phi\}, B\{\uplus \phi^+\}} (\uplus \beta \bullet \langle \uplus \phi \bullet \mathrm{p}(A \{ \uplus \phi \}), \mathrm{v}_{A\{\uplus \phi\}} \rangle) \\
&= \lambda_{A\{\uplus \phi\}, B\{\uplus \phi^+\}} (\uplus \beta \{ \uplus \phi^+ \}).
\end{align*}

\item \textsc{(App-Subst)} We calculate
\begin{align*}
\mathrm{App} (\uplus \kappa, \uplus \alpha) \{ \uplus \phi \} &= (\lambda_{A, B}^{-1} (\uplus \kappa) \bullet \langle \uplus \mathrm{der}_\Gamma, \uplus \alpha \rangle) \bullet \uplus \phi \\
&= \lambda_{A, B}^{-1} (\uplus \kappa) \bullet (\langle \uplus \mathrm{der}_\Gamma, \uplus \alpha \rangle \bullet \uplus \phi) \\
&= \lambda_{A, B}^{-1} (\uplus \kappa) \bullet \langle \uplus \phi, \uplus \alpha \bullet \uplus \phi \rangle \\
&= \lambda_{A, B}^{-1} (\uplus \kappa) \bullet (\langle \uplus \phi \bullet \mathrm{p}(A \{ \uplus \phi \}), \mathrm{v}_{A\{\uplus \phi\}} \rangle \bullet \langle \uplus \mathrm{der}_\Delta, \uplus \alpha \bullet \uplus \phi \rangle) \\
&= (\lambda_{A, B}^{-1} (\uplus \kappa) \bullet \uplus \phi^+) \bullet \overline{(\uplus \alpha \bullet \uplus \phi)} \\
&= \lambda_{A\{ \uplus \phi \}, B\{ \uplus \phi^+ \}}^{-1} (\uplus \kappa \bullet \uplus \phi) \bullet \overline{(\uplus \alpha \bullet \uplus \phi)} \quad \text{(by $\lambda$-Subst)} \\
&= \lambda_{A\{ \uplus \phi \}, B\{ \uplus \phi^+ \}}^{-1} (\uplus \kappa \bullet \uplus \phi) \{ \overline{(\uplus \alpha \bullet \uplus \phi)} \} \\
&= \mathrm{App}_{A\{ \uplus \phi \}, B\{ \uplus \phi^+ \}} (\uplus \kappa \bullet \uplus \phi, \uplus \alpha \bullet \uplus \phi) \quad \text{(by $\Pi$-Comp)} \\
&= \mathrm{App}_{A\{ \uplus \phi \}, B\{ \uplus \phi^+ \}} (\uplus \kappa \{ \uplus \phi \}, \uplus \alpha \{ \uplus \phi \}).
\end{align*}

\item \textsc{($\lambda$-Uniq)} Finally, we calculate
\begin{align*} 
\lambda(\mathrm{App}(\uplus \kappa\{ \mathrm{p}(A) \}, \mathrm{v}_A)) &= \lambda_{A, B}(\lambda^{-1}_{A\{\mathrm{p}(A)\}, B\{ \mathrm{p}(A)^+ \}}(\uplus \kappa\{ \mathrm{p}(A) \}) \bullet \overline{\mathrm{v}_A}) \\
&= \lambda_{A, B}((\lambda^{-1}_{A, B}(\uplus \kappa) \bullet \mathrm{p}(A)^+) \bullet \overline{\mathrm{v}_A}) \quad \text{(by $\lambda$-Subst)} \\
&= \lambda_{A, B}(\lambda^{-1}_{A, B}(\uplus \kappa) \bullet (\mathrm{p}(A)^+ \bullet \overline{\mathrm{v}_A})) \\
&= \lambda_{A, B}(\lambda^{-1}_{A, B}(\uplus \kappa) \bullet \uplus \mathrm{der}_{\Sigma_{\mathrm{wrw}}(\Gamma, A)}) \\
&= \lambda_{A, B}(\lambda^{-1}_{A, B}(\uplus \kappa)) \\
&= \uplus \kappa,
\end{align*}
%where $\mathrm{p}(A)^+ \colonequals \langle \mathrm{p}(A) \bullet \mathrm{p}(A\{ \mathrm{p}(A) \}), \mathrm{v}_{A\{ \mathrm{p}(A) \}} \rangle : \Sigma_{\mathrm{wrw}}(\Sigma_{\mathrm{wrw}}(\Gamma, A), A\{ \mathrm{p}(A) \}) \rightarrow \Sigma_{\mathrm{wrw}}(\Gamma, A)$ and $\overline{\mathrm{v}_A} \colonequals \langle \uplus \mathrm{der}_{\Sigma_{\mathrm{wrw}}(\Gamma, A)}, \mathrm{v}_A \rangle : \Sigma_{\mathrm{wrw}}(\Gamma, A) \rightarrow \Sigma_{\mathrm{wrw}}(\Sigma_{\mathrm{wrw}}(\Gamma, A), A\{ \mathrm{p}(A) \})$,
\end{itemize}
which completes the proof.
\end{proof}

Next, we consider \emph{sigma-types} (Appendix~\ref{DependentPairTypes}). 
Again, we first recall the semantic type former for sigma-types in an arbitrary CwF:
\begin{definition}[CwFs with sigma-types \cite{hofmann1997syntax}]
A CwF $\mathcal{C}$ \emph{\bfseries supports sigma} if
\begin{itemize}

\item \textsc{($\Sigma$-Form)} Given $\Gamma \in \mathcal{C}$, $A \in \mathrm{Ty}(\Gamma)$ and $B \in \mathrm{Ty}(\Gamma . A)$, there is a type $\Sigma (A, B) \in \mathrm{Ty}(\Gamma)$;

\item \textsc{($\Sigma$-Intro)} There is a morphism $\mathrm{Pair}_{A, B} : \Gamma . A . B \to \Gamma . \Sigma (A, B)$ in $\mathcal{C}$;

\item \textsc{($\Sigma$-Elim)} Given $P \in \mathrm{Ty}(\Gamma . \Sigma (A, B))$ and $p \in \mathrm{Tm}(\Gamma . A . B, P \{ \mathrm{Pair}_{A, B} \})$, there is a term $\mathcal{R}^{\Sigma}_{A, B, P}(p) \in \mathrm{Tm}(\Gamma . \Sigma (A, B), P)$;

\item \textsc{($\Sigma$-Comp)} $\mathcal{R}^{\Sigma}_{A, B, P}(p) \{ \mathrm{Pair}_{A, B}\} = p$;

\item \textsc{($\Sigma$-Subst)} Given $\Delta \in \mathcal{C}$ and $f : \Delta \to \Gamma$ in $\mathcal{C}$, $\Sigma (A, B) \{ f \} = \Sigma (A\{f\}, B\{f^+\})$, where $f^+ \colonequals \langle f \circ \mathrm{p}(A\{ f \}), \mathrm{v}_{A\{f\}} \rangle_A : \Delta . A\{f\} \to \Gamma . A$;

\item \textsc{(Pair-Subst)} $\mathrm{p}(\Sigma (A, B)) \circ \mathrm{Pair}_{A, B} = \mathrm{p}(A) \circ \mathrm{p}(B)$ and $f^\star \circ \mathrm{Pair}_{A\{f\}, B\{f^+\}} = \mathrm{Pair}_{A, B} \circ f^{++}$, where 
\begin{mathpar}
f^\star \colonequals \langle f \circ \mathrm{p}(\Sigma(A,B)\{f\}), \mathrm{v}_{\Sigma(A,B)\{f\}} \rangle_{\Sigma(A,B)} : \Delta . \Sigma(A,B)\{f\} \to \Gamma . \Sigma(A,B); \and
f^{++} \colonequals \langle f^+ \circ \mathrm{p}(B\{f^+\}), \mathrm{v}_{B\{f^+\}} \rangle_B : \Delta . A\{f\} . B\{f^+\} \to \Gamma . A . B;
\end{mathpar}

\item \textsc{($\mathcal{R}^{\Sigma}$-Subst)} $\mathcal{R}^{\Sigma}_{A, B, P}(p) \{f^\star\} = \mathcal{R}^{\Sigma}_{A\{f\}, B\{f^+\}, P\{f^\star\}} (p \{ f^{++} \})$.

\end{itemize}

Moreover, $\mathcal{C}$ \emph{\bfseries supports sigma in the strict sense} if it also satisfies
\begin{itemize}
\item \textsc{($\mathcal{R}^{\Sigma}$-Uniq)} If $p \in \mathrm{Tm}(\Gamma . A . B, P \{ \mathrm{Pair}_{A, B} \})$, $q \in \mathrm{Tm}(\Gamma . \Sigma(A, B), P)$ and $q \{ \mathrm{Pair}_{A, B} \} = p$, then $q = \mathcal{R}^{\Sigma}_{A, B, P}(p)$.
\end{itemize}
\end{definition}

Sigma-types (with the $\eta$-rule) are modeled in a CwF that supports sigma (in the strict sense); see Appendix~\ref{DependentPairTypes} for the details.

Now, let us describe our interpretation of sigma-types:
\begin{theorem}[Realizability model of sigma-types \`{a} la game semantics]
The CwF $\mathbb{NPG}_{\mathrm{wrw}}^{\mathrm{wo}}$ supports sigma in the strict sense.
\end{theorem}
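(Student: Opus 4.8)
The plan is to proceed in exact parallel with the treatment of pi-types, replacing the currying bijection of Lemma~\ref{LemWRWCurryingLemma} with an \emph{associativity} isomorphism for w.r.w. sigma. The technical core I would establish first is that for any $\Gamma \in \mathbb{NPG}_{\mathrm{wrw}}^{\mathrm{wo}}$, $A \in \mathscr{D}\mathbb{NPG}_{\mathrm{wrw}}^{\mathrm{wo}}(\Gamma)$ and $B \in \mathscr{D}\mathbb{NPG}_{\mathrm{wrw}}^{\mathrm{wo}}(\Sigma_{\mathrm{wrw}}(\Gamma, A))$, there is a canonical isomorphism $\Sigma_{\mathrm{wrw}}(\Sigma_{\mathrm{wrw}}(\Gamma, A), B) \cong \Sigma_{\mathrm{wrw}}(\Gamma, \Sigma(A, B))$ in $\mathbb{NPG}_{\mathrm{wrw}}^{\mathrm{wo}}$ that merely re-adjusts the finitary `tags' for product $\&$ on standard moves, where I set $\Sigma(A, B) \colonequals (\Sigma_{\mathrm{wrw}}(A(\uplus\gamma), B_{\uplus\gamma}))_{\uplus\gamma \in \mathbb{NPG}_{\mathrm{wrw}}^{\mathrm{wo}}(\Gamma)}$ with $B_{\uplus\gamma} \colonequals (B(\langle \uplus\gamma, \uplus\alpha \rangle))_{\uplus\alpha \in \mathbb{NPG}_{\mathrm{wrw}}^{\mathrm{wo}}(A(\uplus\gamma))}$. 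This is the sigma-analogue of the isomorphism $\Pi(\Sigma(X,Y),Z) \cong \Pi(X,\Pi(Y,Z))$ that underlies the validation of the $\xi$-rule; because $\Sigma_{\mathrm{wrw}}(\Gamma, A)$ consists exactly of pairings $\langle \gamma, \alpha \rangle :: \Gamma \mathbin{\&} \int_\Gamma A$ with $\alpha :: A(\uplus\{\gamma^{\hat{\oc}\boldsymbol{1}}\})$, a nested pairing $\langle \langle \gamma, \alpha \rangle, \beta \rangle$ corresponds, by reassociating the product tags, to $\langle \gamma, \langle \alpha, \beta \rangle \rangle$, and one checks directly that winning and recursiveness are preserved in both directions since the transformation is a finitary relabelling. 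I expect this lemma, which gives $\Gamma.A.B \cong \Gamma.\Sigma(A,B)$, to be where essentially all the content lies.

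Given this lemma, I would define $\Sigma$-Form by the $\Sigma(A,B)$ above (a well-opened dependent np-game by Lemma~\ref{LemWellDefinedWRWSigma} applied pointwise), and define $\mathrm{Pair}_{A,B} : \Gamma.A.B \to \Gamma.\Sigma(A,B)$ to be the w.r.w. dereliction $\uplus\mathrm{der}$ transported across the isomorphism, i.e. the canonical morphism realizing $\Sigma_{\mathrm{wrw}}(\Sigma_{\mathrm{wrw}}(\Gamma,A),B) \cong \Sigma_{\mathrm{wrw}}(\Gamma,\Sigma(A,B))$. The eliminator $\mathcal{R}^{\Sigma}_{A,B,P}(p)$ for $P \in \mathrm{Ty}(\Gamma.\Sigma(A,B))$ and $p \in \mathrm{Tm}(\Gamma.A.B, P\{\mathrm{Pair}_{A,B}\})$ is then obtained by transporting $p$ back along the isomorphism: since $\mathrm{Pair}_{A,B}$ is invertible, a term over $\Gamma.A.B$ of type $P\{\mathrm{Pair}_{A,B}\}$ is carried bijectively to a term over $\Gamma.\Sigma(A,B)$ of type $P$. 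This bijectivity immediately yields both $\Sigma$-Comp ($\mathcal{R}^{\Sigma}_{A,B,P}(p)\{\mathrm{Pair}_{A,B}\} = p$) and $\mathcal{R}^{\Sigma}$-Uniq (any $q$ with $q\{\mathrm{Pair}_{A,B}\}=p$ equals the transported term), so the strict-sense clause comes for free from the isomorphism rather than requiring a separate argument.

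The remaining clauses are naturality equations verified by the same style of diagram chase used in the proof of Theorem~\ref{ThmWellDefinedWPG}, invoking Lemma~\ref{LemWRWPromotionLemma} and the construction laws of Lemma~\ref{LemWellDefinedConstructionsOnDoWRWLIs}. For $\Sigma$-Subst I would compute $\Sigma(A,B)\{\uplus\phi\}$ pointwise exactly as in the $\Pi$-Subst calculation, using $B\{\uplus\phi^+\}_{\uplus\delta} = B_{\uplus\phi \bullet \uplus\delta}$, which was already established there. For Pair-Subst and $\mathcal{R}^{\Sigma}$-Subst, the two equations reduce to the assertion that the associativity isomorphisms at $\Gamma$ and at $\Delta$ commute with the substitution morphisms $\uplus\phi^{+}$, $\uplus\phi^{++}$ and $\uplus\phi^{\star}$; since all of these are themselves defined by pairings and derelictions that only permute product and exponential tags, commutation follows from unwinding $\langle -, - \rangle$ and $\bullet$ together with Lemma~\ref{LemWRWPromotionLemma}, with no genuinely new computation. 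The main obstacle I anticipate is purely bookkeeping: I must track the realizer `tags' carried at O's first move through the reassociation so that the induced realizer-maps remain `effective' (the third axiom of Definition~\ref{DefFoWRWLIs}) and the canonical Gödel numberings are chosen coherently (the condition closing Definition~\ref{DefConstructionsOnDoWRWLIs}); however, unlike the pi case this is benign, because sigma involves no function-space domain where O supplies an input strategy, so each pairing's realizer is a recursive pairing of its components and the associativity isomorphism is plainly realizer-preserving.
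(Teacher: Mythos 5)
Your proposal matches the paper's proof in all essentials: the paper also defines $\Sigma_{\mathrm{wrw}}(A,B)$ pointwise, takes $\mathrm{Pair}_{A,B}$ to be the evident w.r.w. dereliction up to `tags' realizing the associativity isomorphism $\Sigma_{\mathrm{wrw}}(\Sigma_{\mathrm{wrw}}(\Gamma,A),B) \cong \Sigma_{\mathrm{wrw}}(\Gamma,\Sigma_{\mathrm{wrw}}(A,B))$ (explicitly $\langle \mathrm{fst} \bullet \mathrm{fst}, \langle \mathrm{snd} \bullet \mathrm{fst}, \mathrm{snd} \rangle \rangle$, with the evident inverse), defines $\mathcal{R}^{\Sigma}_{A,B,P}(\uplus\rho) \colonequals \uplus\rho \bullet \mathrm{Pair}_{A,B}^{-1}$, and derives $\Sigma$-Comp and $\mathcal{R}^{\Sigma}$-Uniq directly from invertibility, with $\Sigma$-Subst handled as in the pi case and Pair-Subst/$\mathcal{R}^{\Sigma}$-Subst by the same pairing-and-projection chases you describe. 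Your framing of the isomorphism as a standalone lemma (the paper leaves it to the reader as "simpler than" the currying lemma) and your remark on realizer bookkeeping are the only cosmetic differences.
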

\begin{proof}
Let $\Gamma, \Delta \in \mathbb{NPG}_{\mathrm{wrw}}^{\mathrm{wo}}$, $\uplus \phi \in \mathbb{NPG}_{\mathrm{wrw}}^{\mathrm{wo}}(\Delta, \Gamma)$, $A \in \mathscr{D}\mathbb{NPG}_{\mathrm{wrw}}^{\mathrm{wo}}(\Gamma)$ and $B \in \mathscr{D}\mathbb{NPG}_{\mathrm{wrw}}^{\mathrm{wo}}(\Sigma_{\mathrm{wrw}}(\Gamma, A))$. %and assume that $\mathscr{R}_{\mathrm{wr}}^{\mathrm{cp}}(\Gamma) \neq \emptyset$ and $\mathscr{R}_{\mathrm{wr}}^{\mathrm{cp}}(\Delta) \neq \emptyset$ since the other cases are trivial.

\begin{itemize}

\item \textsc{($\Sigma$-Form)} Similarly to pi, let $\Sigma (A, B) \colonequals (\Sigma_{\mathrm{wrw}} (A(\uplus \gamma), B_{\uplus \gamma}))_{\uplus \gamma \in \mathbb{NPG}_{\mathrm{wrw}}^{\mathrm{wo}}(\Gamma)}$.
Again, to distinguish it from the game semantics of sigma-types given in the previous work \cite{yamada2016game}, we write $\Sigma_{\mathrm{wrw}}(A, B)$ for $\Sigma(A, B)$.
%Again, sigma $\Sigma$ has been generalized for $\Sigma(A, B) = \{ \Sigma(A(\ast), B_\ast) \}$ if $\Gamma = T$, and we also call the dependent p-game $\Sigma(A, B)$ the \emph{\bfseries sigma} of $A$ and $B$.

\item \textsc{($\Sigma$-Intro)} By the isomorphism $\Sigma_{\mathrm{wrw}}(\Sigma_{\mathrm{wrw}}(\Gamma,A),B) \cong \Sigma_{\mathrm{wrw}}(\Gamma, \Sigma_{\mathrm{wrw}}(A,B))$, which is similar to (and simpler than) the one in Lemma~\ref{LemWRWCurryingLemma} and left to the reader, let $\mathrm{Pair}_{A, B} : \Sigma_{\mathrm{wrw}}(\Sigma_{\mathrm{wrw}}(\Gamma,A),B) \rightarrow \Sigma_{\mathrm{wrw}}(\Gamma, \Sigma_{\mathrm{wrw}}(A,B))$ be the evident w.r.w. dereliction up to `tags,' or $\mathrm{Pair}_{A, B} \colonequals \langle \mathrm{fst} \bullet \mathrm{fst}, \langle \mathrm{snd} \bullet \mathrm{fst}, \mathrm{snd} \rangle \rangle$.
Note that there is the evident inverse $\mathrm{Pair}^{-1}_{A, B} = \langle \langle \mathrm{fst}, \mathrm{fst} \bullet \mathrm{snd} \rangle, \mathrm{snd} \bullet \mathrm{snd} \rangle$.

\item \textsc{($\Sigma$-Elim)} We define $\mathcal{R}^{\Sigma}_{A, B, P}(\uplus \rho) \in \mathcal{TS}_{\mathrm{wr}}(\Pi_{\mathrm{wrw}}(\Sigma_{\mathrm{wrw}}(\Gamma, \Sigma_{\mathrm{wrw}} (A, B)), P))$ to be the composition $\uplus \rho \bullet \mathrm{Pair}_{A, B}^{-1}$ for any $P \in \mathscr{D}\mathbb{NPG}_{\mathrm{wrw}}^{\mathrm{wo}}(\Sigma_{\mathrm{wrw}}(\Gamma, \Sigma_{\mathrm{wrw}}(A,B)))$ and  $\uplus \rho \in \mathcal{TS}_{\mathrm{wr}}(\Pi_{\mathrm{wrw}}(\Sigma_{\mathrm{wrw}}(\Sigma_{\mathrm{wrw}}(\Gamma, A), B), P\{\mathrm{Pair}_{A,B}\}))$.

\item \textsc{($\Sigma$-Comp)} We calculate
\begin{align*}
\mathcal{R}^{\Sigma}_{A, B, P}(\uplus \rho) \{ \mathrm{Pair}_{A, B}\} &= \mathcal{R}^{\Sigma}_{A, B, P}(\uplus \rho) \bullet \mathrm{Pair}_{A, B} \\
&= (\uplus \rho \bullet \mathrm{Pair}_{A, B}^{-1}) \bullet \mathrm{Pair}_{A, B} \\
&= \uplus \rho \bullet (\mathrm{Pair}_{A, B}^{-1} \bullet \mathrm{Pair}_{A, B}) \\
&= \uplus \rho \bullet \uplus \mathrm{der}_{\Sigma_{\mathrm{wrw}}(\Sigma_{\mathrm{wrw}}(\Gamma, A), B)} \\
&= \uplus \rho.
\end{align*}

\item \textsc{($\Sigma$-Subst)} Similar to the case of pi.
%Given $\Delta \in \mathbb{LPG}$ and $f : \Delta \to \Gamma$ in $\mathbb{LPG}$, by the same reasoning as the case of pi-types, $\Sigma (A, B) \{ f \} = \Sigma (A\{ f \}, B\{ f^+\})$.
%where $\phi^+ \colonequals \langle \phi \bullet \mathrm{p}(A\{ \phi \}), \mathrm{v}_{A\{\phi\}} \rangle : \widehat{\Sigma}(\Delta, A\{\phi\}) \to \widehat{\Sigma}(\Gamma, A)$.

\item \textsc{(Pair-Subst)} We calculate
\begin{align*}
\mathrm{p}(\Sigma_{\mathrm{wrw}} (A, B)) \bullet \mathrm{Pair}_{A, B} &= \mathrm{fst} \bullet \langle \mathrm{fst} \bullet \mathrm{fst}, \langle \mathrm{snd} \bullet \mathrm{fst}, \mathrm{snd} \rangle \rangle \\ 
&= \mathrm{fst} \bullet \mathrm{fst} \\
&= \mathrm{p}(A) \bullet \mathrm{p}(B),
\end{align*}
and
\begin{align*}
& {\uplus \phi}^\star \bullet \mathrm{Pair}_{A\{\uplus \phi\}, B\{\uplus \phi^+\}} \\
=& \ \langle \uplus \phi \bullet \mathrm{p}(\Sigma_{\mathrm{wrw}}(A,B)\{\uplus \phi\}), \mathrm{v}_{\Sigma_{\mathrm{wrw}}(A,B)\{\uplus \phi\}} \rangle \bullet \mathrm{Pair}_{A\{\uplus \phi\}, B\{\uplus \phi^+\}} \\
=& \ \langle \uplus \phi \bullet \mathrm{p}(\Sigma_{\mathrm{wrw}}(A\{\uplus \phi\},B\{\uplus \phi^+\})) \bullet \mathrm{Pair}_{A\{\uplus \phi\}, B\{\uplus \phi^+\}}, \mathrm{v}_{\Sigma_{\mathrm{wrw}}(A,B)\{\uplus \phi\}} \bullet \mathrm{Pair}_{A\{ \uplus \phi \}, B\{ \uplus \phi^+ \}} \rangle \\
=& \ \langle \uplus \phi \bullet \mathrm{p}(A\{ \uplus \phi \}) \bullet \mathrm{p}(B\{ \uplus \phi^+ \}), \mathrm{v}_{\Sigma_{\mathrm{wrw}}(A\{ \uplus \phi \},B\{ \uplus \phi^+ \})} \bullet \mathrm{Pair}_{A\{ \uplus \phi \}, B\{ \uplus \phi^+ \}} \rangle \\ 
&\text{(by the above equations)} \\
=& \ \langle \uplus \phi \bullet \mathrm{fst} \bullet \mathrm{fst}, \mathrm{snd} \bullet \langle \mathrm{fst} \bullet \mathrm{fst}, \langle \mathrm{snd} \bullet \mathrm{fst}, \mathrm{snd} \rangle \rangle \rangle \\
=& \ \langle \uplus \phi \bullet \mathrm{fst} \bullet \mathrm{fst}, \langle \mathrm{snd} \bullet \mathrm{fst}, \mathrm{snd} \rangle \rangle \\
=& \ \langle \mathrm{fst} \bullet \mathrm{fst}, \langle \mathrm{snd} \bullet \mathrm{fst}, \mathrm{snd} \rangle \rangle \bullet \langle \langle \uplus \phi \bullet \mathrm{fst} \bullet \mathrm{fst}, \mathrm{snd} \bullet \mathrm{fst} \rangle, \mathrm{snd} \rangle \\
=& \ \langle \mathrm{fst} \bullet \mathrm{fst}, \langle \mathrm{snd} \bullet \mathrm{fst}, \mathrm{snd} \rangle \rangle \bullet \langle \langle \uplus \phi \bullet \mathrm{fst}, \mathrm{snd} \rangle \bullet \mathrm{fst}, \mathrm{snd} \rangle \\
=& \ \mathrm{Pair}_{A, B} \bullet \langle \langle \uplus \phi \bullet \mathrm{p}(A\{ \uplus \phi \}), \mathrm{v}_{A\{ \uplus \phi \}} \rangle \bullet \mathrm{p}(B\{ \uplus \phi^+ \}), \mathrm{v}_{B\{ \uplus \phi^+ \}} \rangle \\
=& \ \mathrm{Pair}_{A, B} \bullet \langle \uplus \phi^+ \bullet \mathrm{p}(B\{ \uplus \phi^+ \}), \mathrm{v}_{B\{ \uplus \phi^+ \}} \rangle \\
=& \ \mathrm{Pair}_{A, B} \bullet \uplus \phi^{++}.
\end{align*}
%where $f^\star \colonequals \langle f \bullet \mathrm{p}(\Sigma(A,B)\{f\}), \mathrm{v}_{\Sigma(A,B)\{f\}} \rangle : \Sigma(\Delta, \Sigma(A, B)\{ f \}) \to \Sigma(\Gamma, \Sigma(A, B))$ and $f^{++} \colonequals \langle f^+ \bullet \mathrm{p}(B\{ f^+ \}), \mathrm{v}_{B\{ f^+ \}} \rangle : \Sigma(\Sigma(\Delta, A\{ f \}), B\{ f^+ \}) \to \Sigma(\Sigma(\Gamma, A), B)$.

\item \textsc{($\mathcal{R}^{\Sigma}$-Subst)} We calculate
\begin{align*}
\mathcal{R}^{\Sigma}_{A, B, P}(\uplus \rho) \{ \uplus \phi^\star \} &= \uplus \rho \bullet \mathrm{Pair}_{A, B}^{-1} \bullet \langle \uplus \phi \bullet \mathrm{p}(\Sigma_{\mathrm{wrw}}(A,B)\{ \uplus \phi \}, \mathrm{v}_{\Sigma_{\mathrm{wrw}}(A,B)\{ \uplus \phi \}} \rangle \\
&= \uplus \rho \bullet \langle \langle \mathrm{fst}, \mathrm{fst} \bullet \mathrm{snd} \rangle, \mathrm{snd} \bullet \mathrm{snd} \rangle \bullet \langle \uplus \phi \bullet \mathrm{fst}, \mathrm{snd} \rangle \\
&= \uplus \rho \bullet \langle \langle \uplus \phi \bullet \mathrm{fst}, \mathrm{fst} \bullet \mathrm{snd} \rangle, \mathrm{snd} \bullet \mathrm{snd} \rangle \\
&= \uplus \rho \bullet \langle \langle \uplus \phi \bullet \mathrm{fst}, \mathrm{snd} \rangle \bullet \mathrm{fst}, \mathrm{snd} \rangle \bullet \langle \langle \mathrm{fst}, \mathrm{fst} \bullet \mathrm{snd} \rangle, \mathrm{snd} \bullet \mathrm{snd} \rangle \\
&= \uplus \rho \bullet \langle \uplus \phi^+ \bullet \mathrm{fst}, \mathrm{snd} \rangle \bullet \langle \langle \mathrm{fst}, \mathrm{fst} \bullet \mathrm{snd} \rangle, \mathrm{snd} \bullet \mathrm{snd} \rangle \\
&= \uplus \rho \bullet \langle \uplus \phi^+ \bullet \mathrm{p}(B\{ \uplus \phi^+ \}), \mathrm{v}_{B\{ \uplus \phi^+ \}} \rangle \bullet \mathrm{Pair}_{A\{ \uplus \phi \}, B\{ \uplus \phi^+ \}}^{-1} \\
&= \mathcal{R}^{\Sigma}_{A\{ \uplus \phi \}, B\{ \uplus \phi^+ \}, P\{ \uplus \phi^\star \}} (\uplus \rho \bullet \langle \uplus \phi^+ \bullet \mathrm{p}(B\{ \uplus \phi^+ \}), \mathrm{v}_{B\{ \uplus \phi^+ \}} \rangle ) \\
&= \mathcal{R}^{\Sigma}_{A\{ \uplus \phi \}, B\{ \uplus \phi^+ \}, P\{ \uplus \phi^\star \}} (\uplus \rho \{ \uplus \phi^{++} \}).
\end{align*}

\item \textsc{($\mathcal{R}^{\Sigma}$-Uniq)} If a given t-skeleton $\uplus \varrho \in \mathcal{TS}_{\mathrm{wr}}(\Pi_{\mathrm{wrw}}(\Sigma_{\mathrm{wrw}}(\Gamma, \Sigma_{\mathrm{wrw}}(A, B)), P))$ satisfies the equation $\uplus \varrho \{ \mathrm{Pair}_{A, B} \} = \uplus \rho$ then we have
\begin{equation*}
\uplus \varrho = \uplus \rho \bullet \mathrm{Pair}_{A, B}^{-1} = \mathcal{R}^{\Sigma}_{A, B, P}(\uplus \rho),
\end{equation*}
\end{itemize}
which completes the proof.
\end{proof}

We proceed to model N-type (Appendix~\ref{NaturalNumberType}).
Again, let us first recall the semantic type former for N-type in an arbitrary CwF:
\begin{definition}[CwFs with N-type \cite{hofmann1997syntax}\footnote{The definition is left to the reader in \cite{hofmann1997syntax}, and thus Definition~\ref{DefSemanticTypeFormersForNTypes} is the present author's solution, which may be shown to be \emph{sound} in the same manner as the case of pi- and sigma-types \cite{hofmann1997syntax}. This point is applied to empty-type given below as well.}]
\label{DefSemanticTypeFormersForNTypes}
A CwF $\mathcal{C}$ \emph{\bfseries supports N} or \emph{\bfseries natural numbers} if
\begin{itemize}

\item \textsc{($N$-Form)} Given $\Gamma \in \mathcal{C}$, there is a type $N^{\Gamma} \in \mathrm{Ty}(\Gamma)$, called \emph{\bfseries natural number (N-) type} in $\Gamma$, which we often abbreviate as $N$;

\item \textsc{($N$-Intro)} There are a term and a morphism in $\mathcal{C}$
\begin{mathpar}
\underline{0}_{\Gamma} \in \mathrm{Tm}(\Gamma, N) \and
\mathrm{succ}_{\Gamma} : \Gamma . N \to \Gamma . N
\end{mathpar}
that satisfy the equations
\begin{mathpar}
%\mathrm{p}(N) \circ \mathrm{zero}_{\Gamma} &= \mathrm{id}_{\Gamma} : \Gamma \to \Gamma \\
\underline{0}_{\Gamma} \{ f \} = \underline{0}_\Delta \in \mathrm{Tm}(\Delta, N) 
\and
\mathrm{p}(N) \circ \mathrm{succ}_{\Gamma} = \mathrm{p}(N) : \Gamma . N \to \Gamma 
\and
\mathrm{succ}_{\Gamma} \circ \langle g, \mathrm{v}_{N} \rangle_{N} = \langle g, \mathrm{v}_{N}\{\mathrm{succ}_{\Delta}\} \rangle_{N} : \Delta . N \to \Gamma . N 
%\mathrm{succ}_{\Gamma}^i \circ \underline{0}_{\Gamma} &\neq \mathrm{succ}_{\Gamma}^j \circ \mathrm{zero}_{\Gamma} \ \text{for all $i, j \in \mathbb{N}$ with $i \neq j$}
\end{mathpar}
for any morphisms $f : \Delta \to \Gamma$ and $g : \Delta . N \to \Gamma$ in $\mathcal{C}$;

\begin{notation}
Let us define $\mathrm{zero}_\Gamma \colonequals \langle \mathrm{id}_\Gamma, \underline{0}_\Gamma \rangle_N : \Gamma \to \Gamma . N$ for each $\Gamma \in \mathcal{C}$; it then satisfies $\mathrm{zero}_\Gamma \circ f = \langle f, \underline{0}_\Delta \rangle_N = \langle f, \mathrm{v}_N \{ \mathrm{zero}_\Delta \} \rangle_N : \Delta \to \Gamma . N$ for any morphism $f : \Delta \to \Gamma$ in $\mathcal{C}$.
We often omit the subscript $(\_)_\Gamma$ on $\underline{0}_\Gamma$, $\mathrm{zero}_{\Gamma}$ and $\mathrm{succ}_{\Gamma}$.
Also, for each $n \in \mathbb{N}$, we define $\underline{n}_\Gamma \in \mathrm{Tm}(\Gamma, N)$ by
\begin{itemize}

\item $\underline{0}_\Gamma$ is already given;

\item $\underline{n+1}_\Gamma \colonequals \mathrm{v}_N \{ \mathrm{succ}_\Gamma \circ \langle \mathrm{id}_\Gamma, \underline{n}_\Gamma \rangle \}$;

\end{itemize}
\end{notation}

\item \textsc{($N$-Elim)} Given a type $P \in \mathrm{Ty}(\Gamma . N)$, and terms $z \in \mathrm{Tm}(\Gamma, P\{\mathrm{zero}\})$ and $s \in \mathrm{Tm}(\Gamma . N . P, P\{ \mathrm{succ} \circ \mathrm{p}(P)\})$, there is a term $\mathcal{R}^{N}_{P}(z, s) \in \mathrm{Tm}(\Gamma . N, P)$;

\item \textsc{($N$-Comp)} We have the equations
\begin{mathpar}
\mathcal{R}^{N}_{P}(z, s) \{ \mathrm{zero} \} = z \in \mathrm{Tm}(\Gamma, P\{\mathrm{zero}\}); \and
\mathcal{R}^{N}_{P}(z, s) \{ \mathrm{succ} \} = s \{ \langle \mathrm{id}_{\Gamma . N}, \mathcal{R}^{N}_{P}(z, s) \rangle_{P} \} \in \mathrm{Tm}(\Gamma . N, P\{\mathrm{succ}\});
\end{mathpar}

\item \textsc{($N$-Subst)} $N^{\Gamma}\{f\} = N^{\Delta} \in \mathrm{Ty}(\Delta)$;

\item \textsc{($\mathcal{R}^{N}$-Subst)} $\mathcal{R}^{N}_{P}(z, s) \{f^+\} = \mathcal{R}^{N}_{P\{f^+\}}(z\{f\}, s\{f^{++}\}) \in \mathrm{Tm}(\Delta . N, P\{f^+\})$, where
\begin{mathpar}
f^+ \colonequals \langle f \circ \mathrm{p}(N), \mathrm{v}_{N} \rangle_{N} : \Delta . N \to \Gamma . N \and
f^{++} \colonequals \langle f^+ \circ \mathrm{p}(P\{f^+\}), \mathrm{v}_{P\{f^+\}}\rangle_P : \Delta . N . P\{f^+\} \to \Gamma . N . P.
\end{mathpar}

\end{itemize}
\end{definition}
N-type is modeled in a CwF that supports natural numbers; see Appendix~\ref{NaturalNumberType} for the details.

We now propose our game semantics of N-type by basically employing the \emph{total} fragment of the standard game semantics of PCF \`{a} la McCusker \cite{abramsky1999game}:
\begin{theorem}[Realizability model of N-type \`{a} la game semantics]
The CwF $\mathbb{NPG}_{\mathrm{wrw}}^{\mathrm{wo}}$ supports natural numbers.
\end{theorem}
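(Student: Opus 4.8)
The plan is to follow the total fragment of McCusker's game semantics of PCF \cite{abramsky1999game,mccusker1998games} and adapt it to the winning-realizer-wise setting, verifying each clause of Definition~\ref{DefSemanticTypeFormersForNTypes} in turn. For ($N$-Form) I would set $N^\Gamma$ to be the \emph{constant} dependent np-game valued at the natural number np-game $N$ (Example~\ref{ExamplesOfPredicativeGames}), i.e., $N^\Gamma(\uplus\gamma) \colonequals N$ for every $\uplus\gamma \in \mathbb{NPG}_{\mathrm{wrw}}(\Gamma)$; this is well-opened since $N$ is. Because $N^\Gamma$ is constant, ($N$-Subst) is immediate: $N^\Gamma\{\uplus\phi\} = (N^\Gamma(\uplus\phi \bullet \uplus\delta))_{\uplus\delta} = (N)_{\uplus\delta} = N^\Delta$.

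For ($N$-Intro) I would take $\underline{0}_\Gamma$ to be the winning, recursive DoWRWLI on $\Pi_{\mathrm{wrw}}(\Gamma, N^\Gamma)$ whose every component plays the numeral t-skeleton $\underline{0} :: N$ on the codomain while ignoring the input; its realizer-map is constant, hence effective in the sense of Definition~\ref{DefFoWRWLIs}. The morphism $\mathrm{succ}_\Gamma : \Sigma_{\mathrm{wrw}}(\Gamma, N^\Gamma) \to \Sigma_{\mathrm{wrw}}(\Gamma, N^\Gamma)$ I would define as the pairing $\langle \mathrm{p}(N), \mathrm{suc} \bullet \mathrm{v}_N \rangle$, where $\mathrm{suc}$ is the w.r.w. successor whose component at each canonical pair is the successor strategy $q^{[1]}\mapsto q^{[0]}$, $q^{[1]}q^{[0]}m^{[0]}\mapsto (m{+}1)^{[1]}$ from Section~\ref{IntroGameSemantics}, sending the declared pair $\langle\uplus\gamma,\underline{n}\rangle$ to $\langle\uplus\gamma,\underline{n{+}1}\rangle$. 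The three equations then fall out by direct calculation: $\underline{0}_\Gamma\{\uplus\phi\} = \underline{0}_\Delta$ because $\underline{0}$ never interacts with the context; and $\mathrm{p}(N) \bullet \mathrm{succ}_\Gamma = \mathrm{p}(N)$ together with the naturality square for $\mathrm{succ}_\Gamma$ follow from (Cons-L), (Cons-Nat) and the copy-cat behaviour of $\mathrm{p}(N) = \uplus\mathrm{der}_\Gamma$ recorded in Lemma~\ref{LemWRWPromotionLemma}.

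The substantive work is ($N$-Elim) together with ($N$-Comp). I would define $\mathcal{R}^N_P(z,s)$ by the usual PCF recursor t-skeleton restricted to its total fragment: at a play where O declares a canonical realizer for the pair $\langle\uplus\gamma,\underline{n}\rangle$ in $\Sigma_{\mathrm{wrw}}(\Gamma, N^\Gamma)$, the strategy decodes $n$ from the exhibited realizer, then builds the answer by interrogating $s$ exactly $n$ times over the value supplied by $z$, threading the intermediate results through the copies of $P$ by the composition and promotion operators of Definitions~\ref{DefDoWRWLIs} and~\ref{DefConstructionsOnDoWRWLIs}. Totality is where well-foundedness of $N$ is essential: each input resolves to a \emph{finite} numeral $\underline{n}$, so the recursion unfolds in finitely many steps and terminates; innocence and noetherianity are inherited from $z$, $s$ and the finitely iterated step, whence the recursor is winning. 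Recursiveness, including effectivity of its realizer-map, follows because the recursion over the decoded numeral is computable from realizers for $z$ and $s$, exactly as closure of recursive strategies under composition was argued in \cite[p.~355]{hyland2000full}. The computation rules are then verified by unfolding: $\mathcal{R}^N_P(z,s)\{\mathrm{zero}\} = z$, since decoding $\underline{0}$ triggers zero iterations and returns the base case, and $\mathcal{R}^N_P(z,s)\{\mathrm{succ}\} = s\{\langle\mathrm{id},\mathcal{R}^N_P(z,s)\rangle\}$, since decoding $\underline{n{+}1}$ performs one step of $s$ on top of the recursive call at $\underline{n}$.

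Finally, ($\mathcal{R}^N$-Subst) I expect to follow by naturality, reusing the promotion identities of Lemma~\ref{LemWRWPromotionLemma} and the pairing/substitution bookkeeping already exercised in the proof of Theorem~\ref{ThmWellDefinedWPG}, precisely as in the analogous $\Pi$- and $\Sigma$-substitution verifications above. The main obstacle throughout is the recursor: one must exhibit a single t-skeleton whose \emph{on-the-nose} unfolding realizes both computation rules while remaining total, innocent and recursive with an effective realizer-map. The delicate point is that O's input number is presented only through the declared canonical realizer, so the strategy must recover $n$ effectively from that realizer before it can count down the recursion; arranging this decoding to cohere with the disjoint-union tagging of DoWRWLIs, and to respect substitution, is the crux of the argument.
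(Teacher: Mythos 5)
Your proposal is correct in outline and agrees with the paper on ($N$-Form), ($N$-Intro) and ($N$-Subst): the paper likewise takes the constant dependent np-game $\{N\}_\Gamma$, interprets zero by $\uplus\{\underline{0}\}$ up to `tags,' and interprets successor by the pairing $\langle \mathrm{p}(\{N\}), \uplus \mathrm{s}^\Gamma \rangle$ of the first projection with a w.r.w. successor. Where you genuinely diverge is the recursor. You build $\mathcal{R}^{N}_{P}(z,s)$ as a single t-skeleton that decodes the numeral $n$ from O's exhibited canonical realizer and then unfolds $s$ exactly $n$ times over $z$. The paper instead follows the PCF/domain-theoretic recipe: it assembles a one-step functional $\mathcal{F}^{N}_{P}(\uplus\zeta,\uplus\sigma)$ from the standard conditional $\mathrm{cond}_P$, the predecessor $\mathrm{pred}$, the evaluation $\mathrm{ev}_P$ and currying, and defines $\mathcal{R}^{N}_{P}(\uplus\zeta,\uplus\sigma)$ as the least upper bound of the chain of iterates starting from $\top$; both ($N$-Comp) and ($\mathcal{R}^{N}$-Subst) then fall out of this fixed-point characterization (approximant-wise for the latter). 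Each route buys something. The paper's construction inherits winning, recursiveness and the equational laws from the already-established closure lemmas for composition, pairing, promotion and currying, at the price of the unargued step that the least upper bound of the chain is again a winning, recursive t-skeleton with an `effective' realizer-map; your construction avoids chains and lubs entirely (totality is immediate, since every input resolves to a finite numeral), but in exchange you must verify innocence, noetherianity, recursiveness and, crucially, effectivity of the realizer-map by hand, which you argue only by analogy with Hyland--Ong --- roughly the same level of rigor as the paper, but the burden sits in a different place. One small correction: your claim that the strategy \emph{must} recover $n$ from the exhibited realizer is not forced. A DoWRWLI component plays against the declared input t-skeleton, so the recursor may simply interrogate the $N$ component of the domain, which is exactly what the paper's conditional does; decoding from the tag is an available alternative, not a necessity.
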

\begin{proof}
Let $\Gamma, \Delta \in \mathbb{NPG}_{\mathrm{wrw}}^{\mathrm{wo}}$ and $\uplus \phi \in \mathbb{NPG}_{\mathrm{wrw}}^{\mathrm{wo}}(\Delta, \Gamma)$, and assume that $\mathscr{R}_{\mathrm{wr}}^{\mathrm{cp}}(\Gamma) \neq \emptyset$ and $\mathscr{R}_{\mathrm{wr}}^{\mathrm{cp}}(\Delta) \neq \emptyset$ since the other cases are trivial.

\begin{itemize}
\item \textsc{($N$-Form)} Let $N^\Gamma$ be the constant dependent np-game on $\Gamma$ valued at $N$ (Example~\ref{ExamplesOfWellOpenedNonstandardPredicativeGames}), for which we write $\{ N \}_\Gamma$ or $\{ N \}$.

\item \textsc{($N$-Intro)} Let $\underline{0}_\Gamma \in \mathcal{TS}_{\mathrm{wr}}(\Pi_{\mathrm{wrw}}(\Gamma, \{ N \}))$ be $\uplus \{ \underline{0} \} :: \hat{\oc}\Gamma \rightarrowtriangle N$ up to `tags,' and $\mathrm{succ}_\Gamma : \Sigma_{\mathrm{wrw}}(\Gamma, \{ N^{[0]} \}) \rightarrow \Sigma_{\mathrm{wrw}}(\Gamma, \{ N^{[1]} \})$ the pairing $\langle \mathrm{p}(\{ N \}), \uplus \mathrm{s}^\Gamma \rangle$, where the superscripts $(\_)^{[i]}$ ($i = 0, 1$) are to distinguish the two copies of $N$, and the DoWRWLI $\uplus \mathrm{s}^\Gamma :: \Pi_{\mathrm{wrw}}(\Sigma_{\mathrm{wrw}}(\Gamma, \{ N^{[0]} \}), \{ N^{[1]} \})$ is defined by
\begin{equation*}
\mathrm{s}^\Gamma_{(\sigma^\dagger, e)} \colonequals \mathrm{Pref} (\{ q^{[1]} . q^{[0]} . n^{[0]} . (n+1)^{[1]} \}) \quad (\underline{n} \colonequals \mathrm{snd} \bullet \sigma)
\end{equation*}
up to `tags' for each $(\sigma^\dagger, e) \in \mathscr{R}_{\mathrm{wr}}^{\mathrm{cp}}(\hat{\oc} \Sigma_{\mathrm{wrw}}(\Gamma, \{ N^{[0]} \}))$.
Clearly, $\underline{0}_\Gamma \bullet \uplus \phi = \underline{0}_\Delta$ and $\uplus \mathrm{s}^\Gamma \bullet \langle \uplus \psi, \mathrm{v}_{\{N\}_\Delta} \rangle = \uplus \mathrm{s}^\Delta = \mathrm{v}_{\{N\}_\Delta}\{ \mathrm{succ}_\Delta \}$, where $\uplus \psi : \Sigma_{\mathrm{wrw}}(\Delta, \{ N \}_\Delta) \rightarrow \Gamma$ is any morphism in $\mathbb{NPG}_{\mathrm{wrw}}^{\mathrm{wo}}$, and therefore the required equations hold.

\item \textsc{($N$-Elim)} Given $P \in \mathscr{D}\mathbb{NPG}_{\mathrm{wrw}}^{\mathrm{wo}}(\Sigma_{\mathrm{wrw}}(\Gamma, \{ N \}))$, $\uplus \zeta \in \mathcal{TS}_{\mathrm{wr}}(\Pi_{\mathrm{wrw}}(\Gamma, P\{ \mathrm{zero} \}))$ and $\uplus \sigma \in \mathcal{TS}_{\mathrm{wr}}(\Pi_{\mathrm{wrw}}(\Sigma_{\mathrm{wrw}}(\Sigma_{\mathrm{wrw}}(\Gamma, \{ N \}), P), P\{ \mathrm{succ} \circ \mathrm{p}(P)\}))$, there are 
\begin{footnotesize}
\begin{mathpar}
\uplus \tilde{\zeta} \in \mathcal{TS}_{\mathrm{wr}}(\Pi_{\mathrm{wrw}}(\Sigma_{\mathrm{wrw}}(\Pi_{\mathrm{wrw}}(\Sigma_{\mathrm{wrw}}(\Gamma, \{N\}), P), \{ \Sigma_{\mathrm{wrw}}(\Gamma, \{ N \}) \}), P\{ \mathrm{zero} \bullet \mathrm{fst} \bullet \mathrm{snd} \})) \\
\uplus \tilde{\sigma} \in \mathcal{TS}_{\mathrm{wr}}(\Pi_{\mathrm{wrw}}(\Sigma_{\mathrm{wrw}}(\Pi_{\mathrm{wrw}}(\Sigma_{\mathrm{wrw}}(\Gamma, \{N\}), P), \{ \Sigma_{\mathrm{wrw}}(\Gamma, \{ N \}) \}), P\{ \mathrm{succ} \bullet \mathrm{pred} \bullet \mathrm{snd} \}))
\end{mathpar}
\end{footnotesize}defined respectively by
\begin{footnotesize}
\begin{mathpar}
\uplus \tilde{\zeta} : \Pi_{\mathrm{wrw}}(\Sigma_{\mathrm{wrw}}(\Gamma, \{ N \}), P) \mathbin{\&} \Sigma_{\mathrm{wrw}}(\Gamma, \{ N \}) \stackrel{\mathrm{snd}}{\longrightarrow} \Sigma_{\mathrm{wrw}}(\Gamma, \{ N \}) \stackrel{\mathrm{fst}}{\longrightarrow} \Gamma \stackrel{\uplus \zeta}{\longrightarrow} \textstyle \int P\{ \mathrm{zero} \}; \\
\uplus \tilde{\sigma} : \Pi_{\mathrm{wrw}}(\Sigma_{\mathrm{wrw}}(\Gamma, \{N\}), P) \mathbin{\&} \Sigma_{\mathrm{wrw}}(\Gamma, \{N\}) \stackrel{\langle \mathrm{pred} \bullet \mathrm{snd}, \mathrm{ev}_P \{ \langle \mathrm{fst}, \mathrm{pred} \bullet \mathrm{snd} \rangle \} \rangle}{\longrightarrow} \Sigma_{\mathrm{wrw}}(\Gamma, \{N\}) \mathbin{\&} \textstyle \int P \stackrel{\uplus \sigma}{\longrightarrow} \textstyle \int P\{ \mathrm{succ} \circ \mathrm{fst} \},
\end{mathpar}
\end{footnotesize}where $\mathrm{ev}_P \in \mathcal{TS}_{\mathrm{wr}}(\Pi_{\mathrm{wrw}}(\Sigma_{\mathrm{wrw}}(\Pi_{\mathrm{wrw}}(\Sigma_{\mathrm{wrw}}(\Gamma, \{N\}), P), \{ \Sigma_{\mathrm{wrw}}(\Gamma, \{N\}) \}), P\{\mathrm{snd}\}))$ is the \emph{evaluation} on $P$ \cite{abramsky1999game,mccusker1998games} given by $\mathrm{ev}_P \colonequals \lambda^{-1}(\uplus \mathrm{der}_{\Pi_{\mathrm{wrw}}(\Sigma_{\mathrm{wrw}}(\Gamma, \{ N \}), P)})$, and $\mathrm{pred} : \Sigma_{\mathrm{wrw}}(\Gamma, \{N\}) \to \Sigma_{\mathrm{wrw}}(\Gamma, \{N\})$ is the \emph{predecessor} \cite{abramsky1999game} given similarly to $\mathrm{succ}$ such that $\mathrm{pred} \bullet \mathrm{succ} = \uplus \mathrm{der}_{\Sigma_{\mathrm{wrw}}(\Gamma, \{ N \})}$ and $\mathrm{pred} \bullet \mathrm{zero} = \mathrm{zero}$.
 
In addition, let us define
\begin{mathpar}
P_{\mathrm{z}} \colonequals P\{ \mathrm{zero} \bullet \mathrm{p}(N) \}) \in \mathscr{D}\mathbb{NPG}_{\mathrm{wrw}}^{\mathrm{wo}}(\Sigma_{\mathrm{wrw}}(\Gamma, \{N\})); \\
P_{\mathrm{s}} \colonequals P\{ \mathrm{succ} \bullet \mathrm{pred} \bullet \mathrm{p}(P_{\mathrm{z}}) \} \in \mathscr{D}\mathbb{NPG}_{\mathrm{wrw}}^{\mathrm{wo}}(\Sigma_{\mathrm{wrw}}(\Sigma_{\mathrm{wrw}}(\Gamma, \{N\}), P_{\mathrm{z}})),
\end{mathpar}
and then we have
\begin{equation*}
\mathrm{cond}_P \in \mathcal{TS}_{\mathrm{wr}}(\Pi_{\mathrm{wrw}}(\Sigma_{\mathrm{wrw}}(\Sigma_{\mathrm{wrw}}(\Sigma_{\mathrm{wrw}}(\Gamma, \{ N \}), P_{\mathrm{z}}), P_{\mathrm{s}}), P\{ \mathrm{p}(P_{\mathrm{z}}) \bullet \mathrm{p}(P_{\mathrm{s}}) \})) 
\end{equation*}
that is the standard interpretation of \emph{conditionals} on $P$ \cite{abramsky1999game,mccusker1998games}: It first asks an input natural number in the component $N$ of the domain, and plays as the w.r.w. dereliction between $P_{\mathrm{z}}$ and $P\{ \mathrm{p}(P_{\mathrm{z}}) \bullet \mathrm{p}(P_{\mathrm{s}})\}$ if the answer is $\underline{0}$, and as the w.r.w. dereliction between $P_{\mathrm{s}}$ and $P\{ \mathrm{p}(P_{\mathrm{z}}) \bullet \mathrm{p}(P_{\mathrm{s}})\}$ otherwise. 
Then, we define $\mathcal{F}^{N}_{P}(\uplus \zeta, \uplus \sigma) : \Pi_{\mathrm{wrw}}(\Sigma_{\mathrm{wrw}}(\Gamma, \{N\}), P) \to \Pi_{\mathrm{wrw}}(\Sigma_{\mathrm{wrw}}(\Gamma, \{N\}), P)$ by
\begin{equation*}
\mathcal{F}^{N}_{P}(\uplus \zeta, \uplus \sigma) \colonequals \lambda_{\{ \Sigma_{\mathrm{wrw}}(\Gamma, \{ N \}) \}, \{ P\{ \mathrm{snd} \} \}} (\mathrm{cond}_P \{ \langle \langle \mathrm{snd}, \uplus \tilde{\zeta} \rangle, \uplus \tilde{\sigma} \rangle \}).
\end{equation*}

Finally, we define $\mathcal{R}^{N}_{P}(\uplus \zeta, \uplus \sigma) \in \mathcal{TS}_{\mathrm{wr}}(\Pi_{\mathrm{wrw}}(\Sigma_{\mathrm{wrw}}(\Gamma, \{N\}), P))$ to be the least upper bound of the chain $\big(\mathcal{R}^{N}_{P}(\uplus \zeta, \uplus \sigma)_n \in \mathcal{TS}_{\mathrm{wr}}(\Pi_{\mathrm{wrw}}(\Sigma_{\mathrm{wrw}}(\Gamma, \{N\}), P))\big)_{n \in \mathbb{N}}$:
\begin{mathpar}
\mathcal{R}^{N}_{P}(\uplus \zeta, \uplus \sigma)_0 \colonequals \top \quad \text{(up to `tags')}; \\
\mathcal{R}^{N}_{P}(\uplus \zeta, \uplus \sigma)_{n+1} \colonequals \mathcal{F}^{N}_{P}(\uplus \zeta, \uplus \sigma) \bullet \mathcal{R}^{N}_{P}(\uplus \zeta, \uplus \sigma)_n.
\end{mathpar}

\item \textsc{($N$-Comp)} By the definition of $\mathcal{R}^{N}_{P}(\uplus \zeta, \uplus \sigma)$, we calculate
%\begin{footnotesize}
\begin{mathpar}
\mathcal{R}^{N}_{P}(\uplus \zeta, \uplus \sigma) \{ \mathrm{zero} \} = \uplus \zeta; \\
\mathcal{R}^{N}_{P}(\uplus \zeta, \uplus \sigma) \{ \mathrm{succ} \} = \uplus \sigma \{ \langle \uplus \mathrm{der}_{\Sigma(\Gamma, \{N\})}, \mathcal{R}^{N}_{P}(\uplus \zeta, \uplus \sigma) \rangle \}.
\end{mathpar}
%\end{footnotesize}

\item \textsc{($N$-Subst)} $\{N\}_{\Gamma}\{ \uplus \phi \} = \{N\}_{\Delta}$.

\item \textsc{($\mathcal{R}^{N}$-Subst)}
Finally, by the definition of $\mathcal{R}^{N}_{P}(\uplus \zeta, \uplus \sigma)$, we calculate
\begin{equation*}
\mathcal{R}^{N}_{P}(\uplus \zeta, \uplus \sigma) \{ \uplus \phi^+ \} = \mathcal{R}^{N}_{P\{ \uplus \phi^+ \}}(\uplus \zeta\{ \uplus \phi \}, \uplus \sigma\{ \uplus \phi^{++} \})
\end{equation*}
\end{itemize}
(or $\mathcal{R}^{N}_{P}(\uplus \zeta, \uplus \sigma)_n \{ \uplus \phi^+ \} = \mathcal{R}^{N}_{P\{ \uplus \phi^+ \}}(\uplus \zeta\{ \uplus \phi \}, \uplus \sigma\{ \uplus \phi^{++} \})_n$ for all $n \in \mathbb{N}$).
\end{proof}

We proceed to model \emph{identity (Id-) types} (Appendix~\ref{IdentityTypes}). 
Again, we first review the semantic type former for Id-types in an arbitrary CwF:
\begin{definition}[CwFs with Id-types \cite{hofmann1997syntax}]
A CwF $\mathcal{C}$ \emph{\bfseries supports Id} if
\begin{itemize}

\item \textsc{(Id-Form)} Given $\Gamma \in \mathcal{C}$ and $A \in \mathrm{Ty}(\Gamma)$, there is a type $\mathrm{Id}_A \in \mathrm{Ty}(\Gamma . A . A^+)$, where $A^+ \colonequals A\{\mathrm{p}(A)\} \in \mathrm{Ty}(\Gamma . A)$;

\item \textsc{(Id-Intro)} There is a morphism $\mathrm{Refl}_A : \Gamma . A \to \Gamma . A . A^+ . \mathrm{Id}_A$ in $\mathcal{C}$;

\item \textsc{(Id-Elim)} Given $B \in \mathrm{Ty}(\Gamma . A . A^+ . \mathrm{Id}_A)$ and $b \in \mathrm{Tm}(\Gamma . A, B\{\mathrm{Refl}_A\})$, there is a term $\mathcal{R}^{\mathrm{Id}}_{A,B}(b) \in \mathrm{Tm}(\Gamma . A . A^+ . \mathrm{Id}_A, B)$;

\item \textsc{(Id-Comp)} $\mathcal{R}^{\mathrm{Id}}_{A,B}(b)\{\mathrm{Refl}_A\} = b$;

\item \textsc{(Id-Subst)} $\mathrm{Id}_A\{f^{++}\} = \mathrm{Id}_{A\{f\}} \in \mathrm{Ty}(\Delta . A\{f\} . A\{f\}^+)$ for all $\Delta \in \mathcal{C}$ and $f : \Delta \to \Gamma$ in $\mathcal{C}$, where 
\begin{mathpar}
A\{f\}^+ \colonequals A\{f\}\{\mathrm{p}(A\{f\})\} \in \mathrm{Ty}(\Delta . A\{f\});
\and
f^+ \colonequals \langle f \circ \mathrm{p}(A\{ f \}), \mathrm{v}_{A\{f\}} \rangle_A : \Delta . A\{f\} \to \Gamma . A;
\and
f^{++} \colonequals \langle f^+ \circ \mathrm{p}(A^+\{f^+\}), \mathrm{v}_{A^+\{f^+\}} \rangle_{A^+} : \Delta . A\{f\} . A^+\{f^+\} \to \Gamma . A . A^+;
\end{mathpar}

\item \textsc{(Refl-Subst)} $\mathrm{Refl}_A \circ f^+ = f^{+++} \circ \mathrm{Refl}_{A\{f\}} : \Delta . A\{f\} \to \Gamma . A . A^+ . \mathrm{Id}_A$, where 
\begin{footnotesize}
\begin{mathpar}
f^{+++} \colonequals \langle f^{++} \circ \mathrm{p}(\mathrm{Id}_A\{f^{++}\}), \mathrm{v}_{\mathrm{Id}_A\{f^{++}\}} \rangle_{\mathrm{Id}_A} : \Delta . A\{f\} . A^+\{f^+\} . \mathrm{Id}_{A\{f\}} \to \Gamma . A . A^+ . \mathrm{Id}_A;
\end{mathpar}
\end{footnotesize}
%Note that $\mathrm{Id}_A\{f^{++}\} = \mathrm{Id}_{A\{f\}}$, $A^+\{f^+\} = A\{f\}^+$;

\item \textsc{($\mathcal{R}^{\mathrm{Id}}$-Subst)} $\mathcal{R}^{\mathrm{Id}}_{A,B}(b)\{f^{+++}\} = \mathcal{R}^{\mathrm{Id}}_{A\{f\}, B\{f^{+++}\}}(b\{f^+\})$.
%where note that $\mathrm{Refl}_A \circ \phi^+ = \phi^{+++} \circ \mathrm{Refl}_{A\{\phi\}}$.

\end{itemize}
\end{definition}
Id-types are modeled in a CwF that supports Id; see Appendix~\ref{IdentityTypes} for the details.

We then equip the CwF $\mathbb{NPG}_{\mathrm{wrw}}^{\mathrm{wo}}$ with our game-semantic Id-types:
\begin{theorem}[Realizability model of Id-types \`{a} la game semantics]
\label{LemGameSemanticIdTypes}
The CwF $\mathbb{NPG}_{\mathrm{wrw}}^{\mathrm{wo}}$ supports Id.
\end{theorem}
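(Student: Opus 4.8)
The plan is to follow the game-semantic treatment of Id-types in \cite{yamada2016game}, reading everything \emph{pointwise} through Definition~\ref{DefIdentityNonstandardPredicativeGames}, and to exploit the single structural fact that drives the whole construction: among the two possible fibers $\boldsymbol{1}$ and $\boldsymbol{0}$ of an Id np-game, only $\boldsymbol{1}$ carries a winning t-skeleton, since $\bot :: \boldsymbol{0}$ is not even total. First I would record how winning, recursive t-skeletons on the comprehensions $\Gamma.A = \Sigma_{\mathrm{wrw}}(\Gamma, A)$, $\Gamma.A.A^+$ and $\Gamma.A.A^+.\mathrm{Id}_A$ decompose into pairings, so that a point of $\Gamma.A.A^+$ is presented as $\langle \langle \gamma, a_1 \rangle, a_2 \rangle$ with $\gamma \in \mathcal{TS}_{\mathrm{wr}}(\Gamma)$ and $a_1, a_2 \in \mathcal{TS}_{\mathrm{wr}}(A(\uplus \{ \gamma^{\hat{\oc} \boldsymbol{1}} \}))$.

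For \textsc{(Id-Form)} I would set, pointwise, $\mathrm{Id}_A(\langle \langle \gamma, a_1 \rangle, a_2 \rangle) \colonequals \mathrm{Id}_{A(\uplus \{ \gamma^{\hat{\oc} \boldsymbol{1}} \})}(a_1, a_2)$, which is $\boldsymbol{1}$ if $a_1 = a_2$ and $\boldsymbol{0}$ otherwise; both are well-opened and have well-founded enabling relations, whence $\mathrm{Id}_A \in \mathscr{D}\mathbb{NPG}_{\mathrm{wrw}}^{\mathrm{wo}}(\Gamma.A.A^+)$. For \textsc{(Id-Intro)} I would take the diagonal $\mathrm{diag} \colonequals \langle \mathrm{id}_{\Gamma.A}, \mathrm{v}_A \rangle : \Gamma.A \to \Gamma.A.A^+$, verify that $\mathrm{diag}$ sends a point $\langle \gamma, a \rangle$ to $\langle \langle \gamma, a \rangle, a \rangle$ so that $\mathrm{Id}_A\{\mathrm{diag}\}$ is the constant dependent np-game $\{ \boldsymbol{1} \}_{\Gamma.A}$, and then define $\mathrm{Refl}_A \colonequals \langle \mathrm{diag}, \uplus \top \rangle$, where $\uplus \top \in \mathrm{Tm}(\Gamma.A, \{ \boldsymbol{1} \}_{\Gamma.A})$ is the canonical winning, recursive term of the unit np-game.

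The heart of the argument is \textsc{(Id-Elim)}. Using the structural fact above, I would show that every winning, recursive t-skeleton on $\Gamma.A.A^+.\mathrm{Id}_A = \Sigma_{\mathrm{wrw}}(\Gamma.A.A^+, \mathrm{Id}_A)$ is a pairing $\langle \delta, p \rangle$ whose second component $p$ can be winning only if the fiber $\mathrm{Id}_A(\uplus \{ \delta^{\hat{\oc} \boldsymbol{1}} \})$ equals $\boldsymbol{1}$; hence $\delta = \langle \langle \gamma, a \rangle, a \rangle$, $p = \top$, and the point is exactly $\mathrm{Refl}_A \bullet \langle \gamma, a \rangle$ for a unique point $\langle \gamma, a \rangle$ of $\Gamma.A$. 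Thus $\mathrm{Refl}_A$ induces a bijection between the winning, recursive points of $\Gamma.A$ and those of $\Gamma.A.A^+.\mathrm{Id}_A$. I would then define $\mathcal{R}^{\mathrm{Id}}_{A,B}(b)$ as the DoWRWLI whose component at the canonical realizer of $\mathrm{Refl}_A \bullet \langle \gamma, a \rangle$ is the component of $b$ at $\langle \gamma, a \rangle$; this is type-correct because $B\{\mathrm{Refl}_A\}(\langle \gamma, a \rangle) = B(\mathrm{Refl}_A \bullet \langle \gamma, a \rangle)$ by \textsc{(Ty-Comp)}. Its realizer-map is effective, being the composite of the effective decoding of a reflexivity realizer into the realizer of $\langle \gamma, a \rangle$ with the effective realizer-map of $b$, so $\mathcal{R}^{\mathrm{Id}}_{A,B}(b)$ is a winning, recursive term; \textsc{(Id-Comp)} $\mathcal{R}^{\mathrm{Id}}_{A,B}(b)\{\mathrm{Refl}_A\} = \mathcal{R}^{\mathrm{Id}}_{A,B}(b) \bullet \mathrm{Refl}_A = b$ is then immediate from the definition.

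Finally, the stability equations \textsc{(Id-Subst)}, \textsc{(Refl-Subst)} and \textsc{($\mathcal{R}^{\mathrm{Id}}$-Subst)} I would check by the same pointwise bookkeeping used for pi- and sigma-types: for \textsc{(Id-Subst)}, evaluating both sides at a point shows each equals $\mathrm{Id}_{A(f \bullet \delta)}(a_1, a_2)$ via \textsc{(Ty-Comp)}, since $f^{++}$ merely reindexes the $\Gamma$-component by $f$; \textsc{(Refl-Subst)} and \textsc{($\mathcal{R}^{\mathrm{Id}}$-Subst)} follow by chasing points through the substitution morphisms and invoking the reflexivity-point bijection. I expect the one genuinely load-bearing step to be \textsc{(Id-Elim)}: making precise that non-totality of $\bot :: \boldsymbol{0}$ collapses the domain of the eliminator to reflexivity points, and confirming that the induced eliminator is a bona fide winning, recursive DoWRWLI with an effective realizer-map. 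Once this is in place, the remaining clauses are routine.
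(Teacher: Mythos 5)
Your proposal is correct and takes essentially the same route as the paper: the pointwise definition of $\mathrm{Id}_A$, the reflexivity morphism given by the diagonal together with the trivial term of $\boldsymbol{1}$ (equivalently, w.r.w. derelictions up to `tags'), and the load-bearing observation that winning-ness rules out the fiber $\boldsymbol{0}$ and so collapses every point of $\Gamma . A . A^+ . \mathrm{Id}_A$ to a reflexivity point are exactly the paper's ingredients. The only difference is packaging: the paper realizes your point-bijection as an explicit inverse morphism $\mathrm{Refl}_A^{-1}$ (a w.r.w. dereliction up to `tags') and defines $\mathcal{R}^{\mathrm{Id}}_{A,B}(\uplus \beta) \colonequals \uplus \beta \bullet \mathrm{Refl}_A^{-1}$, so that \textsc{(Id-Comp)} and the substitution laws become short categorical calculations, whereas your componentwise transport along the bijection is the same morphism but makes you re-verify DoWRWLI well-definedness (typing of components against $\delta^\dagger$ and effectivity of the realizer-map) by hand.
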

\begin{proof}
Let $\Gamma, \Delta \in \mathbb{NPG}_{\mathrm{wrw}}^{\mathrm{wo}}$, $\uplus \phi \in \mathbb{NPG}_{\mathrm{wrw}}^{\mathrm{wo}}(\Delta, \Gamma)$ and $A \in \mathscr{D}\mathbb{NPG}_{\mathrm{wrw}}^{\mathrm{wo}}(\Gamma)$. %and assume that $\mathscr{R}_{\mathrm{wr}}^{\mathrm{cp}}(\Gamma) \neq \emptyset$, $\mathscr{R}_{\mathrm{wr}}^{\mathrm{cp}}(\Delta) \neq \emptyset$ and $\mathscr{R}_{\mathrm{wr}}^{\mathrm{cp}}(A(\uplus \gamma)) \neq \emptyset$ for some $\uplus \gamma$ since the other cases are trivial. 

\begin{itemize}

\item \textsc{(Id-Form)} We define $\mathrm{Id}_A \in \mathscr{D}\mathbb{NPG}_{\mathrm{wrw}}^{\mathrm{wo}}(\Sigma_{\mathrm{wrw}}(\Sigma_{\mathrm{wrw}}(\Gamma, A), A^+))$ by $\mathrm{Id}_A \colonequals (\mathrm{Id}_{A(\uplus \gamma)}(\uplus \alpha, \uplus \alpha'))_{\langle \langle \uplus \gamma, \uplus \alpha \rangle, \uplus \alpha' \rangle \in \mathbb{NPG}_{\mathrm{wrw}}^{\mathrm{wo}}(\Sigma_{\mathrm{wrw}}(\Sigma_{\mathrm{wrw}}(\Gamma, A), A^+))}$.
%Let us also call $\mathrm{Id}_A$ the \emph{\bfseries identity predicative (p-) game} on $A$.

\item \textsc{(Id-Intro)} Let $\mathrm{Refl}_A : \Sigma_{\mathrm{wrw}}(\Gamma, A^{[1]}) \rightarrow \Sigma_{\mathrm{wrw}}(\Sigma_{\mathrm{wrw}}(\Sigma_{\mathrm{wrw}}(\Gamma, A^{[2]}), (A^{[3]})^+), \mathrm{Id}_A)$ be what plays by the w.r.w. dereliction between $\Sigma_{\mathrm{wrw}}(\Gamma, A^{[1]})$ and $\Sigma_{\mathrm{wrw}}(\Gamma, A^{[2]})$, between $\int_\Gamma A^{[1]}$ and $\int_\Gamma (A^{[3]})^+$, and trivially on $\Sigma_{\mathrm{wrw}}(\Gamma, A^{[1]}) \rightarrow \boldsymbol{1}$, where the superscripts $(\_)^{[i]}$ ($i = 1, 2, 3$) are to distinguish the three copies of $A$. 
The inverse $\mathrm{Refl}_A^{-1} : \Sigma_{\mathrm{wrw}}(\Sigma_{\mathrm{wrw}}(\Sigma_{\mathrm{wrw}}(\Gamma, A^{[2]}), (A^{[3]})^+), \mathrm{Id}_A) \to \Sigma_{\mathrm{wrw}}(\Gamma, A^{[1]})$ is the w.r.w. dereliction between $\Sigma_{\mathrm{wrw}}(\Gamma, A^{[2]})$ and $\Sigma_{\mathrm{wrw}}(\Gamma, A^{[1]})$ up to `tags.' 
%We often omit the subscript $(\_)_A$ on $\mathrm{Refl}_A$ when $A$ is obvious. 

\if0
\begin{remark}
If we allowed a non-canonical t-skeleton on $\mathrm{Id}_A$ then $\mathrm{Refl}_A$ would have only the left inverse, being unable to satisfy Id-Elim given below, which is why we define Id np-games in the rather \emph{extensional} form in Definition~\ref{DefIdentityNonstandardPredicativeGames}.
\end{remark}
\fi

\item \textsc{(Id-Elim)} Given $B \in \mathscr{D}\mathbb{NPG}_{\mathrm{wrw}}^{\mathrm{wo}}(\Sigma_{\mathrm{wrw}}(\Sigma_{\mathrm{wrw}}(\Sigma_{\mathrm{wrw}}(\Gamma, A^{[2]}), (A^{[3]})^+), \mathrm{Id}_A))$ and $\uplus \beta \in \mathcal{TS}_{\mathrm{wr}}(\Pi_{\mathrm{wrw}}(\Sigma_{\mathrm{wrw}}(\Gamma, A), B\{\mathrm{Refl}_A\}))$, we define
\begin{equation*}
\mathcal{R}^{\mathrm{Id}}_{A,B}(\uplus \beta) \colonequals \uplus \beta \bullet \mathrm{Refl}_A^{-1} \in \mathcal{TS}_{\mathrm{wr}}(\Pi_{\mathrm{wrw}}((\Sigma_{\mathrm{wrw}}(\Sigma_{\mathrm{wrw}}(\Sigma_{\mathrm{wrw}}(\Gamma, A), A^+), \mathrm{Id}_A), B)).
\end{equation*}

\item \textsc{(Id-Comp)} We then calculate 
\begin{align*}
\mathcal{R}^{\mathrm{Id}}_{A,B}(\uplus \beta)\{\mathrm{Refl}_A\} &= \mathcal{R}^{\mathrm{Id}}_{A,B}(\uplus \beta) \bullet \mathrm{Refl}_A \\
&= (\uplus \beta \bullet \mathrm{Refl}_A^{-1}) \bullet \mathrm{Refl}_A \\
&= \uplus \beta \bullet (\mathrm{Refl}_A^{-1} \bullet \mathrm{Refl}_A) \\
&= \uplus \beta \bullet \uplus \mathrm{der}_{\Sigma_{\mathrm{wrw}}(\Gamma, A)} \\
&= \uplus \beta.
\end{align*}

\item \textsc{(Id-Subst)} We calculate
\begin{align*} 
\mathrm{Id}_A\{\uplus \phi^{++}\} &= (\mathrm{Id}_{A(\uplus \gamma)}(\uplus \alpha, \uplus \alpha'))_{ \langle \langle \uplus \gamma, \uplus \alpha \rangle, \uplus \alpha' \rangle \in \mathbb{NPG}_{\mathrm{wrw}}^{\mathrm{wo}}(\Sigma_{\mathrm{wrw}}(\Sigma_{\mathrm{wrw}}(\Gamma, A), A^+))} \{ \uplus \phi^{++} \} \\
&= (\mathrm{Id}_A(\langle \langle \uplus \gamma, \uplus \alpha \rangle, \uplus \alpha' \rangle))_{\langle \langle \uplus \gamma, \uplus \alpha \rangle, \uplus \alpha' \rangle \in \mathbb{NPG}_{\mathrm{wrw}}^{\mathrm{wo}}(\Sigma_{\mathrm{wrw}}(\Sigma_{\mathrm{wrw}}(\Gamma, A), A^+)} \{\uplus \phi^{++}\} \\
&= (\mathrm{Id}_{A}(\uplus \phi^{++} \bullet \langle \langle \uplus \delta, \uplus \alpha \rangle, \uplus \alpha' \rangle))_{\langle \langle \uplus \delta, \uplus \alpha \rangle, \uplus \alpha' \rangle \in \mathbb{NPG}_{\mathrm{wrw}}^{\mathrm{wo}}(\Sigma_{\mathrm{wrw}}(\Sigma_{\mathrm{wrw}}(\Delta, A\{\uplus \phi\}), A\{\uplus \phi\}^+))} \\
&= (\mathrm{Id}_{A}(\langle \uplus \phi \bullet \uplus \delta, \uplus \alpha \rangle, \uplus \alpha' \rangle))_{\langle \langle \uplus \delta, \uplus \alpha \rangle, \uplus \alpha' \rangle \in \mathbb{NPG}_{\mathrm{wrw}}^{\mathrm{wo}}(\Sigma_{\mathrm{wrw}}(\Sigma_{\mathrm{wrw}}(\Delta, A\{\uplus \phi\}), A\{\uplus \phi\}^+))} \\
&= (\mathrm{Id}_{A(\uplus \phi \bullet \uplus \delta)}(\uplus \alpha, \uplus \alpha'))_{\langle \langle \uplus \delta, \uplus \alpha \rangle, \uplus \alpha' \rangle \in \mathbb{NPG}_{\mathrm{wrw}}^{\mathrm{wo}}(\Sigma_{\mathrm{wrw}}(\Sigma_{\mathrm{wrw}}(\Delta, A\{\uplus \phi\}), A\{\uplus \phi\}^+))} \\
&= (\mathrm{Id}_{A\{\uplus \phi\}(\uplus \delta)}(\uplus \alpha, \uplus \alpha'))_{\langle \langle \uplus \delta, \uplus \alpha \rangle, \uplus \alpha' \rangle \in \mathbb{NPG}_{\mathrm{wrw}}^{\mathrm{wo}}(\Sigma_{\mathrm{wrw}}(\Sigma_{\mathrm{wrw}}(\Delta, A\{\uplus \phi\}), A\{\uplus \phi\}^+))} \\
&= \mathrm{Id}_{A\{\uplus \phi\}}, 
\end{align*}
where the forth equation holds because
\begin{align*}
\uplus \phi^{++} \bullet \langle \langle \uplus \delta, \uplus \alpha \rangle, \uplus \alpha' \rangle &= \langle \uplus \phi^+ \bullet \mathrm{p}(A^+\{\uplus \phi^+\}), \mathrm{v}_{A^+\{\uplus \phi^+\}} \rangle \bullet \langle \langle \uplus \delta, \uplus \alpha \rangle, \uplus \alpha' \rangle \\
&= \langle \uplus \phi^+ \bullet \mathrm{p}(A^+\{\uplus \phi^+\}) \bullet \langle \langle \uplus \delta, \uplus \alpha \rangle, \uplus \alpha' \rangle, \mathrm{v}_{A^+\{\uplus \phi^+\}} \bullet \langle \langle \uplus \delta, \uplus \alpha \rangle, \uplus \alpha' \rangle \rangle \\
%&= \langle \langle [\phi] \bullet [\delta], [\alpha] \rangle, [\alpha'] \rangle \\
&= \langle \langle \uplus \phi \bullet \uplus \delta, \uplus \alpha \rangle, \uplus \alpha' \rangle.
\end{align*}

\item \textsc{(Refl-Subst)} We calculate
\begin{align*}
\mathrm{Refl}_A \bullet \uplus \phi^+ &= \mathrm{Refl}_A \bullet \langle \uplus \phi \bullet \mathrm{p}(A\{ \uplus \phi \}), \mathrm{v}_{A\{ \uplus \phi \}} \rangle \\
&= \langle \langle \langle \uplus \phi \bullet \mathrm{p}(A\{ \uplus \phi \}), \mathrm{v}_{A\{ \uplus \phi \}} \rangle \bullet \mathrm{p}(A^+\{ \uplus \phi^+ \}) \bullet \mathrm{p}(\mathrm{Id}_A\{ \uplus \phi^{++} \}), \\&\mathrm{v}_{A^+\{ \uplus \phi^+ \}} \bullet \mathrm{p}(\mathrm{Id}_A\{ \uplus \phi^{++} \}) \rangle, \mathrm{v}_{\mathrm{Id}_A\{ \uplus \phi^{++} \}} \rangle \bullet \mathrm{Refl}_{A\{ \uplus \phi \}} \quad \text{(by the definition of $\mathrm{Refl}$)} \\
&= \langle \langle \uplus \phi^+ \bullet \mathrm{p}(A^+\{ \uplus \phi^+ \}), \mathrm{v}_{A^+\{ \uplus \phi^+ \}} \rangle \bullet \mathrm{p}(\mathrm{Id}_A\{ \uplus \phi^{++} \}), \mathrm{v}_{\mathrm{Id}_A\{ \uplus \phi^{++} \}} \rangle \bullet \mathrm{Refl}_{A\{ \uplus \phi \}} \\
&= \langle \uplus \phi^{++} \bullet \mathrm{p}(\mathrm{Id}_A\{ \uplus \phi^{++} \}), \mathrm{v}_{\mathrm{Id}_A\{ \uplus \phi^{++} \}} \rangle \bullet \mathrm{Refl}_{A\{ \uplus \phi \}} \\
&= \uplus \phi^{+++} \bullet \mathrm{Refl}_{A\{ \uplus \phi \}}.
\end{align*}
%where $f^{+++} \colonequals \langle f^{++} \bullet \mathrm{p}(\mathrm{Id}_A\{ f^{++} \}), \mathrm{v}_{\mathrm{Id}_A\{ f^{++} \}} \rangle$.

\item \textsc{($\mathcal{R}^{\mathrm{Id}}$-Subst)} We calculate
\begin{align*}
\mathcal{R}^{\mathrm{Id}}_{A,B}(\uplus \beta)\{\uplus \phi^{+++}\} &= (\uplus \beta \bullet \mathrm{Refl}_A^{-1}) \bullet \uplus \phi^{+++} \\
&= \uplus \beta \bullet (\mathrm{Refl}_A^{-1} \bullet \uplus \phi^{+++}) \\
&= \uplus \beta \bullet (\uplus \phi^+ \bullet \mathrm{Refl}_{A\{\uplus \phi\}}^{-1}) \quad \text{(by Refl-Subst)} \\
&= (\uplus \beta \bullet \uplus \phi^+) \bullet \mathrm{Refl}_{A\{\uplus \phi\}}^{-1} \\
&= \mathcal{R}^{\mathrm{Id}}_{A\{\uplus \phi\}, B\{\uplus \phi^{+++}\}}(\uplus \beta \bullet \uplus \phi^+) \\
&= \mathcal{R}^{\mathrm{Id}}_{A\{\uplus \phi\}, B\{\uplus \phi^{+++}\}}(\uplus \beta \{\uplus \phi^+\}),
\end{align*}

\end{itemize}
which completes the proof.
\end{proof}

We next model \emph{unit-type} (Appendix~\ref{UnitType}). 
Again, let us first recall the semantic type former for unit-type:
\begin{definition}[CwFs with unit-type \cite{hofmann1997syntax}]
A CwF $\mathcal{C}$ \emph{\bfseries supports unit} if
\begin{itemize}

\item \textsc{(Unit-Form)} Given $\Gamma \in \mathcal{C}$, there is a type $\boldsymbol{1}^{\Gamma} \in \mathrm{Ty}(\Gamma)$, called the \emph{\bfseries unity type} in the context $\Gamma$;
%We usually omit the superscript $\Gamma$ on $\boldsymbol{1}^{\Gamma}$.

\item \textsc{(Unit-Intro)} Given $\Gamma \in \mathcal{C}$, there is a term $\top_{\Gamma} \in \mathrm{Tm}(\Gamma, \boldsymbol{1}^\Gamma)$;
%We usually omit the subscript $\Gamma$ on $\ast_{\Gamma}$.

\item \textsc{(Unit-Elim)} Given $\Gamma \in \mathcal{C}$, $A \in \mathrm{Ty}(\Gamma . \boldsymbol{1}^\Gamma)$, $a \in \mathrm{Tm}(\Gamma, A\{ \overline{\top_\Gamma} \})$ and $t \in \mathrm{Tm}(\Gamma, \boldsymbol{1}^\Gamma)$, there is a term $\mathcal{R}^{\boldsymbol{1}}_A(a, t) \in \mathrm{Tm}(\Gamma, A\{\overline{t}\})$, where 
\begin{mathpar}
\overline{\top} \colonequals \langle \mathrm{id}_\Gamma, \top_\Gamma \rangle_{\boldsymbol{1}^\Gamma} : \Gamma \to \Gamma . \boldsymbol{1}^\Gamma 
\and
\overline{t} \colonequals \langle \mathrm{id}_\Gamma, t \rangle_{\boldsymbol{1}^\Gamma} : \Gamma \to \Gamma . \boldsymbol{1}^\Gamma;
\end{mathpar}

\item \textsc{(Unit-Comp)} $\mathcal{R}^{\boldsymbol{1}}_A(a, \top_\Gamma) = a$;

\item \textsc{(Unit-Subst)} Given $f : \Delta \to \Gamma$ in $\mathcal{C}$, $\boldsymbol{1}^{\Gamma}\{f\} = \boldsymbol{1}^{\Delta} \in \mathrm{Ty}(\Delta)$;

\item \textsc{($\top$-Subst)} $\top_{\Gamma}\{f\} = \top_{\Delta} \in \mathrm{Tm}(\Delta, \boldsymbol{1}^\Delta)$.

\end{itemize}

Moreover, $\mathcal{C}$ \emph{\bfseries supports one in the strict sense} if it additionally satisfies:
\begin{itemize}
\item \textsc{($\top$-Uniq)} 
$t = \top_\Gamma$ for all $t \in \mathrm{Tm}(\Gamma, \boldsymbol{1}^\Gamma)$.\footnote{Note that $\top$-Uniq implies Unit-Elim and Unit-Comp by defining $\mathcal{R}^{\boldsymbol{1}}_A(a, t) \colonequals a$.}
\end{itemize}
\end{definition}

Unit-type (with the $\eta$-rule) is modeled in a CwF that supports unit (in the strict sense); see Appendix~\ref{UnitType} for the details.

We now propose our game semantics of unit-type:
\begin{theorem}[Realizability model of unit-type \`{a} la game semantics]
The CwF $\mathbb{NPG}_{\mathrm{wrw}}^{\mathrm{wo}}$ supports unit in the strict sense.
\end{theorem}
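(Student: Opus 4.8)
The plan is to interpret unit-type by the constant dependent np-game valued at the unit np-game $\boldsymbol{1}$ (Example~\ref{ExamplesOfWellOpenedNonstandardPredicativeGames}) and to reduce every required clause to the single structural fact that $\boldsymbol{1}$ has no moves. Concretely, for Unit-Form I would set $\boldsymbol{1}^\Gamma \colonequals \{ \boldsymbol{1} \}_\Gamma \in \mathscr{D}\mathbb{NPG}_{\mathrm{wrw}}^{\mathrm{wo}}(\Gamma)$, the constant dependent np-game on $\Gamma$ valued at $\boldsymbol{1}$, exactly as the constant dependent np-games $\{ N \}_\Gamma$ are formed in the model of N-type.

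The crux is to compute $\Pi_{\mathrm{wrw}}(\Gamma, \{ \boldsymbol{1} \}) = \ell\Pi_{\mathrm{wrw}}(\hat{\oc}\Gamma, \{\boldsymbol{1}\}^\ddagger)$. Since every fibre of $\{\boldsymbol{1}\}$ is the unit np-game, the integration $\int_\Gamma \{\boldsymbol{1}\}$ has underlying game $\{\boldsymbol{\epsilon}\}$, i.e. no moves, and $\mathcal{TS}_{\mathrm{wr}}(\boldsymbol{1}) = \{ \top \}$. Hence in any FoWRWLI from $\hat{\oc}\Gamma$ to $\{\boldsymbol{1}\}$ each codomain t-skeleton $\mathrm{cod}_\phi(\gamma^\dagger, e)$ is forced to equal $\top$, so each component $\phi_{(\gamma^\dagger, e)}$ is a t-skeleton on $\gamma^\dagger \multimap \top$; as the codomain contributes no initial move, this game admits only the empty position, whence $\phi_{(\gamma^\dagger, e)} = \{\boldsymbol{\epsilon}\}$. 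Taking the disjoint union (and recalling that $\uplus\phi = \top$ when the underlying family is empty) yields $\Pi_{\mathrm{wrw}}(\Gamma, \{\boldsymbol{1}\}) = \{ \top \}$. In particular $\mathrm{Tm}(\Gamma, \boldsymbol{1}^\Gamma) = \mathcal{TS}_{\mathrm{wr}}(\Pi_{\mathrm{wrw}}(\Gamma, \{\boldsymbol{1}\}))$ is the singleton $\{ \top \}$, whose unique element I take as $\top_\Gamma$ for Unit-Intro.

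From this singleton the remaining clauses are immediate. For $\top$-Uniq, any $t \in \mathrm{Tm}(\Gamma, \boldsymbol{1}^\Gamma)$ lies in a one-element set and hence equals $\top_\Gamma$; by the footnote to the definition, $\top$-Uniq then supplies Unit-Elim and Unit-Comp via $\mathcal{R}^{\boldsymbol{1}}_A(a, t) \colonequals a$, which is legitimate because $t = \top_\Gamma$ forces $A\{\overline{t}\} = A\{\overline{\top_\Gamma}\}$. For Unit-Subst, constancy of the dependent np-game gives $\boldsymbol{1}^\Gamma\{\uplus\phi\} = (\{\boldsymbol{1}\}(\uplus\phi \bullet \uplus\delta))_{\uplus\delta \in \mathbb{NPG}_{\mathrm{wrw}}^{\mathrm{wo}}(\Delta)} = \{\boldsymbol{1}\}_\Delta = \boldsymbol{1}^\Delta$. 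For $\top$-Subst, $\top_\Gamma\{\uplus\phi\} = \top_\Gamma \bullet \uplus\phi$ is a winning, recursive t-skeleton on $\Pi_{\mathrm{wrw}}(\Delta, \{\boldsymbol{1}\}) = \{\top\}$ and therefore coincides with $\top_\Delta$.

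I do not expect a genuine obstacle here; the only point requiring care is the verification that $\Pi_{\mathrm{wrw}}(\Gamma, \{\boldsymbol{1}\})$ collapses to $\{\top\}$. One must check that neither the initial protocol with J nor O's obligation to exhibit a realizer introduces any nontrivial play: both the J-declaration and the realizer-tag $(\_)^{[e]}$ are attached to moves situated in the codomain, and since $\boldsymbol{1}$ contributes no such move, the disjoint-union construction of DoWRWLIs (Definition~\ref{DefDoWRWLIs}) genuinely produces only the trivial t-skeleton, so that unit is supported in the strict sense.
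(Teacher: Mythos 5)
Your proof is correct and takes essentially the same route as the paper's: the same interpretation (the constant dependent np-game at $\boldsymbol{1}$, the unique term $\top_\Gamma$, $\top$-Uniq yielding Unit-Elim and Unit-Comp via $\mathcal{R}^{\boldsymbol{1}}_A(a, t) \colonequals a$, and constancy of $\{\boldsymbol{1}\}$ giving Unit-Subst and $\top$-Subst). The only difference is one of detail: where the paper asserts that uniqueness of $\top_\Gamma$ and the equation $\uplus \tau = \top_\Gamma$ ``clearly'' hold, you explicitly verify the underlying fact that $\Pi_{\mathrm{wrw}}(\Gamma, \{\boldsymbol{1}\})$ collapses to $\{\top\}$ because the codomain contributes no initial move (covering the empty-family case uniformly), which is precisely the justification the paper leaves implicit.
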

\begin{proof} 
Let $\Gamma, \Delta \in \mathbb{NPG}_{\mathrm{wrw}}^{\mathrm{wo}}$ and $\uplus \phi \in \mathbb{NPG}_{\mathrm{wrw}}^{\mathrm{wo}}(\Delta, \Gamma)$, and assume that $\mathscr{R}_{\mathrm{wr}}^{\mathrm{cp}}(\Gamma) \neq \emptyset$ and $\mathscr{R}_{\mathrm{wr}}^{\mathrm{cp}}(\Delta) \neq \emptyset$ since the other cases are trivial. 
\begin{itemize}

\item \textsc{(Unit-Form)} Let $\boldsymbol{1}^{\Gamma}$ be the constant dependent np-game valued at the unit np-game $\boldsymbol{1}$ (Example~\ref{ExamplesOfWellOpenedNonstandardPredicativeGames}), for which we write $\{ \boldsymbol{1} \}_\Gamma$ or $\{ \boldsymbol{1} \}$.

\item \textsc{(Unit-Intro)} Let $\top_{\Gamma} \in \mathrm{Tm}(\Gamma, \{ \boldsymbol{1} \}_\Gamma)$ be the unique one $\uplus \{ \top \}$ up to `tags.'
%Clearly, it satisfies $\mathrm{p}(\boldsymbol{1}) \bullet \ast_{\Gamma} = \mathrm{der}_{\Gamma}$. %and $\ast_{\Gamma} \bullet \phi = \langle \phi, \mathrm{v}_{\boldsymbol{1}}\{\ast_{\Delta}\} \rangle_{\boldsymbol{1}}$, where $\phi : \Delta \to \Gamma$ is any morphism in $\mathcal{LPG}$.

\item \textsc{(Unit-Elim)} Given $A \in \mathscr{D}\mathbb{NPG}_{\mathrm{wrw}}^{\mathrm{wo}}(\Sigma_{\mathrm{wrw}}(\Gamma, \{ \boldsymbol{1} \}_\Gamma))$, $\uplus \alpha \in \mathcal{TS}_{\mathrm{wr}}(\Pi_{\mathrm{wrw}}(\Gamma, A \{ \overline{\top_\Gamma} \}))$ and $\uplus \tau \in \mathcal{TS}_{\mathrm{wr}}(\Pi_{\mathrm{wrw}}(\Gamma, \{ \boldsymbol{1} \}))$, the equation $\uplus \tau = \top_\Gamma$ clearly holds, i.e., $\top$-Uniq is satisfied, and therefore we define $\mathcal{R}^{\boldsymbol{1}}_A(\uplus \alpha, \uplus \tau) \colonequals \uplus \alpha$.

\item \textsc{(Unit-Comp)} Clearly, $\mathcal{R}^{\boldsymbol{1}}_A(\uplus \alpha, \top_\Gamma) = \uplus \alpha$.

\item \textsc{(Unit-Subst)} Clearly, $\{ \boldsymbol{1} \}_\Gamma\{ \uplus \phi \} = \{ \boldsymbol{1} \}_{\Delta}$.

\item \textsc{($\top$-Subst)} Clearly, $\top_{\Gamma} \bullet \uplus \phi = \top_{\Delta} : \Delta \to \boldsymbol{1}$,

%\item \textsc{($\ast$-Uniq)} Clearly, $[\varphi] = \ast_\Gamma$ for all $[\varphi] \in \mathrm{Tm}(\Gamma, \{ \boldsymbol{1} \}_\Gamma)$.

\end{itemize}
which completes the proof.
\end{proof}

Finally, we model \emph{empty-type} (Appendix~\ref{EmptyType}). 
First, the semantic type former for empty-type is:
\begin{definition}[CwFs with empty-type \cite{hofmann1997syntax}]
A CwF $\mathcal{C}$ \emph{\bfseries supports empty} if
\begin{itemize}

\item \textsc{(Empty-Form)} Given $\Gamma \in \mathcal{C}$, there is a type $\boldsymbol{0}^{\Gamma} \in \mathrm{Ty}(\Gamma)$, called the \emph{\bfseries empty-type} in the context $\Gamma$;
%We usually omit the superscript $\Gamma$ in $\boldsymbol{0}^{\Gamma}$.

\if0
\item {\bfseries \sffamily Empty-Intro.} For any $\Gamma \in \mathcal{C}$, there is a morphism
\begin{equation*}
\bot_{\Gamma} : \Gamma \to \Gamma . \boldsymbol{0}
\end{equation*}
that satisfies $\mathrm{p}(\boldsymbol{0}) \circ \bot_{\Gamma} = \mathrm{id}_{\Gamma}$ and $\bot_{\Gamma} \circ \phi = \langle \phi, \mathrm{v}_{\boldsymbol{0}}\{\bot_{\Delta}\} \rangle_{\boldsymbol{0}}$, where $\phi : \Delta \to \Gamma$ is any morphism in $\mathcal{C}$.
We usually omit the subscript $\Gamma$ in $\bot_{\Gamma}$;
\fi

\item \textsc{(Empty-Elim)} Given $\Gamma \in \mathcal{C}$, $A \in \mathrm{Ty}(\Gamma . \boldsymbol{0}^\Gamma)$ and $z \in \mathrm{Tm}(\Gamma, \boldsymbol{0}^\Gamma)$, there is a term $\mathcal{R}^{\boldsymbol{0}}_A(z) \in \mathrm{Tm}(\Gamma, A\{\overline{z}\})$, where $\overline{z} \colonequals \langle \mathrm{id}_\Gamma, z \rangle_{\boldsymbol{0}^\Gamma} : \Gamma \to \Gamma . \boldsymbol{0}^\Gamma$;

\if0
\item {\bfseries \sffamily Empty-Comp.} Under the same assumption, 
\begin{equation*}
\mathcal{R}^{\boldsymbol{1}}_A(\tau)\{\star\} = \kappa
\end{equation*}
for any term $\kappa \in \mathrm{Tm}(\Gamma, A\{\star\})$.
\fi

\item \textsc{(Empty-Subst)} Given $f : \Delta \to \Gamma$ in $\mathcal{C}$, $\boldsymbol{0}^{\Gamma}\{f\} = \boldsymbol{0}^{\Delta} \in \mathrm{Ty}(\Delta)$;

\item \textsc{($\mathcal{R}^0$-Subst)} $\mathcal{R}^{\boldsymbol{0}}_{A\{f^+\}}(z\{f\}) = \mathcal{R}^{\boldsymbol{0}}_A(z)\{ f \}$, where $f^+ \colonequals \langle f \bullet \mathrm{p}(\boldsymbol{0}^\Delta), \mathrm{v}_{\boldsymbol{0}^\Gamma} \rangle_{\boldsymbol{0}^\Gamma} : \Delta . \boldsymbol{0}^\Delta \to \Gamma . \boldsymbol{0}^\Gamma$.

\end{itemize}
\end{definition}

Empty-type is modeled in a CwF that supports empty; see Appendix~\ref{EmptyType} for the details.

Let us finally propose our game-semantic interpretation of empty-type:
\begin{theorem}[Realizability model of empty-type \`{a} la game semantics]
\label{ThmGameSemanticZeroType}
The CwF $\mathbb{NPG}_{\mathrm{wrw}}^{\mathrm{wo}}$ supports empty.
\end{theorem}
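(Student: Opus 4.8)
The plan is to verify the three clauses required for $\mathbb{NPG}_{\mathrm{wrw}}^{\mathrm{wo}}$ to support empty, namely Empty-Form, Empty-Elim, and Empty-Subst, together with the substitution coherence $\mathcal{R}^{\boldsymbol{0}}$-Subst. First I would take Empty-Form to be the constant dependent np-game valued at the empty np-game $\boldsymbol{0} = \mathrm{flat}(\emptyset)$ (Example~\ref{ExamplesOfWellOpenedNonstandardPredicativeGames}), writing $\boldsymbol{0}^{\Gamma} \colonequals \{ \boldsymbol{0} \}_\Gamma$; this is a well-opened dependent np-game since $\boldsymbol{0}$ is a well-opened np-game. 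Empty-Subst, $\{ \boldsymbol{0} \}_\Gamma\{ \uplus \phi \} = \{ \boldsymbol{0} \}_{\Delta}$, is then immediate because substitution on a constant dependent np-game merely reindexes along $\uplus \phi \bullet \uplus \delta$ and the value is constantly $\boldsymbol{0}$, exactly as in the unit- and N-type cases already proved.

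The crux is Empty-Elim. The key observation is that the hypothesis $\uplus z \in \mathcal{TS}_{\mathrm{wr}}(\Pi_{\mathrm{wrw}}(\Gamma, \{ \boldsymbol{0} \}_\Gamma))$ is vacuous: I would argue that $\mathcal{TS}_{\mathrm{wr}}(\Pi_{\mathrm{wrw}}(\Gamma, \{ \boldsymbol{0} \}_\Gamma))$ is empty whenever $\mathscr{R}_{\mathrm{wr}}^{\mathrm{cp}}(\Gamma) \neq \emptyset$. Indeed, any winning $\uplus z$ would, for each canonical pair $(\gamma, e) \in \mathscr{R}^{\mathrm{cp}}_{\mathrm{wr}}(\Gamma)$, have a component $\mathrm{cod}_{z}(\gamma, e) \in \mathcal{TS}_{\mathrm{wr}}(\boldsymbol{0})$ by the defining axiom of FoDWRWLIs (Definition~\ref{DefFoDWRWLIs}); but $\boldsymbol{0} = \{ \bot \}$ and $\bot$ is not total (Example~\ref{ExamplesOfGames}), so $\mathcal{TS}_{\mathrm{wr}}(\boldsymbol{0}) = \emptyset$, and a winning (hence total) $\uplus z$ cannot exist. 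Consequently the premise of Empty-Elim can never be met under that assumption, and I would define $\mathcal{R}^{\boldsymbol{0}}_A(\uplus z)$ by any convenient stipulation (e.g.\ the trivial t-skeleton $\top$ up to `tags'), the choice being irrelevant precisely because it is applied to no actual input. In the complementary case $\mathscr{R}_{\mathrm{wr}}^{\mathrm{cp}}(\Gamma) = \emptyset$ the hom-sets collapse to the trivial $\top$ and everything holds vacuously, following the convention already adopted in the preceding proofs.

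Finally, $\mathcal{R}^{\boldsymbol{0}}$-Subst, $\mathcal{R}^{\boldsymbol{0}}_{A\{\uplus \phi^+\}}(\uplus z\{\uplus \phi\}) = \mathcal{R}^{\boldsymbol{0}}_A(\uplus z)\{ \uplus \phi \}$, would likewise be handled by the same vacuity: since $\mathcal{TS}_{\mathrm{wr}}(\Pi_{\mathrm{wrw}}(\Gamma, \{ \boldsymbol{0} \}_\Gamma))$ is empty, there is no $\uplus z$ for which either side is actually instantiated, so the equation holds trivially. I would close by remarking that this absence of a winning t-skeleton on $\{ \boldsymbol{0} \}_\Gamma$ is exactly the game-semantic content of the fact that empty-type is uninhabited, which is the property we shall invoke in Section~\ref{MainResult} to deduce consistency of MLTT with CT.

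The main obstacle I anticipate is not any of these verifications individually but making the emptiness argument airtight: one must be careful that totality of $\uplus z$ genuinely forces each codomain component $\mathrm{cod}_{z}(\gamma, e)$ to be a \emph{total} (indeed winning) t-skeleton on $\boldsymbol{0}$, rather than merely a t-skeleton on $\boldsymbol{0}$ that responds only at positions reachable from O's realizer. I would pin this down by tracing through Definitions~\ref{DefFoWRWLIs} and \ref{DefFoDWRWLIs}, confirming that the winning constraint on the whole DoWRWLI propagates to winning of each codomain t-skeleton, and that $\boldsymbol{0}$ admits no total t-skeleton because its only position $q$ is an unanswerable O-question. Once this is established, the rest is routine and parallels the unit-type proof.
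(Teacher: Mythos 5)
Your proposal is correct and is essentially the paper's own proof: both hinge on the observation that $\mathcal{TS}_{\mathrm{wr}}(\boldsymbol{0}) = \emptyset$ (the sole t-skeleton $\bot$ is not total), so that by the codomain axiom on FoWRWLIs any $\uplus \zeta \in \mathcal{TS}_{\mathrm{wr}}(\Pi_{\mathrm{wrw}}(\Gamma, \{ \boldsymbol{0} \}_\Gamma))$ forces $\mathscr{R}_{\mathrm{wr}}^{\mathrm{cp}}(\Gamma) = \emptyset$ and $\zeta = \emptyset$, whence one defines $\mathcal{R}^{\boldsymbol{0}}_A(\uplus \zeta) \colonequals \uplus \{ \top \}$. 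Two minor remarks: the ``obstacle'' in your last paragraph dissolves at once because clause~1 of Definition~\ref{DefFoWRWLIs} makes $\mathrm{cod}_\phi(\gamma, e) \in \mathcal{TS}_{\mathrm{wr}}(H)$ a \emph{definitional} requirement (no propagation of totality from $\uplus \zeta$ is needed), and for $\mathcal{R}^{\boldsymbol{0}}$-Subst the case $\mathscr{R}_{\mathrm{wr}}^{\mathrm{cp}}(\Gamma) = \emptyset$ is not vacuous (there $\uplus \zeta = \uplus \{\top\}$ does exist), but the equation then reduces to $\uplus \{ \top \} = \uplus \{ \top \} \bullet \uplus \phi$ with $\phi = \emptyset$, which is the trivial computation the paper writes out.
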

\begin{proof}

Let $\Gamma, \Delta \in \mathbb{NPG}_{\mathrm{wrw}}^{\mathrm{wo}}$ and $\uplus \phi \in \mathbb{NPG}_{\mathrm{wrw}}^{\mathrm{wo}}(\Delta, \Gamma)$. %and assume that $\mathscr{R}_{\mathrm{wr}}^{\mathrm{cp}}(\Gamma) \neq \emptyset$ and $\mathscr{R}_{\mathrm{wr}}^{\mathrm{cp}}(\Delta) \neq \emptyset$ since the other cases are trivial. 

\begin{itemize}

\item \textsc{(Empty-Form)} $\boldsymbol{0}^\Gamma$ is the constant dependent np-game valued at the empty np-game $\boldsymbol{0}$ (Example~\ref{ExamplesOfWellOpenedNonstandardPredicativeGames}), for which we write $\{ \boldsymbol{0} \}_{\Gamma}$ or $\{ \boldsymbol{0} \}$.

\item \textsc{(Empty-Elim)} Let $A \in \mathscr{D}\mathbb{NPG}_{\mathrm{wrw}}^{\mathrm{wo}}(\Sigma_{\mathrm{wrw}}(\Gamma, \{ \boldsymbol{0} \}_\Gamma))$ and $\uplus \zeta \in \mathcal{TS}_{\mathrm{wr}}(\Pi_{\mathrm{wrw}}(\Gamma, \{ \boldsymbol{0} \}_\Gamma))$. 
Because $\mathscr{R}_{\mathrm{wr}}^{\mathrm{cp}}(\boldsymbol{0}) = \emptyset$, we have $\mathscr{R}_{\mathrm{wr}}^{\mathrm{cp}}(\Gamma) = \emptyset$ and $\zeta = \emptyset$.
Hence, it suffices to define $\mathcal{R}^{\boldsymbol{0}}_A(\uplus \zeta) \in \mathcal{TS}_{\mathrm{wr}}(\Pi_{\mathrm{wrw}}(\Gamma, A \{ \overline{\uplus \zeta} \}))$ to be the trivial one $\uplus \{ \top \}$. %in $\mathbb{NPG}_{\mathrm{wrw}}^{\mathrm{wo}}$ by a case analysis: It is obtained from $\uplus \zeta$ by replacing the first move of $\{ \boldsymbol{0} \}_\Gamma$ with those of $A \{ \overline{\uplus \zeta} \}$ if any, and $\top$ (up to `tags') otherwise, where $\overline{\uplus \zeta} \colonequals \langle \mathrm{id}_\Gamma, \uplus \zeta \rangle : \Gamma \rightarrow \Sigma_{\mathrm{wrw}}(\Gamma, \{ \boldsymbol{0} \})$.

\item \textsc{(Empty-Subst)} Clearly, $\{ \boldsymbol{0} \}_{\Gamma}\{ \uplus \phi \} = \{ \boldsymbol{0} \}_{\Delta}$.

\item \textsc{($\mathcal{R}^{\boldsymbol{0}}$-Subst)} By the definition of the operation $\mathcal{R}^{\boldsymbol{0}}_{\_}(\_)$, we calculate
\begin{align*}
\mathcal{R}^{\boldsymbol{0}}_{A\{ \uplus \phi^+ \}}(\uplus \zeta\{ \uplus \phi \}) &= \uplus \{ \top \} \\
&= \uplus \{ \top \} \bullet \uplus \phi \quad \text{(n.b.,  $\mathscr{R}_{\mathrm{wr}}^{\mathrm{cp}}(\Delta) = \emptyset$ and $\phi = \emptyset$)} \\
%&= \mathcal{R}^{\boldsymbol{0}}_{A\{ \uplus \phi^+ \}}(\uplus \zeta \bullet \uplus \phi) \\
&= \mathcal{R}^{\boldsymbol{0}}_{A\{ \uplus \phi^+ \}}(\uplus \zeta) \bullet \uplus \phi \\
&= \mathcal{R}^{\boldsymbol{0}}_A(\uplus \zeta)\{ \uplus \phi \},
\end{align*}
\end{itemize}
which completes the proof. 
\end{proof}

\subsection{Main result: consistency of intensional Martin-L\"{o}f type theory with formal Church's thesis and its constructivity}
\label{MainResult}
Finally, let us solve the targeted open problem, viz., consistency of MLTT with CT, \emph{in the affirmative}.
For precision, we implement the \emph{T-predicate} $T$ and the \emph{result-extracting function} $U$ (Section~\ref{IntroFormalChurch'sThesis}) explicitly in MLTT as follows. 

We first define a \emph{realizer} for a given partial function $f : \mathbb{N} \rightharpoonup \mathbb{N}$ to be a realizer for a t-skeleton $\uplus \phi :: \hat{\oc} N \rightarrowtriangle N$ whose \emph{extension} $\mathrm{ext}(\uplus \phi) : n \in \mathbb{N} \mapsto \mathrm{read}(\uplus \phi \bullet \underline{n}_{\boldsymbol{1}})$, where $\mathrm{read}(\uplus \{ \bot \}) \uparrow$ and $\mathrm{read}(\underline{m}_{\boldsymbol{1}}) \colonequals m$ for all $m \in \mathbb{N}$, coincides with $f$.
%The realizers define a G\"{o}del numbering on partial recursive functions, where realizers for total t-skeletons realize total recursive functions.  

Then, we implement the T-predicate and the result-extracting function in MLTT with respect to the resulting realizers for total recursive functions:
\begin{itemize}

\item Let the T-predicate to be the type $\mathsf{x : N, y : N, z : N \vdash T(x, y, z) \ type}$ defined by $\mathsf{T(\underline{e}, \underline{n}, \underline{c}) \colonequals 1}$ if the triple $e, n, c \in \mathbb{N}$ satisfies the T-predicate with respect to the realizers just defined in the preceding paragraphs, for which we write $T(e, n, c)$, and $\mathsf{T(\underline{e}, \underline{n}, \underline{c}) \colonequals 0}$ otherwise;

\item Let the result-extracting function to be the term $\mathsf{x : N \vdash U(x) : N}$ defined by $\mathsf{U(\underline{c}) \colonequals \underline{n} : N}$ if $c \in \mathbb{N}$ encodes, with respect to the realizers just defined, the computational history of a terminating computation on $\mathbb{N}$ whose output is $n \in \mathbb{N}$, and $\mathsf{U(\underline{c}) \colonequals \underline{0} : N}$ otherwise. 

\end{itemize}

In practice, the T-predicate and the result-extracting function are given in MLTT by the elimination rule of N-type, which is possible because they are both primitive recursive \cite[Section~3.4.3]{troelstra1988constructivism}.
Thus, having modeled MLTT, including N-type, one may think that a model of the T-predicate and the result-extracting function has been obtained as well. 
However, it is not quite the case because we need a \emph{universe} to formalize the T-predicate in that way because it is a type, not a term.

For this point, we could extend our model of MLTT to a universe, but for brevity, let us skip modeling a universe in this article and instead define our interpretation of the T-predicate \emph{directly}: 
\begin{definition}[Realizability model of the T-predicate \`{a} la game semantics]
Let us interpret the T-predicate in the CwF $\mathbb{NPG}_{\mathrm{wrw}}^{\mathrm{wo}}$ by the dependent np-game $\mathcal{T}$ on the product $(N \mathbin{\&} N) \mathbin{\&} N$ defined by
\begin{equation*}
\mathcal{T} (\langle \langle \uplus \sigma, \uplus \tau \rangle, \uplus \gamma \rangle) \colonequals \begin{cases} \boldsymbol{1} &\text{if $T(\mathrm{read} (\uplus \sigma), \mathrm{read}(\uplus \tau), \mathrm{read} (\uplus \gamma))$;} \\ \boldsymbol{0} &\text{otherwise.} \end{cases}
\end{equation*}
\end{definition}

Clearly, our interpretation of the T-predicate is \emph{sound}, i.e., it respects the defining judgmental equality.
Having extended our model of MLTT in the CwF $\mathbb{NPG}_{\mathrm{wrw}}^{\mathrm{wo}}$ to the T-predicate, we are now ready to present our main theorem: 
\begin{theorem}[Realizability model of MLTT plus CT \`{a} la game semantics]
\label{ThmGameSemanticsOfMLTTPlusCT}
The CwF $\mathbb{NPG}_{\mathrm{wrw}}^{\mathrm{wo}}$ validates CT, and therefore it models MLTT equipped with CT.
\end{theorem}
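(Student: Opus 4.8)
The plan is to exhibit explicitly a winning, recursive t-skeleton that interprets CT (\ref{CTinMLTT}) in the CwF $\mathbb{NPG}_{\mathrm{wrw}}^{\mathrm{wo}}$, thereby showing that the type denoting CT is inhabited by a term of the model. Since the preceding theorems already establish that $\mathbb{NPG}_{\mathrm{wrw}}^{\mathrm{wo}}$ supports unit-, empty-, N-, pi-, sigma- and Id-types, and the T-predicate and the result-extracting function have been interpreted by the dependent np-game $\mathcal{T}$ and a suitable t-skeleton $\uplus\mu$, the np-game $\mathcal{CT}$ interpreting (\ref{CTinMLTT}) is well-defined as an iterated w.r.w.\ pi and sigma over these pieces. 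The consistency of MLTT plus CT then follows as an immediate corollary: by Theorem~\ref{ThmGameSemanticZeroType} the model supports empty, and because every morphism in $\mathbb{NPG}_{\mathrm{wrw}}^{\mathrm{wo}}$ is a \emph{winning} (hence \emph{total}) t-skeleton, the empty np-game $\boldsymbol{0}$ (whose only t-skeleton $\bot$ is not total, as recorded in the example following Definition~\ref{DefConstraintsOnTSkeletons}) admits no winning t-skeleton; so empty-type is refuted, and MLTT together with CT cannot derive falsity.

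First I would unwind the structure of $\mathcal{CT}$ following the schematic plays displayed in Section~\ref{GameSemanticRealizability}. The outermost constructor is the w.r.w.\ pi $\Pi_{\mathrm{wrw}}(\hat{\oc}N\rightarrowtriangle N, \dots)$ whose domain is the w.r.w.\ implication interpreting $\mathsf{N \Rightarrow N}$. The decisive point is that, by Definition~\ref{DefWRWLinearImplication} and Definition~\ref{DefDoWRWLIs}, O must open a play by the move $q^{[e]}$ whose tag $e \in \pi_2(\mathscr{R}^{\mathrm{cp}}_{\mathrm{wr}}(\hat{\oc}N\rightarrowtriangle N))$ is the canonical realizer for his chosen winning, recursive input t-skeleton $\uplus\nu$ on the domain. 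I would then define the witnessing t-skeleton $\uplus\mathrm{ct}$ essentially as the t-skeleton $\uplus\mathrm{ctl}$ constructed explicitly just before Definition~\ref{DefCategoryWRWPG}: at the first P-move it copy-cats the disclosed realizer $e$ (adjusted by $(\_)^{\ddagger}$) into the witness position $x : N$ of the first sigma, so that $\mathrm{read}(\uplus\nu \circ \underline{n}^{\dagger})$ is computed by the very algorithm coded by $e$.

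The verification then splits into three obligations. First, I would check that $\uplus\mathrm{ct}$ inhabits the $\Sigma$-component for $x : N$, i.e.\ that the copy-catted $e$ is a genuine natural-number witness; this is exactly the content of Lemma~\ref{LemWellDefinedCopyCatAndDerelictionTreeSkeletons} applied to the modified copy-cat. Second, for the nested $\Pi_{y:N}\Sigma_{z:N}(\mathcal{T}\mathbin{\&}\mathrm{Id}_N)$ I would show that given any input $\underline{n}$ (supplied by O with its own realizer $e'$ on the inner pi), the pair $(c, \star)$ with $c$ the code of the terminating computation of $e$ on $n$ makes both $\mathcal{T}(\langle\underline{e},\underline{n},\underline{c}\rangle)$ and $\mathrm{Id}_N(\langle \uplus\mu\circ\underline{c}^{\dagger}, \uplus\nu\circ\underline{n}^{\dagger}\rangle)$ collapse to the unit np-game $\boldsymbol{1}$; here totality and recursiveness of $\uplus\nu$ (forced by the w.r.w.\ discipline) guarantee that the computation terminates and that $U(c) = \nu(n)$, so the Id-component is inhabited by $\top$. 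Third, I would confirm that $\uplus\mathrm{ct}$ is itself winning and recursive, which follows from the closure Lemmata~\ref{LemWellDefinedDoWRWLIs}, \ref{LemWellDefinedConstructionsOnDoWRWLIs} and \ref{LemWRWPromotionLemma} together with the carefully ``compromised'' notion of recursiveness (where $q^{[e]}$ need not even be partially decidable) noted after Definition~\ref{DefRecursiveTreeSkeletons}.

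The main obstacle I anticipate is not any single calculation but ensuring that the realizer $e$ disclosed by O on the \emph{input} side of the outer pi is the \emph{same} object that the T-predicate and result-extracting function were implemented against in Section~\ref{MainResult}. In other words, the crux is the coherence between (i) O's obligation to exhibit the canonical realizer $e = \mathscr{G}_{\hat{\oc}N\rightarrowtriangle N}(\uplus\nu)$ for his input, (ii) the definition of a realizer for a \emph{partial function} via $\mathrm{ext}(\uplus\phi)$, and (iii) the dependent np-game $\mathcal{T}$ which tests $T(\mathrm{read}(\uplus\sigma),\mathrm{read}(\uplus\tau),\mathrm{read}(\uplus\gamma))$. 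I would need to argue that $e$ realizes $\mathrm{ext}(\uplus\nu)$ in precisely the sense used to build $\mathcal{T}$, so that the predicate genuinely holds for the witnessed triple; this is where the earlier ``game-semantic art'' of forcing O to be total, effective \emph{and} to hand over a realizer pays off, since it is exactly this disclosure that lets P validate CT by a copy-cat without ever having to compute a realizer herself. Once this coherence is pinned down, the remaining equalities are the same bookkeeping on tags already carried out for pi and sigma, and the theorem, together with its consistency corollary, follows.
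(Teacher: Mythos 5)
Your proposal is correct and follows essentially the same route as the paper: you exhibit the witnessing t-skeleton $\uplus\mathrm{ct} :: \mathcal{CT}$ whose component at each $(\uplus\phi^\dagger, e)$ pairs the copy-catted numeral $\uplus\{\underline{e^\ddagger}\}$ with the evident DoWRWLI computing the code $c$ of the computation history of $\uplus\phi$ on $\underline{n}$ (so that $T(e^\ddagger,n,c)$ and $\underline{U(c)}_{\boldsymbol{1}} = \uplus\phi \bullet \underline{n}_{\boldsymbol{1}}$ hold), and conclude by noting $\uplus\mathrm{ct}$ is winning and recursive. The coherence issue you flag is real but is already discharged by the paper's setup in Section~\ref{MainResult}, where realizers for functions are \emph{defined} as realizers for t-skeletons via $\mathrm{ext}$ and the T-predicate and $U$ are implemented against exactly those realizers.
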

\begin{proof}
We have given a model of MLTT equipped with the T-predicate and unit-, empty-, N-, sigma-, pi- and Id-types in $\mathbb{NPG}_{\mathrm{wrw}}^{\mathrm{wo}}$.
Hence, it remains to validate CT in $\mathbb{NPG}_{\mathrm{wrw}}^{\mathrm{wo}}$.
Let us write $\mathcal{CT} \in \mathbb{NPG}_{\mathrm{wrw}}^{\mathrm{wo}}$ for the interpretation of CT (\ref{CTinMLTT}) in $\mathbb{NPG}_{\mathrm{wrw}}^{\mathrm{wo}}$.

As sketched in Section~\ref{GameSemanticRealizability}, we provide the t-skeleton $\uplus \mathrm{ct} :: \mathcal{CT}$ whose component $\mathrm{ct}_{(\uplus \phi^\dagger, e)}$ for each $(\uplus \phi^\dagger, e) \in \mathscr{R}^{\mathrm{cp}}_{\mathrm{wr}}(\hat{\oc} (\hat{\oc} N \rightarrowtriangle N))$ is the pairing $\langle \uplus \{ \underline{e^\ddagger} \},  \mathrm{ct}' \rangle$ up to `tags,' where $e^\ddagger \in \pi_2(\mathscr{R}^{\mathrm{cp}}_{\mathrm{wr}}(\hat{\oc} N \rightarrowtriangle N))$ is the canonical realizer for $\uplus \phi$, and $\mathrm{ct}'$ is the evident DoWRWLI that essentially computes the code $c \in \mathbb{N}$ of the computational history of $\uplus \phi$ applied to a given t-skeleton $\underline{n}_{\boldsymbol{1}}$ ($n \in \mathbb{N}$), so that $T(e^\ddagger, n, c)$ and $\underline{U (c)}_{\boldsymbol{1}} = \uplus \phi \bullet \underline{n}_{\boldsymbol{1}}$ hold.
Finally, $\uplus \mathrm{ct}$ is clearly winning and recursive, which completes the proof. 
\end{proof}

Let us emphasize that our model of MLTT equipped with CT in the CwF $\mathbb{NPG}_{\mathrm{wrw}}^{\mathrm{wo}}$ is \emph{constructive} in the sense that every t-skeleton in $\mathbb{NPG}_{\mathrm{wrw}}^{\mathrm{wo}}$ is recursive. 
In other words, we have established a `constructive justification' of CT within MLTT.

At last, we are now ready to solve the open problem: 
\begin{corollary}[Consistency of MLTT with CT]
MLTT is consistent with CT.
\end{corollary}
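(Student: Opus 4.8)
The plan is to derive the corollary directly from Theorem~\ref{ThmGameSemanticsOfMLTTPlusCT} together with the observation that the model $\mathbb{NPG}_{\mathrm{wrw}}^{\mathrm{wo}}$ \emph{refutes} empty-type. Recall that ``MLTT is consistent with CT'' means that MLTT augmented with CT~(\ref{CTinMLTT}) as an axiom does not derive falsity, i.e.\ there is no closed term $\mathsf{\vdash a : 0}$ of empty-type in MLTT plus CT. I would argue by contraposition: by Theorem~\ref{ThmGameSemanticsOfMLTTPlusCT} the CwF $\mathbb{NPG}_{\mathrm{wrw}}^{\mathrm{wo}}$ is a sound model of MLTT plus CT, so by soundness of the CwF-semantics \cite{hofmann1997syntax} any such closed term $\mathsf{\vdash a : 0}$ would yield an interpretation $\llbracket \mathsf{a} \rrbracket \in \mathrm{Tm}(\boldsymbol{1}, \{ \boldsymbol{0} \}_{\boldsymbol{1}}) = \mathcal{TS}_{\mathrm{wr}}(\Pi_{\mathrm{wrw}}(\boldsymbol{1}, \{ \boldsymbol{0} \}))$, where $\{ \boldsymbol{0} \}_{\boldsymbol{1}}$ is the constant dependent np-game on $\boldsymbol{1}$ valued at the empty np-game $\boldsymbol{0}$ (Theorem~\ref{ThmGameSemanticZeroType}). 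Thus it suffices to show that this term-set is empty.

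The heart of the argument is the computation $\Pi_{\mathrm{wrw}}(\boldsymbol{1}, \{ \boldsymbol{0} \}) = \ell\Pi_{\mathrm{wrw}}(\hat{\oc}\boldsymbol{1}, \{ \boldsymbol{0} \}^\ddagger) = \emptyset$. By Definition~\ref{DefFoWRWLIs}, every FoWRWLI in $\mathscr{F}_{\mathrm{wrw}}(\hat{\oc}\boldsymbol{1}, \{ \boldsymbol{0} \}^\ddagger)$ is a family $\phi = (\phi_{(\gamma, e)})_{(\gamma, e) \in \mathscr{R}^{\mathrm{cp}}_{\mathrm{wr}}(\hat{\oc}\boldsymbol{1})}$ whose codomain t-skeletons satisfy $\mathrm{cod}_\phi(\gamma, e) \in \mathcal{TS}_{\mathrm{wr}}(\boldsymbol{0})$. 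Now $\mathcal{TS}_{\mathrm{wr}}(\boldsymbol{0}) = \emptyset$: the only t-skeleton on the empty np-game is $\bot = \{ \boldsymbol{\epsilon}, q \}$ (Example~\ref{ExamplesOfWellOpenedNonstandardPredicativeGames}), and $\bot$ fails totality since the odd-length position $q \in \bot^{\mathrm{Odd}}$ has no response $qn \in \bot$, so $\bot$ is not winning (Definition~\ref{DefConstraintsOnTSkeletons}). Consequently no family with a nonempty index set can be an FoWRWLI into $\boldsymbol{0}$, whence $\mathscr{F}_{\mathrm{wrw}}(\hat{\oc}\boldsymbol{1}, \{ \boldsymbol{0} \}^\ddagger)$ is empty and so is $\Pi_{\mathrm{wrw}}(\boldsymbol{1}, \{ \boldsymbol{0} \})$. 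Therefore $\mathrm{Tm}(\boldsymbol{1}, \{ \boldsymbol{0} \}_{\boldsymbol{1}}) = \emptyset$, contradicting the existence of $\llbracket \mathsf{a} \rrbracket$; hence no closed term of empty-type exists in MLTT plus CT, which is exactly consistency.

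The step requiring the most care, which I would foreground, is ensuring that the \emph{trivial} t-skeleton $\top = \{ \boldsymbol{\epsilon} \}$ does not sneak in as an interpretation of empty-type. Since $\top$ has no odd-length positions, it is vacuously winning as an abstract t-skeleton, so one must check it cannot legitimately inhabit $\Pi_{\mathrm{wrw}}(\boldsymbol{1}, \{ \boldsymbol{0} \})$. By Definition~\ref{DefDoWRWLIs} the value $\uplus\phi = \top$ arises only from the \emph{empty} FoWRWLI, which exists precisely when the index set $\mathscr{R}^{\mathrm{cp}}_{\mathrm{wr}}(\hat{\oc}\boldsymbol{1})$ is empty. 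But $\boldsymbol{1} = \{ \top \}$ gives $\hat{\oc}\boldsymbol{1} = \{ \top \}$ with $\top$ winning and recursive, so $\mathscr{R}^{\mathrm{cp}}_{\mathrm{wr}}(\hat{\oc}\boldsymbol{1}) \neq \emptyset$; the empty family is therefore unavailable over this nonempty index, and $\top$ does not occur. This asymmetry between domain and codomain, namely the nonemptiness of $\mathscr{R}^{\mathrm{cp}}_{\mathrm{wr}}(\hat{\oc}\boldsymbol{1})$ against the emptiness of $\mathcal{TS}_{\mathrm{wr}}(\boldsymbol{0})$, is exactly the game-semantic manifestation of ``falsity has no proof,'' and verifying it is the only genuinely non-routine point, the remaining implications being immediate from soundness of the established model.
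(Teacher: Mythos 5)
Your proposal is correct and takes essentially the same route as the paper: the paper's one-line proof likewise combines Theorem~\ref{ThmGameSemanticsOfMLTTPlusCT} with the observation that no total (hence no winning) t-skeleton exists on $\boldsymbol{0}$. Your additional check that the trivial t-skeleton $\top$ cannot inhabit $\Pi_{\mathrm{wrw}}(\boldsymbol{1}, \{\boldsymbol{0}\})$ --- because $\mathscr{R}^{\mathrm{cp}}_{\mathrm{wr}}(\hat{\oc}\boldsymbol{1}) \neq \emptyset$ rules out the empty FoWRWLI --- is exactly the detail the paper leaves implicit, and it is verified correctly.
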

\begin{proof}
The corollary follows from Theorem~\ref{ThmGameSemanticsOfMLTTPlusCT} because there is no total, let alone winning, t-skeleton on the empty np-game $\boldsymbol{0}$.
\end{proof}

\section{Conclusion and future work}
\label{ConclusionAndFutureWork}
We have proved consistency of MLTT with CT, which has been a long-standing open problem in constructive mathematics, by a novel realizability model of MLTT plus CT \`{a} la game semantics. 
Our main technical highlight is to resolve the dilemma between intensionality and extensionality in the consistency problem, which existing realizability models cannot overcome, by taking advantage of some distinguishing features of game semantics: the distinction and the asymmetry between O and P.

It is also worth noting that we have validated CT \emph{constructively} in the sense that an `effective' morphism inhabits the interpretation of CT.  

Methodologically, the present work has demonstrated that game semantics is a powerful semantic method not only for full abstraction/completeness problems but also for the meta-theoretic study on foundations of constructive mathematics. 

As future work, we plan to extend our game semantics of MLTT to \emph{homotopy type theory (HoTT)} \cite{hottbook} and apply the semantics to the study of HoTT similarly to the present work. 
We would also like to extend the semantics to other type constructions such as \emph{well-founded tree types} \cite{martin1998intuitionistic} and \emph{induction-recursion} \cite{dybjer2000general}.

\section*{Acknowledgements}
The author acknowleges the financial support from Funai Overseas Scholarship. 
He is also grateful to Thomas Streicher and Andrew Swan for fruitful discussions especially because the starting point of the present work was when the author leant from them that consistency of MLTT with CT has been an open problem.

\section*{Conflict of interest statement}
The author states that there is no conflict of interest.
%%%%%%%%%%%%%%%%%%%%%%%%%%%%%%%%%%%%%%%%%%%%%%%%%%%%%%%%%%%%%
%%                  The Bibliography                       %%
%%                                                         %%
%%  Bmc_mathpys.bst  will be used to                       %%
%%  create a .BBL file for submission.                     %%
%%  After submission of the .TEX file,                     %%
%%  you will be prompted to submit your .BBL file.         %%
%%                                                         %%
%%                                                         %%
%%  Note that the displayed Bibliography will not          %%
%%  necessarily be rendered by Latex exactly as specified  %%
%%  in the online Instructions for Authors.                %%
%%                                                         %%
%%%%%%%%%%%%%%%%%%%%%%%%%%%%%%%%%%%%%%%%%%%%%%%%%%%%%%%%%%%%%

% if your bibliography is in bibtex format, use those commands:
\bibliographystyle{bmc-mathphys} % Style BST file (bmc-mathphys, vancouver, spbasic).
\bibliography{CategoricalLogic,GamesAndStrategies,RecursionTheory,PCF,TypeTheoriesAndProgrammingLanguages,HoTT,GoI,LinearLogic}      % Bibliography file (usually '*.bib' )
% for author-year bibliography (bmc-mathphys or spbasic)
% a) write to bib file (bmc-mathphys only)
% @settings{label, options="nameyear"}
% b) uncomment next line
%\nocite{label}

% or include bibliography directly:
% \begin{thebibliography}
% \bibitem{b1}
% \end{thebibliography}

%%%%%%%%%%%%%%%%%%%%%%%%%%%%%%%%%%%
%%                               %%
%% Figures                       %%
%%                               %%
%% NB: this is for captions and  %%
%% Titles. All graphics must be  %%
%% submitted separately and NOT  %%
%% included in the Tex document  %%
%%                               %%
%%%%%%%%%%%%%%%%%%%%%%%%%%%%%%%%%%%

%%
%% Do not use \listoffigures as most will included as separate files

%%%%%%%%%%%%%%%%%%%%%%%%%%%%%%%%%%%
%%                               %%
%% Tables                        %%
%%                               %%
%%%%%%%%%%%%%%%%%%%%%%%%%%%%%%%%%%%

%% Use of \listoftables is discouraged.
%%

%%%%%%%%%%%%%%%%%%%%%%%%%%%%%%%%%%%
%%                               %%
%% Additional Files              %%
%%                               %%
%%%%%%%%%%%%%%%%%%%%%%%%%%%%%%%%%%%

\appendix
\section{Intensional Martin-L\"{o}f type theory}
\label{MLTT}
The difference between the \emph{intensional} and the \emph{extensional} variants of MLTT is that there is only the equality in the form of judgements or \emph{judgmental equality}\footnote{Judgmental equality is modulo \emph{$\alpha$-equivalence} or renaming of \emph{bound variables} \cite{jacobs1999categorical,hofmann1997syntax}.} in the extensional one, while the intensional one in addition has \emph{identity types} for another kind of equality, called \emph{propositional equality}, which is to be witnessed by terms.
See Appendix~\ref{IdentityTypes} for the details. 
It is easy to observe from the rules of identity types that judgmentally equal terms are also propositionally equal, but not vice versa. 
Formally, the extensional variant is the intensional one equipped with the axiom of \emph{equality reflection}, which derives judgmental equality from propositional one \cite{martin1982constructive} (n.b., the two kinds of equalities coincide in the extensional variant).

In this appendix, we briefly review the syntax of the intensional variant, which we call \emph{MLTT}.
We first recall \emph{contexts} in Section~\ref{Contexts} and \emph{structural rules} in Section~\ref{StructuralRules}. 
We then recall each type construction in Sections~\ref{UnitType}-\ref{IdentityTypes}.
Along the syntax, we also recall the \emph{interpretation} $\llbracket \_ \rrbracket$ of MLTT in an arbitrary CwF $\mathcal{C} = (\mathcal{C}, \mathrm{Ty}, \mathrm{Tm}, \_\{\_\}, T, \_.\_, \mathrm{p}, \mathrm{v}, \langle\_,\_\rangle_\_)$ \cite{hofmann1997syntax} fixed throughout this appendix.

\if0
\subsection{Judgements}
\label{Judgements}
MLTT is a formal system that is similar to \emph{natural deduction} \cite{gentzen1935untersuchungen,troelstra2000basic} except that vertices of a derivation (tree) are \emph{\bfseries judgements} (not formulas), for which we usually write $\mathcal{J}$ (possibly with subscripts/superscripts). There are the following six kinds of judgements (followed by their intended meanings):
\begin{itemize}

\item $\mathsf{\vdash \Gamma \ ctx}$ ($\mathsf{\Gamma}$ is a \emph{\bfseries context}); 

\item $\mathsf{\Gamma \vdash A \ type}$ ($\mathsf{A}$ is a \emph{\bfseries type} in the context $\mathsf{\Gamma}$);

\item $\mathsf{\Gamma \vdash a : A}$ ($\mathsf{a}$ is a \emph{\bfseries term} (or \emph{\bfseries program}) of the type $\mathsf{A}$ in the context $\mathsf{\Gamma}$);

\item $\mathsf{\vdash \Gamma = \Delta \ ctx}$ ($\mathsf{\Gamma}$ and $\mathsf{\Delta}$ are \emph{\bfseries judgmentally equal} contexts);

\item $\mathsf{\Gamma \vdash A = B \ type}$ ($\mathsf{A}$ and $\mathsf{B}$ are \emph{\bfseries judgmentally equal} types in the context $\mathsf{\Gamma}$);

\item $\mathsf{\Gamma \vdash a = a' : A}$ ($\mathsf{a}$ and $\mathsf{a'}$ are \emph{\bfseries judgmentally equal} terms of the type $\mathsf{A}$ in the context $\mathsf{\Gamma}$).

\end{itemize}

Thus, MLTT consists of \emph{axioms} $\frac{}{ \ \mathcal{J} \ }$ and \emph{(inference) rules} $\frac{ \ \mathcal{J}_1 \ \mathcal{J}_2 \dots \mathcal{J}_k \ }{ \mathcal{J}' }$, which are to make a \emph{conclusion} from \emph{hypotheses} by constructing a derivation exactly as in natural deduction.
In Sect.~\ref{Contexts}--\ref{Universes}, we present axioms and rules of MLTT.

\fi

Each type construction in MLTT is defined in terms of \emph{\bfseries formation}, \emph{\bfseries introduction}, \emph{\bfseries elimination} and \emph{\bfseries computation} rules.
Roughly, the formation rule stipulates how to form the type, and the introduction rule defines terms\footnote{Strictly speaking, the introduction rule defines \emph{canonical} terms of the type, which in turn defines terms of the type; see \cite{martin1984intuitionistic,dybjer2016intuitionistic,nordstrom1990programming} on this point.} of the type. 
%(see Sect.~\ref{MeaningExplanation}) of the type.
On the other hand, the elimination and the computation rules describe how to consume the terms and the result of the consumption (in the form of equations), respectively, both of which are justified by the introduction rule.

%\begin{notation}
%Henceforth, let Greek capital letters $\mathsf{\Gamma}$, $\mathsf{\Delta}$, $\mathsf{\Theta}$, etc. range over contexts, capital letters \textsf{A}, \textsf{B}, \textsf{C}, etc. over types, and lower-case letters \textsf{a}, \textsf{b}, \textsf{c}, etc. over terms. 
%Strictly speaking, types, terms and contexts are identified up to \emph{$\alpha$-equivalence} $\equiv$, i.e., renaming of bound variables, as usual \cite{jacobs1999categorical,hofmann1997syntax,barendregt1984lambda}.
%\end{notation}

\subsection{Contexts}
\label{Contexts}
A \emph{\bfseries context} is a finite sequence $\mathsf{x_1 : A_1, x_2 : A_2, \dots, x_n : A_n}$ of pairs $(\mathsf{x_i}, \mathsf{A_i})$ of a variable $\mathsf{x_i}$ and a type $\mathsf{A_i}$ such that the variables $\mathsf{x_1, x_2, \dots, x_n}$ are pairwise distinct. 
%It intuitively represents the `context' that variables $\mathsf{x_i}$ are assigned the types $\mathsf{A_i}$ for $i = 1, 2, \dots, n$. 
%We usually write $\Gamma$, $\Delta$, $\Theta$, etc. for a context; in particular 
Let $\mathsf{\diamondsuit}$ represent the \emph{empty context}, i.e., the empty sequence $\bm{\epsilon}$. 
We often omit contexts (and sometimes the \emph{turnstile} $\vdash$) in judgements, especially the empty context $\mathsf{\diamondsuit}$.

%The judgement $\mathsf{\vdash \Gamma \ ctx}$ is intended to be a formal notation to represent that $\mathsf{\Gamma}$ is a well-defined context. Thus, we have the following axiom and inference rule:
We have the following axiom and rules for contexts:
\begin{mathpar}
\AxiomC{}
\LeftLabel{\textsc{(\textsc{Ctx-Emp})}}
\UnaryInfC{$\mathsf{\vdash \diamondsuit \ ctx}$}
\DisplayProof \and
\AxiomC{$\mathsf{\Gamma \vdash A \ type}$}
\LeftLabel{(\textsc{Ctx-Ext})}
\UnaryInfC{$\mathsf{\vdash \Gamma, x : A \ ctx}$}
\DisplayProof \and
\AxiomC{$\mathsf{\vdash \Gamma = \Delta \ ctx}$}
\AxiomC{$\mathsf{\Gamma \vdash A = B \ type \ }$}
\LeftLabel{(\textsc{Ctx-ExtEq})}
\BinaryInfC{$\mathsf{\vdash \Gamma, x : A = \Delta, y : B \ ctx \ }$}
\DisplayProof
\end{mathpar}
where $\mathsf{x}$ (resp. $\mathsf{y}$) does not occur in $\mathsf{\Gamma}$ (resp. $\mathsf{\Delta}$). 

The axiom Ctx-Emp and the rule Ctx-Ext define that contexts are exactly finite lists of pairs of a variable and a type.
On the other hand, the rule Ctx-ExtEq is an instance of a \emph{congruence rule} because it states that judgmental equality $=$ on contexts is preserved under `context extension' given by Ctx-Ext. 
Note also that we have $\mathsf{\vdash \diamondsuit = \diamondsuit \ ctx}$ by Ctx-Emp and the rule Ctx-EqRefl in the next section.

\begin{convention}
As in \cite{hofmann1997syntax}, let us skip writing congruence rules for other constructions.
\end{convention}

\subsection{Structural rules}
\label{StructuralRules}
Next, let us collect the rules applicable to all types as \emph{\bfseries structural rules}:
\begin{mathpar}
\AxiomC{$\mathsf{\vdash x_1 : A_1, x_2 : A_2, \dots, x_n : A_n \ ctx}$}
\LeftLabel{(\textsc{Var})}
\RightLabel{($j \in \{ 1, 2, \dots, n \}$)}
\UnaryInfC{$\mathsf{x_1 : A_1, x_2 : A_2, \dots, x_n : A_n \vdash x_j : A_j}$}
\DisplayProof \and
\AxiomC{$\mathsf{\vdash \Gamma \ ctx}$}
\LeftLabel{(\textsc{Ctx-EqRefl})}
\UnaryInfC{$\mathsf{\vdash \Gamma = \Gamma \ ctx}$}
\DisplayProof \and
\AxiomC{$\mathsf{\vdash \Gamma = \Delta \ ctx}$}
\LeftLabel{(\textsc{Ctx-EqSym})}
\UnaryInfC{$\mathsf{\vdash \Delta = \Gamma \ ctx}$}
\DisplayProof \and
\AxiomC{$\mathsf{\vdash \Gamma = \Delta \ ctx}$}
\AxiomC{$\mathsf{\vdash \Delta = \Theta \ ctx}$}
\LeftLabel{(\textsc{Ctx-EqTrans})}
\BinaryInfC{$\mathsf{\vdash \Gamma = \Theta \ ctx}$}
\DisplayProof \and
\AxiomC{$\mathsf{\Gamma \vdash A \ type}$}
\LeftLabel{(\textsc{Ty-EqRefl})}
\UnaryInfC{$\mathsf{\Gamma \vdash A = A \ type}$}
\DisplayProof \and
\AxiomC{$\mathsf{\Gamma \vdash A = B \ type}$}
\LeftLabel{(\textsc{Ty-EqSym})}
\UnaryInfC{$\mathsf{\Gamma \vdash B = A \ type}$}
\DisplayProof \and
\AxiomC{$\mathsf{\Gamma \vdash A = B \ type}$}
\AxiomC{$\mathsf{\Gamma \vdash B = C \ type}$}
\LeftLabel{(\textsc{Ty-EqTrans})}
\BinaryInfC{$\mathsf{\Gamma \vdash A = C \ type}$}
\DisplayProof \and
\AxiomC{$\mathsf{\Gamma \vdash a : A}$}
\LeftLabel{(\textsc{Tm-EqRefl})}
\UnaryInfC{$\mathsf{\Gamma \vdash a = a : A}$}
\DisplayProof \and
\AxiomC{$\mathsf{\Gamma \vdash a = a' : A}$}
\LeftLabel{(\textsc{Tm-EqSym})}
\UnaryInfC{$\mathsf{\Gamma \vdash a' = a : A}$} 
\DisplayProof \and
\AxiomC{$\mathsf{\Gamma \vdash a = a' : A}$}
\AxiomC{$\mathsf{\Gamma \vdash a' = a'' : A}$}
\LeftLabel{(\textsc{Tm-EqTrans})}
\BinaryInfC{$\mathsf{\Gamma \vdash a = a'' : A}$}
\DisplayProof \and
\AxiomC{$\mathsf{\vdash \Gamma = \Delta \ ctx}$}
\AxiomC{$\mathsf{\Gamma \vdash A \ type}$}
\LeftLabel{(\textsc{Ty-Conv})}
\BinaryInfC{$\mathsf{\Delta \vdash A \ type}$}
\DisplayProof \and
\AxiomC{$\mathsf{\Gamma \vdash a : A}$}
\AxiomC{$\mathsf{\vdash \Gamma = \Delta \ ctx}$}
\AxiomC{$\mathsf{\Gamma \vdash A = B \ type}$}
\LeftLabel{(\textsc{Tm-Conv})}
\TrinaryInfC{$\mathsf{\Delta \vdash a : B}$}
\DisplayProof
\end{mathpar}

The rule Var states the reasonable idea that we may give an element $\mathsf{x_j : A_j}$ if it occurs in the context just by `copy-catting' it.
The next nine rules stipulate that every judgmental equality $=$ is an equivalence relation.
Finally, the rules Ty-Conv and Tm-Conv formalize the natural phenomenon that judgements are preserved under the exchange of judgmentally equal contexts and/or types. 

It is then easy to see that the following \emph{\bfseries weakening} and \emph{\bfseries substitution} rules are \emph{admissible} in MLTT, but it is convenient to present them explicitly:
\begin{mathpar}
\AxiomC{$\mathsf{\Gamma, \Delta \vdash \mathsf{J}}$}
\AxiomC{$\mathsf{\Gamma \vdash A \ type}$}
\LeftLabel{(\textsc{Weak})}
\BinaryInfC{$\mathsf{\Gamma, x : A, \Delta \vdash \mathsf{J}}$}
\DisplayProof \and
\AxiomC{$\mathsf{\Gamma, x : A, \Delta \vdash \mathsf{J}}$}
\AxiomC{$\mathsf{\Gamma \vdash a : A}$}
\LeftLabel{(\textsc{Subst})}
\BinaryInfC{$\mathsf{\Gamma, \Delta[a/x] \vdash \mathsf{J}[a/x]}$}
\DisplayProof
\end{mathpar}
where $\mathsf{x}$ does not occur in $\mathsf{\Gamma}$ or $\mathsf{\Delta}$ for Weak, and not in $\mathsf{\Gamma}$ for Subst, and $\mathsf{\mathsf{J}[a/x]}$ (resp. $\mathsf{\Delta[a/x]}$) is the \emph{capture-free substitution} \cite{hankin1994lambda} of $\mathsf{a}$ for $\mathsf{x}$ in $\mathsf{J}$\footnote{Here, $\mathsf{J}$ denotes the RHS of the turnstile $\vdash$ in an arbitrary judgement.} (resp. $\Delta$). 

Note that a priori we cannot define an interpretation of MLTT by induction on deductions since a derivation of a judgement in MLTT is not unique in the presence of the rules Ty-Con and Tm-Con \cite{hofmann1997syntax}. 
For this point, a standard approach is to define an interpretation $\llbracket \_ \rrbracket$ on \emph{pre-syntax}, which is \emph{partial}, by induction on the length of pre-syntax, and show that it is well-defined on every `valid pre-syntax,' i.e., judgement, and preserves judgmental equality by the semantic equality.
By this \emph{soundness theorem} \cite{hofmann1997syntax}, a posteriori we may describe the interpretation $\llbracket \_ \rrbracket$ of the syntax by induction on derivation of judgements:
\begin{definition}[Interpretation of contexts and structural rules in CwFs \cite{hofmann1997syntax}]
\label{DefMLTTInCwFs}
The interpretation $\llbracket \_ \rrbracket$ of contexts and structural rules in a CwF $\mathcal{C}$ is defined by:
\begin{itemize}

\item (\textsc{Ct-Emp}) $\llbracket \mathsf{\vdash \diamondsuit \ ctx} \rrbracket \colonequals T$;

\item (\textsc{Ct-Ext}) $\llbracket \mathsf{\vdash \Gamma, x : A \ ctx} \rrbracket \colonequals \llbracket \mathsf{\vdash \Gamma \ ctx} \rrbracket . \llbracket \mathsf{\Gamma \vdash A \ type} \rrbracket$;

\item (\textsc{Var}) $\llbracket \mathsf{\Gamma, x: A \vdash x : A} \rrbracket \colonequals \mathrm{v}_{\llbracket \mathsf{A} \rrbracket}$; \\ $\llbracket \mathsf{\Gamma, x: A, \Delta, y : B \vdash x : A} \rrbracket \colonequals \llbracket \mathsf{\Gamma, x: A, \Delta \vdash x : A} \rrbracket \{ \mathrm{p}(\llbracket \mathsf{\Gamma, x : A, \Delta \vdash B \ type} \rrbracket) \}_{\llbracket \mathsf{A} \rrbracket}$;

\item (\textsc{Ty-Con}) $\llbracket \mathsf{\Delta \vdash A \ type} \rrbracket \colonequals \llbracket \mathsf{\Gamma \vdash A \ type} \rrbracket$;

\item (\textsc{Tm-Con}) $\llbracket \mathsf{\Delta \vdash a : B} \rrbracket \colonequals \llbracket \mathsf{\Gamma \vdash a : A} \rrbracket$.

\end{itemize}
%where the hypotheses of the rules are as presented in Sect.~\ref{MLTT}.
\end{definition}

In the rest of this appendix, we recall specific type constructions in MLTT and the interpretation of them in an arbitrary CwF.

\subsection{Unit-type}
\label{UnitType}
Let us begin with the simplest type, called \emph{\bfseries unit-type} (or \emph{\bfseries one-type}) $\mathsf{1}$, which is the type that has just one term $\mathsf{\top}$.\footnote{Strictly speaking, $\mathsf{1}$ has just one \emph{canonical} term $\mathsf{\top}$. However, for simplicity, let us be casual about the distinction between canonical and non-canonical terms in the present work, and we usually call canonical/non-canonical terms just \emph{terms}.}
Therefore, from the logical point of view, it represents the `simplest true formula.'

Rules of unit-type are the following:
\begin{mathpar}
\AxiomC{$\mathsf{\vdash \Gamma \ ctx}$}
\LeftLabel{(\textsc{$\mathsf{1}$-Form})}
\UnaryInfC{$\mathsf{\Gamma \vdash 1 \ type}$}
\DisplayProof \and
\AxiomC{$\mathsf{\vdash \Gamma \ ctx}$}
\LeftLabel{(\textsc{$\mathsf{1}$-Intro})}
\UnaryInfC{$\mathsf{\Gamma \vdash \top : \bm{1}}$}
\DisplayProof \and
\AxiomC{$\mathsf{\Gamma \vdash t : 1}$}
\LeftLabel{(\textsc{$\mathsf{1}$-Uniq})}
\UnaryInfC{$\mathsf{\Gamma \vdash t = \top : 1}$}
\DisplayProof \and
\AxiomC{$\mathsf{\Gamma, x : \bm{1} \vdash C \ type}$}
\AxiomC{$\mathsf{\Gamma \vdash c : C[\top / x]}$}
\AxiomC{$\mathsf{\Gamma \vdash t : \bm{1}}$}
\LeftLabel{(\textsc{$\mathsf{1}$-Elim})}
\TrinaryInfC{$\mathsf{\Gamma \vdash R^{\bm{1}}(C, c, t) : C[t/x]}$}
\DisplayProof \and
\AxiomC{$\mathsf{\Gamma, x : \bm{1} \vdash C \ type}$}
\AxiomC{$\mathsf{\Gamma \vdash c : C[\top/x]}$}
\LeftLabel{(\textsc{$\mathsf{1}$-Comp})}
\BinaryInfC{$\mathsf{\Gamma \vdash R^{\bm{1}}(C, c, \top) = c : C[\top/x]}$}
\DisplayProof
\end{mathpar}
Note that $\mathsf{1}$-Uniq implies $\mathsf{1}$-Elim and $\mathsf{1}$-Comp if we define $\mathsf{R^{1}(C, c, t) \colonequals c}$.

The formation rule $\mathsf{1}$-Form states that unit-type is \emph{atomic}, i.e., we may form it without assuming any other types.
%It has the rank $1$ as other atomic types do, which is just a convention.
The introduction rule $\mathsf{1}$-Intro defines that it has just one term, viz., $\mathsf{\top}$.
Then, the uniqueness rule $\mathsf{1}$-Uniq makes sense, from which the remaining rules $\mathsf{1}$-Elim and $\mathsf{1}$-Comp follow.

\begin{definition}[Interpretation of unit-type in CwFs]
The interpretation $\llbracket \_ \rrbracket$ of unit-type in a CwF $\mathcal{C}$ that supports unit is given by:

\begin{itemize} 

\item (\textsc{$\mathsf{1}$-Form}) $\llbracket \mathsf{\Gamma \vdash 1 \ type} \rrbracket \colonequals \boldsymbol{1}^{\llbracket \mathsf{\Gamma} \rrbracket}$;

\item (\textsc{$\mathsf{1}$-Intro}) $\llbracket \mathsf{\Gamma \vdash \top : 1} \rrbracket \colonequals \top_{\llbracket \mathsf{\Gamma} \rrbracket}$;

\item (\textsc{$\mathsf{1}$-Elim}) $\llbracket \mathsf{\Gamma \vdash R^{1}(C, c, t) : C[t/x]} \rrbracket \colonequals \mathcal{R}^{\boldsymbol{1}}_{\llbracket \mathsf{C} \rrbracket}(\llbracket \mathsf{c} \rrbracket, \llbracket \mathsf{t} \rrbracket)$.

\end{itemize}
%where the hypotheses of the rules are as presented in Sect.~\ref{UnitType}.
\end{definition}

\subsection{Empty-type}
\label{EmptyType}
Next, let us recall \emph{\bfseries empty-type} (or \emph{\bfseries zero-type}) $\mathsf{0}$, which is the type that has no terms.
Thus, it corresponds in logic to the `simplest false formula.'

Rules of empty-type are the following:
\begin{mathpar}
\AxiomC{$\mathsf{\vdash \Gamma \ ctx}$}
\LeftLabel{(\textsc{$\mathsf{0}$-Form})}
\UnaryInfC{$\mathsf{\Gamma \vdash \bm{0} \ type}$}
\DisplayProof \and
\AxiomC{$\mathsf{\Gamma, x : \bm{0} \vdash C \ type}$}
\AxiomC{$\mathsf{\Gamma \vdash a : \bm{0}}$}
\LeftLabel{(\textsc{$\mathsf{0}$-Elim})}
\BinaryInfC{$\mathsf{\Gamma \vdash R^{\bm{0}}(C, a) : C[a/x]}$}
\DisplayProof
\end{mathpar}

The formation rule $\mathsf{0}$-Form is similar to $\mathsf{1}$-Form, and the elimination rule $\mathsf{0}$-Elim corresponds in logic to \emph{ex falso}, i.e., `anything follows from a contradiction.'
Empty-type has neither introduction nor computation rule since it has no terms.

\begin{definition}[Interpretation of empty-type in CwFs]
The interpretation $\llbracket \_ \rrbracket$ of empty-type in a CwF $\mathcal{C}$ that supports empty is given by:

\begin{itemize} 

\item (\textsc{$\mathsf{0}$-Form}) $\llbracket \mathsf{\Gamma \vdash 0 \ type} \rrbracket \colonequals \boldsymbol{0}^{\llbracket \mathsf{\Gamma} \rrbracket}$;

\item (\textsc{$\mathsf{0}$-Elim}) $\llbracket \mathsf{\Gamma \vdash R^{0}(C, a) : C[a/x]} \rrbracket \colonequals \mathcal{R}^{\boldsymbol{0}}_{\llbracket \mathsf{C} \rrbracket}(\llbracket \mathsf{a} \rrbracket)$.

\end{itemize}
%where the hypotheses of the rules are as presented in Sect.~\ref{EmptyType}.
\end{definition}

\subsection{N-type}
\label{NaturalNumberType}
We proceed to recall an atomic type of computational significance, \emph{\bfseries natural number type} (or \emph{\bfseries N-type}) $\mathsf{N}$, which is a type of natural numbers.

Rules of N-type are the following:
\begin{footnotesize}
\begin{mathpar}
\AxiomC{$\mathsf{\vdash \Gamma \ ctx}$}
\LeftLabel{(\textsc{$\mathsf{N}$-Form})}
\UnaryInfC{$\mathsf{\Gamma \vdash N \ type}$}
\DisplayProof \and
\AxiomC{$\mathsf{\vdash \Gamma \ ctx}$}
\LeftLabel{(\textsc{$\mathsf{N}$-IntroZ})}
\UnaryInfC{$\mathsf{\Gamma \vdash zero : N}$}
\DisplayProof \and
\AxiomC{$\mathsf{\Gamma \vdash n : N}$}
\LeftLabel{(\textsc{$\mathsf{N}$-IntroS})}
\UnaryInfC{$\mathsf{\Gamma \vdash  succ(n) : N}$}
\DisplayProof \and
\AxiomC{$\mathsf{\Gamma, x : N \vdash C \ type}$}
\AxiomC{$\mathsf{\Gamma \vdash c_z : C[zero/x]}$}
\AxiomC{$\mathsf{\Gamma, x : N, y : C \vdash c_s : C[succ(x)/x]}$}
\AxiomC{$\mathsf{\Gamma \vdash n : N}$}
\LeftLabel{(\textsc{$\mathsf{N}$-Elim})}
\QuaternaryInfC{$\mathsf{\Gamma \vdash R^N(C, c_z, c_s, n) : C[n/x]}$}
\DisplayProof \and
\AxiomC{$\mathsf{\Gamma, x : N \vdash C \ type}$}
\AxiomC{$\mathsf{\Gamma \vdash c_z : C[zero/x]}$}
\AxiomC{$\mathsf{\Gamma, x : N, y : C \vdash c_s : C[succ(x)/x]}$}
\LeftLabel{(\textsc{$\mathsf{N}$-CompZ})}
\TrinaryInfC{$\mathsf{\Gamma \vdash R^N(C, c_z, c_s, zero) = c_z : C[zero/x]}$}
\DisplayProof \and
\AxiomC{$\mathsf{\Gamma, x : N \vdash C \ type}$}
\AxiomC{$\mathsf{\Gamma \vdash c_z : C[zero/x]}$}
\AxiomC{$\mathsf{\Gamma, x : N, y : C \vdash c_s : C[succ(x)/x]}$}
\AxiomC{$\mathsf{\Gamma \vdash n : N}$}
\LeftLabel{(\textsc{$\mathsf{N}$-CompS})}
\QuaternaryInfC{$\mathsf{\Gamma \vdash R^N(C, c_z, c_s, succ(n)) = c_s[n/x, R^N(C, c_z, c_s, n)/y] : C[succ(n)/x]}$}
\DisplayProof
\end{mathpar}
\end{footnotesize}

Again, the formation rule $\mathsf{N}$-Form says that N-type is atomic.
The introduction rules $\mathsf{N}$-IntroZ and $\mathsf{N}$-IntroZ inductively define terms of N-type: $\mathsf{zero}$ (for $0 \in \mathbb{N}$) and $\mathsf{succ(n)}$ if so is $\mathsf{n}$ (for $n \in \mathbb{N} \Rightarrow n+1 \in \mathbb{N}$).
The elimination rule $\mathsf{N}$-Elim represents both \emph{mathematical induction} and \emph{primitive recursion}: To show a predicate $\mathsf{C}$ over $\mathsf{N}$, it suffices to prove $\mathsf{C(zero)}$ and $\mathsf{C(n)} \Rightarrow \mathsf{C(succ(n))}$, or equivalently under the CHIs, to define a (dependent) function $\mathsf{f}$ from $\mathsf{N}$ to $\mathsf{C}$, it suffices to define its outputs $\mathsf{f(zero)}$ on $\mathsf{zero}$ and $\mathsf{f(succ(n))}$ on $\mathsf{succ(n)}$ in terms of $\mathsf{f(n)}$ and $\mathsf{n}$.
The elimination rule makes sense by the introduction rule, i.e., for terms of N-type are only zero and successors.
Finally, the computation rules $\mathsf{N}$-CompZ and $\mathsf{N}$-CompS stipulate the expected behavior of computations given by $\mathsf{N}$-Elim.

\begin{notation}
Given a context $\mathsf{\vdash \Gamma \ ctx}$ and a natural number $n \in \mathbb{N}$, we define the term $\mathsf{\Gamma \vdash \underline{n} : N}$, called the \emph{\bfseries $\bm{n}^{\text{th}}$ numeral} (in the context $\Gamma$), by induction on $n$: $\mathsf{\underline{0} \colonequals zero}$ and $\mathsf{\underline{n+1} \colonequals succ(\underline{n})}$.
That is, the $n^{\text{th}}$ numeral is to represent the number $n$.
\end{notation}

\begin{definition}[Interpretation of N-type in CwFs]
The interpretation $\llbracket \_ \rrbracket$ of N-type in a CwF $\mathcal{C}$ that supports N is given by:

\begin{itemize} 

\item (\textsc{$\mathsf{N}$-Form}) $\llbracket \mathsf{\Gamma \vdash N \ type} \rrbracket \colonequals N^{\llbracket \mathsf{\Gamma} \rrbracket}$;

\item (\textsc{$\mathsf{N}$-IntroZ}) $\llbracket \mathsf{\Gamma \vdash zero : N} \rrbracket \colonequals \underline{0}_{\llbracket \mathsf{\Gamma} \rrbracket}$;

\item (\textsc{$\mathsf{N}$-IntroS}) $\llbracket \mathsf{\Gamma \vdash succ(n) : N} \rrbracket \colonequals \mathrm{v}_N \{ \mathrm{succ}_{\llbracket \mathsf{\Gamma} \rrbracket} \circ \langle \mathrm{id}_{\llbracket \mathsf{\Gamma} \rrbracket}, \llbracket \mathsf{\Gamma \vdash n : N} \rrbracket \rangle \}$;

\item (\textsc{$\mathsf{N}$-Elim}) $\llbracket \mathsf{\Gamma \vdash R^{N}(C, c_z, c_s, n) : C[n/x]} \rrbracket \colonequals \mathcal{R}^{N}_{\llbracket \mathsf{C} \rrbracket}(\llbracket \mathsf{c_z} \rrbracket, \llbracket \mathsf{c_s} \rrbracket) \{ \langle \mathrm{id}_{\llbracket \mathsf{\Gamma} \rrbracket}, \llbracket \mathsf{n} \rrbracket \rangle_N \}$.

\end{itemize}
%where the hypotheses of the rules are as presented in Appendix~\ref{NaturalNumberType}.
\end{definition}

It is easy to see by (the meta-theoretic) mathematical induction that the equation $\llbracket \mathsf{\Gamma \vdash \underline{n} : N} \rrbracket = \underline{n}_{\llbracket \mathsf{\Gamma} \rrbracket}$ holds for any context $\mathsf{\vdash \Gamma \ ctx}$ and natural number $n \in \mathbb{N}$.

\subsection{Pi-types}
\label{DependentFunctionTypes}
Now, let us recall the \emph{\bfseries dependent function types} (or \emph{\bfseries pi-types}) construction $\mathsf{\Pi}$.
In terms of a set-theoretic analogy, the pi-type $\mathsf{\Pi_{x:A}B(x)}$ is something like the set of all (total) functions $\mathsf{f}$ from $\mathsf{A}$ to $\bigcup_{\mathsf{x : A}} \mathsf{B(x)}$ such that $\mathsf{f(a) : B(a)}$ for all $\mathsf{a : A}$, called \emph{dependent functions} from $\mathsf{A}$ to $\mathsf{B}$, where we informally interpret simple types $\mathsf{A}$ and terms $\mathsf{a : A}$ as sets and elements $\mathsf{a \in A}$ of sets, respectively, and dependent types $\mathsf{B}$ on $\mathsf{A}$ as families $\mathsf{(B(x))_{x : A}}$ of sets $\mathsf{B(x)}$.

Rules of pi-types are the following:
\begin{small}
\begin{mathpar}
\AxiomC{$\mathsf{\Gamma \vdash A \ type}$}
\AxiomC{$\mathsf{\Gamma, x : A \vdash B \ type}$}
\LeftLabel{(\textsc{$\mathsf{\Pi}$-Form})}
\BinaryInfC{$\mathsf{\Gamma \vdash \Pi_{x : A}B \ type}$}
\DisplayProof \and
\AxiomC{$\mathsf{\Gamma, x : A \vdash b : B}$}
\LeftLabel{(\textsc{$\mathsf{\Pi}$-Intro})}
\UnaryInfC{$\mathsf{\Gamma \vdash \lambda x^A . b : \Pi_{x : A} B}$}
\DisplayProof \and
\AxiomC{$\mathsf{\Gamma \vdash f : \Pi_{x : A}B}$}
\AxiomC{$\mathsf{\Gamma \vdash a : A}$}
\LeftLabel{(\textsc{$\mathsf{\Pi}$-Elim})}
\BinaryInfC{$\mathsf{\Gamma \vdash f(a) : B[a/x]}$}
\DisplayProof \and
\AxiomC{$\mathsf{\Gamma, x : A \vdash b : B}$}
\AxiomC{$\mathsf{\Gamma \vdash a : A}$}
\LeftLabel{(\textsc{$\mathsf{\Pi}$-Comp})}
\BinaryInfC{$\mathsf{\Gamma \vdash (\lambda x^A . b)(a) = b[a/x] : B[a/x]}$}
\DisplayProof \and
\AxiomC{$\mathsf{\Gamma \vdash f : \Pi_{x : A}B}$}
\LeftLabel{(\textsc{$\mathsf{\Pi}$-Uniq})}
\UnaryInfC{$\mathsf{\Gamma \vdash \lambda x^A . f(x) = f : \Pi_{x : A} B}$}
\DisplayProof
\end{mathpar}
\end{small}
where in \textsc{$\mathsf{\Pi}$-Uniq} the variable $\mathsf{x}$ does not occur free in $\mathsf{f}$.

The formation rule $\mathsf{\Pi}$-Form states that we may form the pi-type $\mathsf{\Pi_{x:A}B}$ from types $\mathsf{A}$ and $\mathsf{B}$, where $\mathsf{B}$ depends on $\mathsf{A}$.
The introduction rule $\mathsf{\Pi}$-Intro defines how to construct terms of $\mathsf{\Pi_{x:A}B}$; it is the ordinary \emph{currying} yet generalized to dependent types.
Then, the elimination and the computation rules $\mathsf{\Pi}$-Elim and $\mathsf{\Pi}$-Comp make sense by the introduction rule.
Finally, the uniqueness rule $\mathsf{\Pi}$-Uniq stipulates that (canonical) terms of pi-types are only $\lambda$-abstractions.

%Let us remark here that MLTT is a \emph{predicative} theory in the sense that there is no impredicative quantification.
%In particular, note that dependent function types embody predicative universal quantification.

\begin{definition}[Interpretation of pi-types in CwFs \cite{hofmann1997syntax}]
The interpretation $\llbracket \_ \rrbracket$ of pi-types in a CwF $\mathcal{C}$ that supports pi is given by:

\begin{itemize} 

\item (\textsc{$\mathsf{\Pi}$-Form}) $\llbracket \mathsf{\Gamma \vdash \Pi_{x : A}B \ type} \rrbracket \colonequals \Pi (\llbracket \mathsf{\Gamma \vdash A \ type} \rrbracket, \llbracket \mathsf{\Gamma, x : A \vdash B \ type} \rrbracket)$;

\item (\textsc{$\mathsf{\Pi}$-Intro}) $\llbracket \mathsf{\Gamma \vdash \lambda x . \ \! b : \Pi_{x : A}B} \rrbracket \colonequals \lambda_{\llbracket \mathsf{A} \rrbracket, \llbracket \mathsf{B} \rrbracket} (\llbracket \mathsf{\Gamma, x : A \vdash b : B} \rrbracket)$;

\item (\textsc{$\mathsf{\Pi}$-Elim}) $\llbracket \mathsf{\Gamma \vdash f(a) : B[a/x]} \rrbracket \colonequals \mathrm{App}_{\llbracket \mathsf{A} \rrbracket, \llbracket \mathsf{B} \rrbracket} (\llbracket \mathsf{\Gamma \vdash f : \Pi_{x : A}B} \rrbracket, \llbracket \mathsf{\Gamma \vdash a : A} \rrbracket)$.

\end{itemize}
%where the hypotheses of the rules are as presented in Sect.~\ref{DependentFunctionTypes}.
\end{definition}

\subsection{Sigma-types}
\label{DependentPairTypes}
Another important type construction is the \emph{\bfseries dependent sum types} (or \emph{\bfseries sigma-types}) construction $\mathsf{\Sigma}$. 
In terms of the set-theoretic analogy again, the sigma-type $\mathsf{\Sigma_{x:A}B(x)}$ represents the set of all pairs $\mathsf{\langle a, b \rangle}$ such that $\mathsf{a : A}$ and $\mathsf{b : B(a)}$, called \emph{dependent pairs} of $\mathsf{A}$ and $\mathsf{B}$.

Rules of sigma-types are the following:
\begin{small}
\begin{mathpar}
\AxiomC{$\mathsf{\Gamma \vdash A \ type}$}
\AxiomC{$\mathsf{\Gamma, x : A \vdash B \ type}$}
\LeftLabel{(\textsc{$\mathsf{\Sigma}$-Form})}
\BinaryInfC{$\mathsf{\Gamma \vdash \Sigma_{x : A}B \ type}$}
\DisplayProof \and
\AxiomC{$\mathsf{\Gamma, x : A \vdash B \ type}$}
\AxiomC{$\mathsf{\Gamma \vdash a : A}$}
\AxiomC{$\mathsf{\Gamma \vdash b : B[a/x]}$}
\LeftLabel{(\textsc{$\mathsf{\Sigma}$-Intro})}
\TrinaryInfC{$\mathsf{\Gamma \vdash \langle a, b \rangle : \Sigma_{x : A} B}$}
\DisplayProof \and
\AxiomC{$\mathsf{\Gamma, z : \Sigma_{x : A}B \vdash C \ type}$}
\AxiomC{$\mathsf{\Gamma, x : A, y : B \vdash g : C[\langle x, y \rangle/z]}$}
\AxiomC{$\mathsf{\Gamma \vdash p : \Sigma_{x : A}B}$}
\LeftLabel{(\textsc{$\mathsf{\Sigma}$-Elim})}
\TrinaryInfC{$\mathsf{\Gamma \vdash R^{\Sigma}([z : \Sigma_{x : A}B]C, [x:A, y:B]g, p) : C[p/z]}$}
\DisplayProof \and
\AxiomC{$\mathsf{\Gamma, z : \Sigma_{x : A}B \vdash C \ type}$}
\AxiomC{$\mathsf{\Gamma, x : A, y : B \vdash g : C[\langle x, y \rangle/z]}$}
\AxiomC{$\mathsf{\Gamma \vdash a : A}$}
\AxiomC{$\mathsf{\Gamma \vdash b : B[a/x]}$}
\LeftLabel{(\textsc{$\mathsf{\Sigma}$-Comp})}
\QuaternaryInfC{$\mathsf{\Gamma \vdash R^{\Sigma}([z : \Sigma_{x : A}B]C, [x:A, y:B]g, \langle a, b \rangle) = g[a/x, b/y] : C[\langle a, b \rangle/z]}$}
\DisplayProof \and
\AxiomC{$\mathsf{\Gamma \vdash p : \Sigma_{x:A}B}$}
\LeftLabel{(\textsc{$\mathsf{\Sigma}$-Uniq})}
\UnaryInfC{$\mathsf{\Gamma \vdash \langle \pi^{A, B}_1(p), \pi^{A, B}_2(p) \rangle = p : \Sigma_{x:A}B}$}
\DisplayProof
\end{mathpar}
\end{small}
where 
\begin{align*}
&\mathsf{\Gamma \vdash \pi^{A, B}_1(p) \stackrel{\mathrm{df. }}{\equiv} R^\Sigma ([z : \Sigma_{x : A}B]A, [x:A, y:B]x, p) : A} \\
&\mathsf{\Gamma \vdash \pi^{A, B}_2(p) \stackrel{\mathrm{df. }}{\equiv} R^\Sigma([z : \Sigma_{x : A}B]B[\pi^{A, B}_1(z)/x], [x:A, y:B]y, p]) : B[\pi^{A, B}_1(p)/x]}
\end{align*}
are \emph{projections} constructed by \textsc{$\mathsf{\Sigma}$-Elim}.

The formation rule $\mathsf{\Sigma}$-Form is the same as that of pi-types.
The introduction rule $\mathsf{\Sigma}$-Intro specifies that terms of a sigma-type $\mathsf{\Sigma_{x : A} B}$ are dependent pairs $\mathsf{\langle a, b \rangle : \Sigma_{x:A}B}$ of terms $\mathsf{a : A}$ and $\mathsf{b : B[a/x]}$.
Again, the elimination and the computation rules $\mathsf{\Sigma}$-Elim and $\mathsf{\Sigma}$-Comp make sense by the introduction rule.
Finally, the uniqueness rule $\mathsf{\Sigma}$-Uniq stipulates that (canonical) terms of sigma-types are only dependent pairs.
%canonical ones, i.e., what are obtained by the introduction rule.

\begin{definition}[Interpretation of sigma-types in CwFs \cite{hofmann1997syntax}]
The interpretation $\llbracket \_ \rrbracket$ of sigma-types in a CwF $\mathcal{C}$ that supports sigma is given by:

\begin{itemize} 

\item (\textsc{$\Sigma$-Form}) $\llbracket \mathsf{\Gamma \vdash \Sigma_{x : A}B \ type} \rrbracket \colonequals \Sigma (\llbracket \mathsf{\Gamma \vdash A \ type} \rrbracket, \llbracket \mathsf{\Gamma, x : A \vdash B \ type} \rrbracket)$;

\item (\textsc{$\Sigma$-Intro}) $\llbracket \mathsf{\Gamma \vdash (a, b) : \Sigma_{x : A}B} \rrbracket \colonequals \mathrm{Pair}_{\llbracket \mathsf{A} \rrbracket, \llbracket \mathsf{B} \rrbracket} \circ \langle \overline{\llbracket \mathsf{\Gamma \vdash a : A} \rrbracket}, \llbracket \mathsf{\Gamma \vdash b : B[a/x] \rrbracket} \rangle_{\llbracket \mathsf{B} \rrbracket}$;

\item (\textsc{$\Sigma$-Elim}) $\llbracket \mathsf{\Gamma \vdash R^{\Sigma}(C, g, p) : C[p/z]} \rrbracket \colonequals \mathcal{R}^{\Sigma}_{\llbracket \mathsf{A} \rrbracket, \llbracket \mathsf{B} \rrbracket, \llbracket \mathsf{C} \rrbracket} (\llbracket \mathsf{\Gamma, x : A, y : B \vdash g : C[(x, y)/z]} \rrbracket) \circ \overline{\llbracket \mathsf{\Gamma \vdash p : \Sigma_{x : A} B} \rrbracket}$,

\end{itemize}
where $\overline{\llbracket \mathsf{\Gamma \vdash a : A} \rrbracket} \colonequals \langle \mathit{id}_{\llbracket \mathsf{\Gamma} \rrbracket}, \llbracket \mathsf{a} \rrbracket \rangle : \llbracket \mathsf{\Gamma} \rrbracket \to \llbracket \mathsf{\Gamma} \rrbracket . \llbracket \mathsf{A} \rrbracket$ and $\overline{\llbracket \mathsf{\Gamma \vdash p : \Sigma_{x : A} B} \rrbracket} \colonequals \langle \mathit{id}_{\llbracket \mathsf{\Gamma} \rrbracket}, \llbracket \mathsf{p} \rrbracket \rangle : \llbracket \mathsf{\Gamma} \rrbracket \to \llbracket \mathsf{\Gamma} \rrbracket . \llbracket \mathsf{\Sigma_{x:A}B} \rrbracket$.
\end{definition}

\subsection{Id-types}
\label{IdentityTypes}
Note that a judgmental equality $\mathsf{\Gamma \vdash a = a' : A}$ is a judgement, not a formula, and therefore it cannot be used in a context or derived by an induction principle such as $\mathsf{N}$-Elim.
To overcome this deficiency, the \emph{\bfseries identity types} (or \emph{\bfseries Id-types}) construction $\mathrm{Id}$ has been introduced.\footnote{We can then, e.g., formulate and prove \emph{Peano axioms} \cite{peano1879arithmetices} by Id-types.}
Informally, the Id-type $\mathsf{Id_A(a, a')}$ represents the set of all \emph{(identity) proofs} that `witnesses' equality between $\mathsf{a}$ and $\mathsf{a'}$.

Rules of Id-types are the following: 
\begin{small}
\begin{mathpar}
\AxiomC{$\mathsf{\Gamma \vdash A \ type}$}
\AxiomC{$\mathsf{\Gamma \vdash a : A}$}
\AxiomC{$\mathsf{\Gamma \vdash a' : A}$}
\LeftLabel{(\textsc{$\mathrm{Id}$-Form})}
\TrinaryInfC{$\mathsf{\Gamma \vdash Id_A(a, a') \ type}$}
\DisplayProof \and
\AxiomC{$\mathsf{\Gamma \vdash A \ type}$}
\AxiomC{$\mathsf{\Gamma \vdash a : A}$}
\LeftLabel{(\textsc{$\mathrm{Id}$-Intro})}
\BinaryInfC{$\mathsf{\Gamma \vdash refl_a : Id_A(a, a)}$}
\DisplayProof \and
\AxiomC{$\mathsf{\Gamma, x : A, y : A, p : Id_A(x, y) \vdash C \ type} $}
\AxiomC{$\mathsf{\Gamma, z : A \vdash c : C[z/x, z/y, refl_z/p]}$}
%\AxiomC{$\mathsf{\Gamma \vdash a : A}$}
%\AxiomC{$\mathsf{\Gamma \vdash a' : A}$}
\AxiomC{$\mathsf{\Gamma \vdash q : Id_A(a, a')}$}
\LeftLabel{(\textsc{$\mathrm{Id}$-Elim})}
\TrinaryInfC{$\mathsf{\Gamma \vdash R^=(C, c, a, a', q) : C[a/x, a'/y, q/p]}$}
\DisplayProof \and
\AxiomC{$\mathsf{\Gamma, x : A, y : A, p : Id_A(x, y) \vdash C \ type}$}
\AxiomC{$\mathsf{\Gamma, z : A \vdash c : C[z/x, z/y, refl_z/p]}$}
\AxiomC{$\mathsf{\Gamma \vdash a : A}$}
\LeftLabel{(\textsc{$\mathrm{Id}$-Comp})}
\TrinaryInfC{$\mathsf{\Gamma \vdash R^=(C, c, a, a, refl_a) = c[a/z] : C[a/x, a/y, refl_a/p]}$}
\DisplayProof
\end{mathpar}
\end{small}

The formation rule $\mathrm{Id}$-Form states that we may form the Id-type $\mathsf{Id_A(a, a')}$ from a type $\mathsf{A}$ and terms $\mathsf{a, a' : A}$.
The introduction rule $\mathrm{Id}$-Intro defines that there is just one term $\mathsf{refl_a}$ of the Id-type $\mathsf{Id_A(a, a)}$.
Again, the elimination and the computation rules $\mathrm{Id}$-Elim and $\mathrm{Id}$-Comp make sense by the introduction rule.

\begin{definition}[Interpretation of Id-types in CwFs \cite{hofmann1997syntax}]
The interpretation $\llbracket \_ \rrbracket$ of Id-types in a CwF $\mathcal{C}$ that supports Id is given by:

\begin{itemize} 

\item (\textsc{$=$-Form}) $\llbracket \mathsf{\Gamma \vdash a =_A a' \ type} \rrbracket \colonequals \mathrm{Id}_{\llbracket \mathsf{A} \rrbracket} \{ \langle \overline{\llbracket \mathsf{\Gamma \vdash a : A} \rrbracket}, \llbracket \mathsf{\Gamma \vdash a' : A} \rrbracket \rangle_{\llbracket \mathsf{A} \rrbracket} \}$;

\item (\textsc{$=$-Intro}) $\llbracket \mathsf{\Gamma \vdash refl_A : a =_A a} \rrbracket \colonequals \mathrm{v}_{\mathrm{Id}_{\llbracket \mathsf{A} \rrbracket}} \{ \mathrm{Refl}_{\llbracket \mathsf{A} \rrbracket} \circ \overline{\llbracket \mathsf{\Gamma \vdash a : A} \rrbracket} \}$;

\item (\textsc{$=$-Elim}) $\llbracket \mathsf{\Gamma \vdash R^{=}(C, c, a, a', q) : C[a/x, a'/y, q/p]} \rrbracket \colonequals \mathcal{R}_{\llbracket \mathsf{A}, \mathsf{C} \rrbracket}^{\mathrm{Id}}(\llbracket \mathsf{c} \rrbracket) \{ \langle \langle \overline{\llbracket \mathsf{a} \rrbracket}, \llbracket \mathsf{a'} \rrbracket \rangle_{\llbracket \mathsf{A} \rrbracket}, \llbracket \mathsf{q} \rrbracket \rangle_{\llbracket \mathsf{a =_A a'} \rrbracket} \}$.

\end{itemize}
%where the hypotheses of the rules are as presented in Sect.~\ref{IdentityTypes}.
\end{definition}

\end{document}